\newtheorem{theorem}{Theorem}
\newtheorem{proposition}{Proposition}
\newtheorem{corollary}{Corollary}
\newtheorem{lemma}{Lemma}
\newtheorem{definition}{Definition}
\newcounter{anhang} 
\newtheorem{theorem_anh}[anhang]{Theorem} 
\newtheorem*{theoremA}{Theorem A}
\theoremstyle{definition}
\newtheorem{remark}{Remark}
\newtheorem{example}{Example}
\newtheorem{rem_anh}[anhang]{Remark}
\newcommand{\bdm}{\begin{displaymath}}
\newcommand{\edm}{\end{displaymath}}
\newcommand{\bq}{\begin{equation}}
\newcommand{\eq}{\end{equation}}
\newcommand{\bqn}{\begin{equation*}}
\newcommand{\eqn}{\end{equation*}}
\newcommand{\rn}{\mathbb{R}^n}
\newcommand{\eps}{\varepsilon}
\newcommand{\phw}{\tilde \psi^{wk}}
\newcommand{\Sing}{\mathrm{Sing}\,}
\newcommand{\Reg}{\mathrm{Reg}\,}
\newcommand{\norm}[1]{\left\| #1 \right\|}
\newcommand{\mklm}[1]{\left\{ #1 \right\}}
\newcommand{\eklm}[1]{\left\langle #1 \right\rangle}
\renewcommand{\d}{\,d}
\newcommand{\N}{{\mathbb N}}
\newcommand{\C}{{\mathbb C}}
\newcommand{\Ccal}{{\mathcal C}}
\newcommand{\R}{{\mathbb R}}
\newcommand{\A}{{\mathcal A}}
\newcommand{\D}{{\mathcal D}}
\newcommand{\F}{{\mathcal F}}
\newcommand{\J}{{\mathbb J }}
\newcommand{\M}{{\mathcal M}}
\newcommand{\T}{{\rm T}}
\newcommand{\X}{{\mathfrak X}}
\newcommand{\1}{{\bf 1}}
\renewcommand{\epsilon}{\varepsilon}
\renewcommand{\phi}{\varphi}
\renewcommand{\rho}{\varrho}
\newcommand{\Cinft}{{\rm C^{\infty}}}
\newcommand{\CT}{{\rm C^{\infty}_c}}
\newcommand{\Lcal}{{\mathcal L}}
\renewcommand{\S}{{\mathcal S}}
\newcommand{\GL}{\mathrm{GL}}
\newcommand{\g}{{\bf \mathfrak g}}
\renewcommand{\t}{{\bf \mathfrak t}}
\newcommand{\Ad}{\mathrm{Ad}\,}
\newcommand{\sgn}{\mathrm{sgn}\,}
\newcommand{\id}{\mathrm{id}\,}
\renewcommand{\det}{\mathrm{det}\,}
\renewcommand{\Im}{\mathrm{Im}\,}
\newcommand{\vol}{\text{vol}\,}
\newcommand{\Crit}{\mathrm{Crit}}
\DeclareMathOperator{\supp}{supp}
\DeclareMathOperator{\Res}{Res}
\DeclareMathOperator{\gd}{\partial}
\newcommand{\e}[1]{\,{\mathrm e}^{#1}\,}
\begin{document}

\author{Pablo Ramacher}
\title[Singular equivariant asymptotics and the momentum map]{Singular equivariant asymptotics and the momentum map. Residue formulae in equivariant cohomology} 
\address{Pablo Ramacher, Philipps-Universit\"at Marburg, Fachbereich Mathematik und Informatik, Hans-Meer\-wein-Str., 35032 Marburg, Germany}
\email{ramacher@mathematik.uni-marburg.de}
\date{October 7, 2013}
\keywords{Equivariant cohomology, residue formulae, momentum map, symplectic quotients, stationary phase principle, resolution of singularities}
\begin{abstract}
Let $M$ be a smooth manifold and $G$  a compact connected Lie group acting on $M$ by isometries. In this paper, we study the equivariant cohomology of ${\bf X}=T^\ast M$, and relate it to the cohomology of the Marsden-Weinstein reduced space via certain residue formulae. In case that  $\bf X$ is a compact symplectic manifold with a Hamiltonian $G$-action, similar residue formulae were derived by Jeffrey, Kirwan et al. \cite{jeffrey-kirwan95, JKKW03}.

\end{abstract}

\maketitle

\setcounter{tocdepth}{1}
\tableofcontents

\section{Introduction}

Let $\bf X$ be a symplectic manifold carrying a Hamiltonian action of a compact, connected Lie group $G$ with Lie algebra $\g$, and denote the corresponding  momentum map by $\J: {\bf X} \rightarrow \g^\ast$.  In case that $\bf X$ is compact and $0$  a regular value of the momentum map,   the cohomology of the Marsden-Weinstein reduced space ${\bf{X}}_{red}=\J^{-1}(0)/G$ was expressed by Jeffrey and Kirwan \cite{jeffrey-kirwan95}  in terms of  the equivariant cohomology of $\bf X$ via  certain residue formulae. If $0$ is not a regular value, similar residue formulae were derived by them and their collaborators  \cite{JKKW03} for nonsingular, connected, complex projective varieties $\bf X$. These formulae rely on  the localization theorem  for compact group actions of Berline-Vergne \cite{berline-vergne82,berline-getzler-vergne}, and are related to the non-Abelian localization theorem of Witten \cite{witten92}. 
 The intention of  this paper is to extend their results to non-compact situations, and derive similar residue formulae  in  case that   $\bf X$ is given by the cotangent bundle of a $G$-manifold.

Let $\bf X$ be a smooth manifold carrying a smooth action of a connected Lie group $G$. According to Cartan \cite{cartan50}, its equivariant cohomology  can be defined by replacing the algebra  $\Lambda({\bf X})$ of smooth differential forms on $\bf X$ by the algebra $(S(\g^\ast) \otimes \Lambda({\bf X}))^G$ of $G$-equivariant polynomial mappings
\bqn 
 \rho: \g \ni X \longmapsto \rho(X) \in \Lambda(\bf{X}),
\eqn
where $\g$ denotes the Lie algebra of $G$. Let $\widetilde X$ denote the fundamental vector field on $\bf X$ generated by an element $X \in \g$. Defining  \emph{equivariant exterior differentiation} by
\bqn 
D\rho(X)=d(\rho(X)) - \iota_{\widetilde X} (\rho(X)), \qquad X \in \g, \,\rho \in (S(\g^\ast) \otimes \Lambda({\bf X}))^G,
\eqn
where $d$ and $\iota$ denote the usual exterior differentiation and contraction, 
the \emph{equivariant cohomology} of the $G$-action on $\bf{X}$ is  given by the quotient
\bqn 
H^\ast_G({\bf X})= \mathrm{Ker}\,  D\slash\,  \Im D,
\eqn
which is canonically isomorphic to the topological equivariant cohomology introduced in \cite{atiyah-bott84} in case that $G$ is compact, an assumption that we will make from now on. The main difference between  ordinary and equivariant cohomology is that the latter has a larger coefficient ring, namely $S(\g^\ast)$, and that it depends on the orbit structure of the underlying $G$-action. Let us now assume that $\bf{X}$ admits a symplectic structure $\omega$ which is left  invariant by $G$. By Cartan's homotopy formula,
\bqn 
0=\Lcal_{\widetilde X} \omega=d \circ \iota_{\widetilde X} \omega  + \iota_{\widetilde X}\circ  d\omega=d \circ \iota_{\widetilde X} \omega,
\eqn
where $\Lcal$ denotes the Lie derivative with respect to a vector field, 
implying that $\iota_{\widetilde X} \omega$ is closed for each $X \in \g$. $G$ is  said to act on $\bf{X}$ in a \emph{Hamiltonian fashion}, if this form is even exact, meaning that  there exists a linear function $J: \g \rightarrow \Cinft(\bf{X})$ such that for each $X \in \g$, the fundamental vector field $\widetilde X$ is equal to the Hamiltonian vector field of $J(X)$, so that
\bqn 
d(J(X))+ \iota_{\widetilde X} \omega=0.
\eqn 
An immediate consequence of this is that for any equivariantly closed form $\rho$ the form given by $e^{i(J(X)-\omega)} \rho(X)$ is equivariantly closed, too. Following Souriau and Kostant, one  defines the momentum map of a Hamiltonian action as the equivariant map
 \bqn 
 \J:{\bf{X}} \longrightarrow \g^\ast, \quad  \J(\eta)(X)=J(X)(\eta).
 \eqn
 Assume next that $0 \in \g^\ast$ is a regular value of $\J$, which is equivalent to the assumption that the stabilizer of each point of $\J^{-1}(0)$ is finite. In this case, $\J^{-1}(0)$ is a smooth manifold, and the corresponding \emph{Marsden-Weinstein reduced space}, or \emph{symplectic quotient}
 \bqn 
 {\bf X}_{red}= \J^{-1}(0)/G
 \eqn
 is an orbifold   with a unique symplectic form $\omega_{red}$ determined by the identity $\iota ^\ast \, \omega= \pi^\ast  \, \omega_{red}$, where $\pi: \J^{-1}(0) \rightarrow {\bf X}_{red}$ and $\iota:\J^{-1}(0)\hookrightarrow  {\bf X}$ denote the canoncial projection and inclusion, respectively. Furthermore, $\pi^\ast$ induces an isomorphism between $H^\ast({\bf X}_{red})$ and $H_G^\ast(\J^{-1}(0))$. Consider now the map 
 \bqn 
 {\mathcal{K}}: H^\ast_G({\bf X}) \stackrel{\iota^\ast}{\longrightarrow} H^\ast_G(\J^{-1}(0)) \stackrel{(\pi^\ast)^{-1}}{\longrightarrow} H^\ast ({\bf X}_{red}),
 \eqn
and assume that  ${\bf X}$ is compact and oriented. In this case, Kirwan \cite{kirwan84} showed that ${\mathcal{K}}$
 defines a surjective homomorphism, so that the cohomology of ${\bf X}_{red}$ should be computable from the equivariant cohomology of $\bf X$. This is the content of the  residue formula of Jeffrey and Kirwan \cite{jeffrey-kirwan95}, which for any $\rho \in H^\ast_G({\bf X})$ expresses the integral
 \bq
 \label{eq:1}
 \int_{{\bf X}_{red}} e^{-i\omega_{red}} {\mathcal{K}}(\rho)=\int_{{\bf X}_{red}} \sum_{k=0}^{\dim {\bf X}_{red}/2} \frac{(-i\omega_{red})^k}{k!} {\mathcal{K}}(\rho)_{[\dim {\bf X}_{red}-2k]}
 \eq
 in terms of data of ${\bf X}$. More precisely, let $T\subset G$ be a maximal torus, and ${\bf X}^T$ its fixed point set. Then 
 \eqref{eq:1} is given by a sum over the components $F$ of ${\bf X}^T$ of certain residues involving the restriction of $\rho$ to the $G$-orbit $G\cdot F$ and the equivariant Euler form $\chi_{NF}$ of the normal bundle $NF$ of $F$. The departing point of their work  is  the observation that the integral  \eqref{eq:1} should be given by the $\g$-Fourier transform of the tempered distribution
 \bqn 
\g \ni X \, \mapsto \, \int_{\bf X} e^{i(J(X)-\omega)}  \rho(X)
 \eqn
evaluated at $0\in \g^\ast$.  The mentioned formula of Jeffrey and Kirwan is then essentially a consequence of  the localization formula of Berline and Vergne \cite{berline-vergne82}. In   case that  $0\in \g^\ast$ is not a regular value,  analogous residue formulae were derived in \cite{JKKW03} for nonsingular, connected, complex projective varieties $\bf{X}$ within the framework of  geometric invariant theoretic quotients,  under some weak assumptions about the group action. In this situation, there is no longer a surjection from equivariant cohomology onto the cohomology of the corresponding quotient, whose singularities are worse than in the orbifold case. Nevertheless, their is still a surjection onto its intersection cohomology, which is a direct summand of the ordinary cohomology of any resolution of singularities of the quotient. Using a canonical desingularization procedure for such quotients  developed by Kirwan \cite{kirwan85}   in combination  with certain residue operations established by Guillemin and Kalkman \cite{guillemin-kalkman}, residue formulae for intersection pairings can then be derived.
 
Historically, the Berline-Vergne localization formula emerged as a generalization of a result of Duistermaat and Heckman \cite{duistermaat-heckman82} concerning the pushforward of the Liouville measure  of a compact, symplectic manifold carrying a Hamiltonian torus action along the momentum map. As it turns out,  this pushforward is a piecewise polynomial measure, or equivalently, its inverse Fourier transform is exactly given by the leading term in the stationary phase approximation. The study of the pushforward of the Liouville measure was motivated by attempts of finding an asymptotic approximation to the Kostant multiplicity formula \cite{kostant59} in order to examine the partition function occuring in that formula, which otherwise is very difficult to evaluate  \cite{guillemin-lerman-sternberg}. On the other side, the origin of the Berline-Vergne localization formula can be traced back to a residue formula for holomorphic vector fields derived by Bott \cite{bott67}, which was inspired by the generalized Lefschetz formula of Atiyah and Bott \cite{atiyah-bottI}.
 
 In this paper, we shall prove a residue formula in  case that ${\bf X}=T^\ast M$ is given by the cotangent bundle of a smooth manifold $M$ on which a compact, connected Lie group $G$ acts by general isometries. For this, we shall determine the asymptotic behavior of integrals of the form
\bqn
\label{eq:2}
I_\varsigma(\mu)=\int_{\g} \left [ \int_{\bf{X}} e^{i(\J(\eta)-\varsigma)(X)/\mu} a(\eta,X) \d \eta \right ] \d X,  \qquad \mu \to 0^+,
\eqn
via the stationary phase principle, where $\varsigma \in \g^\ast$,  $a \in \CT( \bf{X} \times \g)$ is an amplitude, $d\eta$  the Liouville measure on $\bf{X}$, and $dX$ denotes an Euclidean measure on $\g$ given by an $\Ad(G)$-invariant inner product on $\g$. While asymptotics for $I_\varsigma(\mu)$ can be easily obtained for free group actions, one meets with serious difficulties when singular orbits are present. The reason is that, when trying to examine these integrals in case that  $\varsigma \in \g^\ast$ is not a regular value of the momentum map, the critical set of  $(\J(\eta)-\varsigma)(X)$  is no longer  smooth, so that, a priori, the stationary phase  principle can not be applied in this case. Instead, we shall  circumvent this obstacle in the case  $\varsigma=0$  by partially resolving the singularities of the critical set of the momentum map,  and then apply the stationary phase theorem in a suitable resolution space. By this we are able to   obtain asymptotics for $I_0(\mu)$   with  remainder estimates  in the case of singular group actions.  
This  approach was developed first  in \cite{cassanas-ramacher09,ramacher10}  to describe the spectrum of an invariant elliptic operator on a compact $G$-manifold, where similar integrals occur, and used in the derivation of equivariant heat asymptotics in \cite{paniagua-ramacher12}. The asymptotic description of $I_\varsigma(\mu)$ in a neighborhood of $\varsigma =0$ then allows us to derive the following residue formula.   
Let $\rho\in H^\ast_G(T^\ast M)$ be of the form $\rho(X)=\alpha+D\beta(X)$, where $\alpha$ is a closed, basic differential form on $T^\ast M$ of compact support, and $\beta$ is an equivariant differential form of compact support. Fix a maximal torus $T\subset G$, and denote the corresponding root system by $\Delta(\g,\t)$. Assume that the dimension $\kappa$ of a principal $G$-orbit is equal to $d=\dim \g$, and 
denote  the product of the positive roots by $\Phi$. Let further $W$ be the Weyl group  and $H$ a principal isotropy group of the $G$-action. Denote the principal stratum of $\J^{-1}(0)$ by $\Reg \J^{-1}(0)$, and put ${\Reg {\bf X}_{red}}=\Reg \J^{-1}(0)/G$. Also, let  $r:\Lambda^\ast({\bf X}) \rightarrow \Lambda^{\ast-\kappa}(\Reg \J^{-1}(0))$ be the natural restriction map,  and write $\widetilde {\mathcal{K}}= (\pi^\ast)^{-1} \circ r$. Then, by Theorem \ref{thm:res},
\begin{align*}
(2\pi)^{d}  \int_{\Reg {\bf X}_{red}} \widetilde {\mathcal{K}}(e^{-i\omega} \alpha )= \frac{|H|}{|W| \,\vol T} \mathrm{Res} \Big (  \Phi^2 \sum_{F \in \F}  u_F\Big ),
\end{align*}
where   $\mathcal{F}$   denotes the set of components of the fixed point set  of the $T$-action on ${\bf X}=T^\ast M$, and the  $u_F$ are rational functions on $\t$  given by
\bqn 
u_F: \t \ni Y \, \longmapsto  \,  (-2\pi)^{\mathrm{rk}\, F/2} e^{iJ_Y(F)}  \int_F\frac{e^{-i\omega}\rho(Y)}{\chi_{NF}(Y)},
\eqn
$J_Y(F)$ being the constant value of $J(Y)$ on $F$. The definition of the residue operation, given in Section \ref{sec:2}, relies on the fact that the Fourier transform of $u_F $ is a piecewise polynomial measure.  Our approach is in many respects similar to the one of Jeffrey, Kirwan et al., but differs from their's in that it is based on the  stationary phase principle, which suggests that it should be possible to find a new proof of  their results, and  extend them to general symplectic manifolds.

 \medskip

{\bf Acknowledgements.} The author wishes to thank Mich\`ele Vergne for pointing out to him that the results in \cite{ramacher10} could be related to equivariant cohomology, and teaching him many things about the field. This research was financed in its beginnings by the grant RA 1370/2-1 of the German Research Foundation (DFG).

\section{Localization in equivariant cohomology}
\label{sec:2}

Let $\bf X$ be a $2n$-dimensional, paracompact, symplectic manifold with symplectic form $\omega$ and Riemannian metric $g$.  Since $\omega$ is non-degenerate, $\omega^n/n!$ yields a volume form on $\bf X$ called the \emph{Liouville form}, whose existence is equivalent to the fact that  $\bf X$ is orientable.
Define a bundle morphism $\mathcal{J}:T{\bf X} \rightarrow T{\bf X}$ by setting
\bqn 
g_\eta(\mathcal{J}  \X,  {\mathfrak{Y}}) =\omega_\eta( \X,  {\mathfrak{Y}}), \qquad  \X, \mathfrak{Y} \in T_\eta{\bf X},
\eqn
and assume that $\mathcal{J}$ is normed in such a way that $\mathcal{J}^2=-1$, which defines $\mathcal{J}$ uniquely.  $\mathcal{J}$ constitutes an almost-complex structure that is compatible with $\omega$, meaning that 
\bqn 
\omega_\eta (\mathcal{J} \X, \mathcal{J} \mathfrak{Y}) = 
\omega_\eta (\X, \mathfrak{Y}), \qquad \omega_\eta(\X, \mathcal{J} \X) >0.
\eqn
Furthermore, $g_\eta (\mathcal{J} \X, \mathcal{J} \mathfrak{Y}) =g_\eta (\X, \mathfrak{Y})$. $({\bf X},\mathcal{J},g)$ is consequently an almost-Hermitian manifold.  Next, assume that $\bf X$ carries a Hamiltonian action of a compact, connected Lie group $G$ of dimension $d$, and denote the corresponding Kostant-Souriau momentum map  by 
\bqn
\J: {\bf X} \to \g^\ast,  \quad \J(\eta)(X)=J_X(\eta)=J(X)(\eta).
\eqn
By definition, $dJ_X+ \iota_{\widetilde X} \omega =0$ for all $X \in \g$,
where $\widetilde X$ denotes the vector field on $  {\bf X}$ given by 
\bqn 
(\widetilde X f)(\eta)= \frac d {dt} f(e^{-tX} \cdot \eta)_{|t=0}, \qquad X \in \g, \quad f \in \Cinft( {\bf X}).
\eqn
 By this choice, the mapping $X\mapsto \widetilde X$ becomes a Lie-algebra homomorphism, so that in particular $\widetilde{[X,Y]}=[\widetilde X, \widetilde Y]$.  Also note that $\J$ is $G$-equivariant in the sense that $\J(g^{-1} \eta) = \Ad^\ast(g) \J(\eta)$. 

In what follows, we assume that $\g$ is endowed with an $\Ad(G)$-invariant inner product, which allows us to identify $\g^\ast$ with $\g$. Let further $dX$ and $d\xi$ be corresponding measures on $\g$ and  $\g^\ast$, respectively, and denote by 
$$\F_\g: \S(\g^\ast) \rightarrow \S(\g), \qquad \F_\g:\S'(\g) \rightarrow \S'(g^\ast)
$$ the $\g$-Fourier transform on the Schwartz space and the space of tempered distributions, respectively. In this paper, we intend  to relate  the equivariant cohomology $H_G^\ast ({\bf X})$ of $\bf X$ to the  cohomology of the symplectic quotient
\bqn 
{ {\bf X}}_{red}= \Omega_0/G, \qquad \Omega_\varsigma=\J^{-1}(\varsigma).
\eqn 
Following \cite{witten92} and \cite{jeffrey-kirwan95},  we consider for this  the map
\bqn 
X \quad \mapsto \quad L_\alpha(X)= \int_{{\bf X}} e^{iJ_X} \alpha, \qquad X \in \g, \qquad  \alpha \in \Lambda_c({\bf X}), 
\eqn
regarded as a tempered distribution in $\S'(\g)$, where $\Lambda_c({\bf X})$ denotes the algebra of differential forms on $\bf X$ of compact support. 
 If $({\bf X}, \omega)$ is a compact symplectic manifold, $G$ a torus, and $\alpha=\sigma^n/n!$ the Liouville measure, $L_\alpha$ is  the Duistermaat-Heckman integral, and corresponds to the inverse $\g$-Fourier transform of the pushforward $\J_\ast(\sigma^n/n!)$ of the Liouville form along the momentum map. In this case, the $\g$-Fourier transform of $L_\alpha$ is exactly $\J_\ast(\sigma^n/n!)$ and a piecewise polynomial measure on $\g^\ast$ \cite{duistermaat-heckman82}. 

We are therefore interested in  the $\g$-Fourier transform $\F_\g L_\alpha$ of $L_\alpha$ in general, and particularly, in its description near $0 \in \g^\ast$.  Take an $\Ad^\ast(G)$-invariant function $\phi \in \CT(\g^\ast)$ with total integral equal to one and $\g$-Fourier transform $\hat \phi(X) = (\F_\g \phi) (X) = \int_{\g^\ast} e^{-i\eklm{\xi,X}} \phi(\xi) \d \xi$, where we wrote $\xi(X)=\eklm{\xi,X}$. Then $\phi_\eps(\xi)= \phi(\eps^{-1} \xi  ) /\eps^d$, $\eps>0$, constitutes an approximation of the $\delta$-distribution  in $\g^\ast$ at $0$ as $\eps \to 0$,  and we consider the limit
\begin{align}
\label{eq:50}
\lim_{\eps \to 0} \eklm{\F_{\g} L_\alpha,\phi_\eps}&=\lim_{\eps \to 0}\int_{\g} L_\alpha(X) \hat \phi(\eps X) \d X=\lim_{\eps\to 0} \int_{\g} \int_{{\bf X}} e^{iJ_X/\eps} \alpha \,  \hat \phi(X) \frac {dX}{\eps^d},
\end{align}
where we took into account that $\hat \phi_\eps(X)= \hat \phi(\eps X)$. 
Next,   fix a maximal torus $T\subset G$  of dimension $d_T$ with Lie algebra $\t$, and consider the root space decomposition
\bqn 
\g^\C= \t^\C\oplus \bigoplus_{\gamma \in \Delta} \g_\gamma,
\eqn
where $\Delta=\Delta(\g,\t)$ denotes the set of roots of $\g$ with respect to $\t$, and $\g_\gamma$ are the corresponding root spaces. Since $\dim_\C \g_\gamma=1$, the decomposition  implies $d-d_T=\dim_\R \g-\dim_\R \t=|\Delta|$. Assume that $\alpha$ is such that $L_\alpha $ is $\Ad(G)$-invariant. Using  Weyl's integration formula \cite[Lemma 3.1]{jeffrey-kirwan95}, \eqref{eq:50} can be rewritten as
\bq
\label{eq:43}
\lim_{\eps \to 0} \eklm{\F_{\g} L_\alpha,\phi_\eps}=\frac{\vol G}{|W| \vol T} \lim_{\eps \to 0} \int_{\mathfrak{t}}\left [ \int_{{\bf X}}  e^{iJ_Y} \alpha \right ]\hat \phi(\eps Y) \Phi^2(Y) dY,
\eq
where $\Phi(Y)=\prod_{\gamma\in \Delta_+} \gamma(Y)$ and  $\Delta_+$ is the set of   positive roots, while $W=W(\g,\t)$ denotes the  Weyl group. Here $\vol G$ and $\vol T$ stand for the volumes of $G$ and $T$ with respect to the corresponding volume forms on $G$ and $T$ induced by the invariant inner product on $\g$ and its restriction to $\t$, respectively. In what follows, we shall express this limit in terms of  the set  
\bqn 
F^T=\mklm{\eta \in {\bf X}: t \cdot \eta =\eta \quad \forall \,  t \in T}
\eqn
of  fixed points of the underlying $T$-action. The  connected components  of $F^T$ are smooth submanifolds of possibly different dimensions, and we denote the set of these components by  $\F$. Let  $F \in \mathcal{F}$ be fixed, and consider the normal bundle $NF$ of $F$. As can be shown, the real vector bundle $NF$ can be given a complex structure, and splits into a direct sum of two-dimensional real bundles $P^F_q$, which can be regarded as complex line bundles over $F$. For each  $\eta \in F$, the fibers  $(P_q^F)_\eta$ are $T$-invariant, and endowing them with the standard complex structure, the action of  $\t$ can be written as 
\bqn 
(P_q^F)_\eta \ni v \mapsto i\lambda^F_q(Y) v \in (P_q^F)_\eta, \qquad Y \in \t,
\eqn
where the $\lambda^F_q\in \t^\ast$  are the weights of the torus action  \cite{duistermaat94}. They do not depend on $\eta$. Now, if $\rho$ is an  equivariantly closed form,  $L_{e^{-i\omega}\rho(Y)}(Y)$ can be computed using 

\begin{theorem}[Localization formula of Berline-Vergne] Let $\bf X$ be  a smooth $n$-dimensional manifold acted on by a compact Lie group $G$, and $\rho$ an equivariantly closed form on $\bf X$ with compact support. For $Y \in \g$, let ${\bf X}_0$ denote the zero set of $Y$. Then $\rho(Y)_{[n]}$ is exact outside ${\bf X}_0$, and 
\bqn 
\int_{\bf X} \rho(Y)= \int_{{\bf X}_0} (-2\pi)^{\mathrm{rk}\, N{\bf X}_0/2}\frac{\rho(Y)}{\chi_{N{\bf X}_0}(Y)},
\eqn 
where $N{\bf X}_0$ denotes the normal bundle of ${\bf X}_0$, which has been endowed with an orientation compatible with the one of ${\bf X}_0$, and $\chi_{N{\bf X}_0}$ is the equivariant Euler form of the normal bundle. 
\end{theorem}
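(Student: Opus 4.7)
The proof I have in mind is the classical Cartan-model transgression argument. Fix a $G$-invariant Riemannian metric $g$ on ${\bf X}$, available by compactness of $G$, set $\alpha = g(\widetilde Y, \cdot)$, and on the open subset ${\bf X}\setminus{\bf X}_0$ define the smooth $1$-form $\beta = \alpha/g(\widetilde Y, \widetilde Y)$, which satisfies $\iota_{\widetilde Y}\beta = 1$. Since $g$ is $G$-invariant one has $\mathcal{L}_{\widetilde Y}\beta = 0$, so $D^2\beta = -\mathcal{L}_{\widetilde Y}\beta = 0$; consequently $D\beta(Y) = d\beta - 1$ is itself $D$-closed on ${\bf X}\setminus{\bf X}_0$.

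Using $D\rho(Y) = 0$ together with the super-Leibniz rule for $D$, one obtains the transgression identity
\begin{equation*}
\frac{d}{ds}\bigl[e^{sD\beta(Y)}\rho(Y)\bigr] = D\bigl(\beta\,e^{sD\beta(Y)}\rho(Y)\bigr)\quad\text{on }{\bf X}\setminus{\bf X}_0,
\end{equation*}
and integration over $s\in[0,t]$ shows that $e^{tD\beta(Y)}\rho(Y) - \rho(Y)$ is $D$-exact outside ${\bf X}_0$. Restricting to the top-degree component, where $D$ acts as the ordinary exterior derivative, already yields the claimed exactness of $\rho(Y)_{[n]}$ on ${\bf X}\setminus{\bf X}_0$. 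For the integral formula I would excise a closed $\epsilon$-tubular neighborhood $U_\epsilon$ of ${\bf X}_0$, apply Stokes' theorem to the primitive on ${\bf X}\setminus U_\epsilon$, and let $t\to\infty$. Writing $e^{tD\beta(Y)} = e^{-t}e^{td\beta}$ with $e^{td\beta}$ a polynomial in $t$ of degree at most $n/2$, the interior contribution vanishes uniformly on ${\bf X}\setminus U_\epsilon$, and the problem reduces to computing the $\epsilon\to 0$ limit of a boundary integral over $\partial U_\epsilon$.

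The main obstacle, and the geometrically substantive step, is to identify this boundary limit with the asserted integral over ${\bf X}_0$ involving the equivariant Euler form. I would work one component $F\subset{\bf X}_0$ at a time, choose a $G$-invariant tubular neighborhood of $F$, identify it via the exponential map with a disk bundle in the normal bundle $NF$, and use the equivariant splitting of $NF$ into oriented real $2$-plane subbundles on which $Y$ acts by nonzero weights $\lambda^F_q$, precisely as described in the paragraph preceding the theorem statement. In radial fibre coordinates $\beta$ becomes explicit and the $s$-integral produces a Gaussian per $2$-plane factor, which by Chern-Weil theory assembles into $(-2\pi)^{\mathrm{rk}\,NF/2}/\chi_{NF}(Y)$. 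Summing these contributions over the components of ${\bf X}_0$ and pairing with the restriction of $\rho(Y)$ to ${\bf X}_0$ then yields the stated formula.
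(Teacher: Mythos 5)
Your argument is exactly the classical Berline--Getzler--Vergne transgression proof (invariant metric, one-form $\beta$ with $\iota_{\widetilde Y}\beta=1$ and $\mathcal{L}_{\widetilde Y}\beta=0$, equivariant exactness off ${\bf X}_0$, excision of a tubular neighborhood plus Stokes, and the local normal-bundle computation yielding $(-2\pi)^{\mathrm{rk}\, NF/2}/\chi_{NF}(Y)$), which is precisely what the paper's proof invokes by citing \cite[Theorem 7.13]{berline-getzler-vergne}, adding only that the exactness lemma \cite[Lemma 7.14]{berline-getzler-vergne} extends to equivariantly closed forms of compact support on non-compact manifolds. So your proposal is correct and follows essentially the same route as the paper; the compact support of $\rho$ is what justifies Stokes and the convergence of all integrals on the non-compact complement, exactly as the paper's remark indicates.
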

\begin{proof}
The proof is the same as the proof of \cite[Theorem 7.13]{berline-getzler-vergne}, which consists essentially in a local computation, except for \cite[Lemma 7.14]{berline-getzler-vergne} which, nevertheless, can be easily generalized to equivariantly closed forms with compact support on non-compact manifolds. 
\end{proof}

To apply this theorem in our context, recall that an element $Y\in \t$ is called \emph{regular}, if the set $\mklm{\exp(sY): \, s \in \R}$ is dense in $T$.  The set of regular elements, in the following denoted by  $\t'$, is dense in $\t$, and 
\bq
 \label{eq:reg}
 \mklm{\eta \in {\bf X}: \widetilde Y_\eta =0}=F^T, \qquad Y \in \t'.
 \eq
We then have the following
\begin{corollary}
\label{cor:3}
Let $\rho\in H_G^\ast({\bf X})$ be an equivariantly closed form on $\bf X$ of compact support, and $Y\in \t'$. 
Then
\bqn
\label{eq:54}
L_{e^{-i\omega}\rho(Y)}(Y) = \int_{{\bf X}} e^{i(J_Y-\omega)} \rho(Y)=\sum_{F \in \F} u_F(Y),
\eqn
where the $u_F$ are rational functions on $\t$ given by 
\bq
\label{eq:47}
u_F: \t \ni Y \, \longmapsto  \,  (-2\pi)^{\mathrm{rk}\, NF/2} e^{iJ_Y(F)}  \int_F\frac{e^{-i\omega}\rho(Y)}{\chi_{NF}(Y)},
\eq
$J_Y(F)$ being  the constant value of $J_Y$ on $F$. 
\end{corollary}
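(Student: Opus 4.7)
The plan is to apply the Berline-Vergne localization formula of Theorem 1 to the equivariantly closed form $e^{i(J-\omega)}\rho$, evaluated at the regular element $Y \in \t'$.

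First, I would check that $e^{i(J-\omega)}\rho$ is indeed equivariantly closed as an element of $(S(\g^\ast)\otimes \Lambda({\bf X}))^G$, understood via its polynomial truncations in each equivariant degree. Since $\rho$ is equivariantly closed by hypothesis and the product of equivariantly closed forms is equivariantly closed, it suffices to verify that $e^{i(J-\omega)}$ is $D$-closed. Using the defining identity $dJ_X + \iota_{\widetilde X}\omega = 0$ of the momentum map together with $d\omega = 0$ and $\iota_{\widetilde X}J_X = 0$, one computes $D(J_X - \omega) = d(J_X - \omega) - \iota_{\widetilde X}(J_X - \omega) = dJ_X + \iota_{\widetilde X}\omega = 0$, and since $D$ is a derivation, $D(e^{i(J-\omega)}) = ie^{i(J-\omega)}D(J-\omega) = 0$. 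Because $\rho$ has compact support, so does the product $e^{i(J-\omega)}\rho$.

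Next, I would apply Theorem 1 with this equivariantly closed form at $Y \in \t'$. By \eqref{eq:reg}, the zero set ${\bf X}_0$ of the fundamental vector field $\widetilde Y$ coincides with $F^T$, which is the disjoint union of the components $F \in \F$. The localization formula therefore yields
\bqn
\int_{\bf X} e^{i(J_Y - \omega)} \rho(Y) = \sum_{F \in \F} (-2\pi)^{\mathrm{rk}\, NF/2}\int_F \frac{e^{i(J_Y - \omega)}\rho(Y)}{\chi_{NF}(Y)}.
\eqn
Here the ratio is well defined: by the weight description of the normal bundle summarized in the excerpt, the equivariant Euler form satisfies $\chi_{NF}(Y) = \prod_q \lambda_q^F(Y)$, and regularity of $Y$ ensures $\lambda_q^F(Y)\neq 0$ for every $q$.

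Finally, on each component $F$ the function $J_Y$ is constant. Indeed, $\widetilde Y$ vanishes identically on $F$, so the restriction of $dJ_Y = -\iota_{\widetilde Y}\omega$ to $F$ is zero, and $F$ is connected, so $J_Y \equiv J_Y(F)$ on $F$. Pulling the constant $e^{iJ_Y(F)}$ outside the integral over $F$ produces exactly the rational function $u_F(Y)$ displayed in \eqref{eq:47}, and summing over $F \in \F$ gives the claimed identity. The only point where care is needed is the justification that the possibly infinite-order form $e^{i(J-\omega)}\rho$ may be fed into Theorem 1; this is routine since only the top-degree component of $e^{i(J_Y - \omega)}\rho(Y)$ contributes to the integrals, and this top-degree component coincides with that of a polynomial truncation to which the theorem applies directly.
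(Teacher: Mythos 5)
Your proof is correct and follows essentially the same route as the paper: observe that $Y\mapsto e^{i(J_Y-\omega)}\rho(Y)$ is equivariantly closed with compact support, apply the Berline--Vergne theorem at a regular $Y\in\t'$ using \eqref{eq:reg} to identify the zero set with $F^T$, and use constancy of $J_Y$ on each component $F$; the paper's proof is just a one-line version of this. The only cosmetic imprecision is writing $\chi_{NF}(Y)=\prod_q\lambda_q^F(Y)$ rather than $\prod_q\bigl(c_1(P_q^F)+\lambda_q^F(Y)\bigr)$, but your invertibility argument is exactly the one the paper records after the corollary, so nothing is affected.
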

\begin{proof}
Since $Y \mapsto e^{i(J_Y-\omega)}\rho(Y)$ defines an equivariantly closed form, the assertion follows immediately from the previous theorem and \eqref{eq:reg}.
\end{proof}
In the last corollary, the equivariant Euler class  is given by 
\bqn 
\chi_{NF}(Y)=\prod_{q} (c_1(P^F_q)+\lambda^F_q(Y)),
\eqn
where $c_1(P^F_q) \in H^2(F)$ denotes the first Chern class of the complex line bundle $P^F_q$. Thus,
\bqn 
\frac 1 {\chi_{NF}(Y)}= \frac 1{\prod_q \lambda^F_q(Y)}\prod_q \Big (1 + \frac{c_1(P^F_q)}{\lambda^F_q(Y)}\Big )^{-1}=\frac 1{\prod_q \lambda^F_q(Y)}\prod_q \sum_{0 \leq r_q} (-1)^{r_q} \Big ( \frac{c_1(P^F_q)}{\lambda^F_q(Y)}\Big )^{r_q}.
\eqn
Note that the sum in the last expression is finite, since ${c_1(P^F_q)}/{\lambda^F_q(Y)}$ is nilpotent. Consequently,  the inverse makes sense. Let us also note that the set of critical points of $J_X$ is given by 
\bqn 
\Crit \, J_X=\mklm{\eta \in {\bf X}: \widetilde X_\eta =0}, \qquad X \in \g,
\eqn
and is clean in the sense of Bott. Indeed, $\Crit\, J_X$ is a smooth submanifold consisting of possibly several components of different dimension. On the other hand, the Hessian of $J_X$ is given by the symmetric bilinear form
\bqn 
\mathrm{Hess} \, J_X: T_\eta({\bf X}) \times T_\eta({\bf X}) \longrightarrow \R, \quad (\X_1,\X_2) \mapsto (\tilde \X_1)_\eta(\tilde \X_2(J_X)), \qquad \eta \in \Crit \, J_X, 
\eqn
where $\tilde \X_2(J_X)=dJ_X(\tilde \X_2)=-\iota_{\widetilde X} \omega(\tilde \X_2)$, and  $\tilde \X$ denotes the extension of a vector $\X\in T_\eta({\bf X})$ to a vector field. Now,    
\begin{align}
\begin{split}
\label{eq:40cis}
\widetilde \X_i(\omega(\widetilde X,\widetilde \X_j))&=\mathcal{L}_{\widetilde \X_i }(\iota_{\widetilde X} \iota_{\widetilde \X_j} \omega)=\iota_{\mathcal{L}_{\widetilde \X_i}\widetilde X}\,  \iota_{\tilde \X_j} \omega+\iota_{\widetilde X} \, \mathcal{L}_{\widetilde \X_i} ( \iota_{\widetilde \X_j} \omega)\\
&=\iota_{\mathcal{L}_{\widetilde \X_i}\widetilde X}\,  \iota_{\widetilde \X_j} \omega+\iota_{\widetilde X} \, \iota_{\mathcal{L}_{\widetilde \X_i} \widetilde \X_j} \omega + \iota_{\widetilde X} \, \iota_{\widetilde \X_j} \mathcal{L}_{\widetilde \X_i} (\omega),
\end{split}
\end{align}
so that   at a point $\eta \in \Crit \, J_X$ one computes
\begin{align}
\begin{split}
\label{eq:40}
-\mathrm{Hess} \, J_X(\X_1,\X_2)&=\widetilde \X_1(\omega(\widetilde X,\tilde \X_2))=-\omega([\widetilde X, \widetilde \X_1], \widetilde \X_2),
\end{split}
\end{align}
since $\widetilde X$ vanishes on $\Crit \, J_X$. But the Lie derivative $ \X \mapsto (\mathcal{L}_{\widetilde X}  \widetilde \X)_\eta=[\widetilde X, \widetilde \X]_\eta$ defines an invertible endomorphism of $N_\eta \Crit \, J_X$. Consequently,  the Hessian of $J_X$ is transversally non-degenerate and $\Crit \, J_X$ is clean. 

We would like to compute \eqref{eq:43} using Corollary \ref{cor:3}, but since the rational functions  \eqref{eq:47} are not locally integrable on $\t$, we cannot proceed directly. Instead  note that, since $\Phi^2$ and $\hat \phi$ have analytic continuations to $\t^\C=\t\otimes \C$,  Cauchy's integral theorem yields  for arbitrary $Z\in \t$
\begin{align*}
\begin{split}
 \int_{\mathfrak{t}}\left [ \int_{{\bf X}}  e^{i(J_Y-\omega)} \rho(Y) \right ](\hat \phi_\eps \Phi^2)(Y) dY &= \int_{\mathfrak{t}}\left [ \int_{{\bf X}}  e^{i(J_{Y+iZ}-\omega)} \rho(Y+iZ) \right ](\hat \phi_\eps \Phi^2)(Y+iZ) dY.
  \end{split}
\end{align*}
Here we took into account that by the Theorem of Paley-Wiener-Schwartz \cite[Theorem 7.3.1]{hoermanderI} $\hat \phi_\eps(Y+iZ)$ is rapidly falling in $Y$. 
Let now $\Lambda$ be a proper cone in the complement of all the hyperplanes $\mklm{Y \in \t: \lambda_q^F(Y)=0}$, 
so that $Y \in \Lambda$ necessarily implies $\lambda^F_q(Y)\not=0$ for alle $q$  and $F$.  By the foregoing considerations,  $u_F$  defines a holomorphic function on $\t+i\Lambda$, and for arbitrary compacta $M\subset \mathrm{Int} \, \Lambda $, there is an estimate of the form
\bqn 
|u_F(\zeta)| \leq C(1+|\zeta|)^N, \qquad \zeta= Y+iZ, \quad \Im \zeta \in M,
\eqn
for some $N \in \N$.  The functions $u_F \Phi^k$, $k=0,1,2,\dots$, are holomorphic on $\t+i \Lambda$, too, and satisfy similar bounds. Then, by \cite[Theorem 7.4.2]{hoermanderI}, there exists for each $k$ a distribution $U_F^{\Phi^k} \in \D'(\t^\ast)$ such that 
\bq 
\label{eq:uFphi}
e^{-\eklm{\cdot, Z}} U_F^{\Phi^k} \in \S'(\t^\ast), \qquad \F^{-1}_{\t}(e^{-\eklm{\cdot, Z}} U_F^{\Phi^k})= (u_F \Phi^k)(\cdot + i Z), \qquad Z \in \Lambda.
\eq
We therefore obtain with Corollary \ref{cor:3} for arbitrary $Z \in \Lambda$ and $\varsigma \in \t^\ast$ the equality
\begin{align}
\label{eq:51a}
\begin{split}
 \int_{\mathfrak{t}}\left [ \int_{{\bf X}}  e^{i(J_Y-\omega)} \rho(Y) \right ]& (e^{-i\eklm{\varsigma, \cdot }} \hat \phi_\eps \Phi^2)(Y) dY =  \sum _{F \in \mathcal{F}}  \eklm{(u_F\Phi^2)( \cdot +iZ), (e^{-i\eklm{\varsigma, \cdot }} \hat \phi_\eps)(\cdot + i Z) }\\ &= \sum _{F \in \mathcal{F}}   \eklm{e^{-\eklm{\cdot, Z}} U_F^{\Phi^2},\F^{-1}_{\t} \big ( (e^{-i\eklm{\varsigma, \cdot }} \hat \phi_\eps)(\cdot + i Z)\big ) }\\&=  \sum _{F \in \mathcal{F}}  \eklm{ U_F^{\Phi^2},\F_\t^{-1}\big (e^{-i\eklm{\varsigma, \cdot }} \hat \phi_\eps \big )}.
 \end{split}
\end{align}

\begin{remark}
\label{rem:1}
Let us mention    that for arbitrary $\varsigma \in \t^\ast$
\bqn 
\F_\t^{-1}(e^{-i\eklm{\varsigma, \cdot }} \hat \phi_\eps)(\xi)=\frac 1 {\eps^{d_T}}(\F_\t^{-1} \hat \phi)\Big (\frac{\xi- \varsigma}\eps\Big ), \qquad \xi \in \t^\ast,
\eqn
constitutes an approximation of the $\delta$-distribution in $\t^\ast$ at $\varsigma$, since for arbitrary $v \in \CT(\t^\ast)$ 
\bqn 
\eklm{\F_\t^{-1}(e^{-i\eklm{\varsigma, \cdot }} \hat \phi_\eps), v}=\int_{\t^\ast} (\F_\t^{-1} \hat \phi)(\xi) v(\eps \xi + \varsigma) \d \xi \to v(\varsigma) \hat \phi(0) = v(\varsigma), \qquad \eps \to 0.
\eqn
\end{remark}
\begin{remark}
Alternatively, each of the summands in  \eqref{eq:51a} can be expressed as
\begin{gather*}
  \eklm{(u_F\Phi)( \cdot +iZ), (e^{-i\eklm{\varsigma, \cdot }}\Phi \hat \phi_\eps)(\cdot + i Z) }\\= (2\pi)^{|\Delta_+|}  \eklm{\F^{-1}_{\t} (e^{-\eklm{\cdot, Z}} U_F^\Phi), (e^{-i\eklm{\varsigma, \cdot }} \F_{\t} (\Phi \phi_\eps ))(\cdot + i Z)}= (2\pi)^{|\Delta_+|}  \eklm{  U_F^\Phi,(\Phi  \phi_\eps)( \cdot-\varsigma)},
\end{gather*}
where we used the equality $\Phi \hat \phi_\eps= \Phi \F_\g(\phi_\eps) = (2\pi)^{|\Delta_+|}  \F_{\t}(\Phi \phi_\eps )$, see \cite[Lemma 3.4]{jeffrey-kirwan95}, and the fact that  $ (e^{-i\eklm{\varsigma, \cdot }} \F_{\t} (\phi_\eps \Phi))(\cdot + i Z)=  \F_\t (e^{\eklm{\cdot,Z}} (\phi_\eps \Phi)(\cdot -\varsigma))$, or as
\begin{gather*}
 \eklm{u_F( \cdot +iZ), (e^{-i\eklm{\varsigma, \cdot }}\Phi^2 \hat \phi_\eps)(\cdot + i Z) }\\= (2\pi)^{|\Delta_+|}    \eklm{\F^{-1}_{\t} (e^{-\eklm{\cdot, Z}} U_F), (e^{-i\eklm{\varsigma, \cdot }}  \F_{\t} (D_\Phi (\Phi \phi_\eps )))(\cdot + i Z)}= (2\pi)^{|\Delta_+|}  \eklm{  U_F,D_\Phi (\Phi  \phi_\eps)( \cdot-\varsigma)},
\end{gather*}
where $D_\Phi$ denotes the differential operator such that $\F_\t(D_\Phi ( \Phi\phi_\eps))= \Phi \F_\t( \Phi\phi_\eps)$.
\end{remark}

 As a consequence of equations \eqref{eq:50}, \eqref{eq:43}, and     \eqref{eq:51a} we arrive at  
 
 \begin{proposition}
 \label{prop:A}
Let $\rho$ be an equivariantly closed differential form. Then
\begin{gather*}
\lim_{\eps \to 0} \eklm{\F_{\g} \Big ( L_{e^{-i\omega}\rho(\cdot)}(\cdot ) \Big ) ,\phi_\eps}=\lim_{\eps\to 0} \int_{\g} \int_{{\bf X}} e^{i(J_X/\eps-\omega)} \rho(X/\eps) \,  \hat \phi(X) \frac {dX}{\eps^d}\\ = \frac{\vol \, G}{|W| \vol \, T} \lim_{\eps \to 0}  \sum _{F \in \mathcal{F}}   \eklm{ U_F^{\Phi^2},\F_\t^{-1}\big ( \hat \phi_\eps \big )}.
\end{gather*}
\end{proposition}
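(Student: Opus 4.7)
The plan is to simply chain together \eqref{eq:50}, \eqref{eq:43}, and \eqref{eq:51a}, applied now with the $X$-dependent amplitude $e^{-i\omega}\rho(X)$, thereby extending the setup of \eqref{eq:50} where $\alpha$ did not depend on $X$. Concretely, I would set
\bqn
F(X):= L_{e^{-i\omega}\rho(\cdot)}(X)=\int_{{\bf X}} e^{i(J_X-\omega)}\rho(X),
\eqn
which is smooth and of at most polynomial growth on $\g$ because $\rho$ is a polynomial on $\g$ with values in $\Lambda_c({\bf X})$, hence defines a tempered distribution $\F_\g F \in \S'(\g^\ast)$. Running through the derivation of \eqref{eq:50} verbatim with $F$ in place of $L_\alpha$, i.e.\ using $\hat \phi_\eps(X)=\hat \phi(\eps X)$ and substituting $X \mapsto X/\eps$ under the $\g$-integral, yields at once the first equality in the gathered display.

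For the second equality I would first verify that $F$ is $\Ad(G)$-invariant, which is what \eqref{eq:43} requires. This is a direct consequence of the $G$-equivariance of $\rho$, the $G$-invariance of $\omega$, and the identity $J_{\Ad(g)X}(\eta)=J_X(g^{-1}\eta)$ coming from $\J(g^{-1}\eta)=\Ad^\ast(g)\J(\eta)$: under the orientation-preserving change of variables $\eta\mapsto g^{-1}\eta$ on ${\bf X}$ (orientation is preserved because $G$ is connected), $F(\Ad(g)X)$ is converted back into $F(X)$. Granted this invariance, \eqref{eq:43} applies and reduces the $\g$-integral to the integral of $F(Y) \hat\phi(\eps Y)\Phi^2(Y)$ over $\t$ with prefactor $\vol G/(|W|\vol T)$.

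The last step is to invoke \eqref{eq:51a} at $\varsigma = 0$: this rewrites the $\t$-integral as the sum $\sum_{F\in\mathcal{F}}\eklm{U_F^{\Phi^2},\F_\t^{-1}(\hat\phi_\eps)}$, which is effectively finite because the compact support of $\rho$ meets only finitely many components $F\in\mathcal{F}$, so the limit $\eps\to 0$ may be taken termwise and delivers the claim. I do not anticipate a genuinely new obstacle at any step; the essential subtlety — that the rational functions $u_F$ provided by Corollary \ref{cor:3} are not individually locally integrable on $\t$ and must be interpreted via the contour shift into $\t+i\Lambda$ as boundary values of holomorphic functions, producing the distributions $U_F^{\Phi^2}$ in \eqref{eq:uFphi} — has already been settled in the preceding discussion, so the proof is a concatenation of the three identities.
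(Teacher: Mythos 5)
Your proposal is correct and follows essentially the same route as the paper, which obtains Proposition \ref{prop:A} precisely by concatenating \eqref{eq:50}, \eqref{eq:43}, and \eqref{eq:51a} with $\varsigma=0$ and amplitude $e^{-i\omega}\rho(X)$. Your explicit verification of the $\Ad(G)$-invariance of $L_{e^{-i\omega}\rho(\cdot)}$ (needed for Weyl integration in \eqref{eq:43}) is a welcome detail the paper leaves implicit; the closing remark about taking the limit termwise is not needed for the statement as formulated, since the proposition only asserts equality of the two limits.
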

\qed

In order to further investigate the distributions $U_F^{\Phi^k}$,  note that the functions $u_F\Phi^k$ are  given by a linear combination of terms of the form
\bqn 
\frac{e^{iJ_Y(F)}}{\Pi_q \lambda_q^F(Y)^{r_q}} P(Y), \qquad P \in \C[\t^\ast ]. 
\eqn
 The crucial observation is now that, due to this fact, the $u_F \Phi^k$ are tempered distributions whose $\t$-Fourier transforms  are \emph{piecewise polynomial} measures  \cite[Proposition 3.6]{jeffrey-kirwan95}. By the continuity of the Fourier transform in $\S'$ we therefore have
 \bqn 
 \F_\t (u_F\Phi^k)=\F_\t\Big(\lim_{t\to 0} u_F\Phi^k(\cdot +itZ)\Big )=\lim_{t\to 0} \F_\t( u_F\Phi^k(\cdot +itZ) )=\lim_{t \to 0} e^{-\eklm{\cdot, tZ}} U_F^{\Phi^k} =U_F^{\Phi^k}.
 \eqn
 Thus, $U_F^{\Phi^k}\in \S'(\t^\ast)$ is the $\t$-Fourier transform of $u_F\Phi^k$, and, in  particular, a piecewise polynomial measure. Motivated by Proposition \ref{prop:A}, we are interested in the behavior of $U_F^{\Phi^k}$ near the orgin, which leads us to the following
 
 \begin{definition}
 Let $\varsigma\in \t^\ast$ be such that for all $F \in \mathcal{F}$ the Fourier transforms $U_F^{\Phi^k}$ are smooth on the segment $t\varsigma$, $t \in (0, \delta)$. We then define the so-called \emph{residues}
 \bqn 
 \Res^{\Lambda, \varsigma}(u_F \Phi^k)= \lim_{t \to 0} U_F^{\Phi^k}(t\varsigma). 
 \eqn
 \end{definition}
 
 Note that the limit defining $ \Res^{\Lambda, \varsigma}(u_F \Phi^k)$ certainly exists, but does depend on $\varsigma$ (and $\Lambda$) as $U_F^{\Phi^k}$ is not continuous at the origin.  Furthermore, for arbitrary  $Z \in \Lambda$, 
  \begin{align*}
  \Res^{\Lambda, \varsigma}(u_F \Phi^k)&= \lim_{t \to 0} \lim_{\eps \to 0} \int_{\t^\ast} U_F^{\Phi^k}(\xi) \F_\t^{-1}(e^{-i\eklm{t\varsigma, \cdot }} \hat \phi_\eps)(\xi) \d \xi \\ 
  &= \lim_{t \to 0} \lim_{\eps \to 0} \eklm{ \F_\t^{-1}\big ( U_F^{\Phi^k} e^{-\eklm{\cdot, Z}}\big ) ,  \big (e^{-i\eklm{t\varsigma, \cdot }} \hat \phi_\eps\big )  (\cdot + iZ) }\\ 
    &=
  \lim_{t \to 0} \lim_{\eps \to 0}  \int_{\t} (u_F\Phi^k)(Y+iZ) e^{-i \eklm{t\varsigma,Y+iZ}} \hat \phi_\eps (Y+iZ) d Y,
 \end{align*}
 in concordance with the definition of the residues in \cite[Section 8]{jeffrey-kirwan95}. In particular, this implies
\begin{align}
\label{eq:sum1}
\begin{split}
  \sum_{F\in \F}  \Res^{\Lambda, \varsigma}(u_F \Phi^k)&= \lim_{t \to 0} \lim_{\eps \to 0} \int_{\t} \left [\int_{{\bf X}} e^{i(J - t\varsigma)(Y)} e^{-i\omega} \rho(Y) \right ] \Phi^k(Y) \hat \phi(\eps Y) \d Y. 
\end{split}
\end{align}
Similarly,
\begin{align*}
\label{eq:sum2}
\begin{split}
  \sum_{F\in \F} U_F^{\Phi^k}(\varsigma)&= \lim_{\eps \to 0} \int_{\t} \left [\int_{{\bf X}} e^{i(J - \varsigma)(Y)} e^{-i\omega} \rho(Y) \right ] \Phi^k(Y) \hat \phi(\eps Y) \d Y. 
\end{split}
\end{align*}
For a deeper understanding of  the residues and the limits in Proposition \ref{prop:A},   we are  therefore led to a  systematic study  of the asymptotic behavior of integrals of the form 
\bq
\label{int}
I_\varsigma(\mu)=   \int_{\g} \left [ \int _{{\bf X}}  e^{i \psi_\varsigma(\eta,X)/\mu }   a(\eta,X)   \, d\eta \right ] \d X, \qquad \mu \to 0^+,  
\eq 
where $\g$ is the Lie algebra of an arbitrary connected, compact Lie group $G$, $a \in \CT({\bf X} \times \g)$ is an amplitude, $\d \eta=\omega^n/n!$ the Liouville measure on ${\bf X}$,  and $dX$ an Euclidean measure on $\g$ given by an $\Ad(G)$-invariant inner product on $\g$, while
\bq
\label{eq:phase}
\psi_\varsigma(\eta,X) = \J(\eta)(X)-\varsigma(X), \qquad \varsigma \in \g^\ast.
\eq
This will occupy us in the next sections. 

\section{The stationary phase theorem and resolution of singularities}
\label{sec:6}

In what follows, we shall describe the  asymptotic behavior of the integrals $I_\varsigma(\mu)$ defined in \eqref{int} by means of the stationary phase principle. As we shall see,  the critical set of the corresponding phase function is in general not smooth. We shall therefore first partially resolve its singularities, and then apply the stationary phase principle in a suitable resolution space.
We begin by recalling 
\begin{theorem_anh}[Stationary phase theorem for vector bundles]
\label{thm:SP}
 Let $M$ be an $n$-dimensional, oriented manifold, and  $\pi:E \rightarrow M$ an oriented vector bundle of rang $l$.  Let further $\alpha \in \Lambda_{cv}^{q}(E)$ be a differential form on $E$ with compact support along the fibers, $\tau \in \Lambda^{n+l-q}_c(M)$ a differential form on $M$ of compact support,  $\psi \in \Cinft(E)$, and consider the integral
\bq
\label{eq:SPT}
I(\mu)=\int_E e^{i\psi/\mu} (\pi^\ast \tau) \wedge \alpha, \qquad \mu >0.
\eq
Let $\iota:M \hookrightarrow E$ denote the zero section. Assume that the critical set of $\psi$ coincides with $\iota(M)$, and that the transversal Hessian of $\psi$ is non-degenerate along $\iota(M)$. Then,  for each $N \in \N$,  $I(\mu)$ possesses an asymptotic expansion of the form
\bqn
I(\mu) = e^{i\psi_0/\mu}e^{i\frac{\pi}4\sigma_{\psi}}(2\pi \mu)^{\frac {l}{2}}\sum_{j=0} ^{N-1} \mu^j Q_j (\psi;\alpha,\tau)+R_N(\mu),
\eqn
where $\psi_0$ and $\sigma_{\psi}$ denote the value of $\psi$ and the signature of the transversal Hessian along $\iota(M)$, respectively. The coefficients $Q_j$ are given by measures supported on $M$,  and can be computed explicitly, as well as the remainder term $R_N(\mu)=O(\mu^{l/2+N})$. 
\end{theorem_anh}
\begin{proof}
See Appendix A. 
\end{proof}

If the critical set of the phase function is not smooth, the stationary phase principle can not be applied a  priori, and one faces serious difficulties in describing the asymptotic behavior of oscillatory integrals. We shall  therefore  first partially resolve the singularities of the critical set, and then apply the stationary phase principle in a suitable resolution space.  To explain our approach, let $\M$ be a smooth variety, $\mathcal{O}_\M$ the structure sheaf of rings of $\M $, and $I \subset \mathcal{O}_\M $ an ideal sheaf. The aim in the theory of resolution of singularities is  to construct a birational morphism $\Pi: \widetilde \M  \rightarrow \M $ such that $\widetilde \M $ is smooth, and the inverse image ideal sheaf $\Pi^\ast I$ is locally principal. This is called the \emph{principalization} of $I$, and implies resolution of singularities. That is, for every quasi-projective variety $\mathcal X$, there is a smooth variety $\widetilde {\mathcal{X}}$, and a birational and projective morphism $\pi:\widetilde {\mathcal{X}} \rightarrow \mathcal{X}$. Vice versa, resolution of singularities implies principalization. If $\Pi^\ast (I)$ is monomial, that is, if for every $\tilde x \in \widetilde \M $ there are local coordinates $\sigma_i$ and natural numbers $c_i$ such that 
 \bqn 
 \Pi^\ast (I) \cdot  \mathcal{O}_{\tilde x,\widetilde \M } = \prod_i \sigma_i^{c_i} \cdot \mathcal{O}_{\tilde x,\widetilde \M },
 \eqn
 one obtains strong resolution of singularities, which means that, in addition to the properties stated above, $\pi$ is an isomorphism over the smooth locus of $\mathcal{X}$, and $\pi^{-1} (\mathrm{Sing}\,  \mathcal{X})$ a divisor with simple normal crossings. 
Consider next the derivative  $D(I)$ of $I$, which is the sheaf ideal that is generated by all derivatives of elements of $I$. Let further $Z \subset \M $ be a smooth subvariety, and $\pi: B_Z \M  \rightarrow \M $ the corresponding monoidal transformation with center $Z$ and exceptional divisor $F \subset B_Z\M $. Assume that $(I,m)$ is a  marked ideal sheaf with $m \leq \mathrm{ord}_Z I$. The \emph{total transform} $\pi^\ast I$ vanishes along $F$ with multiplicity $\mathrm{ord}_Z I$, and by removing the ideal sheaf $\mathcal{O}_{B_Z\M }(-\mathrm{ord}_Z I \cdot F)$ from $\pi^\ast I$  we obtain the \emph{birational, or weak transform} $\pi_\ast^{-1} I$ of $I$. Take local coordinates $(x_1,\dots, x_n)$ on $\M $ such that $Z=(x_1= \dots =x_r=0)$. As a consequence,
\bqn
y_1=\frac {x_1}{x_r}, \dots, y_{r-1}=\frac{x_{r-1}}{x_r}, y_r=x_r, \dots,  y_n=x_n
\eqn
define local coordinates on $B_Z \M $, and for $(f,m) \in (I,m)$ one has
\bqn
\pi_\ast^{-1} (f(x_1,\dots,x_n),m)= (y_r^{-m} f (y_1y_r, \dots y_{r-1}y_r, y_r, \dots, y_n),m).
\eqn
%By computing the first derivatives of $\pi_\ast^{-1} (f(x_1,\dots,x_n),m)$, one then sees that for any composition  $\Pi:\tilde \M  \rightarrow \M $ of blowing-ups of order greater or equal than $m$, 
%\bqn
%\Pi^{-1}_\ast ( D(I,m)) \subset D(\Pi^{-1}_\ast(I,m)),
%\eqn
%see Koll\'{a}r \cite{kollar}. 
By the work of Hironaka \cite{hironaka}, resolutions are known to exist, and we refer the reader to  \cite{kollar} for a detailed exposition.

Consider now an oscillatory integral of the form \eqref{eq:SPT}  in case that the critical set $\Ccal=\iota(M)\subset E=\M$ of the phase function $\psi$ is not clean. Let $I_{\Ccal}$  be the ideal sheaf of $\Ccal$, and $I_\psi=(\psi)$ the ideal sheaf generated by the phase function $\psi$. Then $D(I_\psi)=D_{\Ccal}$. 
The essential idea behind our approach  to singular  asymptotics is  to construct a  partial monomialization 
\bqn 
\Pi^\ast( I_\psi)  \cdot  \mathcal{O}_{\tilde x,\widetilde \M }= {\sigma}^{c_1}_{1}\cdots \sigma^{c_k}_{k}  \, \Pi^{-1}_\ast (I_\psi) \cdot  \mathcal{O}_{\tilde x,\widetilde \M }, \qquad \tilde x \in \widetilde \M , 
\eqn
 of the ideal sheaf $I_\psi=(\psi)$ via a suitable resolution  $\Pi:\widetilde \M  \rightarrow \M $  in such a way that $D(\Pi^{-1}_\ast (I_\psi))$ is a resolved ideal sheaf. As a consequence, the phase function factorizes locally according to $\psi \circ \Pi \equiv 
 {\sigma}^{c_1}_{1}\cdots \sigma^{c_k}_{k} \cdot \tilde \psi^{wk}$, and we show that the corresponding  weak transforms $\tilde \psi^ {wk}= \Pi_\ast^{-1}(\psi)$ have clean critical sets in the sense of Bott \cite{bott56}. Here   $\sigma_1, \dots, \sigma_k$ are local variables near each $\tilde x \in \widetilde \M $ and  $c_i$ are natural numbers. This  enables  one  to apply the stationary phase theorem in the resolution space $\widetilde \M $ to the weak transforms $ \tilde \psi^ {wk}$  with the variables  $\sigma_1, \dots,\sigma_k$ as parameters.  Note that by Hironaka's theorem,  $I_\psi$  can always be monomialized.  But in general, this monomialization would not be explicit enough  to allow an application of the stationary phase theorem. 

\section{Equivariant asymptotics and the momentum map}
\label{sec:4}

We commence now with our study of the asymptotic behavior of the integrals \eqref{int}  by means of the generalized stationary phase theorem. To determine the critical set of the phase function $\psi_\varsigma(\eta,X)$, let $\mklm{X_1, \dots, X_d}$ be a basis of $\g$, and write $X=\sum_{i=1}^d s_i X_i$. Due to the linear dependence of $J_X$ in $X$, 
\bqn 
\gd_{s_i} \psi_\varsigma (\eta,X) =J_{X_i}(\eta)-\varsigma(X_i),
\eqn
and because of  the non-degeneracy of $\omega$, 
\bqn 
 J_{X, \ast}=0 \quad \Longleftrightarrow \quad dJ_X=-\iota_{\widetilde X} \omega=0 \quad \Longleftrightarrow \quad \widetilde X =0.
\eqn
Hence,  
 \begin{align}
 \label{eq:4}
 \Crit(\psi_\varsigma)&=\mklm{ (\eta,X) \in {{\bf X}} \times \g: \psi_{\varsigma,\ast}  (\eta,X) =0}=\mklm{(\eta,X)\in \Omega_\varsigma \times \g: \widetilde X_\eta=0 },
\end{align}
where $  \Omega_\varsigma=\J^{-1}(\varsigma)$ is the $\varsigma$-level of the momentum map. 
Now, the major difficulty in applying the generalized stationary phase theorem in our setting  stems from the fact that, due to the singular orbit structure of the underlying group action,   $\Omega_\varsigma$ and, consequently, the considered critical set $\Crit(\psi_\varsigma)$, are in general singular varieties. In fact,  if the $G$-action on ${\bf X}$ is not free, $\Omega_\varsigma$ and the symplectic quotients $\Omega_\varsigma/G_\varsigma$ are no longer smooth for general $\varsigma \in \g^\ast$, where $G_\varsigma$ denotes the stabilizer of $\varsigma$ under the co-adjoint action.  Nevertheless,  both $\Omega_\varsigma$ and $\Omega_\varsigma/G_\varsigma$  have Whitney stratifications into smooth submanifolds, see Lerman-Sjamaar \cite{lerman-sjamaar}, and 
Ortega-Ratiu \cite[Theorems 8.3.1 and 8.3.2]{ortega-ratiu},  which correspond to the stratification of ${\bf X}$ into orbit types, see Duistermaat-Kolk \cite{duistermaat-kolk}. In particular, one has the following

\begin{lemma}
\label{lemma:0}
 $\Omega_\varsigma$ has a principal stratum 
 $\mathrm{Reg} \,\Omega_\varsigma$, which is an open and dense subset of $\Omega_\varsigma$,  and a smooth submanifold in ${\bf X} $ of codimension equal to the dimension $\kappa$ of a principal $G$-orbit in ${\bf X}$. Furthermore, 
\bq
\label{eq:7}
T_\eta(\Reg \, \Omega_\varsigma)=[T_\eta(G\cdot \eta)]^\omega=(\g \cdot \eta)^\omega, \qquad \eta \in \Reg \, \Omega_\varsigma,
\eq
where we denoted the symplectic complement of a subspace $V\subset T_\eta{\bf X}$ by $V^\omega$, and   wrote $\g \cdot \eta= \{\widetilde X_\eta: X \in \g\}$. 
\end{lemma}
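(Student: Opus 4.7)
The strategy is to identify $\Reg\Omega_\varsigma$ with the intersection of $\Omega_\varsigma$ with the principal orbit type stratum of $\bf X$, use the constant rank theorem to establish the submanifold structure and codimension, and finally compute the tangent space via the standard formula $\ker d\J_\eta = (\g\cdot \eta)^\omega$.

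First, I would invoke the principal orbit theorem for compact group actions on $\bf X$ (see Duistermaat-Kolk): there is a unique conjugacy class $(H)$ of principal isotropy subgroups, and the stratum ${\bf X}_{(H)}$ of points $\eta$ with $G_\eta$ conjugate to $H$ is open and dense in $\bf X$, with all $G$-orbits through points of ${\bf X}_{(H)}$ of the same dimension $\kappa = d - \dim H$. Define $\Reg\Omega_\varsigma := \Omega_\varsigma \cap {\bf X}_{(H)}$, which is automatically open in $\Omega_\varsigma$. For any $\eta \in {\bf X}$ and $\xi \in T_\eta{\bf X}$, the defining relation $dJ_X = -\iota_{\widetilde X}\omega$ yields
\bqn
(d\J_\eta\xi)(X) = dJ_X(\xi) = -\omega_\eta(\widetilde X_\eta, \xi), \qquad X \in \g,
\eqn
so $\ker d\J_\eta = (\g\cdot\eta)^\omega$. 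Since $\omega$ is non-degenerate, $\dim(\g\cdot\eta)^\omega = 2n - \dim(\g\cdot\eta)$; for $\eta \in {\bf X}_{(H)}$ this equals $2n - \kappa$, hence $\mathrm{rank}\, d\J_\eta = \kappa$ is constant on the open set ${\bf X}_{(H)}$.

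Applying the constant rank theorem to $\J|_{{\bf X}_{(H)}}$, the level set $\Reg\Omega_\varsigma = (\J|_{{\bf X}_{(H)}})^{-1}(\varsigma)$ is a smooth submanifold of ${\bf X}_{(H)}$, and therefore of $\bf X$, of codimension $\kappa$. The tangent space formula \eqref{eq:7} then follows at once: the inclusion $T_\eta\Reg\Omega_\varsigma \subseteq \ker d\J_\eta = (\g\cdot\eta)^\omega$ is an inclusion of vector spaces of equal dimension $2n-\kappa$, hence an equality.

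It remains to verify density of $\Reg\Omega_\varsigma$ in $\Omega_\varsigma$, which is the only delicate point and the step I would expect to be the main obstacle, since openness and density of ${\bf X}_{(H)}$ in $\bf X$ do not a priori imply density of the intersection with the level set $\Omega_\varsigma$ when $\varsigma$ is a singular value. For this I would cite the symplectic stratification theorems of Sjamaar-Lerman \cite{lerman-sjamaar} and Ortega-Ratiu \cite[Theorems 8.3.1 and 8.3.2]{ortega-ratiu}: these results show that $\Omega_\varsigma$ is Whitney-stratified by its intersections with the orbit-type pieces ${\bf X}_{(K)}$, and that among these strata the one corresponding to the principal isotropy $(H)$ is open and dense in $\Omega_\varsigma$. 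This yields the asserted density, completing the proof.
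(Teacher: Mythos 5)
Your proposal is correct and follows essentially the same route as the paper: both define $\Reg \Omega_\varsigma=\Omega_\varsigma\cap \Reg{\bf X}$, invoke the principal orbit theorem for openness and density of $\Reg{\bf X}$, and delegate the remaining structural facts about $\Omega_\varsigma$ (in particular density of the principal stratum) to the stratification results of Sjamaar--Lerman and Ortega--Ratiu. The only difference is that where the paper simply cites Ortega--Ratiu (Corollary 4.6.2 and (5.5.7)) for smoothness, the codimension $\kappa$, and the identity $T_\eta(\Reg\Omega_\varsigma)=(\g\cdot\eta)^\omega$, you obtain these directly from the constant-rank theorem applied to $\J$ on the open set $\Reg{\bf X}$ via $\ker d\J_\eta=(\g\cdot\eta)^\omega$, which is a sound, self-contained substitute for that citation.
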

\begin{proof}
Let $\Reg {\bf X}$ denote the union of all orbits of principal type in $\bf X$, so that $\Reg \Omega_\varsigma=\Omega_\varsigma \cap \Reg {\bf X}$. By the principal orbit theorem, $\Reg {\bf X}$ is open and dense, and the  assertion follows with \cite[Corollary 4.6.2 and (5.5.7)]{ortega-ratiu}. 
\end{proof}

Let us consider  first the case when $\varsigma\in \g^\ast$ is a regular value of the momentum map, which is equivalent to the fact  that $G$ acts \emph{locally freely} on $\Omega_\varsigma$, meaning that 
\bq
\label{eq:nonvan}
\widetilde X_\eta \not=0 \qquad \text{for all } \eta \in \Omega_\varsigma, \, 0\not= X \in \g.
\eq
Consequently, all stabilizers $G_\eta$ of points $\eta \in \Omega_\varsigma$ are finite, and therefore  either of principal or exceptional type. In this case, both $\Omega_\varsigma$ and $ \Crit(\psi_\varsigma)=\Omega_\varsigma \times \mklm{0}$ are smooth, and  $\dim \g \cdot \eta= \kappa$ for all $\eta \in \Omega_\varsigma$, where  $\kappa$ is the dimension of a principal $G$-orbit.  Furthermore, \eqref{eq:nonvan} implies that $\kappa=\dim \g$. We then  have the following

\begin{proposition}
\label{prop:regasymp}
Let $\bf X$ be a paracompact, symplectic manifold of dimension $2n$ with a Hamiltonian action of a compact Lie group $G$ of dimension $d$. 
Assume that $\varsigma \in \g^\ast$ is a regular value of the momentum map $\J:{\bf X} \rightarrow \g^\ast$, and let $I_\varsigma(\mu)$ be defined as in \eqref{int}.  Then, for each $N\in \N$, there exists a constant $C_{N,\psi_\varsigma,a}$ such that 
\bqn 
\Big |I_\varsigma(\mu) - (2\pi \mu)^\kappa \sum_{j=0}^{N-1} \mu^j Q_j(\psi_\varsigma,a) \Big | \leq C_{N,\psi_\varsigma, a}\,  \mu^N,
\eqn
where the coefficients $Q_j$ are given explicitly in terms of measures on $\Omega_\varsigma$. 
\end{proposition}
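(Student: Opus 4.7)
The strategy will be to apply the stationary phase theorem (Theorem \ref{thm:SP}) to $I_\varsigma(\mu)$, regarded as an oscillatory integral on the $(2n+d)$-dimensional manifold ${\bf X}\times\g$ with phase $\psi_\varsigma$ and compactly supported amplitude $a$. The first task is to identify the critical set of $\psi_\varsigma$: by \eqref{eq:4} together with the regularity of $\varsigma$ --- which, by \eqref{eq:nonvan}, is equivalent to the local freeness of the $G$-action on $\Omega_\varsigma$ --- one has $\widetilde X_\eta=0\Rightarrow X=0$, so that $\Crit(\psi_\varsigma)=\Omega_\varsigma\times\{0\}$ is a smooth submanifold of codimension $2d=2\kappa$ in ${\bf X}\times\g$.

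The second and main step is to verify that the transverse Hessian of $\psi_\varsigma$ along $\Crit(\psi_\varsigma)$ is non-degenerate, i.e.\ that the critical set is clean in the sense of Bott. Using the linearity of $\psi_\varsigma$ in $X$ together with $dJ_X=-\iota_{\widetilde X}\omega$, a direct computation at $(\eta_0,0)\in\Crit(\psi_\varsigma)$ gives
\bqn
\mathrm{Hess}\,\psi_\varsigma\big((v_1,X_1),(v_2,X_2)\big)=-\omega(\widetilde{X_1}_{\eta_0},v_2)-\omega(\widetilde{X_2}_{\eta_0},v_1).
\eqn
A kernel element $(v,X)$ of this bilinear form must, by pairing with $(v',0)$ and $(0,X')$ separately, satisfy $\iota_{\widetilde X_{\eta_0}}\omega=0$ --- which forces $\widetilde X_{\eta_0}=0$ and hence $X=0$ by local freeness --- as well as $\omega(\widetilde Y_{\eta_0},v)=0$ for all $Y\in\g$, i.e.\ $v\in(\g\cdot\eta_0)^\omega$. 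By Lemma \ref{lemma:0}, the latter equals $T_{\eta_0}\Omega_\varsigma$, so the kernel coincides with $T_{(\eta_0,0)}(\Omega_\varsigma\times\{0\})=T_{(\eta_0,0)}\Crit(\psi_\varsigma)$, establishing cleanliness.

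With this in hand, I would localize the integral: pick a tubular neighborhood $U$ of $\Crit(\psi_\varsigma)$ in ${\bf X}\times\g$, identified with the normal bundle $N(\Omega_\varsigma\times\{0\})$ of rank $2\kappa$, and a smooth partition of unity $1=\chi+(1-\chi)$ adapted to $U$. On $\supp(1-\chi)$ the gradient $\nabla\psi_\varsigma$ is bounded away from zero, so repeated integration by parts against the vector field $\|\nabla\psi_\varsigma\|^{-2}\nabla\psi_\varsigma$ shows that the complementary contribution to $I_\varsigma(\mu)$ is $O(\mu^N)$ for every $N$. The piece supported in $U$ falls precisely within the scope of Theorem \ref{thm:SP} with $M=\Omega_\varsigma$ (of dimension $2n-\kappa$) and fiber dimension $l=2\kappa$, and produces an expansion
\bqn
(2\pi\mu)^{\kappa}\sum_{j=0}^{N-1}\mu^j\,Q_j(\psi_\varsigma,a)+O(\mu^{\kappa+N}),
\eqn
whose coefficients are integrals over $\Omega_\varsigma$ of universal differential operators applied to $a$ and $\psi_\varsigma$. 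Since $O(\mu^{\kappa+N})\subset O(\mu^N)$ for $\mu\in(0,1]$, the claimed remainder estimate follows.

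The chief obstacle is the cleanliness verification, which rests decisively on the symplectic identity $T_\eta(\Reg\Omega_\varsigma)=(\g\cdot\eta)^\omega$ of Lemma \ref{lemma:0} together with the local freeness \eqref{eq:nonvan}; once these are invoked, the stationary phase theorem supplies the remaining estimates in essentially routine fashion. I expect that this regular case will also serve as the template for the much harder singular situation $\varsigma=0$ taken up subsequently, where $\Crit(\psi_\varsigma)$ is no longer smooth and the resolution of singularities outlined in Section \ref{sec:6} becomes indispensable.
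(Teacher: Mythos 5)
Your proposal is correct and follows essentially the same route as the paper: identify $\Crit(\psi_\varsigma)=\Omega_\varsigma\times\{0\}$ via local freeness, verify that the transversal Hessian is non-degenerate, localize near the critical set by the non-stationary phase principle, and apply Theorem \ref{thm:SP} to the normal bundle $N\Ccal_\varsigma\to\Omega_\varsigma$ with fiber rank $2\kappa$. The only difference is cosmetic: you establish cleanness abstractly by showing $\ker\mathrm{Hess}\,\psi_\varsigma=T_{(\eta,0)}\Ccal_\varsigma$ (the same linear-algebra observation the paper uses later in Lemma \ref{lemma:Reg}), whereas the paper computes the transversal Hessian explicitly in a chosen frame, obtaining the matrix $\mathcal{A}_{\text{trans}}$ involving $\Xi$, which it then reuses to make the coefficients $Q_j$ explicit in Proposition \ref{prop:4}.
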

\begin{proof}
As already noted, $\Ccal_\varsigma=\Crit (\psi_\varsigma)=\Omega_\varsigma \times \mklm{0}$ is a smooth manifold of dimension $2\kappa$, and due to \eqref{eq:7} we have
\bqn 
T_{(\eta,0)} \Ccal_\varsigma \simeq T_\eta \Omega_\varsigma = (\g \cdot \eta)^\omega, \qquad N_{(\eta,0)} \Ccal_\varsigma= \mathcal{J}( \g \cdot \eta )\times \R^d,
\eqn 
 where $\mathcal{J}:T{\bf X} \rightarrow T{\bf X}$ denotes the bundle homomorphism introduced in Section \ref{sec:2}.
 By definition, the Hessian of $\psi_\varsigma$ at $(\eta,0)\in  \Ccal_\varsigma$ is given by the symmetric bilinear form 
\bqn 
\mathrm{Hess} \, \psi_\varsigma: T_{(\eta,0)}({\bf X}\times \g) \times T_{(\eta,0)}({\bf X}\times \g) \rightarrow \C, \qquad (v_1,v_2) \mapsto \tilde v_1(\tilde v_2(\psi_\varsigma))(\eta,0).
\eqn
Let $\{\widetilde \X_1, \dots, \widetilde \X_{2n}\}$ be a local orthonormal frame  in $T{\bf X}$ and $\mklm{e_1,\dots, e_d}$ the standard basis in $\R^d$ corresponding to an orthonormal basis $\mklm{A_1,\dots, A_d}$ of $\g$.  In the  basis
\bqn
((\widetilde \X_i)_\eta; 0), \qquad (0; e_j), \qquad i=1,\dots, 2n, \quad j=1,\dots,d,
\eqn
 of $T_{(\eta,X)}({\bf X} \times \g) = T_\eta{\bf X} \times \R^d$,  $\mathrm{Hess}\,  \psi_\varsigma$ is then given by the matrix
 \bqn 
 \mathcal{A}=-\left ( \begin{matrix}  0  &  \omega_\eta(\widetilde A_j, \widetilde \X_i) \\   \omega_\eta(\widetilde A_i,\widetilde \X_j) & 0 \end{matrix} \right )=-\left ( \begin{matrix}  0& g_\eta(\mathcal{J} \widetilde A_j, \widetilde \X_i) \\   g_\eta(\mathcal{J} \widetilde A_i,\widetilde \X_j) & 0 \end{matrix} \right ).
 \eqn
 Indeed, for arbitrary $X \in \g$ one has  $\widetilde \X_i(J_X)=dJ_X(\widetilde \X_i)=-\iota_{\widetilde X} \omega(\widetilde \X_i)$, and with \eqref{eq:40cis} we obtain $(\widetilde \X_i)_\eta(\omega(\widetilde 0,\widetilde \X_j))=0$.  
 In order to compute the transversal Hessian of $\psi_\varsigma$, we have to exhibit a basis for $N_{(\eta,0)} \Ccal_\varsigma$. Let therefore $\mklm{B_1, \dots, B_\kappa}$ be another basis of $\g=\g_\eta^\perp$ such that $\{(\widetilde B_1)_\eta, \dots, (\widetilde B_\kappa)_\eta\}$ is an orthonormal basis of $\g \cdot \eta$, where we remind the reader that $\kappa=d$.  It is then easy to see that 
\bqn
\mathcal{B}_k=(\mathcal{J} (\widetilde B_k)_\eta;0), \qquad \mathcal{B}'_k=( 0 ; g_\eta(\widetilde A_1,\widetilde B_k), \dots, g_\eta(\widetilde A_\kappa,\widetilde B_k)), \qquad k=1,\dots, \kappa,
\eqn
constitutes a basis of $N_{(\eta,0)} \Ccal_\varsigma$  with $\eklm{\mathcal{B}_k,\mathcal{B}_l}=\delta_{kl}$, $\mathcal{B}_k\perp \mathcal{B}_l'$,  and $\eklm{\mathcal{B}_k',\mathcal{B}_l'}=(\Xi)_{kl}$, where $\Xi$ is given by  the linear transformation
\bq
\label{eq:Xi}
\Xi: \g \cdot \eta \longrightarrow \g \cdot \eta: \X \mapsto \sum_{j=1}^\kappa g_\eta(\X, \widetilde A_j) (\widetilde A_j)_\eta.
\eq 
With these definitions one computes
\begin{align*}
\mathcal{A}(\mathcal{B}_k)=&\Big (0; -\sum_{j=1}^{2n} g_\eta(\mathcal{J} \widetilde A_1, \widetilde \X_j) g_\eta( \mathcal{J} \widetilde B_k, \widetilde \X_j), \dots\Big )\\=&(0; -g_\eta(\mathcal{J} \widetilde A_1,  \mathcal{J} \widetilde B_k), \dots,-g_\eta(\mathcal{J} \widetilde A_\kappa,  \mathcal{J} \widetilde B_k))=-\mathcal{B}_k',\\
\mathcal{A}(\mathcal{B}_k')=&\Big (- \Big (\sum\limits_{j=1}^{\kappa} g_\eta(\mathcal{J} \widetilde A_j, \widetilde \X_1) g_\eta(  \widetilde A_j, \widetilde B_k), \dots \Big ); 0 \Big )=((g_\eta(\Xi (\widetilde B_k)_\eta, \mathcal{J} \widetilde \X_1), \dots);0).
\end{align*}
 Since the $\{\mathcal{J} (\widetilde B_1)_\eta,\dots, \mathcal{J} (\widetilde B_\kappa)_\eta\}$  form an orthonormal basis of $\mathcal{J}( \g \cdot \eta)$, we obtain
 \bqn 
 \mathcal{A}(\mathcal{B}_k')=-(\mathcal{J} \Xi(\widetilde B_k)_\eta;0)=-\sum_{j=1}^\kappa g_\eta(\mathcal{J} \Xi(\widetilde B_k)_\eta, \mathcal{J} (\widetilde B_j)_\eta) \, \mathcal{B}_j.
 \eqn
 Thus, the transversal Hessian $\mathrm{Hess}  \, \psi_\varsigma(\eta,0)_{|N_{(\eta,0)} \Ccal_\varsigma}$ is given by the non-degenerate matrix
\bq
\label{eq:20}
\mathcal{A}_{\text{trans}}=
 \left ( \begin{matrix}  0 &-\1_\kappa \\ -\Xi_{|\g\cdot \eta}  & 0 \end{matrix} \right ).
\eq
By the non-stationary principle, we can choose  the support of the amplitude $a$ in the integral $I_\varsigma(\mu)$  close to $\Ccal_\varsigma$. Identifying a tubular neighborhood of $\Ccal_\varsigma$ with a neighborhood of the zero section in $N\Ccal_\varsigma$, the assertion now follows with Theorem \ref{thm:SP} by integrating along the fibers of $\nu: N\Ccal_\varsigma \to \Ccal_\varsigma$.
The exact form of the coefficients can  be read off from \eqref{eq:67}, in which $\psi''$ corresponds to  $\A_{\text{trans}}$. Note that  the submersion $P_\varsigma: \Ccal_\varsigma \rightarrow \Omega_\varsigma, (\eta,0) \mapsto \eta$ is simply the identity, so that 
measures on $\Ccal_\varsigma$ are identical with measures on $\Omega_\varsigma$. 
\end{proof}

Let us resume the considerations in Section \ref{sec:2}, the notation being the one introduced previously, and consider  the following, more specific oscillatory integrals. 

\begin{lemma}
\label{prop:exact}
Let $\rho=D\beta$ be an equivariantly exact form on ${\bf X}$ of compact support, $\varsigma \in \g^\ast$, and $\eps>0$. Then 
\bqn 
\int_\g  \left [  \int_{{\bf X}} e^{i(J-\varsigma)(X)}e^{-i\omega} \rho(X)\right ] \hat \phi_\eps(X) dX=0.
\eqn
\end{lemma}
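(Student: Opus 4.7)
The plan is to show that the inner integral $\int_{\bf X}e^{i(J-\varsigma)(X)}e^{-i\omega}\rho(X)$ already vanishes for every fixed $X\in\g$, whence the outer integration against $\hat\phi_\eps(X)\,dX$ trivially yields zero. The essential point is that the equivariant form $\alpha:X \mapsto e^{i(J_X-\varsigma(X))}e^{-i\omega}$ is equivariantly closed, so the integrand $\alpha(X)\wedge\rho(X) = \alpha(X)\wedge D\beta(X)$ becomes equivariantly exact, and its top-degree part reduces to an ordinary exact form of compact support, to which Stokes' theorem applies.

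First I would verify the equivariant closedness of $J-\omega$. For every $X\in\g$,
\[
D(J-\omega)(X)=d(J_X-\omega)-\iota_{\widetilde X}(J_X-\omega)= dJ_X+\iota_{\widetilde X}\omega=0,
\]
by the defining momentum map relation, $d\omega=0$, and $\iota_{\widetilde X}J_X=0$ (as $J_X$ is a $0$-form). The linear form $\varsigma$ is trivially equivariantly closed. Since both $J-\omega$ and $\varsigma$ consist only of even-degree components, a short induction on the power-series expansion of the exponential---using that $\iota_{\widetilde X}$ is an antiderivation, so $\iota_{\widetilde X}(\omega^k)=k\,\omega^{k-1}\wedge\iota_{\widetilde X}\omega$---yields $D\alpha(X)=0$ for all $X\in\g$. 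Equivalently, one checks directly that $d(e^{iJ_X}e^{-i\omega})= ie^{iJ_X}e^{-i\omega}\wedge dJ_X$ and $\iota_{\widetilde X}(e^{iJ_X}e^{-i\omega})= ie^{iJ_X}e^{-i\omega}\wedge dJ_X$ coincide, and then multiplies by the $\eta$-constant factor $e^{-i\varsigma(X)}$.

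Because $\alpha(X)$ has only even-degree components, the Leibniz rule for $D$ collapses to the sign-free identity $\alpha(X)\wedge D\beta(X)= D(\alpha\wedge\beta)(X)$. Setting $\gamma:=\alpha\wedge\beta$ (still of compact support in $\eta$ since $\beta$ is), the inner integrand equals $(D\gamma)(X)$. Projecting to top degree on the $2n$-dimensional manifold $\bf X$,
\[
\bigl[(D\gamma)(X)\bigr]_{[2n]}= d\bigl(\gamma(X)_{[2n-1]}\bigr)-\bigl(\iota_{\widetilde X}\gamma(X)\bigr)_{[2n]}=d\bigl(\gamma(X)_{[2n-1]}\bigr),
\]
since $\gamma(X)_{[2n+1]}=0$. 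Stokes' theorem then yields $\int_{\bf X}(D\gamma)(X)=0$ for every $X\in\g$, and integrating against $\hat\phi_\eps(X)\,dX$ over $\g$ gives the claim. I expect no substantial obstacle; the only delicate point is the Leibniz sign for $D$ on mixed-degree forms, which here is trivial because $\alpha$ is of pure even degree throughout.
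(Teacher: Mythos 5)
Your argument is correct, and it is in fact more elementary than the paper's, although both rest on the same two algebraic facts: the equivariant closedness of $e^{i(J_X-\omega)}$ and the identity $[D\sigma(X)]_{[2n]}=d\big(\sigma(X)_{[2n-1]}\big)$ on the $2n$-dimensional manifold ${\bf X}$. You kill the inner integral for each fixed $X$ by writing $e^{i(J-\varsigma)(X)}e^{-i\omega}\rho(X)=D\big(e^{i(J-\varsigma)}e^{-i\omega}\beta\big)(X)$ and applying Stokes' theorem to the compactly supported $(2n-1)$-form $\big(e^{-i\varsigma(X)}e^{i(J_X-\omega)}\beta(X)\big)_{[2n-1]}$, after which the outer integration against $\hat\phi_\eps(X)\,dX$ is trivial. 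The paper, elaborating the Jeffrey--Kirwan argument, makes the same reduction to $\int_{\bf X}d(\cdot)$ but then interchanges the $\g$- and ${\bf X}$-integrations and evaluates the $\g$-integral as a Fourier inversion, $\int_\g e^{i(J-\varsigma)(X)}\theta_j(X)\hat\phi_\eps(X)\,dX=(2\pi)^d\,(\theta_j(-i\gd_\xi)\phi_\eps)\circ(\J-\varsigma)$, so that the resulting form is supported in $\Delta_\delta=\{\eta:|\J(\eta)-\varsigma|<\delta\}$; Stokes is then applied on a smooth domain $\Delta_\delta'\supset\Delta_\delta$, the boundary term vanishing because $\phi_\eps\circ(\J-\varsigma)$ and its derivatives vanish on $\gd\Delta_\delta'$. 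What that route buys is a localization near $\J^{-1}(\varsigma)$ coming from the support of $\phi_\eps$ rather than from the support of the primitive; this extra robustness would matter if $\beta$ were not compactly supported. Under the hypotheses as actually used — the paper's proof also takes $\beta=\sum\theta_j\beta_j$ with the $\beta_j$ of compact support, and in Theorem \ref{thm:res} the primitive has compact support — your fixed-$X$ Stokes argument is perfectly adequate; just keep explicit, as you do, that it is the compact support of $\beta$, not merely of $\rho$, that you use. One cosmetic point: $X\mapsto\varsigma(X)$ need not be $\Ad^\ast(G)$-equivariant, so rather than calling it equivariantly closed it is cleaner to note that $e^{-i\varsigma(X)}$ is an $\eta$-independent scalar commuting with $d-\iota_{\widetilde X}$ for each fixed $X$, which is all your pointwise-in-$X$ computation requires.
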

\begin{proof}
The proof is essentially an elaboration of an argument given  in \cite[Equation (8.20)]{jeffrey-kirwan95}. In what follows, write $\bar \omega(X)=\omega-J_X$ for the extension of the symplectic form to an equivariantly closed form, and assume that $\beta=\sum \theta_j \beta_j$, $\theta_j \in S^j(\g^\ast)$, where the $\beta_j$ are  differential forms of compact support.  Let further $\phi \in \CT(\g^\ast)$  and $\delta=\delta(\eps)>0$ be such that $\supp \phi_\eps \subset B(0,\delta)$.  Define $\Delta_\delta= \mklm{\eta \in {\bf X}:  |\J(\eta)-\varsigma| <\delta}$, and let $\Delta_\delta \subset\Delta_\delta'$ be a smooth domain with smooth boundary $\gd \Delta_\delta'$.    Since $D\sigma(X)_{[2n]}=d(\sigma(X)_{[2n-1]})$ for any equivariant differential form $\sigma$, one computes
\begin{gather*}
\int_\g  \left [  \int_{{\bf X}} e^{-i\bar \omega(X)}\rho(X)\right ] e^{-i\varsigma(X)} \hat \phi_\eps(X) dX=\int_\g  \left [ \int_{{\bf X}} D\Big (  e^{-i\bar \omega}\beta\Big )(X)\right ] e^{-i\varsigma(X)} \hat \phi_\eps(X)  \d X\\
=\int_\g  \left [ \int_{{\bf X}} d\Big (  (e^{-i\bar \omega}\beta )(X)\Big ) \right ]e^{-i\varsigma(X)} \hat \phi_\eps(X)  \d X= \int_{{\bf X}} d\left ( \int_{\g}  e^{-i\varsigma(X)}\hat \phi_\eps(X)  (e^{-i\bar \omega}\beta )(X) \d X \right )\\
=\sum_j \int_{{\bf X}} d \left ( \int_{\g} e^{i(J-\varsigma)(X)} \hat \phi_\eps(X) \theta_j(X) \d X e^{-i\omega} \beta_j \right )\\
=\sum_j \int_{{\bf X}} d \left ( \int_{\g} e^{i(J-\varsigma)(X)} \F_\g( \theta_j(-i\gd_\xi) \phi_\eps)(X) \d X e^{-i\omega} \beta_j \right )\\
=(2\pi)^d\sum_j \int_{\Delta_\delta'} d \Big (  [(\theta_j(-i\gd_\xi) \phi_\eps) \circ (\J-\varsigma)] e^{-i\omega} \beta_j \Big )
\\=(2\pi)^d\sum_j \int_{\gd \Delta_\delta'}   [(-i\theta_j(\gd_\xi) \phi_\eps) \circ (\J-\varsigma)] e^{-i\omega} \beta_j =0
\end{gather*}
since $\phi_\eps \circ (\J-\varsigma)$ vanishes on $\gd \Delta_\delta'$. Hereby we used the Theorem of Stokes for differential forms with compact support, see \cite[page 119]{sternberg}.  
\end{proof}

\begin{proposition}
\label{prop:4}
Let $\varsigma\in \g ^\ast$ be a regular value of $\J:{\bf X} \rightarrow \g ^\ast$,   $\alpha \in \Lambda_c({\bf X})$, and $\theta \in S^r(\g ^\ast)$. 
Then  
\bqn 
\lim_{\eps \to 0} \int_{\g } \left [\int_{{\bf X}} e^{i(J - \varsigma)(X)}  \alpha \right ] \theta(X) \hat \phi (\eps X) \d X= \frac{(2\pi)^{d} \vol \,G}{|H_G|}  \int_{\J^{-1}(\varsigma)} \frac{\iota_\varsigma^\ast(F)}{\vol \, \mathcal{O}_G}
\eqn
for some form $F\in \Lambda_c({\bf X})$ explicitly given in terms of $\J$, $\alpha$ and $\theta$, where $H_G$ denotes a principal isotropy group of the $G$-action, and $\mathcal{O}_G(\eta)=G\cdot \eta$ the $G$-orbit through a point $\eta\in {\bf X}$, while $\iota_\varsigma: \J^{-1}(\varsigma) \hookrightarrow {\bf X}$ is the inclusion. 
\end{proposition}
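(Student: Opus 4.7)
The plan is to reduce the proposition to Proposition~\ref{prop:regasymp} by a rescaling argument. Substituting $X=Y/\eps$ and using that $\theta\in S^r(\g^\ast)$ is $r$-homogeneous, the left-hand side becomes
$$\eps^{-d-r}\int_{\g}\int_{{\bf X}} e^{i(J-\varsigma)(Y)/\eps}\,\alpha\,\theta(Y)\hat\phi(Y)\,dY.$$
Only the top-degree component of $\alpha$ contributes to the inner integration over ${\bf X}$, so the effective amplitude is $f(\eta)\theta(Y)\hat\phi(Y)$ for a compactly supported smooth function $f$ on ${\bf X}$, times the Liouville measure. The factor $\hat\phi$ is Schwartz but not compactly supported in $Y$; I would truncate against a smooth cutoff $\chi(|Y|\leq R)$, the complementary piece contributing $O(\eps^\infty)$ by standard non-stationary arguments exploiting the rapid decay of $\hat\phi$ and its derivatives together with the non-stationarity of $\psi_\varsigma$ away from the critical set. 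After truncation the integral is precisely of the form $I_\varsigma^a(\eps)$ in~\eqref{int} with an amplitude compactly supported in both variables.

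Next I apply Proposition~\ref{prop:regasymp}, which gives $I_\varsigma^a(\eps)\sim(2\pi\eps)^d\sum_{j\geq 0}\eps^j Q_j(\psi_\varsigma;a)$, since $\kappa=d$ at a regular value. The coefficients $Q_j$ come from iterating the second-order operator $\tfrac12\langle\mathcal{A}_{\text{trans}}^{-1}\partial,\partial\rangle$ on the amplitude at the critical set $\Omega_\varsigma\times\{0\}$. The key structural observation is that $\mathcal{A}_{\text{trans}}^{-1}$ is block off-diagonal by~\eqref{eq:20}, hence each iteration pairs exactly one $Y$-derivative with one $\eta$-normal derivative. Because $\theta$ is $r$-homogeneous and $\hat\phi(0)=1$, every partial $Y$-derivative of $\theta\hat\phi$ of total order strictly less than $r$ vanishes at $Y=0$; consequently $Q_j=0$ for $j<r$. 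The first surviving coefficient is $Q_r$, in which all $r$ of the $Y$-derivatives must land on $\theta$, producing a constant element of $\mathrm{Sym}^r(\g)$, while the $r$ normal $\eta$-derivatives act on $f$ along the directions $\mathcal{J}(\widetilde B_k)_\eta$, which are precisely the Riemannian gradients of the components of $\J$. Combining the $\eps^{-d-r}$ prefactor with $(2\pi\eps)^d\eps^r$ leaves $(2\pi)^d Q_r$ in the limit.

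Finally, I would recast $(2\pi)^d Q_r$ in the stated form. The transversal Hessian~\eqref{eq:20} has determinant $\pm\det\Xi_{|\g\cdot\eta}$, and $\Xi$ is the Gram matrix at $\eta$ of the fundamental vector fields $\widetilde A_1,\ldots,\widetilde A_d$ associated with an orthonormal basis of $\g$. The $\Ad(G)$-invariance of the inner product together with the isometric $G$-action give $\det\Xi(g\cdot\eta)=\det\Xi(\eta)$, so $\sqrt{\det\Xi}$ is constant along orbits; since $G/H_G\to\mathcal{O}_G(\eta)$ is a diffeomorphism at regular $\varsigma$, one obtains
$$\vol\,\mathcal{O}_G(\eta)\;=\;\frac{\vol G}{|H_G|}\sqrt{\det\Xi(\eta)}.$$
This converts the $(\det\Xi)^{-1/2}$ density arising from stationary phase into $(\vol G/|H_G|)/\vol\,\mathcal{O}_G(\eta)$ and produces the global factor $(2\pi)^d\vol G/|H_G|$. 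The remaining derivative data in $Q_r$ (derivatives of $f$ along the gradients of the $J_{X_i}$, contracted against $\partial^r_Y\theta$) then assemble into the restriction $\iota_\varsigma^\ast F$ of a globally defined form $F\in\Lambda_c({\bf X})$ manifestly built from $\alpha$, $\J$, and $\theta$. I expect the main obstacle to lie in this last assembly step: the iterated mixed derivatives in $Q_r$ must be organized so as to reconstruct the pullback of a globally defined form on $\bf X$, rather than yielding only a local expression on $\Omega_\varsigma$; the $\Ad$-invariance of the inner product and the $G$-equivariance of $\J$ are the key tools for this global identification.
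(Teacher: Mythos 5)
Your proposal follows essentially the paper's own route: the rescaling $X=Y/\eps$, reduction to Proposition \ref{prop:regasymp}, the use of the block off-diagonal form \eqref{eq:20} of the transversal Hessian to pair each $\g$-derivative with an ${\bf X}$-normal derivative, the homogeneity of $\theta$ to kill the coefficients below order $r$, and the identity $|\det\Xi_{|\g\cdot\eta}|^{1/2}=\vol(G\cdot\eta)\,|G_\eta|/\vol G$ (the paper simply cites \cite[Lemma 3.6]{cassanas}) to produce the constants; your preliminary truncation of $\hat\phi$ is a harmless addition the paper does not bother with. The one place your argument is too quick is the claim that the coefficients $Q_j$ arise by iterating $\eklm{\mathcal{A}_{\text{trans}}^{-1}D,D}$ on the amplitude alone: by \eqref{eq:67} they also involve powers $H^q$ of the third-order remainder $H=\psi_\varsigma-\tfrac12\eklm{\mathcal{A}_{\text{trans}}\,\cdot,\cdot}$, and in the term with $p-q=j$ there are $p=j+q$ derivatives in the $\g$-directions, so a priori more than $j$ of them could land on $\theta\hat\phi$ only if the bookkeeping with the $H$-factors is controlled. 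The counting closes because $\psi_\varsigma$ is linear in $X$, hence $H(\eta,s,0)\equiv 0$ and every factor of $H$ must absorb at least (and, for derivatives of order $\geq 3$, exactly) one $s'$-derivative, leaving at most $j<r$ for $\theta\hat\phi$ and, at $j=r$, only terms proportional to $\hat\phi(0)=1$; this is precisely the step the paper's proof makes explicit, and you should include it to make the vanishing $Q_j=0$, $j<r$, rigorous. With that line added, your argument coincides with the paper's, including the (equally schematic) identification of the surviving data as $\iota_\varsigma^\ast(F)$ for a form $F$ built from $\alpha$, $\J$ and $\theta$.
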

\begin{proof}
Let $\psi_\varsigma(\eta,X)=(\J(\eta)-\varsigma)(X)$, so that the limit in question reads
\begin{align*}
\lim_{\eps \to 0} \frac 1{\eps^{d+r}}
\int_{\g} \left [\int_{{\bf X}} e^{i\psi_\varsigma/\eps}  \alpha \right ]  \theta \,  \hat \phi \d X.
\end{align*}
 Proposition \ref{prop:regasymp} yields for  the integral above  an asymptotic expansion with leading power $\eps^{d}$ and coefficients $Q_{r,j}$ given by measures on $\Ccal_\varsigma=\Crit (\psi_\varsigma)=\Omega_\varsigma\times \mklm{0}\equiv \Omega_\varsigma$.  In order to compute them, let $\mklm{\mathcal{B}_k, \mathcal{B}_l'}$ be the basis of $N_{(\eta,0)} \mathcal{C}_\varsigma$ introduced in the proof of Proposition \ref{prop:regasymp}, and let $\mklm{s_k,s_l'}$ be corresponding coordinates in  $N_{(\eta,0)} \mathcal{C}_\varsigma$. 
The transversal Hessian of $\psi_\varsigma$ is given by the matrix \eqref{eq:20}. By the non-stationary principle, we can choose  the support of $\alpha$ close to $\Omega_\varsigma$. Identify a tubular neighborhood of $\Omega_\varsigma$ with a neighborhood of the zero section in $N\Omega_\varsigma$. Integrating along the fibers of $\nu: N\Ccal_\varsigma\simeq N\Omega_\varsigma\times \g\to \Ccal_\varsigma$  then yields 
\begin{gather*}
\int_{\g} \left [ \int_{{\bf X}} e^{i\psi_\varsigma/\eps} \alpha \right ]  \theta  \,  \hat \phi \d X=  \int_{N\Ccal_\varsigma} e^{i\psi_\varsigma/\eps}    \theta \,   \hat \phi \,  \alpha \, dX 
= \int_{\Ccal_\varsigma} \nu_\ast \Big ( e^{i\psi_\varsigma/\eps}   \theta \,   \hat \phi \, \alpha  \, dX \Big ).
\end{gather*}
Assume now that with respect to the trivialization of $\nu$ given by the frame $\mklm{\mathcal{B}_k, \mathcal{B}_l'}$ we have
\bqn 
 \alpha \d X\equiv f \, \nu^\ast(\beta) \wedge ds \wedge ds', \qquad \beta \in \Lambda_c(\Omega_\varsigma),
\eqn
for some smooth function $f$. Applying \eqref{eq:67} we obtain for arbitrary large  $N\in \N$ an expansion of the form 
\begin{align} 
\begin{split}
\label{eq:21}
&\nu_\ast \Big ( e^{i\psi_\varsigma/\eps}   \theta \,  \hat \phi \,  \alpha  \, dX \Big ) \\=\frac \beta{\det (\A_{\text{trans}}(\eta,0)/2\pi i \eps)^{1/2}}
&\sum_{p-q<N}  \sum_{2p \geq 3 q} \frac{\eps^{p-q}}{p! \,  q!\,  i^j \,  2^p}\eklm{\mathcal{A}_{\text{trans}}^{-1} D, D } ^p (  \theta \,   \hat \phi  \, f \, H^q) ( \eta, 0)+ R_{N},
\end{split}
\end{align}
where $\eta \in \Omega_\varsigma$, $D=-i(\gd_{s_1}, \dots, \gd_{s_{\kappa}},\gd_{s_1'}, \dots, \gd_{s_{\kappa}'} )$, $(\theta \,  \hat \phi)(\eta,s,s')=(\theta \, \hat \phi)(X(s'))$, and  
\begin{align*}
H(\eta,s,s')&=\psi_{\varsigma}(\eta,s,s')- \eklm{\A_{\text{trans}} \Big ( \begin{array}{c} s\\ s' \end{array}\Big ),\Big ( \begin{array}{c} s\\ s' \end{array}\Big )}\Big / 2, \qquad 
\psi_{\varsigma}(\eta,s,s')= J_{X(s')} (\eta,s)-\varsigma(X(s')), 
\end{align*}
 is a smooth function vanishing at $(\eta,0)$ of  order $3$. The inner sum with $p-q=j$   therefore corresponds to a differential operator of order $2j$ acting on $\theta \,  \hat \phi \, f$, since in this case $2p - 3q=2j-q$, the maximal order being attained for $p=j$ and  $q=0$. Now,  since $\psi_\varsigma(\eta,X)$ depends linearly on $X$,  derivatives at $s'=0$ of $\psi_{\varsigma}(\eta, s,s')$, and consequently of $H(\eta, s,s')$, of order greater or equal $3$  vanish,  unless exactly one $s'$-derivative occurs. On the other hand, $\theta $ vanishes at $X(s')=0$ of order $r$. Furthermore, due to the particular form of $\mathcal{A}_{\text{trans}}$ in \eqref{eq:20},
\bqn 
\eklm{\mathcal{A}_{\text{trans}}^{-1} D, D } \equiv\sum c_{kl} \gd_{s_k} \gd_{s_l'}
\eqn
is a differential operator of first order in the $s'$-variables. Consequently, the inner sums in \eqref{eq:21} with  $p<r+q$ must vanish, and for $N=p-q=r$, only terms proportional to $\hat \phi(0)$ occur. 
Summing up we have shown that  
\bqn 
Q_{r,j}=0, \qquad \text{for all } \, j=0, \dots, r-1,
\eqn
 the leading term being  of order $\eps^{d+r}$, and we  obtain
\begin{gather*}
\lim_{\eps \to 0} \frac 1{\eps^{d+r}}
\int_{\g} \left [\int_{{\bf X}} e^{i(J - \varsigma)(X)/\eps} \alpha  \right ]  \theta (X)  \hat \phi(X) \d X\\
=  (2\pi)^{d} \hat \phi(0)\int_{\J^{-1}(\varsigma)}  \frac{ i^\ast_\varsigma(F)}{|\det  \Xi |^{1/2}} =  \frac{ (2\pi)^{d} \hat \phi(0)\, \vol G}{|H_G|} \int_{\J^{-1}(\varsigma)}   \frac{ i^\ast_\varsigma(F)}{\vol \mathcal{O}_G},
\end{gather*}
where $F\in \Lambda_c({\bf X})$ is explicitly given  in terms of $ \alpha$,  $\J$ and $\theta$. Here we took into account that  $|\det  \Xi_{|\g\cdot \eta} |^{1/2}=\vol (G\cdot \eta) \, |G_\eta|/ \vol G$ for $\eta \in \Omega_\varsigma$,   \cite[Lemma 3.6]{cassanas}.  Since $\hat \phi(0)=1$, the assertion follows.  
\end{proof}

Let $T\subset G$ be a maximal torus, and consider next the composition $\J_T: {\bf X} \rightarrow \t^\ast$ of the momentum map $\J$ with the restriction map from $\g^\ast$ to $\t^\ast$, which yields a momentum map for the $T$-action on $\bf X$. Then $\J_T^{-1}(\varsigma)/T_\varsigma\simeq \J_T^{-1}(\varsigma)/T$. Also, define
$$
 {\mathcal{K}}^T_\varsigma: H^\ast_T({\bf X}) \stackrel{\iota^\ast_{\varsigma,T}}{\longrightarrow} H^\ast_T(\J^{-1}_T(\varsigma)) \stackrel{(\pi^\ast_{\varsigma,T})^{-1}}{\longrightarrow} H^\ast (\J^{-1}_T(\varsigma)/T),
 $$ 
$\iota _{\varsigma,T}:\J^{-1}_T(\varsigma)\hookrightarrow {\bf X}$ being the inclusion, and $\pi_{\varsigma,T}:\J^{-1}_T(\varsigma) \rightarrow\J^{-1}_T(\varsigma)/T$ the canonical  projection.
In what follows, we shall also write $\Omega_\varsigma^T=\J_T^{-1}(\varsigma)$. We then have the following

\begin{proposition}
\label{prop:res}
Consider the segment $\mklm{t \varsigma: 0< t <1, \, \varsigma\in \t^\ast}$, and assume that it consists of regular values of $\J_T:{\bf X} \rightarrow \t^\ast$ and that all $U_F^{\Phi^2}$ are smooth on the segment. Then, if $\rho\in H^\ast_G({\bf X})$ is an equivariantly closed form of compact support, 
\bqn 
\sum_{F \in \F} \Res^{\varsigma,\Lambda}(u_F \Phi^2)= \frac{ (2\pi)^{d_T} \vol \,T}{|H_T|} \int_{\Reg \Omega_0^T/T}  {\mathcal{K}}^T_0 (F),
\eqn
where ${\mathcal{K}}_0^T= (\pi_{0,T} ^\ast)^{-1} \circ i_{0,T}^\ast$ is defined over $\Reg \Omega_0^T/T$, and $F$ is explicitly given in terms of $e^{-i\omega} \rho$, $\Phi$, and $\J$. 
%FU Berlin, 27.08.2013
In particular, the sum of the residues is independent of $\varsigma$ and $\Lambda$, and will be denoted by $$\Res \Big ( \Phi^2\sum_{F \in \F} u_F \Big ).$$
\end{proposition}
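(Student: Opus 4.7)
The plan is to reduce Proposition \ref{prop:res} to Proposition \ref{prop:4}, applied to the $T$-action on ${\bf X}$ rather than the $G$-action. By the very definition of the residues and the identity for $\sum_{F} U_F^{\Phi^k}(\varsigma)$ displayed just after Remark \ref{rem:1}, one has
\begin{align*}
\sum_{F\in \F}\Res^{\Lambda,\varsigma}(u_F\Phi^2) &= \lim_{t\to 0^+}\sum_{F\in \F} U_F^{\Phi^2}(t\varsigma) \\
&= \lim_{t\to 0^+}\lim_{\eps\to 0}\int_\t\left[\int_{\bf X} e^{i(J_T-t\varsigma)(Y)}e^{-i\omega}\rho(Y)\right]\Phi^2(Y)\hat\phi(\eps Y)\,dY,
\end{align*}
so it suffices to compute these two nested limits in this order.

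For the inner limit, fix $t\in(0,1)$; by hypothesis $t\varsigma$ is a regular value of $\J_T$. Expanding the polynomial dependence of $\rho$ on $Y\in\t$ as $\rho(Y)=\sum_\beta Y^\beta\rho_\beta$ with $\rho_\beta\in\Lambda_c({\bf X})$, I apply Proposition \ref{prop:4} to each summand, with $G$ replaced by the torus $T$, $\varsigma$ by $t\varsigma$, the amplitude by $e^{-i\omega}\rho_\beta$, and $\theta$ by $Y^\beta\Phi^2$. Summing over $\beta$ yields
\begin{equation*}
\sum_{F\in\F}U_F^{\Phi^2}(t\varsigma) = \frac{(2\pi)^{d_T}\vol T}{|H_T|}\int_{\J_T^{-1}(t\varsigma)}\frac{\iota_{t\varsigma,T}^\ast(F_t)}{\vol\mathcal{O}_T},
\end{equation*}
where $F_t\in\Lambda_c({\bf X})$ is the differential form produced by the stationary-phase coefficients, given explicitly in terms of $e^{-i\omega}\rho$, $\Phi^2$ and $\J$, and depending smoothly on $t$ through its dependence on $t\varsigma$.

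For the outer limit $t\to 0^+$, although $0$ need not be a regular value of $\J_T$, Lemma \ref{lemma:0} guarantees that the principal stratum $\Reg\Omega_0^T$ is an open, dense, smooth $T$-invariant submanifold of $\Omega_0^T$, whose complement has strictly smaller dimension. The smooth level sets $\J_T^{-1}(t\varsigma)$ accumulate on $\Omega_0^T$ as $t\to 0^+$, and the quotients $\J_T^{-1}(t\varsigma)/T$ concentrate on $\Reg\Omega_0^T/T$. Combined with the smooth dependence of $F_t$ on $t$ and the compact support of $\rho$, this allows one to pass to the limit in the quotient integral and conclude
\begin{equation*}
\lim_{t\to 0^+}\int_{\J_T^{-1}(t\varsigma)}\frac{\iota_{t\varsigma,T}^\ast(F_t)}{\vol\mathcal{O}_T} = \int_{\Reg\Omega_0^T/T}\mathcal{K}_0^T(F_0),
\end{equation*}
which is the desired formula. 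Independence of the residue sum from $\varsigma$ and $\Lambda$ then follows at once, since the right-hand side depends on neither.

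The main obstacle is precisely this outer limit: although the inner limit produces an integral over a smooth variety for each $t>0$, these varieties degenerate onto the possibly singular set $\Omega_0^T$ as $t\to 0$, so one needs a careful convergence argument—exploiting the smooth dependence of $F_t$ on $t$, the well-behaved principal stratum, and the hypothesis that all $U_F^{\Phi^2}$ be smooth along the whole segment $\{t\varsigma:0<t<1\}$—to exclude contributions from the singular strata and to guarantee convergence to the integral over $\Reg\Omega_0^T/T$.
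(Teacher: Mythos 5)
Your reduction of the statement to Proposition \ref{prop:4} for the torus action (expanding $\rho(Y)=\sum_\beta Y^\beta\rho_\beta$ and taking $\theta=Y^\beta\Phi^2$, $\varsigma\mapsto t\varsigma$) is exactly the first half of the paper's argument, which passes through \eqref{eq:sum1} to obtain $\sum_F \Res^{\varsigma,\Lambda}(u_F\Phi^2)=\frac{(2\pi)^{d_T}\vol T}{|H_T|}\lim_{t\to 0}\int_{\Omega^T_{t\varsigma}/T}{\mathcal{K}}^T_{t\varsigma}(F)$. Up to that point your proposal is sound.

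However, the outer limit $t\to 0$ is precisely where the proof has to do real work, and your proposal does not supply it: saying that the level sets $\J_T^{-1}(t\varsigma)$ ``accumulate'' on $\Omega_0^T$ and that the quotients ``concentrate'' on $\Reg\Omega_0^T/T$, together with smooth dependence of $F_t$ on $t$, is an assertion of the conclusion rather than an argument. The domains of integration vary with $t$, the limiting set $\Omega_0^T$ is in general singular, and nothing in your sketch rules out mass escaping into (or being contributed by) a neighborhood of the singular strata, nor does it provide a way to compare integrals over different level sets. The paper resolves this by constructing, for small $t>0$, a birational map $\Xi_{t\varsigma}:\Omega^T_{t\varsigma}/T\to\Omega^T_0/T$ that is a diffeomorphism over $\Reg\Omega^T_0/T$: one takes an embedded resolution $\Pi:\widetilde{\bf X}\to{\bf X}$ of $\Omega^T_0$ (Bierstone--Milman), notes that the strict transform $\widetilde\Omega^T_0$ is a $T$-invariant submanifold with an invariant tubular neighborhood $\widetilde W$ and projection $\tilde p$, observes that $\Pi^{-1}(\Omega^T_{t\varsigma})\subset\widetilde W$ for small $t$, and composes $\Pi^{-1}$, $\tilde p$ and $\Pi$ to land in $\Omega^T_0$; dividing by $T$ gives $\Xi_{t\varsigma}$. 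This allows all the integrals $\int_{\Omega^T_{t\varsigma}/T}{\mathcal{K}}^T_{t\varsigma}(F)$ to be rewritten as integrals over the single fixed space $\Reg\Omega^T_0/T$ of the pullbacks $(\Xi_{t\varsigma}^{-1})^\ast({\mathcal{K}}^T_{t\varsigma}(F))$, after which Lebesgue's dominated convergence theorem (using compact supports) gives the limit $\int_{\Reg\Omega^T_0/T}{\mathcal{K}}^T_0(F)$. Without this comparison map, or some equivalent device putting the varying level sets into a common parametrization, your passage to the limit is a genuine gap; the hypothesis that the $U_F^{\Phi^2}$ are smooth along the segment only guarantees that the limit defining the residues exists, not that it is computed by the stated integral over $\Reg\Omega_0^T/T$.
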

\begin{proof}
By \eqref{eq:sum1} and the previous proposition, 
\bq
\label{eq:2208}
\sum_{F \in \F} \Res^{\varsigma,\Lambda}(u_F \Phi^2)= \frac{ (2\pi)^{d_T} \vol T}{|H_T|}  \lim_{t \to 0}   \,  \int_{\Omega_{t\varsigma}^T/T} {\mathcal{K}}^T_{t \varsigma}(F),
\eq
where $d=\dim \g=\dim \t +|\Delta|=d_T+2 |\Delta_+|$.
We now assert that for sufficiently small $t>0$  there exists a birational map 
\bqn
\Xi_{t\varsigma}: \Omega^T_{t\varsigma}/T  \, \, \longrightarrow \, \,  \Omega^T_0/T
\eqn
which is a diffeomorphism over  $\Reg \Omega^T_0/T$. To see this,  consider an embedded resolution $\Pi: \widetilde {\bf X} \rightarrow {\bf X}$ of $\Omega^T_0$ \cite{bierstone-milman97}. By the functoriality of the resolution, the strict transform $\widetilde{\Omega}^T_0$  is a  $T$-invariant submanifold of the resolution space $\widetilde {\bf X}$, and there exists an invariant tubular neighborhood  $\widetilde W$ of $\widetilde{\Omega}^T_0$. Let $\tilde p: \widetilde W \rightarrow \widetilde \Omega^T_0$ be the canonical projection. For sufficiently small $t>0$, $\Pi^{-1}(\Omega^T_{t\varsigma})$ is contained in $  \widetilde W$. Since $\Omega^T_{t\varsigma}$ is diffeomorphic to $\Pi^{-1}(\Omega^T_{t\varsigma})$, which by Lemma \ref{lemma:0} is diffeomorphic to $\widetilde \Omega^T_0$,  we  obtain the birational map
\bqn 
\Omega^T_{t\varsigma} \stackrel{\Pi^{-1}}{\longrightarrow} \Pi^{-1}(\Omega^T_{t\varsigma})\, \stackrel{\tilde p}{\longrightarrow} \,\widetilde \Omega^T_0 \, \stackrel{\Pi}{\longrightarrow} \, \Omega^T_0.
\eqn
Dividing by $T$ then yields  the desired map $\Xi_{t\varsigma}$. As a consequence, we obtain with Lebesgue's theorem as $t \to 0$
\begin{align*}
\int_{\Omega^T_{t\varsigma}/T}{\mathcal{K}}^T_{t \varsigma}(F) &= \int_{\Xi_{\t\varsigma}^{-1}(\Reg \Omega^T_{0}/T)}{\mathcal{K}}^T_{t \varsigma}(F) = \int_{\Reg \Omega^T_{0}/T} (\Xi_{\t\varsigma}^{-1})^\ast ({\mathcal{K}}^T_{t \varsigma}(F))\, \to  \int_{\Reg \Omega^T_{0}/T} {\mathcal{K}}_0^T (F ),
\end{align*}
and the assertion follows with \eqref{eq:2208}.
\end{proof}

 \begin{corollary}
 \label{cor:A}
Let the notation be as in Section \ref{sec:2}, and  $\rho\in H^\ast_G({\bf X})$  an equivariantly closed differential form. Then
\begin{gather*}
\lim_{\eps \to 0} \eklm{\F_{\g} \Big ( L_{e^{-i\omega}\rho(\cdot)}(\cdot )\Big ) ,\phi_\eps}= \frac{\vol \, G}{|W| \vol \, T}\Res \Big (\Phi^2 \sum_{F \in \F} u_F \Big ).
\end{gather*}
\end{corollary}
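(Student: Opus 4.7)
The plan is to start from Proposition \ref{prop:A}, which reduces the corollary to proving
\begin{equation*}
\lim_{\eps \to 0} \sum_{F\in\F} \eklm{U_F^{\Phi^2}, \F_\t^{-1}(\hat\phi_\eps)} = \Res\Big(\Phi^2 \sum_{F\in\F} u_F\Big).
\end{equation*}
First I would rescale using the identity from Remark \ref{rem:1} with $\varsigma = 0$, namely $\F_\t^{-1}(\hat\phi_\eps)(\xi) = \eps^{-d_T}(\F_\t^{-1}\hat\phi)(\xi/\eps)$. Since each $U_F^{\Phi^2}$ is a piecewise polynomial \emph{measure} on $\t^\ast$, the pairing is a genuine Lebesgue integral, and the substitution $\xi = \eps\nu$ rewrites the left-hand side as
\begin{equation*}
\lim_{\eps\to 0^+}\int_{\t^\ast} \bigg(\sum_{F\in\F} U_F^{\Phi^2}(\eps\nu)\bigg)(\F_\t^{-1}\hat\phi)(\nu)\,d\nu.
\end{equation*}

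Next I would identify the pointwise limit of the integrand. For $\nu \in \t^\ast$ such that the segment $\mklm{t\nu : 0 < t < \delta}$ consists of regular values of $\J_T$ and along which every $U_F^{\Phi^2}$ is smooth, Proposition \ref{prop:res} applies and gives
\begin{equation*}
\lim_{\eps\to 0^+}\sum_{F\in\F} U_F^{\Phi^2}(\eps\nu) = \sum_{F\in\F}\Res^{\nu,\Lambda}(u_F\Phi^2) = \Res\Big(\Phi^2\sum_{F\in\F}u_F\Big),
\end{equation*}
independently of $\nu$. Such $\nu$ form a set of full Lebesgue measure: the critical values of $\J_T$ form a null set by Sard's theorem, and the non-smooth locus of each piecewise polynomial $U_F^{\Phi^2}$ is a finite union of affine hyperplanes, so a generic ray from the origin meets the union of these exceptional sets only at $0$.

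To pass the limit through the integral I would invoke dominated convergence. Piecewise polynomiality yields uniform polynomial bounds $|U_F^{\Phi^2}(\eps\nu)| \leq C_F(1+|\nu|)^{N_F}$ for $\eps \in (0,1]$, and $\F_\t^{-1}\hat\phi \in \S(\t^\ast)$ is of rapid decay, supplying an integrable majorant. Combined with the pointwise convergence above and the identity $\int_{\t^\ast}(\F_\t^{-1}\hat\phi)(\nu)\,d\nu = \hat\phi(0) = \int_{\g^\ast}\phi\,d\xi = 1$, dominated convergence produces the desired equality. The only real obstacle I anticipate is verifying that the hypotheses of Proposition \ref{prop:res} hold simultaneously on a set of full measure, i.e.\ that the regular-value condition for $\J_T$ is compatible with the smoothness condition for each $U_F^{\Phi^2}$; once one observes that the respective exceptional sets lie in countable unions of proper algebraic/analytic strata, the almost-everywhere convergence follows automatically.
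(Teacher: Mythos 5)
Your proposal follows the paper's own proof essentially verbatim: the paper likewise uses that $U_F^{\Phi^2}$ is a piecewise polynomial measure to write $\eklm{U_F^{\Phi^2},\F_\t^{-1}(\hat\phi_\eps)}=\int_{\t^\ast}U_F^{\Phi^2}(\eps\varsigma)\,(\F_\t^{-1}\hat\phi)(\varsigma)\d\varsigma$, bounds the integrand by $C(1+|\varsigma|)^N|(\F_\t^{-1}\hat\phi)(\varsigma)|$ for $0<\eps\leq 1$, and applies Lebesgue dominated convergence together with Proposition \ref{prop:res} and $\hat\phi(0)=1$, concluding via Proposition \ref{prop:A}. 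Your closing discussion of why the hypotheses of Proposition \ref{prop:res} hold for almost every direction is extra justification the paper omits entirely, and is consistent with its argument.
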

\begin{proof}
Since $U_F^{\Phi^2}$ is a piecewise polynomial measure, and $\F^{-1}_\t (\hat \phi_\eps) \in \S(\t^\ast)$,
\bqn 
\eklm{ U_F^{\Phi^2},  \F_\t^{-1}(\hat\phi_\eps)}=\int_{\t^\ast}  U_F^{\Phi^2}(\eps\varsigma)  (\F_\t^{-1}\hat\phi)(\varsigma) \d \varsigma.
\eqn
Furthermore, for  $0<\eps \leq 1$ and almost every $\varsigma \in \t^\ast$ we have the estimate $| U_F^{\Phi^2}(\eps\varsigma)  (\F_\t^{-1}\hat\phi)(\varsigma)|\leq C (1+|\varsigma|)^N |(\F_\t^{-1}\hat\phi)(\varsigma)|$ for some $C,N>0$. Taking into account Remark \ref{rem:1}  and the  previous proposition, 
an application of  Lebesgue's theorem on bounded convergence then yields 
\bqn 
  \lim_{\eps \to 0}  \sum _{F \in \mathcal{F}}  \eklm{ U_F^{\Phi^2},  \F_\t^{-1}(\hat\phi_\eps)}=  \lim_{\eps \to 0} \int_{\t^\ast} \sum _{F \in \mathcal{F}} U_F^{\Phi^2}(\eps\varsigma)  (\F_\t^{-1}\hat\phi)(\varsigma) \d \varsigma=\hat \phi(0) \Res \Big ( \Phi^2\sum_{F \in \mathcal{F}} u_F  \Big ),
\eqn
% Marburg, 12.9.2013
and the assertion follows with Proposition \ref{prop:A}.
 \end{proof}

Thus, in order to derive the residue formula mentioned in the introduction, we are left with the task of evaluating the limit $\lim_{\eps \to 0} \eklm{\F_{\g} L_{e^{-i\omega}\rho(\cdot)}(\cdot ) ,\phi_\eps}$ in terms of the reduced space  ${\bf X}_{red}$. 
This amounts to an examination of the asymptotic behavior of the integrals \eqref{int} in case that  $\varsigma \in \g^\ast$, and in particular $\varsigma=0$,  is a singular value of the momentum map, in which case $\Crit (\psi_\varsigma)$ is a singular variety. From now on, we will only be considering the case $\varsigma=0$, and simply write  $\psi$ for $\psi_0$,  $I(\mu)$ for $I_0(\mu)$, and so on.   As explained in the previous section,  we shall partially resolve the singularities of the critical set $\Crit (\psi)$ first, and then make use of the stationary phase principle in a suitable resolution space.  Partial desingularizations  of the zero level set $\Omega=\J^{-1}(0)$ of the momentum map and the symplectic quotient $\Omega /G$ have been obtained   by Meinrenken-Sjamaar \cite{meinrenken-sjamaar} for compact symplectic manifolds with a Hamiltonian compact Lie group action by performing blowing-ups along minimal symplectic suborbifolds containing the strata of maximal depth  in $\Omega $.  In the context of geometric invariant-theoretic quotients, partial desingularizations were studied in \cite{kirwan85} and \cite{JKKW03}.

From now on,  we will restrict ourselves to the case where  ${\bf X}$ is given by the cotangent bundle of a   Riemannian manifold. For a general symplectic manifold, the  desingularization process should be similar, but more involved, and we intend  to deal with this case at some other occasion. %\footnote{Since the asymptotic behavior of $I_\varsigma(\mu)$  is determined by local data, one might expect  that the case of a general symplectic manifold could be reduced to the treated case by  Darboux's theorem. But since, in general, the momentum map is not given locally by a Liouville form, the  general case  seems to require an independent proof. }
 Thus, let  $M$ be  a Riemannian manifold of dimension $n$,    $\gamma:T^\ast M\rightarrow M$ its cotangent bundle,  and $\tau: T(T^\ast M)\rightarrow T^\ast M$ the tangent bundle, endowed with corresponding Riemannian structures \cite{mok75}. Define on $T^\ast M$ the Liouville form 
\bqn 
\Theta_\eta(\X)=\tau(\X)[\gamma_\ast(\X)], \qquad \X \in T_\eta(T^\ast M).
\eqn
We then regard $T^\ast M$ as a  symplectic manifold with symplectic form $\omega= d\Theta$ and Riemannian metric $g$. Assume now that $M$ carries an isometric action of   a compact, connected Lie group $G$ with Lie algebra $\g$, and define for every $X \in \g$ the function
\bqn
J_X: T^\ast M \longrightarrow \R, \quad \eta \mapsto \Theta(\widetilde{X})(\eta).
\eqn
Note that  $ \Theta(\widetilde {X})(\eta)=\eta(\widetilde{X}_{\pi(\eta)})$.
The function $J_X$ is linear in $X$, and due to the invariance of the Liouville form \cite{cannas-da-silva} one has 
\bqn
\mathcal{L}_{\widetilde X} \Theta = dJ_X+ \iota_{\widetilde X} \omega =0, \qquad \forall X \in \g,
\eqn
where $\mathcal{L}$ denotes the Lie derivative. Hence, the infinitesimal action of $X\in \g$ on $ T^\ast M$  is given by the Hamiltonian vector field defined by $J_X$, which means that  $G$ acts on $ T^\ast M$ in a Hamiltonian way. The corresponding symplectic momentum map  is  then given by 
\bqn
\J: T^\ast M \to \g^\ast,  \quad \J(\eta)(X)=J_X(\eta).
\eqn
Note that 
\bq
\label{eq:Ann}
\eta \in \Omega \quad \Longleftrightarrow \quad \eta _m \in \mathrm{Ann}(T_m (G\cdot m)) \quad \forall m \in M,
\eq
where $\mathrm{Ann} \, (V_m) \subset T_m^\ast M$ denotes the annihilator of a vector subspace $V_m \subset T_mM$. 

\begin{example}
In case that $M=\rn$, let $(q_1,\dots, q_n, p_1,\dots p_n)$ denote the canonical coordinates on $ T^\ast \rn\simeq \R^{2n}$. Let further $G \subset \GL(n,\R)$ be a closed subgroup acting  on $\T^\ast \rn$ by $g \cdot  (q,p)=(g \, q, \, ^Tg^{-1}\, p)$. The symplectic form  reads 
$ \omega= d\theta =\sum _{i=1}^n \d p_i  \wedge \, \d q_i$, 
where $\theta=\sum p_i \d q_i$ is the Liouville form,  and the corresponding momentum map is  given by 
\bqn
\J:T^\ast\rn\simeq {\rn}\times \rn\to \g^\ast,  \quad \J(q,p)(X)=\theta(\widetilde X)(q,p)=\eklm{ X q, p},
\eqn
where $\eklm{\cdot,\cdot}$  denotes the Euclidean inner product in $\rn$. In this case, for $\varsigma \in \g^\ast$, 
 \begin{align*}
 \Crit(\psi_\varsigma)&=\mklm{(q,p,X)\in \Omega_\varsigma \times \g: X \in \g_{(q,p)}},
\end{align*}
    where  $  \Omega_\varsigma= \mklm{(q,p) \in {T^\ast \rn}: \eklm {Aq,p}-\varsigma(A)=0 \text{ for all } A \in \g}$ and $\g_{(q,p)}$ is given by the set of all $X \in \g$ such that $ Xq=0, \, Xp=0$.
\end{example}

By Lemma \ref{lemma:0},  $\Omega $ has a principal stratum 
 $\mathrm{Reg} \,\Omega $, which is an open and dense subset of $\Omega $,  and a smooth submanifold in $T^\ast M $ of codimension equal to the dimension $\kappa$ of a principal $G$-orbit in $T^\ast M$. Furthermore, $
T_\eta(\Reg \, \Omega )=[T_\eta(G\cdot \eta)]^\omega=(\g \cdot \eta)^\omega, \, \eta \in \Reg \, \Omega$. 
We describe next the smooth part of the critical set \eqref{eq:4} for the phase function $\psi(\eta)(X)=\J(\eta)(X)$. 

\begin{lemma}
The smooth part of $\Crit(\psi)$ corresponds to 
\bq
\label{eq:z}
\mathrm{Reg}\,  \Crit(\psi)=\mklm{(\eta,X)\in \mathrm{Reg}\, \Omega  \times \g:  X\in \g_\eta},
\eq
and constitutes a submanifold of codimension $2\kappa$. Furthermore, 
\bq
\label{eq:10}
T_{(\eta,X)}\Reg \Crit (\psi )=\mklm{(\X,w) \in (\g \cdot \eta)^\omega \times \R^d: \sum_{i=1}^d w_i (\widetilde X_i)_\eta= [ \widetilde X, \widetilde \X]_\eta},
\eq
where $\widetilde \X$ denotes an extension of $\X$ to a vector field \footnote{In the proposition below, we shall actually see that  $[ \widetilde X, \widetilde \X]_\eta \in \g \cdot \eta$ for $X \in \g_\eta$ and $\X \in (\g \cdot \eta)^\omega$.}.
\end{lemma}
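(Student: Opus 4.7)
The starting point is \eqref{eq:4} at $\varsigma=0$, which combined with the equivalence $\widetilde X_\eta=0 \Longleftrightarrow X\in\g_\eta$ gives
\[
\Crit(\psi)=\mklm{(\eta,X)\in\Omega\times\g : X\in\g_\eta}.
\]
The plan is to identify $\Reg \Crit(\psi)$ with the total space of a smooth vector bundle over $\Reg\Omega$ whose fiber at $\eta$ is $\g_\eta$. On the principal stratum $\Reg T^\ast M$, the principal orbit theorem guarantees that all stabilizers $G_\eta$ are conjugate to a fixed principal isotropy group, so $\dim\g_\eta=d-\kappa$ is constant and $\eta\mapsto\g_\eta$ defines a smooth rank-$(d-\kappa)$ vector subbundle of the trivial bundle $\Reg T^\ast M\times\g$; this is seen locally on slice models $G\times_H V$, where $\g_\eta$ is conjugate to $\mathfrak h$ and varies smoothly with $\eta$. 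Restricting this subbundle to the smooth submanifold $\Reg\Omega\subset\Reg T^\ast M$ furnished by Lemma \ref{lemma:0} exhibits $\Reg\Crit(\psi)$ as the total space of a smooth vector bundle of rank $d-\kappa$ over a manifold of dimension $2n-\kappa$, so $\Reg\Crit(\psi)$ is a submanifold of dimension $2n+d-2\kappa$, confirming \eqref{eq:z} and the codimension count $2\kappa$.

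For the tangent space formula \eqref{eq:10}, I would take an arbitrary smooth curve $t\mapsto(\eta(t),X(t))$ in $\Reg\Crit(\psi)$ with $(\eta(0),X(0))=(\eta,X)$ and velocity $(\X,w)$, writing $w=\sum w_i X_i$ in the fixed basis so that $\widetilde{X(t)}=\widetilde X+t\,\widetilde w+O(t^2)$ with $\widetilde w=\sum_i w_i\widetilde X_i$. The constraint $\eta(t)\in\Omega$ yields $\X\in T_\eta\Reg\Omega=(\g\cdot\eta)^\omega$ by Lemma \ref{lemma:0}. Differentiating $\widetilde{X(t)}_{\eta(t)}=0$ at $t=0$ in local coordinates, the contribution from varying $X$ along $w$ is $\widetilde w_\eta$ by linearity of $X\mapsto\widetilde X$, while the contribution from varying $\eta$ along $\X$ equals, at the zero $\eta$ of the vector field $\widetilde X$, the invariantly defined Lie-bracket linearization $[\widetilde\X,\widetilde X]_\eta$ (independent of the chosen extension $\widetilde\X$, since $\widetilde X_\eta=0$ kills the $\widetilde X(\widetilde\X^i)$ terms in the bracket). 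Combining these contributions gives
\[
\sum_{i=1}^d w_i(\widetilde X_i)_\eta+[\widetilde\X,\widetilde X]_\eta=0,
\]
which is precisely the relation in \eqref{eq:10}. Conversely, the linear constraints on the right-hand side of \eqref{eq:10} cut out a subspace of dimension $(2n-\kappa)+(d-\kappa)=2n+d-2\kappa=\dim\Reg\Crit(\psi)$, so this inclusion is an equality.

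The main obstacle is the smoothness step: the naive set $\mklm{(\eta,X):X\in\g_\eta}$ is a closed subvariety of $T^\ast M\times\g$ that is singular along loci where $\dim\g_\eta$ jumps, and restricting to the principal stratum is essential. The key technical ingredient there is the smooth bundle structure of the isotropy algebras, which I would justify via the slice theorem as above. A secondary point implicit in \eqref{eq:10}, namely that $[\widetilde X,\widetilde\X]_\eta$ must lie in $\g\cdot\eta$ for the equation for $w$ to be consistent, is flagged in the footnote and is deferred to the subsequent proposition.
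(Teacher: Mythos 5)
Your proposal is correct and follows essentially the same route as the paper: the first assertion is the identification $\widetilde X_\eta=0\Leftrightarrow X\in\g_\eta$ combined with Lemma \ref{lemma:0} (the paper leaves the smooth isotropy-algebra bundle structure over the principal stratum implicit, which your slice-theorem argument merely makes explicit), and your tangent-space computation — differentiating $\widetilde{X(t)}_{\eta(t)}=0$ along a curve and using linearity in $X$ plus the well-defined linearization $[\widetilde\X,\widetilde X]_\eta$ at a zero of $\widetilde X$ — is exactly the paper's calculation. Your closing dimension count for the reverse inclusion tacitly uses $[\widetilde X,\widetilde\X]_\eta\in\g\cdot\eta$, but since that fact is established in Proposition \ref{prop:submersion} independently of the lemma (and you flag the deferral, as does the paper's footnote), this causes no circularity; alternatively the bound $\dim(\mathrm{RHS})\le 2n+d-2\kappa$ already suffices.
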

\begin{proof}
 Since the Lie algebra of $G_\eta$ is given by $\g_\eta=\{X\in \g: \widetilde X_\eta=0\}$, the first assertion is clear from \eqref{eq:4}. To see the second, let $(\eta(t),X(t))$ be a smooth curve in $\Reg \, \Omega  \times \g$. Writing $X(t)=\sum s_j(t)X_j$ with respect to a basis $\mklm{X_1, \dots, X_d}$ of $\g$, one computes for any $f \in \Cinft(\Reg \, \Omega )$
 \begin{align*}
 \frac d{dt} \widetilde{X(t)}_{\eta(t)}f_{|t=t_0}&=\sum_{j=1}^d \frac d{dt} \Big (s_j(t) (\widetilde{X_j})_{\eta(t)}f\Big ) _{|t=t_0}\\&=\sum_{j=1}^d \dot s_j(t_0) (\widetilde{X_j}f)(\eta(t_0))+ \sum_{j=1}^d  s_j(t_0) \frac d{dt} (\widetilde{X_j} f)(\eta(t))_{|t=t_0}. 
 \end{align*}
 Writing $\X=\dot \eta(t_0)\in T_{\eta(t_0)}\Reg \, \Omega $, one has $\frac d{dt} (\widetilde{X_j} f)(\eta(t))_{|t=t_0}=\widetilde \X_{\eta(t_0)}(\widetilde{X_j}f)$, so that if $(\eta(t),X(t))$ is a curve in $\Reg \Crit(\psi)$ one obtains
 \bqn
 \sum_{j=1}^d \dot s_j(t_0) (\widetilde{X_j})_{\eta(t_0)}f+ \sum_{j=1}^d  s_j(t_0) [\widetilde \X, \widetilde{X_j}]_{\eta(t_0)} f=0,
 \eqn
 since $\widetilde X(t_0)_{\eta(t_0)} (\widetilde \X f)=0$, and the assertion follows with \eqref{eq:7}.
\end{proof}

Before we start with the actual desingularization process of the phase function $\psi$,  let us mention the following 

\begin{proposition}
\label{prop:submersion}
The mapping $P:\Reg \, \Crit(\psi) \rightarrow \Reg \, \Omega , (\eta,X) \mapsto \eta$ is a submersion. 
\end{proposition}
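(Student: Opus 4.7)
The plan is to recognise that $P$ being a submersion at $(\eta,X)$ is equivalent to the footnote claim of the preceding lemma, i.e.\ to $[\widetilde X,\widetilde \X]_\eta\in\g\cdot\eta$ whenever $X\in\g_\eta$ and $\X\in(\g\cdot\eta)^\omega$. Indeed, by \eqref{eq:10}, the differential $dP_{(\eta,X)}\colon T_{(\eta,X)}\Reg \Crit(\psi)\to T_\eta\Reg \Omega=(\g\cdot\eta)^\omega$ is just the projection $(\X,w)\mapsto \X$, and it surjects onto $(\g\cdot\eta)^\omega$ precisely when every such $[\widetilde X,\widetilde \X]_\eta$ lies in the image $\g\cdot\eta$ of the surjection $\R^d\ni w\mapsto\sum_i w_i(\widetilde X_i)_\eta$.

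As a first step, I would rewrite the bracket in a way that makes its dependence on $\widetilde \X$ transparent. Since $X\in\g_\eta$, the flow $\phi_t(\eta')=e^{-tX}\cdot\eta'$ of $\widetilde X$ fixes $\eta$, and the standard formula for the Lie derivative gives
\bqn
[\widetilde X,\widetilde \X]_\eta=L_X(\X),\qquad L_X:=\frac d{dt}\Big|_{t=0}d(e^{tX})_{|\eta}\in\mathrm{End}\,T_\eta{\bf X},
\eqn
where $L_X$ is the infinitesimal generator of the linearised isotropy representation $G_\eta\to\GL(T_\eta{\bf X})$; in particular the bracket depends on $\widetilde \X$ only through $\X$.

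The core of the argument will then be to show that $L_X$ maps the whole of $T_\eta\Reg {\bf X}$ into $\g\cdot\eta$. For this I would invoke the slice theorem at $\eta\in\Reg {\bf X}$: there is a $G_\eta$-invariant complement $S\subset T_\eta{\bf X}$ to $\g\cdot\eta$ together with a $G$-equivariant diffeomorphism from a neighbourhood of the zero section of $G\times_H S$ onto a $G$-invariant neighbourhood of $G\cdot\eta$, where $H=G_\eta$ is the principal isotropy group. By the principal orbit theorem, the point of the slice associated to $s\in S$ lies in $\Reg {\bf X}$ precisely when its $H$-stabiliser equals $H$, i.e.\ when $s\in S^H$; hence $T_\eta\Reg {\bf X}=\g\cdot\eta\oplus S^H$. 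Since $H$ acts trivially on $S^H$ by definition, $L_X$ annihilates $S^H$ for every $X\in\g_\eta$. On the other hand, differentiating the identity $d(e^{tX})_{|\eta}(\widetilde Y_\eta)=\widetilde{\Ad(e^{tX})Y}_\eta$ at $t=0$ yields $L_X(\widetilde Y_\eta)=\widetilde{[X,Y]}_\eta\in\g\cdot\eta$ for all $Y\in\g$, so $L_X$ also preserves $\g\cdot\eta$; altogether $L_X\bigl(T_\eta\Reg {\bf X}\bigr)\subset\g\cdot\eta$.

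Since $\Reg \Omega\subset\Reg {\bf X}$ by Lemma \ref{lemma:0}, any $\X\in(\g\cdot\eta)^\omega=T_\eta\Reg \Omega$ sits in $T_\eta\Reg {\bf X}$, so that $[\widetilde X,\widetilde \X]_\eta=L_X(\X)\in\g\cdot\eta$; picking any preimage $w\in\R^d$ under $w\mapsto\sum_i w_i(\widetilde X_i)_\eta$ then yields a tangent vector $(\X,w)\in T_{(\eta,X)}\Reg \Crit(\psi)$ mapping to $\X$ under $dP_{(\eta,X)}$, which proves the submersion property. I expect the delicate point to be the identification $T_\eta\Reg {\bf X}=\g\cdot\eta\oplus S^H$ together with the triviality of the slice representation on $S^H$, both of which are consequences of the principal orbit theorem; once these are in place the rest of the argument is bookkeeping.
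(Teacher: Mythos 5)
Your argument is correct, and it reduces the submersion property to the membership claim $[\widetilde X,\widetilde \X]_\eta\in\g\cdot\eta$ via \eqref{eq:10} exactly as the paper does; but you establish that claim by a genuinely different mechanism. You work in the ambient manifold: using the slice theorem at $\eta\in\Reg{\bf X}$ together with the fact that the slice representation of a principal isotropy group is trivial (so that in fact $S^H=S$ and your decomposition $T_\eta\Reg{\bf X}=\g\cdot\eta\oplus S^H$ is the full tangent space $T_\eta{\bf X}$, $\Reg{\bf X}$ being open), you prove the stronger statement that the linearized isotropy generator $L_X$ maps all of $T_\eta{\bf X}$ into $\g\cdot\eta$, and then simply restrict to $\X\in(\g\cdot\eta)^\omega=T_\eta\Reg\Omega$. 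The paper instead never leaves $\Reg\Omega$: it quotes Ortega--Ratiu to the effect that $\pi_G:\Reg\Omega\to\Reg\Omega/G$ is a submersion with $\ker(\pi_G)_{\ast,\eta}=\g\cdot\eta$, differentiates the identity $\pi_G(g\cdot\eta(t))=\pi_G(\eta(t))$ for $g\in G_\eta$ along a curve with $\dot\eta(0)=\X$ to get $\X-g_{\ast,\eta}(\X)\in\g\cdot\eta$, and then passes to the limit \eqref{eq:25} to conclude $\frac d{dt}(e^{-tX})_{\ast,\eta}\X_{|t=0}\in\g\cdot\eta$, identifying this derivative with the bracket in \eqref{eq:11} (it also records, though it does not need, that the bracket lies in $(\g\cdot\eta)^\omega$ via a symplectic computation). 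Your route buys independence from the singular reduction theorem --- only the principal orbit theorem, the slice theorem, and the standard facts that a subgroup of the compact group $H$ conjugate to $H$ equals $H$ and that the bracket at a zero of $\widetilde X$ depends only on $\X$ are used --- and it yields the stronger conclusion $L_X(T_\eta{\bf X})\subset\g\cdot\eta$, which sharpens the paper's Remark \ref{rem:2}; the paper's route avoids the slice analysis by delegating the geometry of $\Reg\Omega$ to the cited reference and by a short limiting argument in place of your linear-algebraic decomposition.
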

\begin{proof}
Let $\eta \in \Reg \, \Omega $ and $X\in \g_\eta$. We show that $[\widetilde \X,\widetilde X]_\eta \in \g\cdot \eta$ for all $\X\in T_\eta \Reg \, \Omega $. To begin, note that $\pi_G:\Reg \, \Omega  \rightarrow \Reg \, \Omega /G$ is a submersion and a principal fiber bundle with $\ker (\pi_G)_{\ast,\eta}=\g \cdot \eta$ \cite[Theorem 8.1.1]{ortega-ratiu}.
If therefore $\eta(t) \in \Reg \, \Omega $ denotes a curve  with $\eta(0)=\eta$, $\dot \eta(0)=\X$, and $g \in G_\eta$, differentiation of $\pi_G(g \cdot \eta(t))= \pi_G(\eta(t))$ yields $\X-g_{\ast, \eta}(\X) \in  \ker (\pi_G)_{\ast,\eta}=\g \cdot \eta$. Consequently,
\bq
\label{eq:25}
\frac d{dt} (e^{-tX})_{\ast, \eta} \X_{|t=0}=\lim_{t\to 0} t^{-1}\big [(e^{-tX})_{\ast, \eta} \X-\X\big ] \in \g \cdot \eta,
\eq
 where we made the identification $T_\X(T_\eta \Reg \, \Omega )\simeq T_\eta \Reg \, \Omega $. Now, for arbitrary $Y \in \g$ \cite[Proposition 4.2.2]{ortega-ratiu}, 
 \bqn 
 \omega_\eta([\widetilde \X, \widetilde X],\widetilde Y)=-\omega_\eta([\widetilde X, \widetilde Y],\widetilde \X)-\omega_\eta([\widetilde Y, \widetilde \X],\widetilde X)=0,
 \eqn
since $\widetilde X_\eta=0$, and $\widetilde \X_\eta=\X \in (\g \cdot \eta)^\omega$. Hence, $[\widetilde \X, \widetilde X]_\eta \in (\g \cdot \eta)^\omega$. Furthermore, for arbitrary $f \in \Cinft(T^\ast M)$,
\begin{align*}
[\widetilde \X, \widetilde X]_\eta f=\widetilde \X_\eta( \widetilde X f)=\frac d {ds} (\widetilde X f)(\eta(s))_{|s=0}=\frac d {dt} \left ( \frac d{ds} f(e^{-tX} \cdot \eta (s))_{|s=0} \right )_{|t=0}=\frac d{dt} \big (\widetilde{(e^{-tX})_{\ast, \eta} \X_{|t=0}}\big )_\eta f, 
\end{align*}
so that with \eqref{eq:25}
\bq
\label{eq:11}
[\widetilde \X, \widetilde X]_\eta=\frac d{dt} (e^{-tX})_{\ast, \eta} \X_{|t=0}\in \g\cdot \eta.
\eq
 The previous lemma then implies that $P_{\ast, (\eta,X)}: T_{(\eta,X)} \Reg \Crit (\psi) \rightarrow T_\eta \Reg \, \Omega , (\X,w) \mapsto \X$ is a surjection, and the assertion follows.
\end{proof}

\begin{remark}
\label{rem:2}
Note that for $\eta \in \Reg \, \Omega$, and $X \in \g_\eta$, the previous proposition implies  that the Lie derivative  defines a homomorphism
\bq
\label{eq:LieX}
L_X: \g \cdot \eta \ni \X \longmapsto \Lcal_{\widetilde X}(\widetilde \X)_\eta=[\widetilde X, \widetilde \X]_\eta \in \g \cdot \eta.
\eq
\end{remark}

\section{The desingularization process   in the case  ${\bf X}=T^\ast M$, $\varsigma=0$}

We shall now proceed to a partial desingularization of  the critical set  of the phase function \eqref{eq:phase} for ${\bf X}=T^\ast M$, $\varsigma=0$, and  derive an asymptotic description of the integral \eqref{int} in this case. An analogous desingularization process was already implemented in \cite{ramacher10} to describe the asymptotic distribution of eigenvalues of an invariant elliptic operator. The desingularization employed here constitutes a local version of the latter, and for this reason is slightly simpler. Indeed, the phase function considered in \cite{ramacher10} is a global analogue of $\psi(\eta,X) = {\mathbb J}(\eta)(X)$. It should be noticed, however, that  these phase functions are not equivalent in the sense of Duistermaat \cite{duistermaat74}, so that  the corresponding desingularization processes can not be reduced to  each other \footnote{Observe that a similar phenomenon occurs in \cite{DKV2}.}.  To begin,  we shall need a suitable $G$-invariant covering of $M$. In its construction, we shall follow Kawakubo \cite{kawakubo}, Theorem 4.20.  For a more detailed survey on compact group actions, we refer the reader to \cite{ramacher10}, Section 3. Thus, let $(H_1), \dots,  (H_L)$ denote the isotropy types of $M$, and  arrange them in such a way that 
\bqn
H_j \text{ is conjugate to a subgroup of }H_i  \quad \Rightarrow \quad i \leq j.
\eqn
Let $H\subset G$ be a closed subgroup, and $M(H)$ the union of all orbits of type $G/H$. Then $M$ has a stratification into orbit types according to  
\bqn
M=M(H_1) \cup \dots \cup M(H_L).
\eqn
By the principal orbit theorem, the set $M(H_L)$ is open and dense in $M$, while $M(H_1)$ is a $G$-invariant submanifold. Denote by $\nu_1$ the normal $G$-vector bundle of $M(H_1)$, and by $f_1: \nu_1 \rightarrow M$ a $G$-invariant tubular neighbourhood of $M(H_1)$ in $M$. Take a $G$-invariant metric on $\nu_1$, and put
\bqn
{D}_t(\nu_1)=\mklm {v \in \nu_1: \norm{v} \leq t }, \qquad t >0.
\eqn
We then define the  $G$-invariant submanifold with boundary
\bqn
M_2=M - f_1(\stackrel{\circ}{D}_{1/2}(\nu_1)), 
\eqn
on which the isotropy type $(H_1)$ no longer occurs, and endow it with a $G$-invariant Riemannian metric with product form in a $G$-invariant collar neighborhood of $\gd M_2$ in $M_2$. Consider now the union $M_2(H_2)$ of orbits in $M_2$ of type $G/H_2$, a $G$-invariant submanifold of $M_2$ with boundary, and let $f_2:\nu_2 \rightarrow M_2$ be a $G$-invariant tubular neighborhood  of $M_2(H_2)$ in $M_2$, which exists due to the particular form of the metric on $M_2$. Taking a $G$-invariant metric on $\nu_2$, we define
\bqn
M_3=M_2 - f_2(\stackrel{\circ}{D}_{1/2}(\nu_2)), 
\eqn
which constitutes a  $G$-invariant submanifold with corners and isotropy types $(H_3), \dots (H_L)$. Continuing this way, one finally obtains for $M$ the decomposition 
\bqn  
M= f_1({D}_{1/2}(\nu_1)) \cup \dots  \cup f_L({D}_{1/2}(\nu_L)),
\eqn
where we identified  $f_L({D}_{1/2}(\nu_L))$ with $M_L$. This leads to the covering 
\bqn
M= f_1(\stackrel{\circ}{D}_{1}(\nu_1)) \cup \dots \cup f_L(\stackrel{\circ}{D}_{1}(\nu_L)),\qquad  f_L(\stackrel{\circ}{D}_{1}(\nu_L))=\stackrel{\circ} M_L.
\eqn

Let us now start resolving the singularities of the critical set $\Crit(\psi)$.  For this, we will  set up an iterative desingularization process along the strata of the underlying $G$-action, where each step in our iteration will consist of a decomposition, a monoidal transformation, and a reduction. For simplicity, we shall assume that at each iteration step the set of maximally singular orbits is connected. Otherwise each of the connected components, which might even have different dimensions,  has to be treated separately. 

\subsection*{First decomposition} Take $1\leq k \leq L-1$. As before,  let $f_k:\nu_k\rightarrow M_k$ be an invariant tubular neighborhood of $M_k(H_k)$ in 
\bdm
M_k=M-\bigcup_{i=1}^{k-1} f_i(\stackrel{\circ}{D}_{1/2}(\nu_i)),
\edm 
a manifold with corners on which $G$ acts with the isotropy types $(H_k), (H_{k+1}), \dots, (H_L)$, and put  $W_k=f_k(\stackrel{\circ}{D_1}(\nu_k))$, $W_L=\stackrel{\circ}{M_L}$, so that $M= W_1 \cup \dots \cup W_L$. Write further $S_k= \mklm{v \in \nu_k: \norm{v}=1}$. 
Introduce a partion of unity $\mklm{\chi_k}_{k=1,\dots,L}$ subordinate to the covering $\mklm{W_k}$, and with the notation of \eqref{int} define 
\bqn
I_k(\mu)=   \int _{T^\ast W_k}  \int_{\g} e^{i \psi(\eta,X)/\mu }   (a\chi_k)(\eta,X)  \d X \d \eta,   
\eqn
so that $I(\mu)=I_1(\mu)+\dots +I_L(\mu)$. 
As will be explained in Lemma \ref{lemma:Reg}, the critical set of $\psi$ is clean on the support of $a\chi_L$, so that we can apply  directly the stationary phase theorem to compute the integral $I_L(\mu)$. But if $k \in \mklm{1, \dots, L-1}$, the sets 
\begin{align*}
  \Omega_k&=\Omega \, \cap \, T^\ast W_k, \\
   \Crit_k(\psi) &=\mklm{(\eta,X)\in \Omega_k \times \g: \widetilde X_\eta=0}
\end{align*}
are no longer smooth manifolds, so that the stationary phase theorem can not  a priori be applied  in this situation. Instead, we shall resolve the singularities of $\Crit_k(\psi)$, and after this  apply the principle of the stationary phase in a suitable resolution space. For this, introduce for each $x^{(k)}\in M_k(H_k)$ the decomposition
\bqn
\g=\g_{x^{(k)}}\oplus \g_{x^{(k)}}^\perp, 
\eqn
where $\g_{x^{(k)}}$ denotes the Lie algebra of the stabilizer $G_{x^{(k)}}$ of $x^{(k)}$, and $\g_{x^{(k)}}^\perp$ its orthogonal complement with respect to some $\Ad(G)$-invariant inner product in $\g$. Let further $A_1(x^{(k)}), \dots, A_{d^{(k)}}(x^{(k)})$ be an orthonormal basis of $\g_{x^{(k)}}^\perp$, and $B_1(x^{(k)}),\dots, B_{e^{(k)}}(x^{(k)})$ an orthonormal basis of $\g_{x^{(k)}}$. Consider  the isotropy algebra bundle over $M_k(H_k)$
\bqn
\mathfrak{iso} \,M_k(H_k) \rightarrow M_k(H_k),
\eqn
as well as the canonical projection
\bqn 
\pi_k: W_k \rightarrow M_k(H_k), \qquad m=f_k(x^{(k)},v^{(k)}) \mapsto x^{(k)}, \qquad x^{(k)} \in M_k(H_k), \, v^{(k)} \in (\nu_k)_{x^{(k)}},
\eqn
where $f_k(x^{(k)},v^{(k)})=(\exp_{x^{(k)}} \circ \gamma ^{(k)})( v^{(k)})$, and 
\bqn 
\gamma^{(k)}(v^{(k)}) = \frac {F_k(x^{(k)})}{(1+ \norm {v^{(k)}})^{1/2}} v^{(k)}
\eqn is an equivariant diffeomorphism from $(\nu_k)_{x^{(k)}}$ onto its image, $F_k:M_k(H_k) \rightarrow \R$ being a smooth, $G$-invariant positive function, see Bredon \cite[pages 306-307]{bredon}. We consider then the induced bundle
\bqn
\pi_k^\ast \frak{iso}\,  M_k(H_k)=\mklm {(f_k(x^{(k)},v^{(k)}),X)\in W_k \times \g: X \in \g_{x^{(k)}}},
\eqn
and denote by  
$$\Pi_k: W_k \times \g \rightarrow  \pi_k^\ast \frak{iso} \, M_k(H_k)$$
the canonical projection which is obtained by considering geodesic normal coordinates around $\pi_k^\ast \, \frak{iso} M_k(H_k)$, and  identifying  $W_k\times \g$ with a neighborhood of the zero section in  the normal bundle $N\, \pi_k^\ast \,\frak{iso} \, M_k(H_k)$. Note  that 
 the fiber of the normal bundle to $\pi^\ast \frak{iso}\,  M_k(H_k)$ at a point $(f_k(x^{(k)},v^{(k)}),X)$ can be identified with $\g_{x^{(k)}}^\perp$.  Integrating along the fibers of the normal bundle to  $\pi_k^\ast \, \frak{iso} M_k(H_k)$ we therefore obtain  for $I_k(\mu)$ the expression
\begin{align}
\label{eq:*}
\begin{split}
&\int_{\pi_k^\ast \, \frak{iso} M_k(H_k)} \left [\int_{\Pi_k^{-1}(m,B^{(k)})\times T^\ast_m W_k
} e^{i\psi/\mu} a\chi_k \, \Phi_k \, \d (T^\ast_m W_k) \, dA^{(k)}   \right ]  dB^{(k)} \d m \\
 =\int_{ M_k(H_k) }&\Big [\int_{ \g \times \pi_k^{-1}(x^{(k)})}\Big [\int_{T^\ast_{\exp_{x^{(k)}} v^{(k)}} W_k
} e^{i\psi/\mu} a\chi_k \, \Phi_k \, \d (T^\ast_{\exp_{x^{(k)}} v^{(k)}}W_k) \Big ]  dA^{(k)} \, dB^{(k)} \, dv^{(k)}  \Big ]      dx^{(k)},
\end{split}
\end{align}
where 
\bqn
\gamma^{(k)}\big ( \stackrel \circ D_1(\nu_k)_{x^{(k)}} \big ) \times  \g_{x^{(k)}}^\perp \times   \g_{x^{(k)}} \ni (v^{(k)},  A^{(k)},B^{(k)})\mapsto (\exp_{x^{(k)}} v^{(k)},A^{(k)}+B^{(k)})=(m,X)
\eqn
are coordinates on $\g \times  \pi_k^{-1}(x^{(k)})$, while $dm$, $dx^{(k)}$, $dA^{(k)}, dB^{(k)} ,  d v^{(k)}$, and $ \d (T^\ast_m W_k)$ are suitable measures on $W_k$,   $M_k(H_k)$, $\g_{x^{(k)}}^\perp$, $\g_{x^{(k)}}$, $\gamma^{(k)}(\stackrel \circ D_1(\nu_k)_{x^{(k)}})$, and $T^\ast_m W_k$, respectively, such that 
\bqn \d X \d \eta \equiv\Phi_k \d (T^\ast_{\exp_{x^{(k)}} v^{(k)}}W_k)(\eta)  dA^{(k)} \, dB^{(k)} \, dv^{(k)} \, dx^{(k)},
\eqn
where $\Phi_k$ is a Jacobian.

\subsection*{First monoidal transformation}  Let now $k \in \mklm{1, \dots, L-1} $ be fixed. For the further analysis of the integral $I_k(\mu)$, we shall sucessively resolve the singularities of $\Crit_k(\psi)$, until we are in position to apply the principle of the stationary phase in a suitable resolution space. To begin with, we perform a monoidal transformation 
\bqn 
\zeta_k: B_{Z_k}( W_k \times \g) \longrightarrow W_k \times \g
\eqn
 in $W_k \times \g$ with center $Z_k= \frak{iso} \, M_k(H_k)$. For this, let us write  $ A^{(k)}(x^{(k)},\alpha^{(k)})=\sum  \alpha_i^{(k)} A_i^{(k)}(x^{(k)})\in \g^\perp_{x^{(k)}}$,  $ B^{(k)}(x^{(k)},\beta^{(k)})=\sum  \beta_i^{(k)} B_i^{(k)}(x^{(k)})\in \g_{x^{(k)}}$, and 
\bqn
\gamma^{(k)}( v^{(k)}) = \sum _{i=1}^{c^{(k)}} \theta_i^{(k)} v_i^{(k)}(x^{(k)})\in \,  \gamma^{(k)} \Big ( \stackrel \circ D_1(\nu_k)_{x^{(k)}}\Big ),
\eqn
where $\{v_1^{(k)},\dots  ,v_{c^{(k)}}^{(k)}\}$ denotes an orthonormal frame in $\nu_k$. With respect to these coordinates we have $Z_k=\mklm{T^{(k)}=(\alpha^{(k)},\theta^{(k)})=0}$,  so that 
\begin{gather*}
B_{Z_k}( W_k \times \g)=\mklm{ (m,X,[t]) \in W_k \times \g \times \mathbb{RP}^{c^{(k)} + d^{(k)}-1}:  T^{(k)}_i t_j = T^{(k)}_j t_i,   },\\
\zeta_k: (m,X,[t]) \longmapsto (m,X).
\end{gather*}
Let us  now cover $B_{Z_k}( W_k \times \g)$ with  charts $\mklm{(\phi_k^\rho, U_k^\rho)}$,  where $U_k^\rho=B_{Z_k}( W_k \times \g)\cap (W_k \times \g \times V_\rho)$,  $V_\rho=\mklm{[t] \in  \mathbb{RP}^{c^{(k)} + d^{(k)}-1}: t_\rho\not=0}$, and  $\phi_k^\rho$ is given by the canonical coordinates on $V_\rho$. As a consequence, we obtain for $\zeta_k$ in each of the $\theta^{(k)}$-charts $\mklm{U_k^\rho}_{1\leq \rho\leq c^{(k)}}$ the expressions
\begin{align}
\label{eq:**}
\begin{split}
 \zeta_k ^\rho&=\zeta_k \circ \phi_k^\rho: ( x^{(k)},\tau_k,\, ^\rho \tilde v^{(k)},  A^{(k)}, B^{(k)})\stackrel{{' \zeta_k^ \rho}}{\longmapsto}  ( x^{(k)},\tau_k \, ^\rho \tilde v^{(k)}, \tau_k A^{(k)}, B^{(k)}) \\
& \longmapsto (\exp_{x^{(k)}} \tau_k  \,^\rho\tilde v^{(k)}, \tau_k A^{(k)}+ B^{(k)})\equiv(m,X),
\end{split}
\end{align}
where $\tau_k \in (-1,1)$, 
\bqn
 \,^\rho \tilde v^{(k)}(x^{(k)},\theta^{(k)})= \gamma^{(k)} \Big ( \big (v_\rho^{(k)}(x^{(k)})+ \sum _{i\not=\rho}^{c^{(k)}} \theta_i^{(k)} v_i^{(k)}(x^{(k)})\big )  \Big / \sqrt{1 + \sum_{i\not=\rho} (\theta_i^{(k)} )^2} \Big ) \in \gamma^{(k)} (\,^\rho S_k^+)_{x^{(k)}},
\eqn
and 
$$\,^\rho S_k^+=\mklm{v \in \nu_k: v = \sum s_i v_i, s_\rho>0,  \norm{v}=1}.$$
  Note that for each $1 \leq \rho\leq c^{(k)}$, $$W_k \simeq f_k(\,^\rho S_k^+ \times (-1,1))$$ up to a set of measure zero. 
Now, for given $m \in M$, let $Z_m \subset T_mM$ be a neighborhood of zero such that $\exp_m: Z_m \longrightarrow M$ is a diffeomorphism onto its image. Then
\bqn
\qquad (\exp_m)_{\ast,v}: T_v Z_m \longrightarrow T_{\exp_mv}M, \quad v \in Z_m, 
\eqn
and $g \cdot \exp_m v= L_g (\exp_m v)=\exp_{L_g(m)}(L_g)_{\ast,m}( v)$. As a consequence, since $B^{(k)} \in \g_{x^{(k)}}$, we obtain
\begin{align*}
\widetilde{B^{(k)}}_{\exp_{x^{(k)}} \tau_k \, ^\rho \tilde v^{(k)}}&= \frac d{dt} \exp_{x^{(k)}} \big ( L_{\e{-t B^{(k)}}} \big )_{\ast, x^{(k)}}( \tau_k \, ^\rho \tilde v^{(k)})_{|t=0}= (\exp_{x^{(k)}})_{\ast, \tau_k \, ^\rho \tilde v ^{(k)}}\big (\lambda(B^{(k)}) ( \tau_k \, ^\rho \tilde v^{(k)})\big ) \\
&= \tau_k (\exp_{x^{(k)}})_{\ast, \tau_k \, ^\rho \tilde v ^{(k)}}\big (\lambda(B^{(k)})( \, ^\rho \tilde v^{(k)}) \big ),
\end{align*}
where we denoted by
\bqn
\lambda: \g_{x^{(k)}} \longrightarrow \mathfrak{gl}(\nu_{k,x^{(k)}}), \quad B^{(k)} \mapsto \frac d{dt} (L_{\e{-tB^{(k)}}})_{\ast, x^{(k)}|t=0} 
\eqn
the linear representation of $ \g_{x^{(k)}}$ in $\nu_{k,x^{(k)}}$, and made the canonical identification $T_v(\nu_{k,x^{(k)}}) \equiv \nu_{k,x^{(k)}}$ for any $v \in(\nu_k)_{x^{(k)}}$.  With $\pi(\eta)=\exp_{x^{(k)}} \tau_k \, ^\rho \tilde v^{(k)}$ we therefore obtain for the phase function the factorization 
%\textbf{GRACIAS A DIOS} 
\begin{align*}
\psi(\eta ,X)&=\eta(\tilde X_{\pi(\eta)})=\eta \big (\widetilde{(\tau_k A^{(k)}+B^{(k)})}_{\exp_{x^{(k)}} \tau_k \, ^\rho \tilde v^{(k)}}\big )\\
&= \tau_k \Big [ \eta\big( \widetilde{A^{(k)}}_{\exp_{x^{(k)}} \tau_k \, ^\rho \tilde v^{(k)}}\big ) + \eta \big ( (\exp_{x^{(k)}})_{\ast, \tau_k \, ^\rho \tilde v ^{(k)}}[\lambda(B^{(k)}) ^\rho \tilde v^{(k)}]\big ) \Big]. 
\end{align*} % 21.03.2009
Similar considerations hold for $\zeta_k$ in the $\alpha^{(k)}$-charts $\mklm{U_k^\rho}_{c^{(k)}+1 \leq \rho \leq c^{(k)}+d^{(k)}}$, so that we get on the resolution space
\bqn 
 \psi \circ (\id_{fiber} \otimes \zeta_k) =  \,^{(k)} \tilde \psi^{tot}=\tau_k \cdot  \,  ^{(k)} \phw, 
 \eqn
$ \,^{(k)} \tilde \psi^{tot}$ and $  \,  ^{(k)} \phw $ being the \emph{total} and \emph{weak transform} of the phase function $\psi$, respectively. 
\begin{example}
In the case $M=T^\ast \rn$ and $G\subset \GL(n,\R)$ a closed subgroup, the phase function  factorizes with respect to the canonical coordinates $\eta=(q,p)$  according to
\begin{align*}
\psi(q,p,X)&=\eklm{Xq,p}= \eklm{\big (\tau_k A^{(k)}+ B^{(k)}\big )\exp_{x^{(k)}} \tau_k\, ^\rho \tilde v^{(k)},p}\\ & =\tau_k \left [ \eklm{A^{(k)} x^{(k)} + B^{(k)} \,^\rho \tilde v^{(k)}, p} +\tau_k \eklm {A^{(k)}  \,^\rho \tilde v^{(k)},p }\right ],
\end{align*} % 8.12.2008
where we took into account that in $\rn$ the exponential map is given by  $\exp_{x^{(k)}} v^{(k)} = x^{(k)}+v^{(k)}$. 
\end{example}

 Introducing a partition $\mklm{u_k^\rho}$ of unity subordinated to the covering $\mklm{U_k^\rho}$ now yields 
\bqn 
I_k(\mu)=\sum_{\rho=1} ^{c^{(k)}}   I_k^\rho (\mu)+\sum_{\rho=c^{(k)}+1} ^{d^{(k)}}   \tilde I_k^\rho(\mu),
\eqn 
where the integrals  $ I_k^\rho (\mu)$ and $  \tilde I_k^\rho(\mu)$ are given by the expressions
\begin{gather*}
\int_{ B_{Z_k} (W_k \times \g) }u_k^\rho  (\id_{fiber} \otimes\,  \zeta_k)^\ast (  e^{i\psi /\mu }a\chi_k  dX d\eta).
\end{gather*}
As we shall see in Section \ref{sec:8}, the weak transform $\, ^{(k)} \phw$ has no critical points in the $\alpha^{(k)}$-charts, which implies that the integrals $ \tilde I_k(\mu)^\rho  $ contribute to $I(\mu)$ only with higher order terms. In what follows, we shall therefore restrict ourselves to the examination of the integrals  $  I_k^\rho(\mu)$. Setting $a_k^\rho=( u_k^\rho \circ \phi_k^\rho) [ (a \chi_k)  \circ ( \id_{fiber} \otimes \zeta_k^\rho)]$ we obtain with \eqref{eq:*} and \eqref{eq:**}
\begin{gather*}
 I_k^\rho (\mu)= \int_{ M_k(H_k) \times (-1,1)  }\Big [\int_{ \gamma^{(k)}((S_k)_{x^{(k)}}) \times  \g_{x^{(k)}} \times \g_{x^{(k)}}^\perp } \Big [ \int_{T^\ast_{\exp_{x^{(k)}} \tau_k \tilde v^{(k)}}W_k} e^{i\frac{\tau_k}\mu \,  ^{(k)} \phw} a_k^\rho \, \tilde \Phi_k^\rho \\   \d(T^\ast_{\exp_{x^{(k)}} \tau_k \tilde v^{(k)}}W_k) \Big ] dA^{(k)} \, dB^{(k)} \, d\tilde v^{(k)}   \Big ]  \d \tau_k \,     \d x^{(k)},
\end{gather*}
 where  $d\tilde v^{(k)}$ is a suitable measure on the set $\gamma^{(k)}((S_k)_{x^{(k)}})$ such that 
$$\d X \d \eta \equiv \tilde \Phi_k^\rho \, \d(T^\ast_{\exp_{x^{(k)}} \tau_k \tilde v^{(k)}}W_k) \, d A^{(k)} \, dB^{(k)} \,  d\tilde v^{(k)} \, d\tau_k \, dx^{(k)},$$ 
$\tilde \Phi_k^\rho$ being a Jacobian. Furthermore, a computation shows that  $\tilde \Phi_k^\rho  = |\tau_k|^{c^{(k)}+d^{(k)}-1} \, \Phi_k\circ   {'\zeta_k^\rho}$.

\subsection*{First reduction} Let us now assume that there exists a $m \in W_k$ with orbit type $G/H_j$, and let  $x^{(k)} \in M_k(H_k), v^{(k)} \in (\nu_k)_{x^{(k)}}$ be such that $m=f_k(x^{(k)},v^{(k)})$. Since we can assume that $m$ lies in a slice at $x^{(k)}$ around the $G$-orbit of $x^{(k)}$, we have $G_m \subset G_{x^{(k)}}$, see Kawakubo \cite[pages 184-185]{kawakubo}, and Bredon \cite[page 86]{bredon}. Hence, $H_j\simeq G_m$ must be conjugate to a subgroup of $H_k\simeq G_{x^{(k)}}$. Now, $G$ acts on $M_k$ with the isotropy types $(H_k),(H_{k+1}), \dots, (H_L)$. The isotropy types occuring in $W_k$ are therefore those for which the corresponding isotropy groups  $H_k,H_{k+1}, \dots, H_L$ are conjugate to a subgroup of $H_k$, and we shall denote them by
\bqn
(H_k) = (H_{i_1}), (H_{i_2}), \dots, (H_L).
\eqn
 Now, for every $x^{(k)}\in M_k(H_k)$, $ (\nu_k)_{x^{(k)}}$ is an orthogonal $ G_{x^{(k)}}$-space; therefore $G_{x^{(k)}}$ acts on  $(S_k)_{x^{(k)}}$ with   isotropy types $(H_{i_2}), \dots, (H_L)$, cp. Donnelly \cite[pp. 34]{donnelly78}. Furthermore,  by the invariant tubular neighborhood theorem, one has the isomorphism
\bqn
W_k/G \simeq (\nu_k)_{x^{(k)}}/ G_{x^{(k)}},
\eqn
so that $G$ acts on $S_k=\mklm{ v \in \nu_k: \norm {v}=1}$ with  isotropy types $(H_{i_2}), \dots, (H_L)$ as well. 
As will turn out, if $G$ acted on $S_k$ only with type $(H_L)$, the critical set of $^{(k)} \phw$ would be clean in the sense of Bott, and we could proceed to apply the stationary phase theorem to compute $I_k(\mu)$. But in general this will not be the case, and we are forced to continue with the iteration.

\subsection*{Second decomposition}

Let now $x^{(k)}\in M_k(H_k)$ be fixed. Since $\gamma^{(k)}: \nu_k \rightarrow \nu_k$ is an equivariant diffeomorphism onto its image,   $\gamma^{(k)}((S_k)_{x^{(k)}})$ is a compact $G_{x^{(k)}}$-manifold, and we consider the covering  
\bqn
\gamma^{(k)}((S_k)_{x^{(k)}}) =W_{ki_2} \cup \dots \cup W_{kL}, \qquad W_{ki_j}= f_{ki_j}(\stackrel \circ D_1(\nu_{ki_j})), \quad W_{kL}=\mathrm{Int} ( \gamma^{(k)}((S_k)_{x^{(k)}})_{L}),
\eqn
where $f_{ki_j}:\nu_{ki_j} \rightarrow  \gamma^{(k)}((S_k)_{x^{(k)}})_{i_j}$ is an invariant tubular neighborhood of $\gamma^{(k)}((S_k)_{x^{(k)}})_{i_j}(H_{i_j})$ in 
\bqn 
\gamma^{(k)}((S_k)_{x^{(k)}})_{i_j}=\gamma^{(k)}((S_k)_{x^{(k)}}) - \bigcup_{r=2}^{j-1} f_{ki_r}(\stackrel \circ D_{1/2}(\nu_{ki_r})),  \qquad j\geq 2, 
\eqn
and $f_{ki_j}(x^{(i_j)},v^{(i_j)})=(\exp_{x^{(i_j)}} \circ \gamma^{(i_j)})(v^{(i_j)})$, $x^{(i_j)} \in  \gamma^{(k)}((S_k)_{x^{(k)}})_{i_j} (H_{i_j})$, $ v ^{(i_j)} \in ( \nu_{ki_j}) _{x^{(i_j)}}$, $\gamma^{(i_j)}: \nu_{ki_j} \rightarrow \nu_{ki_j}$ being an equivariant diffeomorphism onto its image. 
Let further $\{\chi_{ki_j}\}$ denote a partition of  unity subordinated to the covering $\mklm{W_{ki_j}}$,  and define
\begin{align*}
I_{ki_j}^\rho(\mu) =&\int_{ M_k(H_k) \times (-1,1)  }\Big [\int_{ \gamma^{(k)}((S_k)_{x^{(k)}}) \times  \g_{x^{(k)}} \times \g_{x^{(k)}}^\perp} \Big [ \int_{ T^\ast_{\exp_{x^{(k)}} \tau_k \tilde v^{(k)}}W_k} e^{i\frac{\tau_k}\mu \,  ^{(k)} \phw}a_k^\rho \\ & \chi_{ki_j}
 \tilde \Phi_k^\rho \, \d(T^\ast_{\exp_{x^{(k)}} \tau_k \tilde v^{(k)}}W_k)\Big ] \, dA^{(k)} \, dB^{(k)} \, d\tilde v^{(k)}   \Big ]  \d \tau_k \,   \d x^{(k)},
\end{align*}
so that $I_k^\rho(\mu)= I_{ki_2}^\rho(\mu) + \dots + I_{kL}^\rho(\mu)$. It is important to note that the partition functions $\chi_{ki_j}$ depend smoothly on $x^{(k)}$ as a consequence of the tubular neighborhood theorem, by which in particular $\gamma^{(k)}(S_k) /G \simeq \gamma^{(k)}((S_k)_{x^{(k)}})/G_{x^{(k)}}$, and the smooth dependence in $x^{(k)}$ of the induced Riemannian metric on $\gamma^{(k)}((S_k)_{x^{(k)}})$, and  the metrics on the normal bundles $\nu_{ki_j}$.
Since $G_{x^{(k)}}$ acts on $W_{kL}$ only with type $(H_L)$, the iteration process for $I_{kL}^\rho(\mu)$ ends here. For the remaining integrals $I_{ki_j}^\rho(\mu)$ with $k < i_j < L$, let us denote by
\bqn
\frak{iso} \,\gamma^{(k)}((S_k)_{x^{(k)}})_{i_j}(H_{i_j}) \rightarrow \gamma^{(k)}((S_k)_{x^{(k)}})_{i_j}(H_{i_j})
\eqn
the isotropy algebra bundle over $\gamma^{(k)}((S_k)_{x^{(k)}})_{i_j}(H_{i_j})$, and by $\pi_{ki_j}: W_{ki_j} \rightarrow \gamma^{(k)}((S_k)_{x^{(k)}})_{i_j}(H_{i_j})$ the canonical projection.  For $x^{(i_j)} \in \gamma^{(k)}((S_k)_{x^{(k)}})_{i_j}(H_{i_j})$, consider the decomposition
\bqn
\g = \g_{x^{(k)}} \oplus \g_{x^{(k)}}^\perp =(\g_{x^{(i_j)}}\oplus \g_{x^{(i_j)}}^\perp) \oplus \g_{x^{(k)}}^\perp.
\eqn
Let further $A_1^{(i_j)}, \dots ,A_{d^{(i_j)}}^{(i_j)}$ be an orthonormal basis in $ \g_{x^{(i_j)}}^\perp$,  as well as $B_1^{(i_j)}, \dots ,B_{e^{(i_j)}}^{(i_j)}$ be an orthonormal basis in $ \g_{x^{(i_j)}}$, and $\{v_1^{(ki_j)}, \dots, v_{c^{(i_j)}}^{(ki_j)}\}$ an orthonormal frame in $\nu_{ki_j}$. Integrating along the fibers in a neighborhood of $\pi_{ki_j}^\ast \frak{iso} \, \gamma^{(k)}((S_k)_{x^{(k)}})_{i_j}(H_{i_j}) \subset W_{ki_j} \times \g_{x^{(k)}}$ then yields  for $ I_{ki_j}^\rho(\mu)$ the expression
\begin{align*}
 \int_{M_k(H_k)\times (-1,1)} &\Big [ \int_{\gamma^{(k)}((S_k)_{x^{(k)}})_{i_j}(H_{i_j})} \Big [ \int_{\pi_{ki_j}^{-1} (x^{(i_j)})\times \g_{x^{(k)}}\times \g_{x^{(k)}}^\perp} \Big [ \int_{ T^\ast_{\exp _{x^{(k)}} \tau_k \exp_{x^{(i_j)}} v ^{( i_j)}} W_k } e^{i\frac{\tau_k}\mu \, ^{(k)} \phw }   \\\times    a_k^\rho \chi_{ki_j} \,  & \Phi_{ki_j}^\rho \, \d(T^\ast_{\exp _{x^{(k)}} \tau_k \exp_{x^{(i_j)}} v ^{( i_j)}} W_k)  \Big ] d A^{(k)} \, d A^{(i_j)} \, dB^{(i_j)} \,  dv^{(i_j)}     \big ] dx^{(i_j)}  \Big ]   d\tau_k dx^{(k)},
\end{align*}
where $\Phi_{ki_j}^\rho$ is a Jacobian, and 
\bqn
\gamma^{(i_j)} \big (  \stackrel \circ D_1(\nu_{ki_j})_{x^{(i_j)}}\big )  \times \g_{x^{(i_j)}}^\perp \times   \g_{x^{(i_j)}} \ni (v^{(i_j)}, A^{(i_j)},B^{(i_j)})\mapsto (\exp_{x^{(i_j)}} v^{(i_j)},A^{(i_j)} + B^{(i_j)})=(\tilde v^{(k)},B^{(k)})
\eqn
are coordinates on $ \pi_{ki_j}^{-1}(x^{(i_j)})\times \g_{x^{(k)}}$, while $dx^{(i_j)}$, and $dA^{(i_j)}, dB^{(i_j)} ,  dv^{(i_j)} $ are suitable measures in the spaces   $\gamma^{(k)}((S_k)_{x^{(k)}})_{i_j}(H_{i_j})$, and  $\g_{x^{(i_j)}}^\perp$, $\g_{x^{(i_j)}}$, $\gamma^{(i_j)} \big (\stackrel \circ D_1(\nu_{ki_j})_{x^{(i_j)}}\big )$, respectively, such that we have the equality $ \tilde \Phi_k^\rho \d B^{(k)} \d \tilde v^{(k)}\equiv\Phi_{ki_j}^\rho \,  dA^{(i_j)} \, dB^{(i_j)} \,  dv^{(i_j)} \, dx^{(i_j)}$.

\subsection*{Second monoidal transformation}

Let us fix an $l$ such that $k< l < L$, $(H_l) \leq (H_k)$, and consider in $B_{Z_k}(W_k\times \g)$ a monoidal transformation 
\bqn 
\zeta_{kl}: B_{Z_{kl}}(B_{Z_k}(W_k\times \g)) \longrightarrow B_{Z_k}(W_k\times \g)
\eqn
with center
\bqn
Z_{kl}\simeq  \bigcup _{x^{(k)} \in M_k(H_k)}  (-1,1)\times  \frak{iso} \,  \gamma^{(k)}( (S_k)_{x^{(k)}})_l(H_l).
\eqn
Let $A^{(l)}\in \g_{x^{(l)}}^\perp$ and $B^{(l)}\in \g_{x^{(l)}}$ be arbitrary  and write  $ A^{(l)}(x^{(k)},x^{(l)},\alpha^{(l)})$ $=\sum  \alpha_i^{(l)} A_i^{(l)}(x^{(k)}, x^{(l)})\in \g_{x^{(l)}}^\perp$,  $ B^{(l)}(x^{(k)},x^{(l)},\beta^{(l)})=\sum  \beta_i^{(l)} B_i^{(l)}(x^{(l)})\in \g_{x^{(l)}}$, as well as
\bqn
 \gamma^{(l)}(v^{(l)})(x^{(k)}, x^{(l)},\theta^{(l)})= \sum _{i=1}^{c^{(l)}} \theta_i^{(l)} v_i^{(kl)}(x^{(k)},x^{(l)}).
\eqn
Then  $Z_{kl}\simeq\mklm{\alpha^{(k)}=0, \, \alpha^{(l)}=0, \, \theta^{(l)}=0}$ locally, which in particular shows that $Z_{kl}$ is a manifold. If we now cover $B_{Z_{kl}}(B_{Z_k}(W_k\times \g))$ with the standard charts, we shall see again in Section \ref{sec:8} that   modulo higher order terms the main contributions to $I^\rho_{kl}(\mu)$ come from
 the $(\theta^{(k)},\theta^{(l)})$-charts. Therefore it suffices to examine $\zeta_{kl}$ in one of these charts, in which it reads
\begin{gather*}
\zeta_{kl}^{\rho\sigma}: (x^{(k)},\tau_k, x^{(l)}, \tau_l, \tilde v^{(l)}, A^{(k)},  A^{(l)},  B^{(l)}) \stackrel{'\zeta_{kl}^{\rho\sigma}}{\longmapsto} (x^{(k)},\tau_k, x^{(l)},   \tau_l \tilde v^{(l)},  \tau_l A^{(k)},   \tau_l  A^{(l)},  B^{(l)})  \\\longmapsto (x^{(k)},\tau_k, \exp_{x^{(l)}} \tau_l \tilde v^{(l)}, \tau_l A^{(k)}, \tau_l  A^{(l)}+ B^{(l)})\equiv(x^{(k)},\tau_k, \tilde v^{(k)},A^{(k)}, B^{(k)}),
\end{gather*}
where
\bqn
\tilde v^{(l)}(x^{(k)},x^{(l)},\theta^{(l)})\in \gamma^{(l)}\big ((S_{kl}^+)_{x^{(l)}}\big ). 
\eqn
Note that $Z_{kl}$ has normal crossings with the exceptional divisor $E_k=\zeta_k^{-1}(Z_k) = \mklm{\tau_k=0}$, and that 
$$W_{kl} \simeq f_{kl} (S_{kl}^+ \times (-1,1))$$ up to a set of measure zero, where $S_{kl}$ denotes
%  \footnote{In order not to overload notation, we have denoted by $S_{kl}$ and $S_{k,l}$ two quite different sets.}
 the sphere subbundle in $\nu_{kl}$,  and we set $S_{kl}^+=\mklm { v \in S_{kl}:  v=\sum v_i v_i^{(kl)}, \, v_\sigma>0 }$ for some $\sigma$. Consequently,  the phase function factorizes according to 
\bqn 
\psi \circ (\id_{fiber} \otimes (\zeta_k^\rho \circ \zeta_{kl}^{\rho\sigma}))= \,^{(kl)} \tilde \psi^{tot}=\tau_k \, \tau_l \cdot \,  ^{(kl)} \phw,
\eqn
which in the given charts reads
\begin{align*}
\psi(\eta,X)&=\tau_k \left [ \eta \Big (\widetilde{\tau_l A^{(k)}}_{\exp_{x^{(k)}} \tau_k   \exp_{x^{(l)}} \tau_l \tilde v^{(l)}}\Big )\right. \\
&\left. +  \eta \Big ( (\exp_{x^{(k)}})_{\ast, \tau_k  \exp_{x^{(l)}} \tau_l \tilde v^{(l)}}[\lambda(\tau_l A^{(l)}+B^{(l)})  \exp_{x^{(l)}}\tau_l \tilde v^{(l)} ] \Big ) \right] \\
&=\tau_k  \tau_l \left [ \eta \Big (\widetilde{ A^{(k)}}_{\exp_{x^{(k)}} \tau_k   \exp_{x^{(l)}} \tau_l \tilde v^{(l)}}\Big )+  \eta \Big ( (\exp_{x^{(k)}})_{\ast, \tau_k  \exp_{x^{(l)}} \tau_l \tilde v^{(l)}}[\lambda( A^{(l)})  \exp_{x^{(l)}}\tau_l \tilde v^{(l)} ] \Big ) \right. \\
&\left. +  \eta \Big ( (\exp_{x^{(k)}})_{\ast, \tau_k  \exp_{x^{(l)}} \tau_l \tilde v^{(l)}}\big [ (\exp_{x^{(l)}})_{\ast,\tau_l \tilde v^{(l)}} [ ( \lambda(B^{(l)})  \tilde v ^{(l)}]\big ] \Big ) \right] \\
\end{align*}
where we took into account that
$$\lambda( B^{(l)})  \exp_{x^{(l)}}\tau_l \tilde v^{(l)} = \frac d {dt}  \exp_{x^{(l)}}\big (L_{\e{-t B^{(l)}}} \big )_{\ast,x^{(k)}} \tau_l  \tilde v^{(l)} _{|t=0}=(\exp_{x^{(l)}})_{\ast,\tau_l \tilde v^{(l)}} \big ( \lambda(B^{(l)})  \tau_l \tilde v ^{(l)}\big ).
$$
Since the weak transforms $^{kl} \tilde \psi^{wk}$ have no critical points in the $(\theta^{(k)},\alpha^{(l)})$-charts, modulo lower order terms,  $ I_{kl}^\rho(\mu)$ is given by a sum of integrals of the form  
\begin{align*}
I_{kl}^{\rho\sigma}(\mu)
&=\int_{M_k(H_k)\times (-1,1)} \Big [ \int_{\gamma^{(k)}((S_k)_{x^{(k)}})_l(H_{l})\times (-1,1)} \Big [ \int_{\gamma^{(l)}((S_{kl})_{x^{(l)}}) \times \g_{x^{(l)}} \times \g_{x^{(l)}}^\perp \times \g_{x^{(k)}}^\perp} \Big [ \int_{T^\ast _{m^{(kl)}}W_k }  \\ &\times  e^{i\frac{\tau_k\tau_l }\mu  \, ^{(kl)} \phw}  a_{kl}^{\rho\sigma} \,  \tilde  \Phi_{kl}^{\rho\sigma} \, \d(T^\ast _{m^{(kl)}}W_k) \Big ] d A^{(k)} \, d A^{(l)} \, dB^{(l)} \,  d\tilde v^{(l)}   \Big ]  d\tau_l \,dx^{(l)}  \Big ] \, d\tau_k \,   dx^{(k)},
\end{align*}
where we wrote $m^{(kl)}=\exp_{x^{(k)}} \tau_k \exp_{x^{(l)}} \tau_l \tilde v^{(l)}$, $a^{\rho\sigma}_{kl}$ are smooth amplitudes with compact support in a $(\theta^{(k)},\theta^{(l)})$-chart labeled by the indices $\rho, \sigma$, and $d\tilde v^{(l)}$ is a suitable measure in $\gamma^{(l)}((S_{kl})_{x^{(l)}})$ such that we have the equality 
$$\d X \d \eta \equiv \tilde \Phi_{kl}^{\rho\sigma} \, \d(T^\ast _{m^{(kl)}}W_k)\, d A^{(k)} \, dA^{(l)} \, dB^{(l)} \,  d\tilde v^{(l)}  \, d\tau_l \, dx^{(l)}\, d\tau_k \, dx^{(k)}.$$
Furthermore, $\tilde \Phi_{kl}^{\rho\sigma} = |\tau_l | ^{c^{(l)} +d^{(k)} +d^{(l)} -1} \Phi_{kl}^\rho \circ {'\zeta_{kl}^{\rho\sigma}}$.

\subsection*{Second reduction} Now, the group $G_{x^{(k)}}$ acts on $\gamma^{(k)}((S_k)_{x^{(k)}})_l$ with the isotropy types $(H_l)=(H_{i_j}),(H_{i_{j+1}}), \dots, (H_L)$. By the same arguments given in the first reduction, the isotropy types occuring in $W_{kl}$ constitute a subset of these types, and we shall denote them by
\bqn
(H_l) = (H_{i_{r_1}}), (H_{i_{r_2}}), \dots, (H_L).
\eqn
Consequently, $G_{x^{(k)}}$ acts on $S_{kl}$ with the isotropy types $(H_{i_{r_2}}), \dots, (H_L)$. Again, if $G$ acted on $S_{kl}$ only with type $(H_L)$, we shall see later that the critical set of $^{(kl)} \phw$ would be clean. However, in general this will not be the case, and we have to continue with the iteration.

\subsection*{N-th decomposition}
Denote by $\Lambda \leq L$ the maximal number of elements that a totally ordered subset of the set of isotropy types can have. Assume that $3 \leq N < \Lambda$, and let   $\mklm{(H_{i_1}), \dots , (H_{i_{N}})}$ be a totally ordered subset of the set of isotropy types with $i_1<\dots < i_N<L$. Let   $f_{i_1}$, $f_{i_1i_2}$, $S_{i_1}$, $S_{i_1i_2}$, as well as  $x^{(i_1)}\in M_{i_1}(H_{i_1}), \quad x^{(i_2)}\in \gamma^{(i_{1})}\big ((S_{i_1}^+)_{x^{(i_{1})}}\big ) _{i_2}(H_{i_2})$ 
be defined as in the first two iteration steps. Let now $j<N$, and assume that $f_{i_1\dots i_{j}}$,  $S_{i_1\dots i_{j}}$,... have already been defined. For each $x^{(i_{N-1})}$, let $\gamma^{(i_{N-1})}((S_{i_1\dots i_{N-1}})_{x^{(i_{N-1})}})_{i_{N}}$ be the submanifold with corners of the $G_{x^{(i_{N-1})}}$-manifold $\gamma^{(i_{N-1})}((S_{i_1\dots i_{N-1}})_{x^{(i_{N-1})}})$ from which all the isotropy types  less than $(H_{i_{N}})$ have been removed. Consider  the invariant tubular neighborhood 
$$
f_{i_1\dots i_{N}}=\exp \circ \gamma^{(i_{N})}: \nu _{i_1\dots i_{N}} \rightarrow\gamma^{(i_{N-1})}((S_{i_1\dots i_{N-1}})_{x^{(i_{N-1})}})_{i_{N}}
$$
 of the set of maximal singular orbits $\gamma^{(i_{N-1})}((S_{i_1\dots i_{N-1}})_{x^{(i_{N-1})}})_{i_{N}}(H_{i_{N}})$, and define $S_{i_1\dots i_{N}}$  as the sphere subbundle in $ \nu _{i_1\dots i_{N}}$ over $\gamma^{(i_{N-1})}((S_{i_1\dots i_{N-1}})_{x^{(i_{N-1})}})_{i_N} (H_{i_N})$. Put further $W_{i_1\dots i_{N}}=f_{i_1\dots i_{N}}(\stackrel{\circ}{D_1}(\nu_{i_1\dots i_{N}}))$ and denote the corresponding integral in the decomposition of $I_{i_1\dots i_{N-1}}^{\rho_{i_1} \dots \rho_{i_{N-1}}}(\mu)$ by $I_{i_1\dots i_{N}}^{\rho_{i_1} \dots \rho_{i_{N-1}}}(\mu)$. For a point $x^{(i_{N})}\in \gamma^{(i_{N-1})}((S_{i_1\dots i_{N-1}})_{x^{(i_{N-1})}})_{i_{N}}(H_{i_{N}})$ we then consider the decomposition
\bqn 
 \g_{x^{(i_{N-1})}}= \g_{x^{(i_N)}}\oplus \g_{x^{(i_N)}}^\perp, 
\eqn
and set $d^{(i_N)}=\dim \g_{x^(i_N)}^\perp$, $ e^{(i_N)}=\dim \g_{x^(i_N)}$, yielding  the decomposition
\begin{gather}
\label{eq:decomp}
\g = \g_{x^{(i_1)}} \oplus \g_{x^{(i_1)}}^\perp =(\g_{x^{(i_2)}}\oplus \g_{x^{(i_2)}}^\perp) \oplus \g_{x^{(i_1)}}^\perp =\dots = \g_{x^{(i_N)}}\oplus \g_{x^{(i_N)}}^\perp \oplus \cdots \oplus \g_{x^{(i_1)}}^\perp. 
\end{gather}
Denote by  $\{ A_r^{(i_N)}(x^{(i_1)},\dots,x^{(i_N)})\}$ a basis of $\g_{x^(i_N)}^\perp$, and by $\{ B_r^{(i_N)}(x^{(i_1)},\dots,x^{(i_N)})\}$ a basis of $\g_{x^(i_N)}$. For arbitrary elements  $A^{(i_N)}\in \g_{x^{(i_N)}}^\perp$ and $B^{(i_N)}\in \g_{x^{(i_N)}}$   write  
\begin{align*}
A^{(i_N)} &=\sum_{r=1}^{d^{(i_N)}} \alpha^{(i_N)}_r A_r^{(i_N)}(x^{(i_1)},\dots,x^{(i_N)}), \qquad B^{(i_N)} =\sum_{r=1}^{e^{(i_N)}} \beta^{(i_N)}_r B_r^{(i_N)}(x^{(i_1)},\dots,x^{(i_N)}),
\end{align*}
and put
\bqn
\tilde v^{(i_N)}(x^{(i_N)},\theta^{(i_N)})= \gamma^{(i_N)} \left ( \Big (v_\rho^{(i_1\dots i_N)}(x^{(i_N)})+ \sum _{r\not=\rho}^{c^{(i_N)}} \theta_r^{(i_N)} v_r^{(i_1\dots i_N)}(x^{(i_N)})\Big )\Big / \sqrt{1 + \sum\limits_{r\not=\rho} (\theta_r^{(i_N)})^2} \right ) 
\eqn
for some $\rho$, where $\mklm{v_r^{(i_1\dots i_N)}}$ is an orthonormal frame in $\nu_{i_1\dots  i_N}$. 
Finally, we shall use the notations
\begin{align*}
m^{(i_j\dots i_{N})}&=\exp_{x^{(i_j)}}[\tau_{i_j} \exp_{x^(i_{j+1})}[\tau_{i_{j+1}}\exp_{x^(i_{j+2})}[\dots [ \tau_{i_{N-2}}\exp_{x^(i_{N-1})}[\tau_{i_{N-1}}\exp_{x^{(i_N)}}[ \tau_{i_N} \tilde v ^{(i_N)}]]] \dots ]]], \\
X^{(i_j\dots i_{N})}&={\tau_{i_j} \cdots \tau_{i_N}A^{(i_j)}}+{\tau_{i_{j+1}} \cdots \tau_{i_N}A^{(i_{j+1})}}+\dots +{\tau_{i_{N-1}}  \tau_{i_N}A^{(i_{N-1})}} +{\tau_{i_N} A^{(i_N)}}  +B^{(i_N)},
\end{align*}
where $j=1,\dots ,N$. 

 \subsection*{N-th monoidal transformation} 
 Let the monoidal transformations $\zeta_{i_1}$ and $\zeta_{i_1i_2}$ be defined as in the first two iteration steps, and  assume that $\zeta_{i_1\dots i_j}$ have already been defined for $j<N$. Consider the  monoidal transformation 
 \bqn 
 \zeta_{i_1\dots i_N}: B_{Z_{i_1\dots i_N}}(B_{Z_{i_1\dots i_{N-1}}}( \dots B_{Z_{i_1}}(W_k \times \g) \dots )) \longrightarrow B_{Z_{i_1\dots i_{N-1}}}( \dots B_{Z_{i_1}} (W_k \times \g) \dots )
 \eqn
  with center
 \begin{gather*}
 Z_{i_1\dots i_N}\simeq \bigcup _{x^{(i_1)}, \dots, x^{(i_{N-1})}} (-1,1)^{N-1} \times \mathfrak{iso} \gamma^{(i_{N-1})}((S_{i_1\dots i_{N-1}})_{x^{(i_{N-1})}})_{i_N}(H_{i_N}).
 \end{gather*}
 Denote by $\zeta_{i_1}^{\rho_{i_1}} \circ \dots \circ\zeta_{i_1\dots i_N}^{\rho_{i_1}\dots \rho_{i_N}}$ a local realization of the sequence of monoidal transformations $\zeta_{i_1}\circ \dots \circ \zeta_{i_1\dots i_N}$ in a set of $(\theta^{(i_1)}, \dots ,\theta^{(i_N)})$-charts labeled by the indices $\rho_{i_1}, \dots, \rho_{i_N}$.  Now, for an arbitrary element $B ^{(i_1)}\in \g_{i_1}$ one  computes
 \begin{align}
 \label{eq:hola}
 \begin{split}
 (\tilde B ^{i_1)})_{ m^{(i_1\dots i_{N})}}&=\frac d{dt} \e{-t B^{(i_1)}} \cdot  m^{(i_1\dots i_{N})}_{|t=0}= \frac d {dt} \exp_{x^{(i_1)}} \big [(\e{-t B^{(i_1)}})_{\ast, x^{(i_1)}} [\tau_{i_1}  m^{(i_2\dots i_{N})}]\big ]_{|t=0}\\
 &=(\exp_{x^{(i_1)}})_{\ast, \tau_{i_1}  m^{(i_2\dots i_{N})}}[ \lambda(B^{(i_1)})\tau_{i_1}  m^{(i_2\dots i_{N})}].
 \end{split}
 \end{align}
 By iteration we obtain for arbitrary $A ^{(i_j)}\in \g_{i_j}^\perp$, $2
 \leq j\leq N$, 
  \begin{align}
  \begin{split}
  \label{eq:tilde}
 (\tilde A ^{i_j)}&)_{ m^{(i_1\dots i_{N})}}= \frac d {dt} \exp_{x^{(i_1)}} \big [\tau_{i_1} \exp_{x^{(i_2)}} [\dots [\tau_{i_{j-1}}  (\e{-t A^{(i_j)}})_{\ast, x^{(i_1)}} m^{(i_j\dots i_{N})}]\dots ]\big ]_{|t=0}\\
 &=(\exp_{x^{(i_1)}})_{\ast, \tau_{i_1}  m^{(i_2\dots i_{N})}} \big [ \tau_{i_1} (\exp_{x^{(i_2)}})_{\ast, \tau_{i_2} m^{(i_3\dots i_N)}}[ \dots [\tau_{i_{j-1}}  \lambda(A^{(i_j)})  m^{(i_j\dots i_{N})}]\dots ]\big ],
 \end{split}
 \end{align}
 and similarly
   \begin{align}
  \begin{split}
  \label{eq:tildeB}
 (\tilde B ^{i_N)}&)_{ m^{(i_1\dots i_{N})}}=(\exp_{x^{(i_1)}})_{\ast, \tau_{i_1}  m^{(i_2\dots i_{N})}} \big [ \tau_{i_1} (\exp_{x^{(i_2)}})_{\ast, \tau_{i_2} m^{(i_3\dots i_N)}}[ \dots [\tau_{i_{N}}  \lambda(B^{(i_N)})  \tilde v^{(i_N)}]\dots ]\big ].
 \end{split}
 \end{align}
 As a consequence, the phase function factorizes locally according to 
\bqn
\, ^{(i_1\dots i_N)} \tilde \psi^{tot}=\psi \circ (\id_{fiber}\otimes ( \zeta_{i_1}^{\rho_{i_1}} \circ \dots \circ\zeta_{i_1\dots i_N}^{\rho_{i_1}\dots \rho_{i_N}}))= \J( \eta_{m^{(i_1 \dots i_N)}})( X^{(i_1\dots i_N)})=\tau_{i_1} \cdots \tau_{i_N} \, ^{(i_1\dots i_N)}\tilde \psi^ {wk},
\eqn
where in the given charts $\, ^{(i_1\dots i_N)}\tilde \psi^ {wk}$ is given by
\begin{align}
\label{eq:phasewk}
\begin{split}
& \eta_{m^{(i_1 \dots i_N)}} \Big (\widetilde{ A^{(i_1)}}_{m^{(i_1 \dots i_N)}}\Big ) + \sum_{j=2}^N  \eta_{m^{(i_1 \dots i_N)}} \Big ( (\exp_{x^{(i_1)}})_{\ast, \tau_{i_1}  m^{(i_2\dots i_N)} } \\
 & \big [ (\exp_{x^{(i_2)}})_{\ast,\tau_{i_2}  m^{(i_3\dots i_N)}} \big [ \dots (\exp_{x^{(i_{j-1})}})_{\ast, \tau_{i_{j-1}}m^{(i_j\dots i_N)}}[\lambda(A^{(i_j)})  m ^{(i_j\dots i_N)}] \dots \big ]
\big ] \Big ) \\
& +  \eta_{m^{(i_1 \dots i_N)}} \Big ( (\exp_{x^{(i_1)}})_{\ast, \tau_{i_1}  m^{(i_2\dots i_N)} }\big [ (\exp_{x^{(i_2)}})_{\ast,\tau_{i_2}  m^{(i_3\dots i_N)}} \big [ \dots \\
&(\exp_{x^{(i_{N})}})_{\ast, \tau_{i_{N}}\tilde v^{(i_N)}} [\lambda(B^{(i_N)})  \tilde v ^{(i_N)}] \dots \big ]
\big ] \Big ).
\end{split}
\end{align}
Modulo lower order terms, $I(\mu)$ is then given by a sum of integrals of the form
\begin{gather}
\label{eq:N}
\begin{split}
I_{i_1\dots i_{N}}^{\rho_{i_1} \dots \rho_{i_{N}}}(\mu)\qquad \qquad \qquad \qquad\qquad \qquad\qquad \qquad\qquad \qquad \\
 =\int_{M_{i_1}(H_{i_1})\times (-1,1)} \Big [ \int_{\gamma^{(i_1)}((S_{i_1})_{x^{(i_1)}})_{i_2}(H_{i_2})\times (-1,1)} \dots \Big [  \int_{\gamma^{(i_{N-1})}((S_{i_1\dots i_{N-1}})_{x^{(i_{N-1})}})_{i_{N}}(H_{i_{N}})\times (-1,1)} \\
\Big [ \int_{\gamma^{(i_{N})}((S_{i_1\dots i_{N}})_{x^{(i_N)}})\times \g_{x^{(i_{N})}}\times \g_{x^{(i_{N})}}^\perp \times \cdots \times \g_{x^{(i_{1})}}^\perp\times T^\ast _{m^{(i_1\dots i_N)}}W_{i_1}}  e^{i\frac {\tau_1 \dots \tau_N}\mu \, ^{(i_1\dots i_N)} \tilde \psi ^{wk}}  \, a_{i_1\dots i_N}^{\rho_{i_1} \dots \rho_{i_{N}}} \,   \tilde \Phi_{i_1\dots i_N}^{\rho_{i_1} \dots \rho_{i_{N}}} \\
  \d(T^\ast _{m^{(i_1\dots i_N)}}W_{i_1})  \d A^{(i_1)} \dots  \d A^{(i_N)}  \d B^{(i_N)} \d \tilde v^{(i_N)} \Big ]  \d \tau_{i_N} \d x^{(i_{N})} \dots  \Big ] \d \tau_{i_2} \d x^{(i_{2})} \Big ]\d \tau_{i_1} \d x^{(i_{1})}.
\end{split}
\end{gather}
Here
$a_{i_1\dots i_N}^{\rho_{i_1} \dots \rho_{i_{N}}}$ are amplitudes with compact support in a system of  $(\theta^{(i_1)}, \dots, \theta^{(i_N)})$-charts labelled by the indices $\rho_{i_1} \dots \rho_{i_{N}}$, while 
\begin{align*}
\tilde \Phi_{i_1\dots i_N}^{\rho_{i_1} \dots \rho_{i_{N}}} &=\prod_{j=1}^N |\tau_{i_j}|^{c^{(i_j)}+\sum_{r=1}^j d^{(i_r)}-1}\Phi_{i_1\dots i_N}^{\rho_{i_1} \dots \rho_{i_{N}}},
\end{align*}
where $\Phi_{i_1\dots i_N}^{\rho_{i_1} \dots \rho_{i_{N}}} $ are smooth functions which do not depend on the variables $\tau_{i_j}$.

\subsection*{N-th reduction} For each $x^{(i_{N-1})}$, the isotropy group $G_{x^{(i_{N-1})}}$ acts on $ \gamma^{(i_{N-1})} ((S_{i_1\dots i_{N-1}})_{x^{(i_{N-1})}})_{i_{N}}$ by the types $(H_{i_N}), \dots, (H_L)$. The types occuring in $W_{i_1 \dots i_N}$ constitute a subset of these, and   $G_{x^{(i_{N-1})}}$ acts on  the sphere bundle $S_{i_1\dots i_N}$ over the submanifold $\gamma^{(i_{N-1})}((S_{i_1\dots i_{N-1}})_{x^{(i_{N-1})}})_{i_N} (H_{i_N})\subset W_{i_1 \dots i_N}$  with one type less.

 \medskip

\subsection*{End of iteration}

As before, let $\Lambda\leq L$ be the maximal number of elements of a totally ordered subset of the set of isotropy types. After  maximally $N=\Lambda-1$ steps, the end of the iteration is reached.

\section{Phase analysis of the weak transforms. Smoothness of the critical sets}

We shall now prove the smoothness of the critical sets of the weak transforms. We continue with  the notation of the previous sections, and consider a sequence of local monoidal transformations 
 $ \zeta^{\rho_{i_1}}_{i_1} \circ \dots \circ  \zeta^{\rho_{i_1}\dots \rho_{i_{N}}}_{i_1\dots i_{N}}$ corresponding to a totally ordered subset  $\mklm{(H_{i_1}), \dots, (H_{i_{N}})}$ of non-principal isotropy types  that are maximal in the sense that, if there is an isotropy type $(H_{i_{N+1}})$ with $i_N < i_{N+1}$ such that $\mklm{(H_{i_1}), \dots , (H_{i_{N+1}}) }$ is a totally ordered subset, then  $(H_{i_{N+1}})= (H_L)$. For later purposes, let us  define certain geometric distributions $E^{(i_j)}$ and $F^{(i_N)}$ on $M$ by setting
\begin{equation}
\label{eq:EF}
\begin{split}
E^{(i_1)}_{m^{(i_1 \dots i_N)}}&=\mathrm{Span} \{  \tilde Y _{m^{(i_1 \dots i_N)}}: Y \in \g_{x^{(i_1)}} ^\perp\}, \\ 
E^{(i_j)} _{m^{(i_1 \dots i_N)}}&= (\exp_{x^{(i_1)}})_{\ast, \tau_{i_1}  m^{(i_2\dots i_N)} } \dots  (\exp_{x^{(i_{j-1})}})_{\ast, \tau_{i_{j-1}}m^{(i_j\dots i_N)}}[\lambda(\g_{x^{(i_j)}}^\perp)  m ^{(i_j\dots i_N)}]
, \\
F^{(i_N)}_{m^{(i_1 \dots i_N)}} &=  (\exp_{x^{(i_1)}})_{\ast, \tau_{i_1}  m^{(i_2\dots i_N)} } \dots (\exp_{x^{(i_{N})}})_{\ast, \tau_{i_{N}}\tilde v^{(i_N)}}[\lambda(\g_{x^{(i_N)}})  \tilde v ^{(i_N)}],
\end{split}
\end{equation}
where $2 \leq j \leq N$. Note that by \eqref{eq:decomp}, \eqref{eq:tilde} and \eqref{eq:tildeB} we have
\begin{align}
\label{eq:tangent}
T_{m^{(i_1 \dots i_N)}} ( G\cdot m^{(i_1 \dots i_N)})=E^{(i_1)}_{m^{(i_1 \dots i_N)}}\oplus  \bigoplus _{j=2}^N \tau_{i_1}\dots \tau_{i_{j-1}} E^{(i_j)} _{m^{(i_1 \dots i_N)}}  \oplus  \tau_{i_1}\dots \tau_{i_N} F^{(i_N)}_{m^{(i_1 \dots i_N)}} .
\end{align}
By construction, for $\tau_{i_j}\not=0$, $1\leq j\leq N$, the $G$-orbit through $m^{(i_1\dots i_N)}$ is of principal type $G/H_L$, which amounts to the fact that $G_{x^{(i_{N-1})}}$ acts on $S_{i_1\dots i_N}$ only with the isotropy type $(H_L)$, where we understand that $G_{x^{(i_{0})}}=G$.  We then have the following

\begin{theorem}
\label{thm:I}
Let  $\mklm{(H_{i_1}), \dots, (H_{i_{N}})}$ be a maximal, totally ordered subset of non-principal isotropy types, and 
  $ \zeta^{\rho_{i_1}}_{i_1} \circ \dots \circ  \zeta^{\rho_{i_1}\dots \rho_{i_N}}_{i_1\dots i_N}$  a corresponding sequence of local monoidal transformations  in a set of $(\theta^{(i_1)}, \dots, \theta^{(i_N)})$-charts labeled by the indices $\rho_{i_1},\dots ,\rho_{i_N}$.  Let $\eta_{m^{(i_1 \dots i_N)}}  \in \pi^{-1} (m^{(i_1\dots i_N)})$, and consider the factorization 
\bqn
\J( \eta_{m^{(i_1 \dots i_N)}})( X^{(i_1\dots i_N)})= \, ^{(i_1\dots i_N)} \tilde \psi^{tot}=
\tau_{i_1} \cdots \tau_{i_N} \, ^{(i_1\dots i_N)}\tilde \psi^ {wk, \, pre}
\eqn
of the phase function $\psi$ after $N$ iteration steps, where $\, ^{(i_1\dots i_N)}\tilde \psi^ {wk,pre}$ is given by \eqref{eq:phasewk}.\footnote{Note that $\, ^{(i_1\dots i_N)}\tilde \psi^ {wk,pre}$ was denoted in \eqref{eq:phasewk} by $\, ^{(i_1\dots i_N)}\tilde \psi^ {wk}$.} Let further 
\bqn 
 \, ^{(i_1\dots i_N)}\tilde \psi^ {wk}
\eqn
denote the pullback of  $ \, ^{(i_1\dots i_N)}\tilde \psi^ {wk,\,pre}$ along the  substitution $\tau=\delta_{i_1\dots i_N}(\sigma)$ given by the sequence of monoidal transformations
\begin{align*}
\delta_{i_1\dots i_N}: (\sigma_{i_1}, \dots \sigma_{i_N}) &\mapsto \sigma_{i_1}( 1, \sigma_{i_2}, \dots, \sigma_{i_N})= (\sigma_{i_1}', \dots ,\sigma_{i_N}')\mapsto \sigma_{i_2}'(\sigma_{i_1}',1,\dots, \sigma_{i_N}')= (\sigma_{i_1}'', \dots, \sigma_{i_N}'')\\
 &\mapsto \sigma_{i_3}''(\sigma_{i_1}'',\sigma_{i_2}'', 1,\dots, \sigma_{i_N}'')= \cdots \mapsto \dots = (\tau_{i_1}, \dots ,\tau_{i_N}).
\end{align*}
Then the critical set $\Crit(\, ^{(i_1\dots i_N)} \phw)$ of $\, ^{(i_1\dots i_N)} \phw$ is given by all points
$$(\sigma_{i_1}, \dots, \sigma_{i_N}, x^{(i_1)}, \dots, x^{(i_N)},  \tilde v ^{(i_N)}, A^{(i_1)}, \dots, A^{(i_N)}, B^{(i_N)}, \eta_{m^{(i_1 \dots i_N)}}) $$
satisfying the conditions

\medskip
\begin{tabular}{ll}
\emph{(I)} &  $A^{(i_j)} =0$ for all $j=1,\dots,N$, and $\lambda(  B^{(i_N)})\tilde v^{(i_N)}= 0$; \\[2pt]
\emph{(II)} &  $\eta_{m^{(i_1 \dots i_N)}} \in \mathrm{Ann}\big (E^{(i_j)} _{m^{(i_1 \dots i_N)}}\big )$ for all $j=1,\dots, N$; \\[2pt]
\emph{(III)} &   $ \eta_{m^{(i_1 \dots i_N)}}\in\mathrm{Ann} \big (  F^{(i_N)} _{m^{(i_1 \dots i_N)}}\big )$.
\end{tabular}
\medskip

\noindent
Furthermore,  $\Crit(\, ^{(i_1\dots i_N)} \phw)$ is a $\Cinft$-submanifold of codimension $2\kappa$, where $\kappa=\dim G/H_L$ is the dimension of a principal orbit.
\end{theorem}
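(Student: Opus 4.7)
The plan is to exploit the linearity of the weak transform $\, ^{(i_1\dots i_N)}\tilde\psi^{wk,pre}$, and hence of $\, ^{(i_1\dots i_N)}\phw$, in the group coordinates $A^{(i_j)},B^{(i_N)}$ and in the cotangent variable $\eta_{m^{(i_1\dots i_N)}}$. Formula \eqref{eq:phasewk} allows one to write
\begin{equation*}
\, ^{(i_1\dots i_N)}\tilde\psi^{wk,pre}=\eta_{m^{(i_1\dots i_N)}}(V),\qquad V=\tilde A^{(i_1)}_{m^{(i_1\dots i_N)}}+\sum_{j=2}^N L_j(A^{(i_j)})+M(B^{(i_N)}),
\end{equation*}
where, by the very definition \eqref{eq:EF}, the coefficient of each $A^{(i_j)}$ lies in $E^{(i_j)}_{m^{(i_1\dots i_N)}}$ and the coefficient of $B^{(i_N)}$ lies in $F^{(i_N)}_{m^{(i_1\dots i_N)}}$. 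Thus the partial derivatives of $\, ^{(i_1\dots i_N)}\phw$ in the coordinates $\alpha^{(i_j)}_r$ of $A^{(i_j)}$ vanish precisely when $\eta_{m^{(i_1\dots i_N)}}\in\mathrm{Ann}(E^{(i_j)}_{m^{(i_1\dots i_N)}})$ for each $j$, which gives (II); the derivatives in the $\beta^{(i_N)}_r$ yield (III) in the same way.

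To derive (I), I would next examine the derivative in the cotangent fibre direction: by linearity in $\eta$ this forces $V=0$. The purpose of the additional monoidal substitution $\tau=\delta_{i_1\dots i_N}(\sigma)$ is to extract all the scalar factors appearing in \eqref{eq:tangent}, so that on the final resolution space the decomposition
\begin{equation*}
T_{m^{(i_1\dots i_N)}}(G\cdot m^{(i_1\dots i_N)})=E^{(i_1)}\oplus \tau_{i_1}E^{(i_2)}\oplus\cdots\oplus \tau_{i_1}\cdots\tau_{i_{N-1}}E^{(i_N)}\oplus \tau_{i_1}\cdots\tau_{i_N}F^{(i_N)}
\end{equation*}
persists as a uniform direct sum, including across the exceptional divisor. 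Granting this, $V=0$ forces each summand of $V$ to vanish separately; combining $G_{m^{(i_1\dots i_N)}}\subset G_{x^{(i_j)}}$ with $\g_{x^{(i_j)}}^\perp\cap\g_{m^{(i_1\dots i_N)}}=\{0\}$ and the injectivity of the exponential differentials entering each $L_j$, one obtains $A^{(i_j)}=0$ for all $j$ and $\lambda(B^{(i_N)})\tilde v^{(i_N)}=0$, which is exactly (I). The remaining derivatives, in $x^{(i_j)}$, $\sigma_{i_j}$ and $\tilde v^{(i_N)}$, will then vanish automatically: expanding $\partial_\xi(\eta_m(V))$ by the product rule, the first term dies because $V=0$ at any candidate critical point, while $\eta_m(\partial_\xi V)$ pairs $\eta_{m^{(i_1\dots i_N)}}$ with a vector that, after substituting $A^{(i_j)}=0$ and $\lambda(B^{(i_N)})\tilde v^{(i_N)}=0$, still lies in $\bigoplus_j E^{(i_j)}\oplus F^{(i_N)}$ and is therefore annihilated by (II)–(III).

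Finally, to establish that $\Crit(\, ^{(i_1\dots i_N)}\phw)$ is a smooth submanifold of codimension $2\kappa$, I would count the independent conditions imposed by (I)–(III): $A^{(i_j)}=0$ contributes $\sum_j d^{(i_j)}=d-e^{(i_N)}$ linear equations on $\g$; at a point of the sphere bundle where the $G_{x^{(i_{N-1})}}$-stabilizer of $\tilde v^{(i_N)}$ is of principal type $(H_L)$, the equation $\lambda(B^{(i_N)})\tilde v^{(i_N)}=0$ cuts $B^{(i_N)}$ down by $e^{(i_N)}-\dim H_L$ dimensions; and (II)–(III), via the direct-sum decomposition above, impose $\sum_j\dim E^{(i_j)}+\dim F^{(i_N)}=\kappa$ independent conditions on the cotangent variable. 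These sum to $d-\dim H_L+\kappa=2\kappa$, matching the regular-value count in Proposition \ref{prop:regasymp}. Smoothness will then follow from the transversal independence of these three blocks of equations, which is itself a direct consequence of the direct-sum structure. The main obstacle, as throughout, is a rigorous verification that after $\delta_{i_1\dots i_N}$ the decomposition \eqref{eq:tangent} really is a uniform direct sum on the whole resolution space, since it is precisely this uniformity—not available for the naive factorization $^{(i_1\dots i_N)}\tilde\psi^{wk,pre}$ alone—that allows one to pass from $V=0$ to the vanishing of the individual components and hence to the clean conditions (I).
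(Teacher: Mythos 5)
Your derivation of (II) and (III) from the $\alpha^{(i_j)}$- and $\beta^{(i_N)}$-derivatives, and of ``$V=0$'' from the fibre ($p$-)derivative, is the right start, but the step from $V=0$ to condition (I) rests entirely on the claim that $E^{(i_1)}_{m^{(i_1\dots i_N)}}+\dots+E^{(i_N)}_{m^{(i_1\dots i_N)}}+F^{(i_N)}_{m^{(i_1\dots i_N)}}$ is a \emph{direct} sum at every point of the resolution space, in particular on the exceptional divisor where all $\tau_{i_j}$ vanish -- and this is exactly what you leave unproved, while attributing it to the substitution $\delta_{i_1\dots i_N}$. That substitution contributes nothing to the linear independence: it only turns $\tau_{i_1}\cdots\tau_{i_N}$ into a monomial in $\sigma$, so that the vanishing of one $\sigma_{i_j}$ forces all $\tau_{i_l}$ to vanish. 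The independence on the divisor is a geometric fact about the iterated slice representations; in the paper it is obtained by introducing the nested normal spaces $V^{(i_1\dots i_j)}=N_{x^{(i_j)}}(G_{x^{(i_{j-1})}}\cdot x^{(i_j)})$ of \eqref{eq:V} and checking $E^{(i_j)}_{x^{(i_1)}}\subset V^{(i_1\dots i_{j-1})}$, $E^{(i_j)}_{x^{(i_1)}}\cap V^{(i_1\dots i_j)}=\mklm{0}$, $F^{(i_N)}_{x^{(i_1)}}\subset V^{(i_1\dots i_N)}$, yielding \eqref{eq:directsum}. Without that argument the equivalence $\gd_p\,^{(i_1\dots i_N)}\phw=0\Leftrightarrow$ (I) is not established precisely where the desingularization is needed.

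The second gap is your treatment of the remaining derivatives. The assertion that $\eta_{m}(\gd_\xi V)$ pairs $\eta$ with a vector ``still lying in $\bigoplus_j E^{(i_j)}\oplus F^{(i_N)}$'' is unjustified and false as stated: after inserting $A^{(i_j)}=0$ and $\lambda(B^{(i_N)})\tilde v^{(i_N)}=0$, the surviving term is of the form $\eta_m\big(\Psi[\lambda(B^{(i_N)})\,\gd_\xi\tilde v^{(i_N)}]\big)$, and for $B\in\g_{\tilde v^{(i_N)}}$ and an arbitrary vector $w$ the element $\lambda(B)w$ need not lie in $\lambda(\g_{x^{(i_N)}})\tilde v^{(i_N)}$ (already $\SO(2)$ acting on $\R^3$ by rotations about $e_3$ with $\tilde v=e_3$ gives a counterexample). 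What rescues the statement is the maximality of the chain: the orbit of $\tilde v^{(i_N)}$ is of principal type, so its stabilizer acts trivially on the normal space to the orbit. The paper makes this the pivot of the proof -- it proves a separate lemma that the $G_{x^{(i_N)}}$-orbit of $\tilde v^{(i_N)}$ in $V^{(i_1\dots i_N)}$ is principal, uses the triviality of the normal action to show that (I)--(III) force $\tilde B^{(i_N),\mathrm{v}}_{\eta}=0$, identifies the locus $\mklm{\text{(I)--(III)},\ \sigma_{i_1}\cdots\sigma_{i_N}\neq0}$ with the genuine critical set of $\psi$ pulled back, and then obtains the full critical set as the closure of this set, using the $\sigma$-independence of $\dim E^{(i_j)}$, $\dim F^{(i_N)}$ and $\dim\g_{\tilde v^{(i_N)}}$ (cf.\ \eqref{eq:G}, \eqref{eq:kappa}). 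If you prefer your direct computation, it can be repaired by writing $\gd_\xi\tilde v^{(i_N)}=w_T+w_N$, using $\lambda(B)\lambda(Y)\tilde v^{(i_N)}=\lambda([B,Y])\tilde v^{(i_N)}$ for $Y\in\g_{x^{(i_N)}}$ and the infinitesimal triviality of the isotropy action on $w_N$ -- but some such principality argument must appear. Note that the same input is used silently in your codimension count ($\dim\g_{\tilde v^{(i_N)}}=\dim H_L$ at \emph{every} point of the critical set) and in the smoothness claim, where the constancy of these dimensions in $\sigma$ is what makes (I)--(III) cut out the fibre product of vector bundles described in the paper.
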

\begin{proof}
 To begin with, let  $\sigma_{i_1} \cdots \sigma_{i_N}\not=0$, so that all $\tau_{i_j}$ are non-zero. In this case, the sequence of monoidal transformations $\zeta^{\rho_{i_1}}_{i_1} \circ \dots \circ  \zeta^{\rho_{i_1}\dots \rho_{i_N}}_{i_1\dots i_N}\circ \delta_{i_1\dots i_N}$ constitutes a diffeomorphism, so that 
 \begin{align*}
 \mathrm{Crit}(\, ^{(i_1\dots i_N)} \tilde \psi^{tot})_{\sigma_{i_1} \cdots \sigma_{i_N}\not=0}=\{&(\sigma_{i_1}, \dots, \sigma_{i_N}, x^{(i_1)}, \dots, x^{(i_N)},  \tilde v ^{(i_N)}, A^{(i_1)}, \dots, A^{(i_N)}, B^{(i_N)}, \eta_{m^{(i_1 \dots i_N)}}):  \\ & (\eta_{m^{_{(i_1\dots i_N)}}}, X^{_{(i_1\dots i_N)}}) \in \mathrm{Crit}(\psi), \quad  {\sigma_{i_1} \cdots \sigma_{i_N}\not=0}  \}.
  \end{align*}
  Now, 
 \bqn
 (\eta_{m^{(i_1 \dots i_N)}},X^{(i_1\dots i_N)})\in \mathrm{Crit}(\psi) \quad \Leftrightarrow \quad  \eta_{m^{(i_1 \dots i_N)}} \in \Omega,  \quad \tilde X^{(i_1\dots i_N)}_{\eta_{m^{(i_1 \dots i_N)}}}=0.
 \eqn
Furthermore, $\tilde X_\eta=0$ clearly implies $\tilde X_{\pi(\eta)} =\pi_{\ast}(\tilde X_\eta)=0$. 
Since the point $m^{(i_1\dots i_N)}$ lies in a slice at $x^{(i_1)}$, the condition $\tilde X^{(i_1\dots i_N)}_{m^{(i_1\dots i_N)}}=0$ means that 
the vector field  $\tilde X^{(i_1\dots i_N)}$ must vanish at $x^{(i_1)}$ as well. Hence, $X^{(i_1\dots i_N)} \in \g_{x^{(i_1)}}$, since
\bqn
\g_m=\mathrm{Lie}{(G_m}) =\mklm{X \in \g: \tilde X_m=0}, \qquad m \in M.
\eqn
 Now
\begin{align*}
\g_{x^{(i_N)}} \subset \g_{x^{(i_{N-1})}} \subset \dots \subset \g_{x^{(i_1)}}
\end{align*}
and $\g_{x^{(i_{j+1})}}^\perp \subset \g_{x^{(i_j)}}$ imply
\bqn
\tilde X^{(i_1\dots i_N)}_{x^{(i_1)}}= {\tau_{i_1} \dots \tau_{i_N}\sum \alpha_r^{(i_1)} (\tilde A_r ^{(i_1)}})_{ x^{(i_1)}} =0.
\eqn
Thus we conclude $\alpha^{(i_1)} =0$, which gives $X^{(i_2\dots i_N)}=X^{(i_1\dots i_N)} \in \g_{m^{(i_1\dots i_N)}}$, and consequently $X^{(i_2\dots i_N)} \in \g_{m^{(i_2\dots i_N)}}$ by \eqref{eq:hola}. A repetition of the above argument yields that  the condition $\tilde X^{(i_1\dots i_N)}_{m^{(i_1\dots i_N)}}=0$ is equivalent to (I) in the case that all $\sigma_{i_j}$ are different from zero. Actually, the same argument shows that for  $\sigma_{i_j}\not=0$
\bq
\label{eq:G}
\g_{m^{(i_1 \dots i_N)}} = \g _{\tilde v ^{(i_N)}},
\eq
since $\g_{\tilde v ^{(i_N)}} \subset \g_{x ^{(i_N)}}$. Next, $ \eta_{m^{(i_1 \dots i_N)}} \in \Omega$ means that 
\bqn
\J(\eta_{m^{(i_1 \dots i_N)}})(X)=\eta_{m^{(i_1 \dots i_N)}} (\tilde X_{m^{(i_1 \dots i_N)}}) =0 \qquad \forall X \in \g,
\eqn
which by \eqref{eq:Ann} is equivalent to  $ \eta_{m^{(i_1\dots i_ N)}} \in \mathrm{Ann}(T_{m^{(i_1\dots i_ N)}} (G \cdot {m^{(i_1\dots i_ N)}}))$. If $\sigma_{i_j}\not=0$ for all $j=1,\dots,N$, (II) and (III) imply that 
\bqn 
\eta_{m^{(i_1\dots i_ N)}}\Big ( (\exp_{x^{(i_1)}})_{\ast, \tau_{i_1}  m^{(i_2\dots i_N)} } [\dots  (\exp_{x^{(i_{j-1})}})_{\ast, \tau_{i_{N-1}}m^{(i_N)}}[\lambda(\g_{x^{(i_{N-1})}})  m ^{(i_N)}] \dots \big ] \Big )=0,
\eqn
since $ \g_{x^{(i_{N-1})}} =  \g_{x^{(i_{N})}}\oplus  \g_{x^{(i_{N})}}^\perp$. By repeatedly using this argument, we  conclude with \eqref{eq:tangent}  that for $\sigma_{i_j}\not= 0$
\bq
\label{eq:IVb}
\mathrm{(II), \, (III)} \quad \Longleftrightarrow \quad \eta_{m^{(i_1 \dots i_N)}} \in \mathrm{Ann}(T_{m^{(i_1\dots i_ N)}} (G \cdot {m^{(i_1\dots i_ N)}})).
\eq
Taking everything together therefore gives
\begin{align}
\begin{split}
\label{eq:XX}
\mathrm{Crit}(\, ^{(i_1\dots i_N)}& \psi^{tot})_{\sigma_{i_1}\cdots \sigma_{i_N}\not=0}\\ 
 &=\{(\sigma_{i_1}, \dots, \sigma_{i_N}, x^{(i_1)}, \dots, x^{(i_N)},  \tilde v ^{(i_N)}, A^{(i_1)}, \dots, A^{(i_N)}, B^{(i_N)}, \eta_{m^{(i_1 \dots i_N)}}):  \\    &{\sigma_{i_1} \cdots \sigma_{i_N}\not=0}, \, \text{(I)-(III) are fulfilled and $ \tilde B^{(i_N),\mathrm{v}}_{\eta_{m^{(i_1 \dots i_N)}}}=0$}   \}.
\end{split}
  \end{align}  
  Here $\X_\eta^{\mathrm{v}}$ denotes the vertical component of  a vector field $\X \in T(T^\ast M)$ with respect to the decomposition $T_\eta(T^\ast M)=T^{\mathrm{v}} \oplus T^{\mathrm{h}}$, $T^{\mathrm{v}}$ being  the tangent space to the fiber, and $T^{\mathrm{h}}$ the tangent space to the zero section at $\eta$. We now assert that   
$$ \mathrm{Crit}(\, ^{(i_1\dots i_N)} \tilde  \psi^{wk})=\overline{\mathrm{Crit}(\, ^{(i_1\dots i_N)} \tilde \psi^{tot})_{\sigma_{i_1} \cdots \sigma_{i_N}\not=0}}.$$
To show this, let  $(\kappa,\mathcal{O})$ be a chart on $M$ with coordinates $\kappa(m)=(q_1,\dots, q_n)$, and introduce on $T^\ast \mathcal{O}$ the coordinates
\bqn  
\eta_m = \sum p_i (dq_i)_m, \qquad \tilde\kappa(\eta)= (q_1, \dots, q_n,p_1,\dots,p_n),  \quad \eta \in T^\ast \mathcal{O}.
\eqn
Write  $\eta_{m^{(i_1 \dots i_N)}} =\sum p_i (\d q_i)_{m^{(i_1 \dots i_N)}}$, and still assume that all $\sigma_{i_j}$ are different from zero. Then all $\tau_{i_j}$ are different from zero, too, and $\gd_p \, ^{(i_1\dots i_N)} \phw=0$ is equivalent to
\begin{gather*}
 \gd _p \J(\eta_{m^{(i_1 \dots i_N)}})( X^{(i_1\dots i_N)})= ( \d q_1 (\tilde X^{(i_1\dots i_N)}_{m^{(i_1\dots i_N)}}), \dots, \d q_n (\tilde X^{(i_1\dots i_N)}_{m^{(i_1\dots i_N)}}))=0,
\end{gather*}
which gives us the condition $\tilde X^{(i_1\dots i_N)}_{m^{(i_1\dots i_N)}}=0$. By \eqref{eq:G} we therefore obtain condition I) in the case that all $\sigma_{i_j}$ are different from zero. Let next $N_{x^{(i_1)}} ( G \cdot x^{(i_1)})$ be the normal space in $T_{x^{(i_1)}}M$ to the orbit $G \cdot x^{(i_1)}$, on which $G_{x^{(i_1)}}$ acts, and define  $N_{x^{(i_{j+1})}} ( G_{x^{(i_{j})}} \cdot x^{(i_{j+1})})$ successively as the normal space to the orbit $ G_{x^{(i_{j})}} \cdot x^{(i_{j+1})}$  in the $ G_{x^{(i_{j})}}$-space $N_{x^{(i_j)}} ( G_{x^{(i_{j-1})}} \cdot x^{(i_j)})$, where we understand that $G_{x^{(i_0)}}=G$. By Bredon \cite[page 308]{bredon}, these actions can be assumed to be orthogonal. Set  
\bq
\label{eq:V}
V^{(i_1\dots i_{j})}= \bigcap_{r=1} ^{j} N_{x^{(i_r)}} ( G_{x^{(i_{r-1})}} \cdot x^{(i_r)})= N_{x^{(i_j)}} ( G_{x^{(i_{j-1})}} \cdot x^{(i_j)}).
\eq
With the identification $T_0(T_mM) \simeq T_mM$  one has 
\bq
\label{eq:identif}
(\exp_m)_{\ast,0}: T_0(T_mM) \longrightarrow T_mM, \qquad  (\exp_m)_{\ast,0}\simeq \id,
\eq
and similarly $(\exp_{x^{(i_j)}})_{\ast,0}\simeq \id$ for all $j=2,\dots, N$. Therefore, if   $\tau_{i_j}=0$ for all $j$, then $E^{(i_1)}_{x^{(i_1)}}  = T_{x^{(i_1)}}(G\cdot x^{(i_1)})$, and  
\bqn 
E^{(i_j)}_{x^{(i_1)}} \simeq T_{x ^{(i_j)}}(G_{x^{(i_{j-1})}} \cdot x ^{(i_j)}) \subset  V^{(i_{1} \dots i_{j-1})}, \qquad 2 \leq j \leq N, 
\eqn
while $F^{(i_N)}_{x^{(i_1)}}\simeq T_{\tilde v^{(i_N)}} (G_{x^{(i_{N})}}\cdot \tilde v^{(i_N)})  \subset V^{(i_1 \dots i_{N})}$.  Therefore $E^{(i_j)}_{x^{(i_1)}} \cap V^{(i_1 \dots i_{j})}=\mklm{0}$, so that we obtain the direct sum of vector spaces
\bq
\label{eq:directsum}
E_{x^{(i_1)}}^{(i_1)}\oplus E_{x^{(i_1)}}^{(i_2)}\oplus \dots  \oplus E_{x^{(i_1)}}^{(i_N)}\oplus F_{x^{(i_1)}}^{(i_N)} \subset T_{x^{(i_1)}}M.
\eq
Let now one of the  $\sigma_{i_j}$ be equal to zero, so that  all $\tau_{i_j}$ are zero. 
With the identification \eqref{eq:identif} one has
\begin{align}
\label{eq:B}
\, ^{(i_1\dots i_N)}\tilde \psi^ {wk}&= \sum p_i \, dq_i \Big (\widetilde{ A^{(i_1)}}_{x^{(i_1)}} + \sum_{j=2}^N  \lambda(A^{(i_j)})  x ^{(i_j)} + \lambda(B^{(i_N)})  \tilde v ^{(i_N)}  \Big ),
\end{align}
and  $\gd_p \, ^{(i_1\dots i_N)} \phw=0$ is equivalent to
\bqn
\widetilde{ A^{(i_1)}}_{x^{(i_1)}} + \sum_{j=2}^N  \lambda(A^{(i_j)})  x^{(i_j)} + \lambda(B^{(i_N)})  \tilde v ^{(i_N)}=0.
\eqn
Since $x^{(i_j)} \in \gamma^{(i_{j-1})} (S_{i_1\dots i_{j-1}})_{x^{(i_{j-1})}})\subset V^{(i_1\dots i_{j-1})}$, we see that for every $j=2,\dots, N$
\bqn 
\lambda\Big (\sum_r \alpha_r^{(i_j)} A_r^{(i_j)} \Big ) \, x^{(i_j)} \in   T_{x^{(i_j)}} ( G_{x^{(i_{j-1})}} \cdot x^{(i_j)})\subset V^{(i_1\dots i_{j-1})} .
\eqn 
In addition, $(\tilde  A_r^{(i_1)})_{x^{(i_1)}}\in   T_{x^{(i_1)}} ( G \cdot x^{(i_1)})$, and
$
\lambda\Big (\sum_r \beta^{(i_N)} _r B_r^{(i_N)}\Big ) \tilde v ^{(i_N)} \in V^{(i_1\dots i_{N})}
$, so that taking everything together we obtain with \eqref{eq:directsum} for arbitrary $\sigma_{i_j}$
\bqn
\gd_p \, ^{(i_1\dots i_N)} \phw=0 \quad \Longleftrightarrow \quad \mathrm{(I)}.
\eqn
 In particular, one concludes that $\, ^{(i_1\dots i_N)} \phw$ must vanish on its critical set. Since 
\bqn 
d(\, ^{(i_1\dots i_N)} \tilde \psi^{tot})= d(\tau_{i_1}\dots \tau_{i_N}) \cdot \, ^{(i_1\dots i_N)}  \tilde\psi^{wk} +  \tau_{i_1}\dots \tau_{i_N} d\,( ^{(i_1\dots i_N)}  \tilde\psi^{wk}),
\eqn
one sees that 
\bqn 
 \mathrm{Crit}(\, ^{(i_1\dots i_N)}  \tilde\psi^{wk})\subset \mathrm{Crit}(\, ^{(i_1\dots i_N)}  \tilde\psi^{tot}).
\eqn
In turn, the vanishing of $\psi$ on its critical set implies
\bqn 
 \mathrm{Crit}(\, ^{(i_1\dots i_N)}  \tilde \psi^{wk})_{\sigma_{i_1}\cdots \sigma_{i_N}\not=0}=  \mathrm{Crit}(\, ^{(i_1\dots i_N)}  \tilde\psi^{tot})_{\sigma_{i_1}\dots \sigma_{i_N}\not=0}.
\eqn
Therefore, by continuity, 
\bq
\label{eq:XXbis}
\overline{\mathrm{Crit}(\, ^{(i_1\dots i_N)}  \tilde\psi^{tot})_{\sigma_{i_1}\cdots \sigma_{i_N}\not=0}} \subset  \mathrm{Crit}(\, ^{(i_1\dots i_N)}  \tilde\psi^{wk}).
\eq
In order to see the converse inclusion, let us consider next the $\alpha$-derivatives. Clearly,  
\begin{align*}
\gd_{\alpha^{(i_1)}} \, ^{(i_1\dots i_N)} \phw=0 \quad & \Longleftrightarrow \quad \eta_{m^{(i_1 \dots i_N)}}(\tilde Y _{m^{(i_1 \dots i_N)}})=0 \quad \forall \, Y \in \g_{x^{(i_1)}}^\perp.
\end{align*}
For the remaining  derivatives one computes 
\begin{gather*}
\gd_{\alpha_r^{(i_j)}} \, ^{(i_1\dots i_N)} \phw \\ =\eta_{m^{(i_1 \dots i_N)}} \Big ( (\exp_{x^{(i_1)}})_{\ast, \tau_{i_1}  m^{(i_2\dots i_N)} }\big [ \dots (\exp_{x^{(i_{j-1})}})_{\ast, \tau_{i_{j-1}}m^{(i_j\dots i_N)}}[\lambda(A^{(i_j)}_r)  m ^{(i_j\dots i_N)}] \dots \big ] \Big ), 
\end{gather*}
from which one deduces  that for $j=2,\dots, N$
\begin{align*}
\gd_{\alpha^{(i_j)}} \, ^{(i_1\dots i_N)} \phw=0 \quad  \Longleftrightarrow   \quad \forall \, Y \in &\g_{x^{(i_j)}}^\perp \\ 
\eta_{m^{(i_1 \dots i_N)}} \Big ( (\exp_{x^{(i_1)}})_{\ast, \tau_{i_1}  m^{(i_2\dots i_N)} } \big [\dots  (\exp_{x^{(i_{j-1})}})_{\ast, \tau_{i_{j-1}}m^{(i_j\dots i_N)}}&[\lambda(Y)  m ^{(i_j\dots i_N)}] \dots \big ] \Big )=0.
\end{align*}
In a similar way,
\begin{align*}
\gd_{\beta^{(i_j)}} \, ^{(i_1\dots i_N)} \phw=0 \quad  \Longleftrightarrow   \quad \forall \, Z \in &\g_{x^{(i_N)}} \\ 
\eta_{m^{(i_1 \dots i_N)}} \Big ( (\exp_{x^{(i_1)}})_{\ast, \tau_{i_1}  m^{(i_2\dots i_N)} } \big [\dots  (\exp_{x^{(i_{N})}})_{\ast, \tau_{i_{N}}\tilde v^{(i_N)}}&[\lambda(Z)  \tilde v ^{(i_N)}] \dots \big ] \Big )=0.
\end{align*}
by which the necessity of the conditions (I)--(III) is established. In order to see their sufficiency, let them be fulfilled, and assume again that $\sigma_{i_j}\not=0$ for all $j=1,\dots,N$. Then \eqref{eq:IVb} implies that  $ \eta_{m^{(i_1 \dots i_N)}} \in \mathrm{Ann}(T_{m^{(i_1\dots i_ N)}} (G \cdot {m^{(i_1\dots i_ N)}}))$. Now, if $\sigma_{i_j}\not=0$, $G  \cdot m^{(i_1\dots i_N)}$ is of principal type $G/ H_L$ in $M$, so that the isotropy group of $m^{(i_1\dots i_N)}$ must act trivially on $N_{m^{(i_1\dots i_N)}}(G\cdot m^{(i_1\dots i_N)})$, compare Bredon \cite[page 181]{bredon}. If therefore $ \X= \X_T+ \X_N$ denotes an arbitrary element in $ T_{m^{(i_1\dots i_N)}}M = T_{m^{(i_1\dots i_ N)}} (G \cdot {m^{(i_1\dots i_ N)}}))\oplus N_{m^{(i_1\dots i_ N)}} (G \cdot {m^{(i_1\dots i_ N)}}))$, and $g \in G_{m^{(i_1\dots i_N)}}$, one computes 
\begin{align*} 
g \cdot \eta_{m^{(i_1\dots i_N)}}( \X)&= [ ( L_{g^{-1}})^\ast _{gm^{(i_1\dots i_N)}} \eta_{m^{(i_1\dots i_N)}}]( \X)= \eta_{m^{(i_1\dots i_N)}} ( ( L_{g^{-1}}) _{\ast, m^{(i_1\dots i_N)}}( \X_N))\\
&= \eta_{m^{(i_1\dots i_N)}} (  \X_N)= \eta_{m^{(i_1\dots i_N)}} (  \X).
\end{align*}
In view of  $\lambda( B^{(i_N)})\tilde v^{(i_N)}=0$ and \eqref{eq:G} we therefore get the condition $\tilde B^{(i_N), \mathrm{v}}_{\eta_{m^{(i_1\dots i_N)}}}=0$.  Let us now assume that one of the $\sigma_{i_j}$ equals zero. Then  
\begin{align}
\label{eq:VII}
\mathrm{(II), \, (III)} \quad \Leftrightarrow \quad & \left \{
\begin{array}{l}
\eta_{x^{(i_1)}} \in \mathrm{Ann}(T_{x^{(i_j)}} (G_{x^{(i_{j-1})}} \cdot {x^{(i_j)}}))  \quad \forall \, j=1, \dots, N, \\
\eta_{x^{(i_1)}} \in \mathrm{Ann}(T_{\tilde v^{(i_N)}} (G_{x^{(i_{N})}} \cdot {\tilde v^{(i_N)}})).
\end{array} \right.
\end{align}
\begin{lemma}
The orbit of the point $\tilde v^{(i_N)}$ in the $G_{x^{(i_N)}}$-space $V^{(i_1\dots i_N)}$ is of principal type.
\end{lemma}
\begin{proof}[Proof of the lemma]
By assumption, for  $\sigma_{i_j}\not=0$,  $1 \leq j \leq  N$, the $G$-orbit of $m^{(i_1\dots i_N)}$ is of principal type $G/ H_L$ in $M$. The theory of compact group actions then implies that this is equivalent to the fact that $m^{(i_2 \dots i_N)} \in V^{(i_1)}$ is of principal type in the $G_{x^{(i_1)}}$-space $V^{(i_1)}$, see Bredon \cite[page 181]{bredon},  which in turn is equivalent to the fact that $m^{(i_3 \dots i_N)} \in V^{(i_1i_2)}$ is of principal type in the $G_{x^{(i_2)}}$-space $V^{(i_1i_2)}$, and so forth. Thus, $m^{(i_j \dots i_N)} \in V^{(i_1 \dots i_{j-1})}$ must be of principal type in the $G_{x^{(i_{j-1})}}$-space $V^{(i_1\dots i_{j-1})}$ for all $j=1,\dots N$, and the assertion follows.
\end{proof}
As a consequence of the previous lemma,  the stabilizer of $\tilde v^{(i_N)} $  must act trivially on $N_{\tilde v^{(i_N)}} ( G_{x^{(i_N)}} \cdot\tilde v^{(i_N)} )$. If therefore  $ \X= \X_{T}+  \X_N$ denotes an arbitrary element in 
\begin{align*}
T_{x^{(i_1)}} M 
 \simeq \bigoplus_{j=1}^N T_{x^{(i_j)}} (G_{x^{(i_{j-1})}} \cdot {x^{(i_j)}}) \oplus  T_{\tilde v^{(i_N)}} (G_{x^{(i_{N})}}\cdot \tilde v^{(i_N)}) \oplus  N_{\tilde v^{(i_N)}} (G_{x^{(i_{N})}}\cdot \tilde v^{(i_N)}),
\end{align*}
 we obtain with \eqref{eq:VII} 
\begin{align*} 
g \cdot \eta_{x^{(i_1)}}( \X)&= [ ( L_{g^{-1}})^\ast _{gx^{(i_1)}} \eta_{x^{(i_1)}}]( \X)= \eta_{x^{(i_1)}} ( ( L_{g^{-1}}) _{\ast, x^{(i_1)}}( \X_N))\\
&= \eta_{x^{(i_1)}} (  \X_N)= \eta_{x^{(i_1)}} (  \X), \qquad  g \in G_{\tilde v^{(i_N)}}.
\end{align*}
Collecting everything together we have shown for arbitrary $\sigma_{i_j}$ that 
\begin{align}
\label{eq:IV}
 \gd_{p, \alpha^{(i_1)}, \dots , \alpha^{(i_N)} , \beta^{(i_N)}} \, ^{(i_1\dots i_N)} \phw=0 \quad \Longleftrightarrow \quad \mathrm{(I), \,(II), \,(III)} \quad & \Longrightarrow \quad \tilde B^{(i_N),\mathrm{v}}_{\eta_{m^{(i_1\dots i_N)}}}=0.
\end{align}
By \eqref{eq:XX} and \eqref{eq:XXbis} we therefore conclude 
\bq
\label{eq:CC}
\overline{\mathrm{Crit}(\, ^{(i_1\dots i_N)} \tilde \psi^{tot})_{\sigma_{i_1}\cdots \sigma_{i_N}\not=0}} =  \mathrm{Crit}(\, ^{(i_1\dots i_N)} \tilde \psi^{wk}).
\eq
Thus we have computed the critical set of $\, ^{(i_1\dots i_N)} \phw$, and it remains to show that it is a $\Cinft$-submanifold of codimension $2\kappa$. By our previous considerations, we have the characterization
\begin{align}
\label{eq:C}
\begin{split}
&\Crit(\, ^{(i_1\dots i_N)} \phw)\\ 
= \Big \{ A^{(i_j)}=0, \quad \lambda(B^{(i_N)}) \tilde v ^{(i_N)}&=0, \quad  \eta_{m^{(i_1\dots i_N)}}\in \mathrm{Ann} \Big(\bigoplus_{j=1}^N E^{(i_j)}_{m^{(i_1\dots i_N)}}\oplus F^{(i_N)}_{m^{(i_1\dots i_N)}}\Big ) \Big \}.
\end{split}
\end{align}
Note that the condition $\tilde  B^{(i_N), \mathrm{v}}_{\eta_{m^{(i_1\dots i_N)}}}  =0$ is already implied by the others. Now, $
\dim E^{(i_j)}_{m^{(i_1\dots i_N)}} = \dim G_{x^{(i_{j-1})}} \cdot x ^{(i_j )}$.
Since for  $\sigma_{i_1}\cdots \sigma_{i_N}\not=0$ the $G$-orbit of $m^{(i_1\dots i_N)}$ is of principal type $G/ H_L$ in $M$, one computes in this case with \eqref{eq:tangent}
\begin{align*}
\kappa =& \dim G \cdot m^{(i_1\dots i_N)}= \dim T_{m^{(i_1 \dots i_N)}} ( G\cdot m^{(i_1 \dots i_N)})\\
=&\dim [E^{(i_1)}_{m^{(i_1 \dots i_N)}}\oplus  \bigoplus _{j=2}^N \tau_{i_1}\dots \tau_{i_{j-1}} E^{(i_j)} _{m^{(i_1 \dots i_N)}}  \oplus  \tau_{i_1}\dots \tau_{i_N} F^{(i_N)}_{m^{(i_1 \dots i_N)}}] \\
 =& \sum_{j=1} ^N \dim E^{(i_j)}_{m^{(i_1\dots i_N)}} + \dim F^{(i_N)}_{m^{(i_1\dots i_N)}}.
\end{align*}
But since the dimension of the spaces $E^{(i_j)}_{m^{(i_1 \dots i_N)}}$ and $F^{(i_N)}_{m^{(i_1 \dots i_N)}}$  does not depend on the variables $\sigma_{i_j}$, we obtain the equality 
\bq
\label{eq:kappa}
\kappa=\sum_{j=1} ^N \dim E^{(i_j)} _{m^{(i_1 \dots i_N)}}+ \dim F^{(i_N)}_{m^{(i_1 \dots i_N)}}
\eq
for arbitrary ${m^{(i_1 \dots i_N)}}$. Note that, in contrast, the dimension of  $T_{m^{(i_1 \dots i_N)}} ( G\cdot m^{(i_1 \dots i_N)})$ collapses, as soon as one of the $\tau_{i_j}$ becomes zero. Since the annihilator of a subspace of $T_m M$ is itself a linear subspace of $T^\ast_m M$,  we arrive at a vector bundle with $(n-\kappa)$-dimensional fiber that is locally given by the trivialization 
\bqn 
\Big ((\sigma_{i_j}, x^{(i_j)},  \tilde v ^{(i_N)}), \mathrm{Ann}  \big(\bigoplus _{j=1}^N  E^{(i_j)} _{m^{(i_1 \dots i_N)}}  \oplus  F^{(i_N)}_{m^{(i_1 \dots i_N)}}\big ) \Big )\mapsto (\sigma_{i_j}, x^{(i_j)},  \tilde v ^{(i_N)}).
\eqn
Consequently, by equation \eqref{eq:C} we see that $\Crit(\, ^{(i_1\dots i_N)} \phw)$ is equal to the total space of the fiber product of the mentioned vector bundle with the isotropy algebra bundle given by the local trivialization
\bqn 
(\sigma_{i_j}, x^{(i_j)},  \tilde v ^{(i_N)}, \g_{\tilde v ^{(i_N)}})\mapsto (\sigma_{i_j}, x^{(i_j)},  \tilde v ^{(i_N)}).
\eqn
Lastly, since by equation \eqref{eq:G} we have $\g_{\tilde v ^{(i_N)}}=\g_{m^{(i_1,\dots, i_N)}}$ in case that all $\sigma_{i_j}$ are different from zero, we necessarily have $\dim \g_{\tilde v ^{(i_N)}}=d-\kappa$, which concludes the proof of the theorem.
\end{proof}

\section{Phase analysis of the weak transforms. Non-degeneracy of the transversal Hessians}

In this section, we  prove the non-degeneracy of the transversal Hessians of the weak transforms. To begin with, let $M$ be a $n$-dimensional Riemannian manifold, and $C$ the critical set of a function $\psi \in \Cinft(M)$, which is assumed to be a smooth submanifold in a chart $\mathcal{O} \subset M$. Let further 
\bqn
\alpha:(x,y) \mapsto m, \qquad \beta:(q_1,\dots, q_n) \mapsto m, 	\qquad m \in \mathcal{O},
\eqn
be two systems of  local coordinates on $\mathcal{O}$, such that $\alpha(x,y) \in C$ if and only if $y=0$. 
As one computes, the transversal Hessian is given by
\bq
\label{eq:Hess}
\gd_{y_k} \gd_{y_l} (\psi \circ \alpha)(x,0)= \mathrm{Hess}\,  \psi_{|\alpha(x,0)} (\alpha_{\ast,(x,0)}(\gd_{y_k}),\alpha_{\ast,(x,0)}(\gd_{y_l})),
\eq
Let us now write $x=(x',x'')$, and consider the restriction of $\psi$ onto  the $\Cinft$-submanifold
\bdm
M_{c'}=\mklm { m \in \mathcal{O}: m=\alpha(c',x'',y)}.
\edm
We write $\psi_{c'}=\psi_{|M_{c'}}$, and denote the critical set of $\psi_{c'}$ by $C_{c'}$, which contains $C \cap M_{c'}$ as a subset. Introducing on $M_{c'}$ the local coordinates $
\alpha':(x'',y) \mapsto \alpha(c',x'',y)$, we obtain 
\bqn
\gd_{y_k} \gd_{y_l} (\psi_{c'} \circ \alpha')(x'',0)= \mathrm{Hess}\,  \psi_{c'|\alpha(x'',0)} (\alpha'_{\ast,(x'',0)}(\gd_{y_k}),\alpha'_{\ast,(x'',0)}(\gd_{y_l})).
\eqn
Let us now assume $C_{c'}=C \cap M_{c'}$, a transversal intersection.  Then $C_{c'}$ is a submanifold of $M_{c'}$, and the normal space to $C_{c'}$ as a submanifold of $M_{c'}$ at a point $\alpha'(x'',0)$ is spanned by the vector fields $\alpha'_{\ast,(x'',0)} (\gd _{y_k})$.
Since clearly
\bqn
\gd_{y_k} \gd_{y_l} (\psi_{c'} \circ \alpha')(x'',0)=\gd_{y_k} \gd_{y_l} (\psi \circ \alpha)(x,0),\qquad x=(c',x''),
\eqn
we thus have proven the following
\begin{lemma}
\label{lemma:A}
Assume that $C_{c'}=C \cap M_{c'}$. Then the restriction
\bqn
\mathrm{Hess} \, \psi({\alpha(c',x'',0)})_{|N_{\alpha(c',x'',0)}C}
\eqn
of the Hessian of $\psi$ to the normal space $N_{\alpha(c',x'',0)}C$ defines a non-degenerate quadratic form if, and only if the restriction
\bqn
\mathrm{Hess} \, \psi_{c'}({\alpha'(x'',0)})_{|N_{\alpha'(x'',0)}C_{c'}}
\eqn
of the Hessian of $\psi_{c'}$ to the normal space $N_{\alpha'(x'',0)}C_{c'}$ defines a non-degenerate quadratic form.
\end{lemma}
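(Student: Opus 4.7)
The plan is purely linear-algebraic: identify explicit bases of the two normal spaces and show that, under the hypothesis $C_{c'}=C\cap M_{c'}$, the matrices representing the two transversal Hessians in these bases coincide.

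First, since $\alpha$ is a chart in which $C=\{y=0\}$, the tangent space $T_{\alpha(x,0)}C$ is spanned by the vectors $\alpha_{\ast,(x,0)}(\partial_{x_i})$, so a supplement of $T_{\alpha(x,0)}C$ in $T_{\alpha(x,0)}M$ (a realization of the normal space $N_{\alpha(x,0)}C$) is spanned by the $n-\dim C$ vectors $\alpha_{\ast,(x,0)}(\partial_{y_k})$. The assumption $C_{c'}=C\cap M_{c'}$ forces $C_{c'}$ to be parametrized in the chart $\alpha':(x'',y)\mapsto \alpha(c',x'',y)$ by $\{y=0\}$, since $C\cap M_{c'}=\{\alpha(c',x'',0)\}$; consequently the normal space $N_{\alpha'(x'',0)}C_{c'}\subset T_{\alpha'(x'',0)}M_{c'}$ is spanned by the vectors $\alpha'_{\ast,(x'',0)}(\partial_{y_k})$. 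This uses crucially the transversality built into the hypothesis, because without $C_{c'}=C\cap M_{c'}$ the critical set of $\psi_{c'}$ could be larger and the $\partial_{y_k}$-directions would no longer exhaust its normal bundle.

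Next, since $\alpha'(x'',y)=\alpha(c',x'',y)$ by definition, one has the pointwise identity of tangent vectors
\begin{equation*}
\alpha'_{\ast,(x'',0)}(\partial_{y_k})=\alpha_{\ast,(c',x'',0)}(\partial_{y_k}),
\end{equation*}
viewing $T_{\alpha'(x'',0)}M_{c'}$ as a subspace of $T_{\alpha(c',x'',0)}M$. I would then invoke formula \eqref{eq:Hess} applied to both $\psi$ and $\psi_{c'}$: the matrix of $\mathrm{Hess}\,\psi_{|N_{\alpha(c',x'',0)}C}$ in the basis $\{\alpha_{\ast}(\partial_{y_k})\}$ equals
\begin{equation*}
\bigl(\partial_{y_k}\partial_{y_l}(\psi\circ\alpha)(c',x'',0)\bigr)_{k,l},
\end{equation*}
while the matrix of $\mathrm{Hess}\,\psi_{c'|N_{\alpha'(x'',0)}C_{c'}}$ in the basis $\{\alpha'_{\ast}(\partial_{y_k})\}$ equals
\begin{equation*}
\bigl(\partial_{y_k}\partial_{y_l}(\psi_{c'}\circ\alpha')(x'',0)\bigr)_{k,l}.
\end{equation*}
Because $\psi_{c'}\circ\alpha'(x'',y)=\psi\circ\alpha(c',x'',y)$, the two arrays of partial derivatives are identical, so the two matrices coincide.

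Finally, since the two Hessian forms are represented by the same matrix, one is non-degenerate if and only if the other is, yielding the stated equivalence. I do not expect any real obstacle here; the only subtle point is the correct identification of the two normal spaces via the shared basis $\{\partial_{y_k}\}$, which is exactly where the assumption $C_{c'}=C\cap M_{c'}$ intervenes.
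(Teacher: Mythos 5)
Your proposal is correct and follows essentially the same route as the paper: the paper also identifies the normal spaces with the span of the $\partial_{y_k}$-directions in the charts $\alpha$ and $\alpha'$ (using $C_{c'}=C\cap M_{c'}$ to know that $C_{c'}=\{y=0\}$ in the $\alpha'$-chart), and then observes via \eqref{eq:Hess} that both transversal Hessians are represented by the identical matrix $\bigl(\gd_{y_k}\gd_{y_l}(\psi\circ\alpha)(c',x'',0)\bigr)_{k,l}$ since $\psi_{c'}\circ\alpha'(x'',y)=\psi\circ\alpha(c',x'',y)$. No gap; the identification of the normal space with the coordinate complement is handled at the same level of rigor as in the paper.
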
 \qed

We can now state the main result of this section, the notation being the same as in the previous ones.

\begin{theorem}
\label{thm:II}
Let  $\mklm{(H_{i_1}), \dots, (H_{i_{N}})}$ be a maximal, totally ordered subset of non-principal isotropy types of the $G$-action on $M$, and  
  $ \zeta^{\rho_{i_1}}_{i_1} \circ \dots \circ  \zeta^{\rho_{i_1}\dots \rho_{i_N}}_{i_1\dots i_N}$  a corresponding sequence of local monoidal transformations labeled by the indices $\rho_{i_1},\dots ,\rho_{i_N}$. Consider the corresponding factorization 
\bqn
\, ^{(i_1\dots i_N)} \tilde \psi^{tot}=\tau_{i_1} \dots \tau_{i_N} \, ^{(i_1\dots i_N)}\tilde \psi^ {wk, \, pre}=\tau_{i_1}(\sigma) \dots \tau_{i_N}(\sigma) \, ^{(i_1\dots i_N)}\tilde \psi^ {wk}
\eqn
 of the phase function \eqref{eq:phase}. Then, for each point of the critical manifold $\Crit(\, ^{(i_1\dots i_N)}\tilde \psi^ {wk})$,  the restriction of 
\bqn 
\mathrm{Hess} \, ^{(i_1\dots i_N)}\tilde \psi^ {wk}
\eqn
to the normal space to $\Crit(\, ^{(i_1\dots i_N)}\tilde \psi^ {wk})$ at the given point defines a non-degenerate symmetric bilinear form.
\end{theorem}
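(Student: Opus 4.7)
The plan is to apply Lemma \ref{lemma:A} to freeze the base parameters parametrizing $\Crit(\,^{(i_1\dots i_N)}\phw)$, exploit the multilinearity of the weak transform in the remaining fiber variables, and identify the resulting transversal Hessian with a canonical duality pairing that is forced to be non-degenerate by the dimension identity \eqref{eq:kappa}.

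First I would split the coordinates on the resolution space into \emph{base} coordinates $c'=(\sigma_{i_1},\dots,\sigma_{i_N},x^{(i_1)},\dots,x^{(i_N)},\tilde v^{(i_N)})$, along which $\Crit(\,^{(i_1\dots i_N)}\phw)$ extends freely by Theorem \ref{thm:I}, and \emph{fiber} coordinates $(A^{(i_1)},\dots,A^{(i_N)},B^{(i_N)},\eta)$, on which the critical set is cut out by conditions (I)--(III). Since these conditions involve only the fiber variables, the intersection of $\Crit(\,^{(i_1\dots i_N)}\phw)$ with any slice $\mklm{c'=c'_0}$ is transverse, so Lemma \ref{lemma:A} reduces the claim to non-degeneracy of the transversal Hessian of the restricted function $\,^{(i_1\dots i_N)}\phw_{|c'=c'_0}$. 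The restricted phase is then, by inspection of \eqref{eq:phasewk}, a sum of terms each of which is linear in $\eta$ and linear in exactly one of the Lie-algebra variables $A^{(i_1)},\dots,A^{(i_N)},B^{(i_N)}$. Consequently all pure second derivatives vanish and only the mixed $\eta$--$(A,B)$ blocks survive, producing a transversal Hessian of anti-diagonal form
\begin{equation*}
\mathcal{A}_{\mathrm{trans}}= \begin{pmatrix} 0 & M^{T} \\ M & 0 \end{pmatrix},
\end{equation*}
so the question reduces to the invertibility of the off-diagonal block $M$.

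Differentiation of \eqref{eq:phasewk} shows that $M$ is the evaluation pairing $(\delta A,\delta B;\delta\eta)\mapsto \delta\eta\bigl(\Phi(\delta A,\delta B)\bigr)$, where $\Phi$ is the block-diagonal map whose components are precisely those appearing in the definitions \eqref{eq:EF} of $E^{(i_j)}$ and $F^{(i_N)}$. The transverse directions in the $(A,B)$-block are
\begin{equation*}
V_{\mathrm{Lie}} = \bigoplus_{j=1}^N \g^\perp_{x^{(i_j)}} \,\oplus\, \bigl(\g_{x^{(i_N)}}/\g_{\tilde v^{(i_N)}}\bigr),
\end{equation*}
since on the critical set $A^{(i_j)}=0$ and, by \eqref{eq:G}, $B^{(i_N)}\in\g_{\tilde v^{(i_N)}}$; while the transverse directions in $\eta$ form the dual of $V_{\mathrm{vec}} = \bigoplus_{j=1}^N E^{(i_j)}_{m^{(i_1\dots i_N)}}\oplus F^{(i_N)}_{m^{(i_1\dots i_N)}}$. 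Both spaces have dimension $\kappa$: the chain $d=d^{(i_1)}+\dots+d^{(i_N)}+e^{(i_N)}$ together with \eqref{eq:G} gives $\dim V_{\mathrm{Lie}}=d-\dim\g_{\tilde v^{(i_N)}}=\kappa$, and $\dim V_{\mathrm{vec}}=\kappa$ by \eqref{eq:kappa}. Since $\Phi$ surjects onto each summand of $V_{\mathrm{vec}}$ by the very definition of $E^{(i_j)}$ and $F^{(i_N)}$, and the total dimensions agree, $\Phi$ is an isomorphism; hence $M$ realizes the perfect duality pairing between $V_{\mathrm{Lie}}$ and $V_{\mathrm{vec}}^{\ast}$, and in particular is non-singular.

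The main delicate point I expect is the dimension identity \eqref{eq:kappa} at exceptional points where some $\sigma_{i_j}=0$: at such points the individual images $\tau_{i_1}\cdots\tau_{i_{j-1}}E^{(i_j)}$ collapse inside the tangent space of the orbit (compare \eqref{eq:tangent}), so the geometric dimension of $T_{m^{(i_1\dots i_N)}}(G\cdot m^{(i_1\dots i_N)})$ drops below $\kappa$, while the constancy of $\sum_j\dim E^{(i_j)}+\dim F^{(i_N)}$ in the resolution parameters is precisely what keeps $\Phi$ an isomorphism. This uniformity, already the crucial output of the desingularization and recorded in \eqref{eq:kappa}, is what ensures that the anti-diagonal block form persists across the exceptional divisors and yields non-degeneracy of the transversal Hessian for every point of $\Crit(\,^{(i_1\dots i_N)}\phw)$.
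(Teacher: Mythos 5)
Your overall skeleton --- freeze the base variables $(\sigma_{i_j},x^{(i_j)},\tilde v^{(i_N)})$ via Lemma \ref{lemma:A}, observe that the frozen phase is bilinear in $(p;(\alpha,\beta))$ so that its Hessian is purely off-diagonal, and reduce everything to invertibility of the off-diagonal block on complements of the critical directions --- is essentially the route the paper takes at the exceptional points (its Proposition \ref{prop:1}). The gap lies in how you establish invertibility of $M$, i.e.\ injectivity of your map $\Phi$ on $V_{\mathrm{Lie}}$, at points where all $\tau_{i_j}$ vanish. You take $\dim V_{\mathrm{vec}}=\kappa$ ``by \eqref{eq:kappa}'', but \eqref{eq:kappa} is the identity $\kappa=\sum_j\dim E^{(i_j)}_{m^{(i_1\dots i_N)}}+\dim F^{(i_N)}_{m^{(i_1\dots i_N)}}$, a sum of dimensions of the individual summands; it says nothing about the dimension of the span $\sum_j E^{(i_j)}+F^{(i_N)}$, which is what $\dim \mathrm{Im}\,\Phi$ actually equals. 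Constancy of the individual dimensions does not prevent the summands from overlapping at special parameter values: the rank of $\Phi$ is only lower semicontinuous, so the fact that the span has dimension $\kappa$ at nearby points with $\sigma_{i_1}\cdots\sigma_{i_N}\neq 0$ (where it is $T_{m^{(i_1\dots i_N)}}(G\cdot m^{(i_1\dots i_N)})$ by \eqref{eq:tangent}) gives no lower bound in the limit. Hence ``surjective onto each summand and dimensions agree, therefore isomorphism'' is circular --- it presupposes exactly the directness of the sum that has to be proved on the exceptional divisor.

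What is missing is the paper's \eqref{eq:directsum}: at $\tau_{i_j}=0$ one has $E^{(i_j)}_{x^{(i_1)}}\subset V^{(i_1\dots i_{j-1})}$, $E^{(i_j)}_{x^{(i_1)}}\cap V^{(i_1\dots i_j)}=\{0\}$ and $F^{(i_N)}_{x^{(i_1)}}\subset V^{(i_1\dots i_N)}$, where the $V^{(i_1\dots i_j)}$ are the nested normal spaces \eqref{eq:V} coming from the chain of orthogonal slice representations; this is what forces $E^{(i_1)}_{x^{(i_1)}}\oplus\dots\oplus E^{(i_N)}_{x^{(i_1)}}\oplus F^{(i_N)}_{x^{(i_1)}}$ to be an honest direct sum (and, together with the slice theorem, the injectivity of $Y\mapsto\lambda(Y)\,x^{(i_j)}$ on $\g_{x^{(i_j)}}^\perp$ that you also use tacitly), and only then does your duality and dimension count go through. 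Note also that the same fact is needed earlier than you acknowledge: the hypothesis $C_{c'}=C\cap M_{c'}$ of Lemma \ref{lemma:A} is not a consequence of conditions (I)--(III) ``involving only fiber variables''; it requires the implication that vanishing of the $p,\alpha^{(i_j)},\beta^{(i_N)}$-derivatives alone already forces (I)--(III), which is \eqref{eq:IV} in the proof of Theorem \ref{thm:I} and is itself proved there by means of \eqref{eq:directsum}. Once that input is inserted, your uniform bilinear argument is sound and in fact covers all points at once, including the locus $\sigma_{i_1}\cdots\sigma_{i_N}\neq 0$ which the paper instead handles by transporting Lemma \ref{lemma:Reg} through the blow-down diffeomorphism and comparing the Hessians of the total and weak transforms on the common critical set.
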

Note that by construction, for $\tau_{i_j}\not=0$, $1\leq j\leq N$, the $G$-orbit through $m^{(i_1\dots i_N)}$ is of principal type $G/H_L$. 
For the proof of Theorem \ref{thm:II} we need  the following
\begin{lemma}
\label{lemma:Reg}
 Let  $(\eta,X) \in \Crit{(\psi)}$, and  $ \pi(\eta) \in M({H_L})$. Then $(\eta,X) \in \mathrm{Reg} \,\Crit{(\psi)}$. Furthermore, the restriction of the Hessian of $\psi$ at the point $(\eta,X)$ to the normal space ${N_{(\eta,X)} \mathrm{Reg} \,\Crit{(\psi)}}$ defines  a non-degenerate quadratic form. 
\end{lemma}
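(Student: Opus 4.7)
The plan is to verify the first assertion using the structure of the cotangent lift, and then to reduce the non-degeneracy statement to an argument close to the matrix computation in the proof of Proposition \ref{prop:regasymp}, modified only by the appearance of one extra symmetric block because now $X$ need not vanish. For the first assertion, since $(\eta,X)\in \Crit(\psi)$, I have from \eqref{eq:4} that $\eta \in \Omega$ and $X \in \g_\eta$; combining $\pi(\eta)\in M(H_L)$ with the fact that a point of $\Omega$ sitting over the principal stratum of $M$ lies in a principal orbit of $T^\ast M$ (so that its orbit has the maximal dimension $\kappa$), one concludes $\eta \in \Reg T^\ast M\cap \Omega = \Reg \Omega$. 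Together with $X \in \g_\eta$, the characterization \eqref{eq:z} then gives $(\eta,X)\in \Reg \Crit(\psi)$.

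For the second assertion, I would compute the Hessian of $\psi(\eta,X)=\J(\eta)(X)$ directly at $(\eta,X)$. The $XX$-block vanishes by linearity of $\psi$ in $X$, the mixed $\eta X$-block equals $-\omega_\eta(\widetilde Y,\widetilde \X)$ as in Proposition \ref{prop:regasymp}, and by \eqref{eq:40cis}-\eqref{eq:40} the $\eta\eta$-block equals $\mathrm{Hess}\, J_X(\widetilde \X_1,\widetilde \X_2)=\omega_\eta([\widetilde X,\widetilde \X_1],\widetilde \X_2)$, which is absent in the regular-value case and is the only genuine novelty. To describe the normal space, I would use the almost complex structure $\mathcal J$ to write $T_\eta(T^\ast M)= \mathcal J(\g\cdot \eta)\oplus (\g\cdot \eta)^\omega$, where the second summand equals $T_\eta \Reg \Omega$ by Lemma \ref{lemma:0}, so that $\mathcal J(\g\cdot \eta)$ is a $\kappa$-dimensional complement of $T_\eta \Reg \Omega$. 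Since the fiber of the submersion $P:\Reg \Crit(\psi)\to \Reg \Omega$ of Proposition \ref{prop:submersion} is $\g_\eta$, a natural complement of $T_{(\eta,X)}\Reg \Crit(\psi)$ inside $T(T^\ast M)\times \g$ is
\[
N_{(\eta,X)}\Reg \Crit(\psi)\;\simeq\; \mathcal J(\g\cdot \eta)\oplus \g_\eta^\perp,
\]
of the required dimension $2\kappa$.

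Taking an orthonormal basis $\{A_1,\dots,A_\kappa\}$ of $\g_\eta^\perp$ with respect to the $\Ad(G)$-invariant inner product on $\g$, the map $A\mapsto (\widetilde A)_\eta$ is an isomorphism $\g_\eta^\perp\cong \g\cdot\eta$. In the basis $\{(\mathcal J(\widetilde A_k)_\eta,0)\}_k\cup \{(0,A_l)\}_l$ of the normal space, a computation parallel to the one at the end of the proof of Proposition \ref{prop:regasymp}, now retaining the nonzero $\eta\eta$-block, yields
\[
\A_{\mathrm{trans}}\;=\; \begin{pmatrix} H(X) & -\Xi \\ -\Xi & 0 \end{pmatrix},
\]
where $\Xi_{lk}=g_\eta(\widetilde A_l,\widetilde A_k)$ is the Gram matrix arising from the transformation \eqref{eq:Xi} restricted to $\g\cdot\eta$, and $H(X)_{kk'}=\omega_\eta([\widetilde X,\mathcal J(\widetilde A_k)_\eta],\mathcal J(\widetilde A_{k'})_\eta)$ is the new symmetric ingredient. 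Positive definiteness of $g_\eta$ on $\g\cdot\eta$ makes $\Xi$ invertible, so the block-antidiagonal structure gives $\det \A_{\mathrm{trans}}=\pm (\det \Xi)^2\ne 0$ irrespective of $H(X)$; hence the transversal Hessian is non-degenerate. The main obstacle I anticipate is the correct bookkeeping of the extra block $H(X)$ together with the verification of the identification of the normal space when $X$ is nonzero, but the block-antidiagonal form of $\A_{\mathrm{trans}}$ ultimately reduces the non-degeneracy argument to the invertibility of $\Xi$, exactly as in the regular value case of Proposition \ref{prop:regasymp}.
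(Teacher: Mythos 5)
Your argument is correct in substance, but it takes a genuinely different route from the paper's. For the first assertion both proofs rest on the same fact: for $\eta\in\Omega$ lying over a principal point $m\in M(H_L)$, the isotropy group $G_m\sim H_L$ acts trivially on $\mathrm{Ann}(T_m(G\cdot m))$, so $G_\eta=G_{\pi(\eta)}$ and \eqref{eq:z} applies; your phrasing via $\Reg T^\ast M\cap\Omega$ mirrors the wording in the proof of Lemma \ref{lemma:0}. For the non-degeneracy, however, the paper never produces a complement of the tangent space: it works in local coordinates $(q,p,s)$, observes that on $T^\ast M(H_L)\times\g$ the vanishing of the $(p,s)$-derivatives already forces the $q$-derivatives to vanish (equation \eqref{eq:yz}), reduces via the parametric Lemma \ref{lemma:A} to the family $\psi_q(p,s)$, whose Hessian is block anti-diagonal with entries $(dq_i)_m((\widetilde X_j)_m)$, identifies the kernel of that matrix with $T\,\Crit(\psi_q)$, and concludes by the general linear-algebra observation stated at the end of its proof. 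Your computation --- the full Hessian expressed symplectically and restricted to $\mathcal{J}(\g\cdot\eta)\oplus\g_\eta^\perp$, giving $\bigl(\begin{smallmatrix}H(X)&-\Xi\\ -\Xi&0\end{smallmatrix}\bigr)$ with $\det=\pm(\det\Xi)^2\neq0$ --- is an intrinsic alternative that essentially anticipates the paper's later Lemma \ref{lem:TransvHess} (proved there to evaluate the leading coefficient $L_0$); it buys an explicit matrix and makes the role of $\Xi$ visible, while the paper's route avoids normal-space bookkeeping and reuses exactly the scheme employed again in Theorem \ref{thm:II}.

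One point you should make explicit. For $X\neq 0$ the subspace $\mathcal{J}(\g\cdot\eta)\oplus\g_\eta^\perp$ is a linear complement of $T_{(\eta,X)}\Reg\Crit(\psi)$ (this needs the small check that it meets \eqref{eq:10} only in zero, which holds because $\mathcal{J}(\g\cdot\eta)\cap(\g\cdot\eta)^\omega=\{0\}$ and $\g_\eta\cap\g_\eta^\perp=\{0\}$), but it is \emph{not} the orthogonal normal space: a tangent vector $(\X,w)$ in \eqref{eq:10} has its $\g_\eta^\perp$-component of $w$ prescribed by $[\widetilde X,\widetilde\X]_\eta$, so $(0,A)$ with $A\in\g_\eta^\perp$ need not be orthogonal to it --- the paper's basis in Lemma \ref{lem:TransvHess} compensates for this with the extra $L_X(\widetilde B_k)_\eta$-component in $\mathcal{B}'_k$. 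This costs you only one sentence: since $T_{(\eta,X)}\Reg\Crit(\psi)$ lies in the radical of $\mathrm{Hess}\,\psi$, non-degeneracy of the restriction to \emph{some} complement forces the radical to equal the tangent space, and then the restriction to the true normal space is non-degenerate as well; this is precisely the ``general observation'' invoked at the end of the paper's proof. With that remark added, your proof is complete.
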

\begin{proof}
The first assertion is clear from \eqref{eq:7} and \eqref{eq:z}, since 
\bqn 
\eta \in \Omega, \quad G_{\pi(\eta)} \sim H_L \quad \Rightarrow  \quad G_\eta =G_{\pi(\eta)}.
\eqn
 To see the second, note that by the last implication
\bq
\label{eq:yz}
\eta \in \Omega \cap T^\ast M(H_{L}), \tilde X_{\pi(\eta)}=0 \quad \Longrightarrow \quad \tilde X _\eta =0.
\eq
Let now $\mklm{q_1,\dots, q_n}$ be local coordinates on $M$, $\pi(\eta)=m=m(q)$, and write $\eta_m = \sum p_i (dq_i)_m$, $X=\sum s_i X_i$, where $\mklm{X_1,\dots,X_d}$ denotes a basis of $\g$. Then 
\bqn 
\psi(\eta,X)=\sum p_i (d q_i)_m (\tilde X_m),
\eqn
and 
\bqn 
\gd_p \psi (\eta, X) =0 \quad \Longleftrightarrow  \quad \tilde X_m =0, \qquad \quad \gd_s \psi(\eta, X) =0 \quad \Longleftrightarrow \quad \eta \in \Omega.
\eqn 
As a consequence of \eqref{eq:yz}, on  $T^\ast M(H_{L})\times \g$ we  get
\bqn 
\gd_{p,s} \psi (\eta, X) =0 \quad \Longrightarrow  \quad \gd_q \psi(\eta, X) =0.
\eqn 
Let $\psi_q(p,s)$ denote the phase function regarded as a function of the coordinates $p,s$ alone, while $q$ is regarded as a parameter. 
Lemma \ref{lemma:A} then implies that  on $T^\ast M(H_{L})\times \g$ the study of the transversal Hessian of $\psi$ can be reduced to the study of the transversal Hessian of $\psi_q$. Now, with respect to the coordinates $s,p$, the Hessian of $\psi_q$ is given by 
\bqn 
\left ( \begin{array}{cc}
0 & (dq_i)_m((\tilde X_{j})_m) \\ (dq_j)_m((\tilde X_{i})_m) & 0\\
\end{array} \right ).
\eqn 
A computation shows that the kernel of the corresponding linear transformation is isomorphic to
$$
T_{p,s}( \Crit \, \psi_q)\simeq\mklm{(\tilde p,\tilde s)\in \R^n \times \R^d: \sum \tilde p_j (dq_j)_{m(q)} \in \mathrm{Ann}(T_{m(q)} ( G \cdot m(q))), \sum \tilde s_j X_j \in \g_{m(q)}}.
$$ 
The lemma then follows with the following general observation.
Let $\mathcal{B}$ be a symmetric bilinear form on an $n$-dimensional $\mathbb{K}$-vector space $V$, and $B=(B_{ij})_{i,j}$ the corresponding Gramsian matrix with respect to a basis $\mklm{v_1,\dots,v_n}$ of $V$ such that 
\bqn 
\mathcal{B}(u,w) = \sum_{i,j} u_i w_j B_{ij}, \qquad  u=\sum u_i v_i, \quad w=\sum w_i v_i.
\eqn
We denote the linear operator given by $B$ with the same letter, and write 
\bqn 
V =\ker B \oplus W.
\eqn
Consider the restriction $\mathcal{B}_{|W \times W}$ of $\mathcal{B}$ to $W\times W$, and assume that $\mathcal{B}_{|W\times W}(u,w) =0$ for all $u \in W$, but $w\not=0$. Since the Euclidean scalar product in $V$ is non-degenerate, we necessarily must have $Bw=0$, and consequently $ w \in \ker  B \cap W=\mklm{0}$, which is a contradiction. Therefore $\mathcal{B}_{|W \times W}$ defines a non-degenerate symmetric bilinear form. 
\end{proof}

\begin{proof}[Proof of Theorem \ref{thm:II}] As before, let $m=m(q_1,\dots,q_n)$ be local coordinates on $M$, and write $\eta_m=\sum p_i (dq_i)_m$. For $\sigma_{i_1} \cdots \sigma_{i_N}\not=0$,  the sequence of monoidal transformations $ \zeta^{\rho_{i_1}}_{i_1} \circ \dots \circ  \zeta^{\rho_{i_1}\dots \rho_{i_N}}_{i_1\dots i_N} \circ \delta_{i_1\dots i_N}$  constitutes a diffeomorphism, so that by the previous lemma the restriction of 
\bqn 
\mathrm{Hess} \, ^{(i_1\dots i_N)} \tilde \psi^{tot} (\sigma_{i_j},x^{(i_j)},\tilde v^{(i_N)}, \alpha^{(i_j)}, \beta^{(i_N)},p)
\eqn
to the normal space of 
\bqn 
\mathrm{Crit}(\, ^{(i_1\dots i_N)} \psi^{tot})_{\sigma_{i_1}\cdots \sigma_{i_N}\not=0}
\eqn
defines a non-degenerate quadratic form.  Next, one computes for the Hessian of the total transform
\begin{align*}
\left (\frac{\gd^2 \, ^{(i_1\dots i_N)}\tilde \psi^ {tot}}{\gd \gamma_k\gd \gamma_l}  \right )_{k,l} & = \tau_{i_1}(\sigma) \cdots  \tau_{i_N }(\sigma) 
\left (\frac{\gd^2 \, ^{(i_1\dots i_N)}\tilde \psi^ {wk}}{\gd \gamma_k\gd \gamma_l}  \right )_{k,l} \\ &+
\left (\begin{array}{cc}
\left (   \frac{ \gd ^2 (\tau_{i_1}(\sigma) \cdots  \tau_{i_N }(\sigma))}{\gd \sigma_{i_r}\sigma_{i_s}} \right )_{r,s} & 0 \\ 0 & 0
\end{array}\right ) \, ^{(i_1\dots i_N)}\tilde \psi^ {wk} +R,
\end{align*}
where $R$ is a matrix whose entries contain first order derivatives of $^{(i_1\dots i_N)}\tilde \psi^ {wk}$ as factors. But since $^{(i_1\dots i_N)}\tilde \psi^ {wk}$ vanishes along its critical set, and 
\bqn 
\mathrm{Crit}(\, ^{(i_1\dots i_N)} \tilde \psi^{tot})_{\sigma_{i_1}\cdots \sigma_{i_N}\not=0} =\Crit(^{(i_1\dots i_N)}\tilde \psi^ {wk})_{|\sigma_{i_1}\cdots \sigma_{i_N}\not=0}, 
\eqn
 we conclude that  the transversal Hessian of $^{(i_1\dots i_N)}\tilde \psi^ {wk}$ does not degenerate along the manifold $\Crit(^{(i_1\dots i_N)}\tilde \psi^ {wk})_{|\sigma_{i_1}\cdots \sigma_{i_N}\not=0}$. Therefore, it  remains to study the transversal Hessian of $^{(i_1\dots i_N)}\tilde \psi^ {wk}$ in the case that any of the $\sigma_{i_j}$ vanishes. Now, the proof of Theorem \ref{thm:I},  in particular  \eqref{eq:IV},  showed that
\bqn 
\gd _{p, \alpha^{(i_1)}, \dots, \alpha^{(i_N)}, \beta^{(i_N)}} \, ^{(i_1\dots i_N)}\tilde \psi^ {wk}=0 \quad \Longrightarrow \quad \gd _{\sigma_{i_1}, \dots \sigma_{i_N}, x^{(i_1)}, \dots, x^{(i_N)},\tilde v^{(i_N)}} \, ^{(i_1\dots i_N)}\tilde \psi^ {wk}=0.
\eqn
 If therefore 
$$
\, ^{(i_1\dots i_N)}\tilde \psi^ {wk}_{\sigma_{i_j}, x^{(i_j)},\tilde v^{(i_N)}}(\alpha^{(i_j)}, \beta^{(i_N)},p)
$$ 
denotes the weak transform of the phase function $\psi$ regarded as a function of the variables $(\alpha^{(i_1)},\dots, \alpha^{(i_N)}, \beta^{(i_N)},p)$ alone, while the variables $(\sigma_{i_1},\dots,\sigma_{i_N}, x^{(i_1)},\dots, x^{(i_N)},\tilde v^{(i_N)})$ are kept fixed,
\bqn 
\Crit \big ( \, ^{(i_1\dots i_N)}\tilde \psi^ {wk}_{\sigma_{i_j}, x^{(i_j)},\tilde v^{(i_N)}}\big )=\Crit \big ( \, ^{(i_1\dots i_N)}\tilde \psi^ {wk}\big )  \cap \mklm{\sigma_{i_j}, x^{(i_j)},\tilde v^{(i_N)} = \, \, \text{constant}},
\eqn
a transversal intersection. Thus, the critical set of $\, ^{(i_1\dots i_N)}\tilde \psi^ {wk}_{\sigma_{i_j}, x^{(i_j)},\tilde v^{(i_N)}}$ is equal to the fiber over $(\sigma_{i_j}, x^{(i_j)},\tilde v^{(i_N)})$ of the vector bundle
\bqn 
\Big ((\sigma_{i_j}, x^{(i_j)},\tilde v^{(i_N)}), \g _{\tilde v^{(i_N)}} \times \mathrm{Ann} \big ( \bigoplus\limits_{j=1}^N E^{(i_j)}_{m^{(i_1\dots i_N)}} \oplus F^{(i_N)}_{m^{(i_1\dots i_N)}} \big ) \Big ) \mapsto  (\sigma_{i_j}, x^{(i_j)},\tilde v^{(i_N)}),
\eqn
and in particular  a smooth submanifold. Lemma \ref{lemma:A} then implies that the study of the transversal Hessian  of $\, ^{(i_1\dots i_N)}\tilde \psi^ {wk}$ can be reduced to the study of the transversal Hessian of $\, ^{(i_1\dots i_N)}\tilde \psi^ {wk}_{\sigma_{i_j}, x^{(i_j)},\tilde v^{(i_N)}}$. The crucial fact is now contained in the following 
\begin{proposition}
\label{prop:1}
Assume that 
%for $\tau_{i_j}\not=0$, $1\leq j\leq N$, the $G$-orbit through $x^{(i_1\dots i_N)}$ is of principal type $G/H_L$, and let
$\sigma_{i_1} \cdots \sigma_{i_N}=0$. Then 
\bqn 
\ker  \mathrm{Hess} \, ^{(i_1\dots i_N)}\tilde \psi^ {wk}_{\sigma_{i_j}, x^{(i_j)},\tilde v^{(i_N)}}(0,\dots, 0, \beta^{(i_N)},p)\simeq T_{(0, \dots, 0,\beta^{(i_N)},p)}  \mathrm{Crit} \big (\,^{(i_1\dots i_N)}\tilde \psi^ {wk}_{\sigma_{i_j}, x^{(i_j)},\tilde v^{(i_N)}} \big )
\eqn
for all $(0,\dots, 0, \beta^{(i_N)},p) \in \mathrm{Crit} \big (\,^{(i_1\dots i_N)}\tilde \psi^ {wk}_{\sigma_{i_j}, x^{(i_j)},\tilde v^{(i_N)}} \big )$, and arbitrary $x^{(i_j)}$, $\tilde v^{(i_j)}$.
\end{proposition}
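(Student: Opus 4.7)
My plan is to exploit the very simple algebraic shape of $\,^{(i_1\dots i_N)}\tilde\psi^{wk}$ when all factors $\tau_{i_j}$ vanish, together with the direct sum decomposition \eqref{eq:directsum} already established in the proof of Theorem \ref{thm:I}.

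First I would verify that the substitution $\tau=\delta_{i_1\dots i_N}(\sigma)$ has the property that $\sigma_{i_1}\cdots\sigma_{i_N}=0$ forces every $\tau_{i_j}$ to vanish. This is an inductive check: at each step $\tau_{i_j}$ is a monomial in the $\sigma$'s that contains every $\sigma_{i_k}$ with positive exponent, so the whole critical locus $\{\sigma_{i_1}\cdots \sigma_{i_N}=0\}$ is contained in $\{\tau_{i_1}=\dots=\tau_{i_N}=0\}$. In particular, the point $m^{(i_1\dots i_N)}$ collapses to $x^{(i_1)}$, all the maps $(\exp_{x^{(i_r)}})_{\ast,0}$ become the identity under the canonical identifications, and $\,^{(i_1\dots i_N)}\tilde\psi^{wk}$ reduces precisely to the simple form \eqref{eq:B}:
\begin{equation*}
\,^{(i_1\dots i_N)}\tilde\psi^{wk}=\sum p_i\, dq_i\Big(\widetilde{A^{(i_1)}}_{x^{(i_1)}}+\sum_{j=2}^N\lambda(A^{(i_j)})\, x^{(i_j)}+\lambda(B^{(i_N)})\,\tilde v^{(i_N)}\Big).
\end{equation*}

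The crucial observation is that, with $(\sigma,x,\tilde v)$ frozen, this expression is \emph{bilinear} in $(p,(\alpha^{(i_1)},\ldots,\alpha^{(i_N)},\beta^{(i_N)}))$. Hence its Hessian at the critical point has the off-diagonal shape
\begin{equation*}
H=\begin{pmatrix} 0 & M^{T}\\ M & 0\end{pmatrix},
\end{equation*}
where $M$ is the matrix of the linear map $\mathbf{X}\mapsto Q(\mathbf{X})$ with $Q(\mathbf{X})=\widetilde{A^{(i_1)}}_{x^{(i_1)}}+\sum_{j=2}^N\lambda(A^{(i_j)})x^{(i_j)}+\lambda(B^{(i_N)})\tilde v^{(i_N)}$, read against the coordinates $p_1,\dots,p_n$. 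Consequently $\ker H=\ker M^{T}\oplus \ker M$, and I only need to compute these two kernels.

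For $\ker M$: by construction $\widetilde{A^{(i_1)}}_{x^{(i_1)}}\in E^{(i_1)}_{x^{(i_1)}}$, $\lambda(A^{(i_j)})x^{(i_j)}\in E^{(i_j)}_{x^{(i_1)}}$, and $\lambda(B^{(i_N)})\tilde v^{(i_N)}\in F^{(i_N)}_{x^{(i_1)}}$, so the direct sum property \eqref{eq:directsum} lets one read off the kernel summand by summand. The maps $\g_{x^{(i_j)}}^{\perp}\to E^{(i_j)}_{x^{(i_1)}}$ are the infinitesimal orbit maps of $G_{x^{(i_{j-1})}}$ at $x^{(i_j)}$; since $\g_{x^{(i_j)}}^{\perp}\cap\g_{x^{(i_j)}}=\{0\}$, each is injective, so $\delta A^{(i_j)}=0$ for all $j$, while $\delta B^{(i_N)}$ may be any element with $\lambda(\delta B^{(i_N)})\tilde v^{(i_N)}=0$. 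For $\ker M^T$: $\delta p$ lies in the kernel exactly when the covector $\sum \delta p_i\,(dq_i)_{x^{(i_1)}}$ annihilates every $Q(\mathbf{X})$, i.e.\ when it lies in $\mathrm{Ann}\bigl(\bigoplus_j E^{(i_j)}_{x^{(i_1)}}\oplus F^{(i_N)}_{x^{(i_1)}}\bigr)$. Comparing with the characterisation \eqref{eq:C} of $\Crit(\,^{(i_1\dots i_N)}\tilde\psi^{wk})$ given in Theorem \ref{thm:I}, this is precisely the description of $T_{(0,\dots,0,\beta^{(i_N)},p)}\Crit\bigl(\,^{(i_1\dots i_N)}\tilde\psi^{wk}_{\sigma_{i_j},x^{(i_j)},\tilde v^{(i_N)}}\bigr)$, which finishes the argument. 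The only genuine subtlety I anticipate is confirming that $\{\sigma_{i_1}\cdots\sigma_{i_N}=0\}\subset\{\tau_{i_j}=0\,\forall j\}$, since without this collapse one still faces the more complicated mixed expression \eqref{eq:phasewk} with nontrivial $\exp_{\ast}$ factors; once that point is secured, the rest is essentially linear algebra encoding the geometry already established in the proof of Theorem \ref{thm:I}.
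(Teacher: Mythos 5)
Your proposal is correct and follows essentially the same route as the paper: at $\sigma_{i_1}\cdots\sigma_{i_N}=0$ all $\tau_{i_j}$ vanish, the weak transform reduces to the bilinear expression \eqref{eq:B}, so the Hessian in the variables $(\alpha^{(i_j)},\beta^{(i_N)},p)$ is block off-diagonal, and its kernel is computed exactly as in the paper via the direct sum \eqref{eq:directsum} (forcing $\delta A^{(i_j)}=0$ and $\lambda(\delta B^{(i_N)})\tilde v^{(i_N)}=0$) together with the annihilator condition on $\delta p$, which matches the tangent space of the critical set read off from \eqref{eq:C}. Your explicit verification that the substitution $\delta_{i_1\dots i_N}$ sends $\{\sigma_{i_1}\cdots\sigma_{i_N}=0\}$ into $\{\tau_{i_1}=\dots=\tau_{i_N}=0\}$, and the injectivity remark for the infinitesimal orbit maps, are just spelled-out versions of steps the paper uses implicitly.
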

\begin{proof}
Let $\sigma_{i_1} \cdots \sigma_{i_N}=0$. With \eqref{eq:phasewk}, or directly from  \eqref{eq:B} 
one computes
or the second derivatives of the weak transform at a critical point $(0,\dots,0,\beta^{(i_N)},p)$
\begin{align*}
\gd_{\alpha^{(i_1)}_s} \gd _{p_r} \, ^{(i_1\dots i_N)}\tilde \psi^ {wk}_{\sigma_{i_j}, x^{(i_j)},\tilde v^{(i_N)}} &=dq_r((\tilde A^{(i_1)}_{s})_{x^{(i_1)}}), \\
\gd_{\alpha^{(i_j)}_s} \gd _{p_r} \, ^{(i_1\dots i_N)}\tilde \psi^ {wk}_{\sigma_{i_j}, x^{(i_j)},\tilde v^{(i_N)}} &=dq_r(\lambda(A_s^{(i_j)}) x^{(i_j)}), \\
\gd_{\beta^{(i_N)}_s} \gd _{p_r} \, ^{(i_1\dots i_N)}\tilde \psi^ {wk}_{\sigma_{i_j}, x^{(i_j)},\tilde v^{(i_N)}} &=dq_r(\lambda(B_s^{(i_N)}) \tilde v^{(i_N)}),
\end{align*}
while all other second derivatives vanish.
Thus,  for $\sigma_{i_1}\cdots \sigma_{i_j}=0$, the Hessian of the function $\, ^{(i_1\dots i_N)}\tilde \psi^ {wk}_{\sigma_{i_j}, x^{(i_j)},\tilde v^{(i_N)}}$  with respect to the coordinates $p,\alpha^{(i_j)}, \beta^{(i_j)}$ is given on its critical set by the matrix
\bqn 
\left ( \begin{array}{ccccc}
0 & dq_r((\tilde A^{(i_1)}_{s})_{x^{(i_1)}}) & \dots & dq_r(\lambda(A_s^{(i_N)}) x^{(i_j)})& dq_r(\lambda(B_s^{(i_N)}) \tilde v^{(i_N)}) \\
\, dq_s((\tilde A^{(i_1)}_{r})_{x^{(i_1)}})& 0 & \dots & 0 & 0 \\
\vdots & \vdots &\vdots &\vdots &\vdots \\
\,dq_s(\lambda(A_r^{(i_N)}) x^{(i_j)})& 0 & \dots & 0 & 0 \\
 \,dq_s(\lambda(B_r^{(i_N)}) \tilde v^{(i_N)})& 0 & \dots & 0 & 0
\end{array} \right ). 
\eqn
Let us now compute the kernel of the linear transformation corresponding to this matrix. Cleary,  the vector $(\tilde p, \tilde \alpha^{(i_1)}, \dots, \tilde \alpha^{(i_N)}, \tilde \beta^{(i_N)})$ lies in the kernel if and only if

\medskip
\begin{tabular}{ll}
{(a)} & $\sum \tilde \alpha_s^{(i_1)} (\tilde A_s^{(i_1)})_{ x^{(i_1)}}+  \dots +\sum \tilde \alpha_s^{(i_N)} \lambda (A_s^{(i_N)}) x^{(i_N)}+ \sum \tilde \beta_s^{(i_N)} \lambda(B_s^{(i_N)}) \tilde v^{(i_N)}=0$ ; \\[2pt]
{(b)} &  $\sum \tilde p_s dq_s((\tilde Y^{(i_1)})_{x^{(i_1)}})=0$ for all $Y^{(i_1)} \in \g_{x^{(i_1)}}^\perp$,   $\sum \tilde p_s dq_s(\lambda ( \g_{x^{(i_j)}}^\perp)x^{(i_j)})=0$, $2 \leq j \leq N$;\\[2pt]
{(c)} &$\sum \tilde p_s dq_s(\lambda ( \g_{x^{(i_N)}})\tilde v^{(i_N)})=0$.
\end{tabular}
\medskip

Let $E^{(i_j)}$,  $F^{(i_N)}$, and  $V^{(i_1\dots i_N)}$ be defined as in \eqref{eq:EF} and \eqref{eq:V}. Then
\bqn 
\sum \tilde \alpha_r^{(i_j)} (\tilde A_r^{(i_1)})_{x^{(i_1)}}+  \dots+ \sum \tilde \alpha_r^{(i_N)}\lambda( A_r^{(i_N)}) x^{(i_N)}+ \sum \tilde \beta_r^{(i_N)} \lambda( B_r^{(i_N)}) \tilde v^{(i_N)} \in \bigoplus_{j=1}^N E^{(i_j)}_{x^{(i_1)}} \oplus F^{(i_N)}_{x^{(i_1)}},
\eqn
so that for condition (a) to hold, it is  necessary  and sufficient that 
\bqn 
\tilde \alpha^{(i_j)} =0, \quad 1 \leq j \leq N, \qquad \sum \tilde \beta_r^{(i_N)} \lambda(B_r^{(i_N)}) \tilde v^{(i_N)}=0.
\eqn
Condition (b) is equivalent to $\sum \tilde p_s (dq_s)_{x^{(i_1)}} \in \mathrm{Ann}( E^{(i_j)}_{x^{(i_1)}})$ for al $j=1,\dots, N$. Similarly, condition (c) is equivalent to $\sum \tilde p_s (dq_s)_{x^{(i_1)}} \in \mathrm{Ann}( F^{(i_N)}_{x^{(i_1)}})$.  On the other hand, by \eqref{eq:C},
\begin{gather*}
T_{(0, \dots, 0,\beta^{(i_N)},p)}  \mathrm{Crit} \big (\,^{(i_1\dots i_N)}\tilde \psi^ {wk}_{\sigma_{i_j}, x^{(i_j)},\tilde v^{(i_N)}} \big )= \Big \{( \tilde \alpha^{(i_1)}, \dots, \tilde \alpha^{(i_N)}, \tilde \beta^{(i_N)}, \tilde p): \tilde \alpha^{(i_j)}=0, \\  \sum \tilde  \beta^{(i_N)}_r \lambda(B^{(i_N)}_r) \in \g_{\tilde v^{(i_N)}}, \, \sum \tilde p_s (dq_s)_{x^{(i_1)}} \in \mathrm{Ann}\Big ( \bigoplus_{j=1}^N E^{(i_j)}_{x^{(i_1)}}\oplus F^{(i_N)}\Big ) \Big \},
\end{gather*}
and the proposition follows.
\end{proof}
The previous proposition   implies that for $\sigma_{i_1}\cdots \sigma_{i_N}=0$
\bqn 
\mathrm{Hess} \,^{(i_1\dots i_N)}\tilde \psi^ {wk}_{\sigma_{i_j}, x^{(i_j)},\tilde v^{(i_N)}}(0, \dots, 0,\beta^{(i_N)},p)_{|N_{(0, \dots, 0,\beta^{(i_N)},p)}  \mathrm{Crit} \big (\,^{(i_1\dots i_N)}\tilde \psi^ {wk}_{\sigma_{i_j}, x^{(i_j)},\tilde v^{(i_N)}} \big )}
\eqn
defines a non-degenerate symmetric bilinear form for all points $(0, \dots, 0,\beta^{(i_N)},p)$ lying in the critical set of $\,^{(i_1\dots i_N)}\tilde \psi^ {wk}_{\sigma_{i_j}, x^{(i_j)},\tilde v^{(i_N)}}$, and Theorem \ref{thm:II} follows with  Lemma \ref{lemma:A}.
\end{proof}

\section{Asymptotics in the resolution space }

We are now in position to  give an asymptotic description of the integrals $I_{i_1\dots i_N}^{\rho_{i_1}\dots \rho_{i_N}}(\mu)$  defined in \eqref{eq:N}.  Since the considered integrals are absolutely convergent, we can interchange the order of integration by Fubini, and write
\bqn
I_{i_1\dots i_N}^{\rho_{i_1}\dots \rho_{i_N}}(\mu)= \int_{(-1,1)^N}  \hat J_{i_1\dots i_{N}}^{\rho_{i_1} \dots \rho_{i_{N}}}\Big ( \frac \mu{\tau_{i_1}\cdots \tau_{i_N}} \Big ) \prod_{j=1}^N |\tau_{i_j}|^{c^{(i_j)} + \sum _{r=1}^j d^{(i_r)} -1} \d \tau_{i_N} \dots \d \tau_{i_1},
\eqn
where we set
\begin{gather*}
\hat J_{i_1\dots i_{N}}^{\rho_{i_1} \dots \rho_{i_{N}}}(\nu) 
 =\int_{M_{i_1}(H_{i_1})} \Big [ \int_{\gamma^{(i_1)}((S_{i_1})_{x^{(i_1)}})_{i_2}(H_{i_2})} \dots \Big [  \int_{\gamma^{(i_{N-1})}((S_{i_1\dots i_{N-1}})_{x^{(i_{N-1})}})_{i_{N}}(H_{i_{N}})} \\
\Big [ \int_{\gamma^{(i_{N})}((S_{i_1\dots i_{N}})_{x^{(i_N)}})\times \g_{x^{(i_{N})}}\times \g_{x^{(i_{N})}}^\perp \times \cdots \times \g_{x^{(i_{1})}}^\perp\times T^\ast _{m^{(i_1\dots i_N)}}W_{i_1}}  e^{i  \, ^{(i_1\dots i_N)} \tilde \psi ^{wk,pre}/\nu}  \, a_{i_1\dots i_N}^{\rho_{i_1} \dots \rho_{i_{N}}} \,    \Phi_{i_1\dots i_N}^{\rho_{i_1} \dots \rho_{i_{N}}} \\
  \d(T^\ast _{m^{(i_1\dots i_N)}}W_{i_1})  \d A^{(i_1)} \dots  \d A^{(i_N)}  \d B^{(i_N)} \d \tilde v^{(i_N)} \Big ]  \d \tau_{i_N} \d x^{(i_{N})} \dots  \Big ] \d \tau_{i_2} \d x^{(i_{2})} \Big ]\d \tau_{i_1} \d x^{(i_{1})},
\end{gather*}
and introduced the new parameter
\bqn
\nu =\frac \mu {\tau_{i_1}\cdots \tau_{i_N}}.
\eqn
Now,  for an arbitrary $0<\epsilon < T$ to be chosen later we define
\begin{align*}
\,^1I_{i_1\dots i_N}^{\rho_{i_1}\dots \rho_{i_N}}(\mu)&= \int_{((-1,1)\setminus (-\epsilon,\epsilon))^N}  \hat J_{i_1\dots i_{N}}^{\rho_{i_1} \dots \rho_{i_{N}}}\Big ( \frac \mu{\tau_{i_1}\cdots \tau_{i_N}} \Big ) \prod_{j=1}^N |\tau_{i_j}|^{c^{(i_j)} + \sum _{r=1}^j d^{(i_r)} -1} \d \tau_{i_N} \dots \d \tau_{i_1},\\
\, ^2I_{i_1\dots i_N}^{\rho_{i_1}\dots \rho_{i_N}}(\mu)&= \int_{(-\epsilon,\epsilon)^N}  \hat J_{i_1\dots i_{N}}^{\rho_{i_1} \dots \rho_{i_{N}}}\Big ( \frac \mu{\tau_{i_1}\cdots \tau_{i_N}} \Big ) \prod_{j=1}^N |\tau_{i_j}|^{c^{(i_j)} + \sum _{r=1}^j d^{(i_r)} -1} \d \tau_{i_N} \dots \d \tau_{i_1} . 
\end{align*}
\begin{lemma}
\label{lemma:kappa}
One has $c^{(i_j)} + \sum _{r=1}^j d^{(i_r)} -1\geq \kappa$ for arbitrary $j=1,\dots, N$.
\end{lemma}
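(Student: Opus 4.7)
My plan is to rewrite the inequality as a statement about slice representations, and then to reduce it to a compactness bound for compact orbits in Euclidean space. First, since $d^{(i_r)} = \dim \g_{x^{(i_{r-1})}} - \dim \g_{x^{(i_r)}}$ with the convention $\g_{x^{(i_0)}} = \g$, a telescoping sum gives
$$\sum_{r=1}^{j} d^{(i_r)} \;=\; \dim \g - \dim \g_{x^{(i_j)}} \;=\; \dim G \cdot x^{(i_j)}.$$
The claimed inequality thereby becomes $c^{(i_j)} - 1 \geq \kappa - \dim G \cdot x^{(i_j)}$. By the tube theorem, a principal $G$-orbit through a point near $x^{(i_j)}$ splits its dimension as $\dim G \cdot x^{(i_j)} + \dim G_{x^{(i_j)}} \cdot y = \kappa$, so the right-hand side is precisely the dimension of a principal $G_{x^{(i_j)}}$-orbit inside the slice $V^{(i_1\dots i_j)}$.

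Next, I would compute $c^{(i_j)}$ intrinsically. Because $(H_{i_j})$ is the minimal remaining isotropy type in $\gamma^{(i_{j-1})}((S_{i_1\dots i_{j-1}})_{x^{(i_{j-1})}})_{i_j}$, the slice theorem applied to $G_{x^{(i_{j-1})}}$ acting on the unit sphere in $V^{(i_1\dots i_{j-1})}$ identifies the tangent space at $x^{(i_j)}$ to the stratum of type $(H_{i_j})$ with $T_{x^{(i_j)}}(G_{x^{(i_{j-1})}}\cdot x^{(i_j)}) \oplus (V_j^{H_{i_j}} \cap \{x^{(i_j)}\}^\perp)$, writing $V_j := V^{(i_1\dots i_j)}$. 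Comparing with the tangent space to the sphere, which differs only by having the full $V_j \cap \{x^{(i_j)}\}^\perp$ in place of its $H_{i_j}$-fixed subspace, one obtains
$$c^{(i_j)} \;=\; \dim V_j - \dim V_j^{H_{i_j}}.$$

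The key step, on which everything rests, is to bound a principal $G_{x^{(i_j)}}$-orbit in $V_j$. Fix an $H_{i_j}$-invariant inner product on $V_j$ and decompose orthogonally $V_j = V_j^{H_{i_j}} \oplus W_j$, so that $W_j^{H_{i_j}} = 0$ and $\dim W_j = c^{(i_j)}$. For a principal point $y = y_1 + y_2 \in V_j$, the fact that $y_1$ is $H_{i_j}$-fixed gives $G_y = \mathrm{Stab}_{H_{i_j}}(y_2)$; since by assumption $(H_{i_j}) \neq (H_L)$, this stabilizer must be a proper subgroup of $H_{i_j}$, which forces $y_2 \neq 0$. The orbit $H_{i_j} \cdot y_2$ is then a compact submanifold of $W_j$ avoiding the origin. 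A top-dimensional compact submanifold of the connected vector space $W_j$ would be simultaneously open and compact, which is impossible, so $\dim G_{x^{(i_j)}} \cdot y = \dim H_{i_j} \cdot y_2 \leq \dim W_j - 1 = c^{(i_j)} - 1$.

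Combining the three steps yields the lemma. I expect the main technical obstacle to lie in the second step, the explicit identification of $c^{(i_j)}$ with the codimension of the $H_{i_j}$-fixed subspace of the slice, since it requires tracking the sphere restriction carefully through the nested slice construction of the iteration. Once this translation is in place, the telescoping in step one and the compact-orbit argument in step three are essentially routine.
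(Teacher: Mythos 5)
Your proof is correct, and it reaches the inequality by a more intrinsic route than the paper does. Both arguments ultimately rest on the same mechanism: a compact-group orbit sitting inside a sphere of the normal fiber must have dimension at most $c^{(i_j)}-1$. The paper gets there using the data already built into the resolution: it observes that the explicitly constructed principal point $m^{(i_{j+1}\dots i_N)}$ lies on the unit sphere of $(\nu_{i_1\dots i_j})_{x^{(i_j)}}$, computes $\dim G_{x^{(i_j)}}\cdot m^{(i_{j+1}\dots i_N)}=\sum_{l>j}\dim E^{(i_l)}+\dim F^{(i_N)}$ from the tangent-space splitting \eqref{eq:tangent}, and then concludes via $d^{(i_r)}=\dim E^{(i_r)}$ and the identity \eqref{eq:kappa} for $\kappa$. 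You instead re-derive the dimension bookkeeping from the slice theorem: the telescoping $\sum_{r\le j}d^{(i_r)}=\dim\g-\dim\g_{x^{(i_j)}}$, the identification $c^{(i_j)}=\dim V_j-\dim V_j^{H_{i_j}}$ (your $W_j$ is precisely the fiber $(\nu_{i_1\dots i_j})_{x^{(i_j)}}$, so this step makes explicit what the paper takes directly from the construction of $\nu_{i_1\dots i_j}$), and a genericity argument producing a principal point with nonzero $W_j$-component in place of the paper's explicit point. What your version buys is independence from the resolution-specific objects $E^{(i_l)}$, $F^{(i_N)}$ and from \eqref{eq:kappa}; what it costs is that you must invoke the nested slice correspondence twice — once to say that $\kappa-\dim G\cdot x^{(i_j)}$ is the principal $G_{x^{(i_j)}}$-orbit dimension in the iterated slice $V^{(i_1\dots i_j)}$ (for $j\ge 2$ this is not a single application of the tube theorem in $M$ but the iterated statement, which the paper itself establishes in the sub-lemma inside the proof of Theorem \ref{thm:I} by repeated appeal to Bredon), and once to know that the principal type there is $(H_L)\neq(H_{i_j})$, which is what forces $y_2\neq 0$ in your final step (note also that with an invariant inner product the orbit of $y_2$ lies in a sphere of $W_j$, so the open-compact argument can be shortened). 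With those two standard inputs made explicit, your argument is complete and covers the case $j=N$ uniformly, which the paper treats by a separate remark.
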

\begin{proof}
We first note that for $j=1,\dots, N-1$
\bqn
c^{(i_j)} = \dim (\nu_{i_1\dots i_j})_{x^{(i_j)}} \geq \dim G_{x^{(i_j)}} \cdot m^{(i_{j+1}\dots i_N)} +1.
\eqn
Indeed, $(\nu_{i_1\dots i_j})_{x^{(i_j)}}$ is an orthogonal $G_{x^{(i_j)}}$-space, so that the dimension of the $G_{x^{(i_j)}}$-orbit of $m^{(i_{j+1}\dots i_N)}\in \gamma^{(i_j)}((S_{i_1 \dots i_j})_{x^{(i_j)}})$ can be at most $c^{(i_j)}-1$. Now, under the assumption $\sigma_{i_1}\cdots \sigma_{i_N}\not=0$, \eqref{eq:decomp},  \eqref{eq:tilde} and \eqref{eq:tildeB} imply 
\begin{align*}
T_{m^{(i_{j+1}\dots i_N)}} &(G_{x^{(i_j)}} \cdot m^{(i_{j+1}\dots i_N)}) \simeq T_{m^{(i_{1}\dots i_N)}} (G_{x^{(i_j)}} \cdot m^{(i_{1}\dots i_N)}) \\
&= E^{(i_{j+1})}_{m^{(i_1 \dots i_N)}}\oplus  \bigoplus _{k=j+2}^N \tau_{i_{j+1}}\dots \tau_{i_{k-1}} E^{(i_k)} _{m^{(i_1 \dots i_N)}}  \oplus  \tau_{i_{j+1}}\dots \tau_{i_N} F^{(i_N)}_{m^{(i_1 \dots i_N)}}, 
\end{align*}
where the distributions $E^{(i_{j})}$, $F^{(i_{N})}$ where defined in \eqref{eq:EF}. On then computes
\begin{align*}
\dim G_{x^{(i_j)}}  \cdot m^{(i_{j+1}\dots i_N)}= &\dim T_{m^{(i_{j+1}\dots i_N)}} (G_{x^{(i_j)}} \cdot m^{(i_{j+1}\dots i_N)}) \\
=&\sum_{l=j+1} ^N \dim E^{(i_l)} _{m^{(i_1 \dots i_N)}}+ \dim F^{(i_N)}_{m^{(i_1 \dots i_N)}},
\end{align*}
which implies 
\bqn
c^{(i_j)} \geq \sum_{l=j+1} ^N \dim E^{(i_l)}_{m^{(i_1 \dots i_N)}} + \dim F^{(i_N)}_{m^{(i_1 \dots i_N)}} +1
\eqn
for arbitrary $\sigma_{i_j}$. On the other hand, one has
\begin{align*}
d^{(i_j)}&=\dim \g_{x^{(i_j)}}^\perp =\dim [\lambda( \g_{x^{(i_j)}}^\perp) \cdot x^{(i_j)} ]=\dim [ \lambda( \g_{x^{(i_j)}}^\perp)  \cdot m^{(i_j\dots i_N)} ]= \dim E^{(i_j)}_{m^{(i_1 \dots i_N)}}.
\end{align*}
For $j=1,\dots,N-1$, the assertion of the lemma now follows with \eqref{eq:kappa}. Since 
\bqn
c^{(i_N)} = \dim (\nu_{i_1\dots i_N})_{x^{(i_N)}} \geq \dim G_{x^{(i_N)}} \cdot \tilde v^{(i_N)} +1,
\eqn
 a similar argument  yields the assertion for $j=N$. 
\end{proof}
As a consequence of the lemma, we obtain for $^2I_{i_1\dots i_N}^{\rho_{i_1}\dots \rho_{i_N}}(\mu)$ the estimate
\begin{align}
\label{eq:I2}
\begin{split}
^2I_{i_1\dots i_N}^{\rho_{i_1}\dots \rho_{i_N}}(\mu)&\leq C \int_{(-\epsilon,\epsilon)^N}  \prod_{j=1}^N |\tau_{i_j}|^{c^{(i_j)} + \sum _{r=1}^j d^{(i_r)} -1} \d \tau_{i_N} \dots \d \tau_{i_1} \\
&\leq C \int_{(-\epsilon,\epsilon)^N}  \prod_{j=1}^N |\tau_{i_j}|^{\kappa} \d \tau_{i_N} \dots \d \tau_{i_1} =\frac{2C}{\kappa+1} \epsilon^{N (\kappa+1)}
\end{split}
\end{align}
for some $C>0$. Let us now turn to the integral $^1I_{i_1\dots i_N}^{\rho_{i_1}\dots \rho_{i_N}}(\mu)$. After performing the change of variables $\delta_{i_1\dots i_N}$
one obtains
\begin{align*}
^1I_{i_1\dots i_N}^{\rho_{i_1}\dots \rho_{i_N}}(\mu)&  = \int\limits _{\epsilon < |\tau_{i_j} (\sigma)|< 1} \hspace{-.5cm}  J_{i_1\dots i_{N}}^{\rho_{i_1} \dots \rho_{i_{N}}}\Big ( \frac \mu{\tau_{i_1}(\sigma)\cdots \tau_{i_N}(\sigma)} \Big )  \prod_{j=1}^N |\tau_{i_j}(\sigma)|^{c^{(i_j)} + \sum _{r=1}^j d^{(i_r)} -1} \, |\det D\delta_{i_1\dots i_N}(\sigma) | \d \sigma,
\end{align*}
where $ J_{i_1\dots i_{N}}^{\rho_{i_1} \dots \rho_{i_{N}}}(\nu)$ is defined like $\hat J_{i_1\dots i_{N}}^{\rho_{i_1} \dots \rho_{i_{N}}}(\nu)$, but with $^{(i_1\dots i_N)} \tilde \psi ^{wk,pre}$ being replaced by $ ^{(i_1\dots i_N)} \tilde \psi ^{wk}_\sigma $, which denotes the weak transform of the phase function $\psi$ as a function of the variables $ x^{(i_j)}$, $\tilde v^{(i_N)}, \alpha^{(i_j)}, \beta^{(i_N)}, p$ alone, while the variables $\sigma=(\sigma_{i_1},\dots \sigma_{i_N})$ are regarded as parameters. The idea is now to make use of the principle of the stationary phase to give an asymptotic expansion of $J_{i_1\dots i_{N}}^{\rho_{i_1} \dots \rho_{i_{N}}}(\nu)$. 

\begin{theorem}
\label{thm:J}
Let $\sigma=(\sigma_{i_1},\dots, \sigma_{i_N})$ be a fixed set of parameters. Then, for every $\tilde N \in \N$ there exists a constant $C_{\tilde N, ^{(i_1\dots i_N)} \tilde \psi ^{wk}_\sigma}>0$ such that 
\bqn
|J_{i_1\dots i_{N}}^{\rho_{i_1} \dots \rho_{i_{N}}} (\nu) -(2\pi |\nu|)^\kappa\sum_{j=0} ^{\tilde N-1} |\nu|^j Q_j (^{(i_1\dots i_N)} \tilde \psi ^{wk}_\sigma;a_{i_1\dots i_N} \Phi_{i_1\dots i_N})| \leq C_{\tilde N,^{(i_1\dots i_N)} \tilde \psi ^{wk}_\sigma} |\nu|^{\tilde N},
\eqn
with explicit expressions and estimates for the coefficients $Q_j$. Moreover,  the constants $C_{\tilde N, ^{(i_1\dots i_N)} \tilde \psi ^{wk}_\sigma}$ and the coefficients $Q_j$ have uniform bounds in $\sigma$.
\end{theorem}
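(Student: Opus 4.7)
The strategy is to view $J_{i_1\dots i_N}^{\rho_{i_1}\dots \rho_{i_N}}(\nu)$ as an oscillatory integral on a manifold with boundary whose phase $^{(i_1\dots i_N)}\tilde\psi^{wk}_\sigma$ satisfies, for each fixed $\sigma=(\sigma_{i_1},\dots,\sigma_{i_N})$, the hypotheses of the generalized stationary phase theorem (Theorem \ref{thm:SP}), and then to track the dependence on the parameter $\sigma$. First, let me fix notation: denote by $\M_\sigma$ the smooth manifold on which the inner variables $(x^{(i_j)},\tilde v^{(i_N)},\alpha^{(i_j)},\beta^{(i_N)},p)$ live, which carries the amplitude $a_{i_1\dots i_N}^{\rho_{i_1}\dots\rho_{i_N}}\Phi_{i_1\dots i_N}^{\rho_{i_1}\dots\rho_{i_N}}$ with compact support. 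By Theorem \ref{thm:I} the critical set $\Crit(^{(i_1\dots i_N)}\tilde\psi^{wk}_\sigma)$ is a $\Cinft$-submanifold of $\M_\sigma$ of codimension $2\kappa$, by Theorem \ref{thm:II} the transversal Hessian is non-degenerate at every critical point, and by construction $^{(i_1\dots i_N)}\tilde\psi^{wk}_\sigma$ vanishes on its critical set.

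The plan is then as follows. First, by the non-stationary phase principle one may, at the cost of a term of order $O(\nu^\infty)$, restrict the amplitude to an arbitrarily small tubular neighborhood $\mathcal{U}_\sigma$ of $\Crit(^{(i_1\dots i_N)}\tilde\psi^{wk}_\sigma)$. Identifying $\mathcal{U}_\sigma$ with a neighborhood of the zero section of the normal bundle $N\Crit(^{(i_1\dots i_N)}\tilde\psi^{wk}_\sigma)$ of rank $2\kappa$ via the exponential map, Theorem \ref{thm:SP} becomes applicable. Integrating along the fibers of this normal bundle yields, for every $\tilde N\in \N$, an asymptotic expansion
\bqn
J_{i_1\dots i_N}^{\rho_{i_1}\dots\rho_{i_N}}(\nu)= (2\pi|\nu|)^\kappa \sum_{j=0}^{\tilde N-1} |\nu|^j Q_j(^{(i_1\dots i_N)}\tilde\psi^{wk}_\sigma; a_{i_1\dots i_N}\Phi_{i_1\dots i_N}) + R_{\tilde N}(\nu,\sigma),
\eqn
where the coefficients $Q_j$ are given by the explicit formula \eqref{eq:67} evaluated at the transversal Hessian and involving derivatives of the amplitude and of the phase along the critical set, and $R_{\tilde N}(\nu,\sigma)=O(|\nu|^{\tilde N})$.

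Second, to obtain the uniformity in $\sigma$, the key observation is that the entire construction depends smoothly on $\sigma\in [-1,1]^N$. Indeed, the phase $^{(i_1\dots i_N)}\tilde\psi^{wk}_\sigma$ is polynomial in $\sigma$ and smooth in the remaining variables, and the explicit description of $\Crit(^{(i_1\dots i_N)}\tilde\psi^{wk}_\sigma)$ provided by Theorem \ref{thm:I} together with \eqref{eq:C} shows that this critical set varies smoothly with $\sigma$ through the smoothly varying distributions $E^{(i_j)}$ and $F^{(i_N)}$. By Proposition \ref{prop:1} and the argument in the proof of Theorem \ref{thm:II}, the kernel of the Hessian coincides with the tangent space to the critical set \emph{for every} value of $\sigma\in[-1,1]^N$, so the transversal Hessian stays non-degenerate on the compact parameter set, and the modulus of its determinant admits a uniform positive lower bound. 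Since all coefficients $Q_j$ and remainder estimates in Theorem \ref{thm:SP} depend continuously on the transversal Hessian, on finitely many derivatives of the phase and amplitude, and on the tubular neighborhood data, the constants $C_{\tilde N,^{(i_1\dots i_N)}\tilde\psi^{wk}_\sigma}$ and the coefficients $Q_j$ admit uniform bounds in $\sigma$.

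The main obstacle I expect is exactly this uniformity issue: near the exceptional divisor $\{\sigma_{i_j}=0\}$ the underlying geometry of the $G$-orbits collapses, and one must check that the non-degeneracy of the transversal Hessian does not deteriorate there. This is precisely what the desingularization process was designed to guarantee, and it is encoded in Proposition \ref{prop:1}, which ensures that the kernel of the Hessian has constant dimension $\dim \Crit(^{(i_1\dots i_N)}\tilde\psi^{wk}_\sigma) = \dim \M_\sigma - 2\kappa$ for all $\sigma\in[-1,1]^N$. Together with the smooth dependence of all geometric data on $\sigma$, a compactness argument on $[-1,1]^N$ then delivers the claimed uniform bounds.
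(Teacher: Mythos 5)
Your proposal is correct and follows essentially the same route as the paper: Theorems \ref{thm:I} and \ref{thm:II} (via Lemma \ref{lemma:A} and Proposition \ref{prop:1}) give a clean critical set with non-degenerate transversal Hessian for every $\sigma$, the stationary phase theorem is applied for fixed $\sigma$, and uniformity in $\sigma$ is obtained because the relevant constants are controlled by the inverse of the transversal Hessian, which is uniformly invertible over the compact parameter set and the compact support of the amplitude. The paper phrases this last step by identifying the $\sigma$-fixed transversal Hessian with that of the full weak transform via Lemma \ref{lemma:A} and taking a supremum over all variables including $\sigma$, which is the same compactness argument you give.
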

\begin{proof}
As a consequence of  Theorems \ref{thm:I} and \ref{thm:II}, together with Lemma \ref{lemma:A}, the phase function $^{(i_1\dots i_N)} \tilde \psi ^{wk}_\sigma$ has a clean critical set, meaning that 
\begin{itemize}
\item the critical set $\Crit( ^{(i_1\dots i_N)} \tilde \psi ^{wk}_\sigma )$ is a $\Cinft$-submanifold of codimension $2\kappa$ for arbitrary $\sigma$;
\item  the transversal Hessian
\bqn
\mathrm{Hess}  \,^{(i_1\dots i_N)} \tilde \psi ^{wk}_\sigma ( x^{(i_j)},  \tilde v^{(i_N)}, \alpha^{(i_j)}, \beta^{(i_N)},p)_{|N_{  (x^{(i_j)},  \tilde v^{(i_N)}, \alpha^{(i_j)}, \beta^{(i_N)},p)}\Crit \big ( ^{(i_1\dots i_N)} \tilde \psi ^{wk}_\sigma \big)}
\eqn 
defines a non-degenerate symmetric bilinear form for arbitrary $\sigma$ at every point  of the critical set of $^{(i_1\dots i_N)} \tilde \psi ^{wk}_\sigma$.
\end{itemize}
Thus, the necessary conditions for applying the principle of the stationary phase to the integral $J_{\sigma_{i_1},\dots,\sigma_{i_N}}(\nu)$ are fulfilled, and we obtain the desired asymptotic expansion by  Theorem \ref{thm:SPM}. To see  the existence of the uniform bounds, note that as an examination of the proof of Theorem \ref{thm:SP} shows,  the constants $C_{N,\psi}$ in Theorem \ref{thm:SPM} are  bounded from above by 
\bqn 
\sup_{m \in \Ccal \cap \supp a} \norm {\Big ( \psi''(m)_{|N_m\Ccal}\Big ) ^{-1}}
\eqn
see also \cite[Remark 1]{ramacher10}. We therefore have
\bqn 
C_{\tilde N, ^{(i_1\dots i_N)} \tilde \psi ^{wk}_\sigma} \leq C'_{\tilde N} \sup_{ x^{(i_j)},  \tilde v^{(i_N)}, \alpha^{(i_j)}, \beta^{(i_N)},p} \norm{\Big ({\mathrm{Hess} \,  ^{(i_1\dots i_N)} \tilde \psi ^{wk}_\sigma}_{|N \Crit ( ^{(i_1\dots i_N)} \tilde \psi ^{wk}_\sigma)}\Big ) ^{-1}}.
\eqn
But since by Lemma \ref{lemma:A} the transversal Hessian 
\bqn 
{\mathrm{Hess} \,  ^{(i_1\dots i_N)} \tilde \psi ^{wk}_\sigma}_{|N_{ (x^{(i_j)},  \tilde v^{(i_N)}, \alpha^{(i_j)}, \beta^{(i_N)},p ) } \Crit ( ^{(i_1\dots i_N)} \tilde \psi ^{wk}_\sigma)}
\eqn 
is given by 
\bqn 
{\mathrm{Hess} \,  ^{(i_1\dots i_N)} \tilde \psi ^{wk}}_{|N_{ (\sigma_{i_j}, x^{(i_j)},  \tilde v^{(i_N)}, \alpha^{(i_j)}, \beta^{(i_N)},p ) } \Crit ( ^{(i_1\dots i_N)} \tilde \psi ^{wk})},
\eqn 
we finally obtain the estimate
\bqn 
C_{\tilde N, ^{(i_1\dots i_N)} \tilde \psi ^{wk}_\sigma} \leq C'_{\tilde N} \sup_{\sigma_{i_j},  x^{(i_j)},  \tilde v^{(i_N)}, \alpha^{(i_j)}, \beta^{(i_N)},p} \norm{\Big ({\mathrm{Hess} \,  ^{(i_1\dots i_N)} \tilde \psi ^{wk}}_{|N \Crit ( ^{(i_1\dots i_N)} \tilde \psi ^{wk})}\Big ) ^{-1}} \leq C_{\tilde N, {i_1\dots i_N}}
\eqn
by a constant independent of $\sigma$. Similarly, one can show the existence of bounds of the form
\bqn 
|Q_j(^{(i_1\dots i_N)} \tilde \psi ^{wk}_\sigma; a_{i_1\dots i_N} \Phi_{i_1\dots i_N}) |\leq \tilde C_{j, {i_1\dots i_N}},
\eqn
with constants $\tilde C_{j, {i_1\dots i_N}}$ independent of $\sigma$. 
\end{proof}

\begin{remark}
Before going on, let us remark that for the computation of the integrals $^1I_{i_1\dots i_N}^{\rho_{i_1}\dots \rho_{i_N}}(\mu)$ it is only necessary to have an asymptotic expansion for the integrals $J_{i_1\dots i_{N}}^{\rho_{i_1} \dots \rho_{i_{N}}} (\nu)$ in the case that $\sigma_{i_1} \cdots \sigma_{i_N}\not=0$, which can also be  obtained without Theorems \ref{thm:I} and \ref{thm:II} using only the factorization of the phase function $\psi$ given by the resolution process, together with Lemma \ref{lemma:Reg}. Nevertheless, the main consequence to be drawn from Theorems \ref{thm:I} and \ref{thm:II} is that the constants $C_{\tilde N, ^{(i_1\dots i_N)} \tilde \psi ^{wk}_\sigma}$ and the coefficients $Q_j$ in Theorem \ref{thm:J} have uniform bounds in $\sigma$. 
\end{remark}
As a consequence of Theorem \ref{thm:J}, we obtain for arbitrary $\tilde N\in \N$
\begin{align*}
|J_{i_1\dots i_{N}}^{\rho_{i_1} \dots \rho_{i_{N}}} (\nu) & -(2\pi |\nu|)^\kappa Q_0( ^{(i_1\dots i_N)} \tilde \psi ^{wk}_\sigma;a_{i_1\dots i_N} \Phi_{i_1\dots i_N})| \\&\leq 
\Big |J_{i_1\dots i_{N}}^{\rho_{i_1} \dots \rho_{i_{N}}}(\nu) -(2\pi |\nu|)^\kappa\sum_{l=0} ^{\tilde N-1} |\nu|^l Q_l (^{(i_1\dots i_N)} \tilde \psi ^{wk}_\sigma;a_{i_1\dots i_N} \Phi_{i_1\dots i_N})\Big | \\ & + (2\pi |\nu|)^\kappa \sum_{l=1} ^{\tilde N-1} |\nu|^l |Q_l (^{(i_1\dots i_N)} \tilde \psi ^{wk}_\sigma;a_{i_1\dots i_N} \Phi_{i_1\dots i_N})| \leq c_1 |\nu|^{\tilde N}+c_2 |\nu|^\kappa \sum_{l=1}^{\tilde N-1} |\nu|^l
\end{align*}
with constants $c_i>0$ independent  of both $\sigma$ and $\nu$.
From this we deduce
\begin{align*}
\Big |\, ^1I_{i_1\dots i_N}^{\rho_{i_1}\dots \rho_{i_N}}&(\mu)  - (2\pi \mu)^{\kappa}  \int _{\epsilon < |\tau_{i_j}(\sigma) |< 1}   Q_0 \prod_{j=1}^N |\tau_{i_j}(\sigma)|^{c^{(i_j)} + \sum _{r=1}^j d^{(i_r)} -1-\kappa}  |\det D\delta_{i_1\dots i_N}(\sigma) | \d \sigma \Big| \\&\leq c_3 \mu^{\tilde N}   \int_{\epsilon < |\tau_{i_j}(\sigma) |< 1}   \prod_{j=1}^N |\tau_{i_j}(\sigma)|^{c^{(i_j)} + \sum _{r=1}^j d^{(i_r)} -1-\tilde N} \, |\det D\delta_{i_1\dots i_N}(\sigma) | \d \sigma\\
&+ c_4 \mu^{\kappa} \sum_{l=1}^{\tilde N-1} \mu ^l  \int_{\epsilon < |\tau_{i_j}(\sigma) |< 1}   \prod_{j=1}^N |\tau_{i_j}(\sigma)|^{c^{(i_j)} + \sum _{r=1}^j d^{(i_r)} -1-\kappa -l} \, |\det D\delta_{i_1\dots i_N}(\sigma) | \d \sigma\\
& \leq c_5 \mu^{\tilde N}\prod_{j=1}^N (- \log \eps)^{i_j} \max \Big \{1, \prod _{j=1}^N  \epsilon^{c^{(i_j)} + \sum _{r=1}^j d^{(i_r)} -\tilde N} \Big \} \\ 
&+c_6 \sum_{l=1}^{\tilde N -1} \mu^{\kappa +l} \prod_{j=1}^N (- \log \eps)^{i_{lj}} \max \Big \{1, \prod _{j=1}^N  \epsilon^{c^{(i_j)} + \sum _{r=1}^j d^{(i_r)} -\kappa -l} \Big \},
\end{align*}
where the exponents $i_j$ and $i_{lj}$ can take the values $0$ or $1$. We now set
$
\epsilon=\mu^{1/N}
$.
Taking into account Lemma \ref{lemma:kappa}, one infers  that the right hand side of the last inequality can be estimated by 
\bqn
\mu^{k+1} (\log \mu)^N.
\eqn
so that for sufficiently large $\tilde N \in \N$ we finally obtain an asymptotic expansion for $I_{i_1\dots i_N}^{\rho_{i_1}\dots \rho_{i_N}}(\mu)$ by taking into account \eqref{eq:I2}, and the fact that 
\bqn 
(2\pi \mu)^\kappa \int_{0 < |\tau_{i_j} |< \mu^{1/N}}  Q_0  \prod_{j=1}^N |\tau_{i_j}|^{c^{(i_j)} + \sum _{r=1}^j d^{(i_r)} -1-\kappa} \d \tau_{i_N} \dots \d \tau_{i_1} = O(\mu^{\kappa+1}). 
\eqn
\begin{theorem}
\label{thm:5}
Let the assumptions of Theorem \ref{thm:I}  be fulfilled. Then 
\bqn 
I_{i_1\dots i_N}^{\rho_{i_1}\dots \rho_{i_N}}(\mu)=(2 \pi \mu)^\kappa L_{i_1\dots i_N}^{\rho_{i_1}\dots \rho_{i_N}}+ O(\mu^{\kappa+1}(\log \mu)^N),
\eqn
where the leading coefficient $L_{i_1\dots i_N}^{\rho_{i_1}\dots \rho_{i_N}}$ is given by 
\bq
\label{eq:L}
L_{i_1\dots i_N}^{\rho_{i_1}\dots \rho_{i_N}}=\int_{\Crit( ^{(i_1\dots i_N)} \tilde \psi^{wk})} \frac { a_{i_1\dots i_N}^{\rho_{i_1}\dots \rho_{i_N}} \Phi_{i_1\dots i_N}^{\rho_{i_1}\dots \rho_{i_N}} \, d\Crit( ^{(i_1\dots i_N)} \tilde \psi^{wk})} {|\mathrm{Hess} ( ^{(i_1\dots i_N)} \tilde \psi^{wk})_{N\Crit( ^{(i_1\dots i_N)} \tilde \psi^{wk})}|^{1/2}},
\eq
where $ d\Crit( ^{(i_1\dots i_N)} \tilde \psi^{wk})$ denotes the induced measure.
\end{theorem}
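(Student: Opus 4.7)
The plan is to combine the pieces already assembled in the text preceding the statement: the local factorization of the phase function provided by the desingularization process, the smoothness of the critical set of the weak transform (Theorem \ref{thm:I}), the transversal non-degeneracy of its Hessian (Theorem \ref{thm:II}), and the stationary phase expansion with uniform-in-$\sigma$ bounds (Theorem \ref{thm:J}). The overall strategy is to split $I_{i_1\dots i_N}^{\rho_{i_1}\dots \rho_{i_N}}(\mu) = {}^{1}I_{i_1\dots i_N}^{\rho_{i_1}\dots \rho_{i_N}}(\mu) + {}^{2}I_{i_1\dots i_N}^{\rho_{i_1}\dots \rho_{i_N}}(\mu)$ at a threshold $\epsilon$ to be chosen, to bound the small-$\tau$ piece trivially, and to apply stationary phase in a controlled way to the remaining piece. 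The threshold $\epsilon = \mu^{1/N}$ will ultimately balance the two error contributions.

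For the contribution ${}^{2}I_{i_1\dots i_N}^{\rho_{i_1}\dots \rho_{i_N}}(\mu)$, I would use the boundedness of $\hat J_{i_1\dots i_N}^{\rho_{i_1}\dots \rho_{i_N}}$ together with Lemma \ref{lemma:kappa}, which guarantees $c^{(i_j)} + \sum_{r=1}^{j} d^{(i_r)} - 1 \geq \kappa$ for each $j$, to obtain the estimate \eqref{eq:I2}, i.e.\ $|{}^{2}I_{i_1\dots i_N}^{\rho_{i_1}\dots \rho_{i_N}}(\mu)| \leq C\,\epsilon^{N(\kappa+1)}$. With $\epsilon=\mu^{1/N}$ this is of size $O(\mu^{\kappa+1})$, which is absorbed in the stated remainder.

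For ${}^{1}I_{i_1\dots i_N}^{\rho_{i_1}\dots \rho_{i_N}}(\mu)$, I would first perform the change of variables $\tau = \delta_{i_1\dots i_N}(\sigma)$ so that the phase becomes $\tau_{i_1}(\sigma)\cdots\tau_{i_N}(\sigma)\cdot{}^{(i_1\dots i_N)}\tilde\psi^{wk}_\sigma$, reducing the inner integral to $J_{i_1\dots i_N}^{\rho_{i_1}\dots \rho_{i_N}}(\nu)$ with semiclassical parameter $\nu = \mu/(\tau_{i_1}(\sigma)\cdots\tau_{i_N}(\sigma))$. Theorem \ref{thm:J} then yields an expansion to order $\tilde N$ with remainder and coefficient bounds that are uniform in $\sigma$, the uniformity being the essential consequence of Theorems \ref{thm:I} and \ref{thm:II}. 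Plugging the expansion into the outer integral produces, for the leading term, an integral of $Q_0\prod_j|\tau_{i_j}(\sigma)|^{c^{(i_j)}+\sum_{r=1}^j d^{(i_r)}-1-\kappa}|\det D\delta_{i_1\dots i_N}(\sigma)|$ over $\epsilon < |\tau_{i_j}(\sigma)|<1$, together with remainder integrals of mixed form. A careful accounting, using Lemma \ref{lemma:kappa} once more, shows that each such integral contributes at worst $\log\epsilon$-factors and matching powers of $\epsilon$; with $\epsilon=\mu^{1/N}$ and $\tilde N$ chosen large enough, the total error is $O(\mu^{\kappa+1}(\log\mu)^N)$. Noting that the region $0 < |\tau_{i_j}| < \mu^{1/N}$ contributes $O(\mu^{\kappa+1})$ by Lemma \ref{lemma:kappa}, one may replace the truncated domain of integration in the leading term by the full cube, obtaining the claimed leading coefficient.

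The final step is to identify the leading coefficient as \eqref{eq:L}. Here I would invoke the explicit form of $Q_0$ from the stationary phase theorem (Theorem \ref{thm:SP}), which on $\Crit({}^{(i_1\dots i_N)}\tilde\psi^{wk}_\sigma)$ is the ratio of $a_{i_1\dots i_N}^{\rho_{i_1}\dots\rho_{i_N}}\Phi_{i_1\dots i_N}^{\rho_{i_1}\dots\rho_{i_N}}$ by $|\mathrm{Hess}({}^{(i_1\dots i_N)}\tilde\psi^{wk}_\sigma)_{|N\Crit}|^{1/2}$, and then fold the outer integration over $\sigma$ together with the $\tau$-factors $|\tau_{i_j}(\sigma)|^{c^{(i_j)}+\sum d^{(i_r)}-1-\kappa}|\det D\delta_{i_1\dots i_N}(\sigma)|$ into the induced measure on $\Crit({}^{(i_1\dots i_N)}\tilde\psi^{wk})$; the slicewise structure of the critical set established in Theorem \ref{thm:I} guarantees that this identification is consistent with Fubini. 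The hardest point will be the bookkeeping for the logarithmic factors: one must verify that at most $N$ logarithmic blow-ups occur in the $\sigma$-integrals of $\prod_j|\tau_{i_j}(\sigma)|^{-l}$ as $l$ grows, and that the powers of $\epsilon$ produced by each partial integration are dominated by $\mu^{(\kappa+1)/N\cdot N}=\mu^{\kappa+1}$ once $\epsilon=\mu^{1/N}$ is substituted.
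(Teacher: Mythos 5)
Your proposal follows essentially the same route as the paper: the split at $\epsilon$ with ${}^{2}I$ bounded via Lemma \ref{lemma:kappa}, the change of variables $\delta_{i_1\dots i_N}$ and application of Theorem \ref{thm:J} with its $\sigma$-uniform bounds to ${}^{1}I$, the choice $\epsilon=\mu^{1/N}$ to balance the logarithmic and power errors, the extension of the truncated leading-term integral to the full cube at cost $O(\mu^{\kappa+1})$, and the identification of the leading coefficient with the integral over $\Crit({}^{(i_1\dots i_N)}\tilde\psi^{wk})$ using the fibered structure from Theorem \ref{thm:I}. This matches the paper's argument, so no further comment is needed.
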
\qed

\section{Statement of the main result}
\label{sec:8}

Let us now return to our departing point, that is, the asymptotic behavior of the integral  \eqref{int} in case that $\varsigma=0$ is a singular value of the momentum map. 
For this, we still have to examine the contributions to $I(\mu)$ coming from integrals of the form
\begin{gather*}
\begin{split}
\tilde I_{i_1\dots i_\Theta}^{\rho_{i_1}\dots \rho_{i_\Theta}}(\mu) = \qquad \qquad \qquad \qquad\qquad \qquad\qquad \qquad\qquad \qquad \\ \int_{M_{i_1}(H_{i_1})\times (-1,1)} \Big [ \int_{\gamma^{(i_1)}((S_{i_1})_{x^{(i_1)}})_{i_2}(H_{i_2})\times (-1,1)} \dots \Big [  \int_{\gamma^{(i_{\Theta-1})}((S_{i_1\dots i_{\Theta-1}})_{x^{(i_{\Theta-1})}})_{i_{\Theta}}(H_{i_{\Theta}})\times (-1,1)} \\
\Big [ \int_{\gamma^{(i_{\Theta})}((S_{i_1\dots i_{\Theta}})_{x^{(i_\Theta)}})\times \g_{x^{(i_{\Theta})}}\times \g_{x^{(i_{\Theta})}}^\perp \times \cdots \times \g_{x^{(i_{1})}}^\perp\times T^\ast _{m^{(i_1\dots i_\Theta)}} W_{i_1}} e^{i\frac {\tau_1 \dots \tau_\Theta}\mu \, ^{(i_1\dots i_\Theta)} \tilde \psi ^{wk}}  \, a_{i_1\dots i_\Theta}^{\rho_{i_1}\dots \rho_{i_\Theta}} \,   \tilde \Phi_{i_1\dots i_\Theta}^{\rho_{i_1}\dots \rho_{i_\Theta}} \\
 \d (T^\ast _{m^{(i_1\dots i_\Theta)}} W_{i_1})(\eta)  \d A^{(i_1)} \dots  \d A^{(i_\Theta)}  \d B^{(i_\Theta)} \d \tilde v^{(i_\Theta)} \Big ]  \d \tau_{i_\Theta} \d x^{(i_{\Theta})} \dots  \Big ] \d \tau_{i_2} \d x^{(i_{2})} \Big ]\d \tau_{i_1} \d x^{(i_{1})},
\end{split}
\end{gather*}
where $\{(H_{i_1}), \dots, (H_{i_\Theta})\}$ is an arbitrary totally ordered subset of non-principal isotropy types, while 
$a_{i_1\dots i_\Theta}^{\rho_{i_1}\dots \rho_{i_\Theta}}$ is a smooth  amplitude which is supposed to have compact support in a system of $(\theta^{(i_1)}, \dots, \theta^{(i_{N-1})},\alpha^{(i_N)})$-charts labeled by the indices ${\rho_{i_1}\dots \rho_{i_\Theta}}$, and
\begin{align*}
\tilde \Phi_{i_1\dots i_\Theta} ^{\rho_{i_1}\dots \rho_{i_\Theta}}&=\prod_{j=1}^\Theta |\tau_{i_j}|^{c^{(i_j)}+\sum_r^j d^{(i_r)}-1}\Phi_{i_1\dots i_\Theta}^{\rho_{i_1}\dots \rho_{i_\Theta}},
\end{align*}
 $\Phi_{i_1\dots i_\Theta}$ being a smooth function which does not depend on the variables $\tau_{i_j}$. Now, a computation of the $p$-derivatives of $\, ^{(i_1\dots i_\Theta)} \tilde \psi ^{wk}$ in any of the $\alpha^{(i_\Theta)}$-charts shows that $\, ^{(i_1\dots i_\Theta)} \tilde \psi ^{wk}$ has no critical points  there. 
 By the non-stationary phase theorem, see H\"{o}rmander \cite[Theorem 7.7.1]{hoermanderI},  one then computes for arbitrary $\tilde N \in \N$
 \begin{align*}
| \tilde I_{i_1 \dots i_\Theta}^{\rho_{i_1}\dots \rho_{i_\Theta}}(\mu)| \leq c_7 \mu^{\tilde N} \int_{\epsilon<|\tau_{i_j}| < 1} \prod_{j=1} ^\Theta |\tau_{i_j}|^{c^{(i_j)}+\sum_r^j d^{(i_r)}-1-\tilde N}d\tau + c_8 \epsilon^{\Theta (\kappa+1)} \leq c_9 \max\mklm{\mu^{\tilde N},\mu^{\kappa+1}},
 \end{align*}
 where we took $\epsilon=\mu^{1/\Theta}$. Choosing $\tilde N$ large enough, we conclude that
 \bqn
 | \tilde I_{i_1 \dots i_\Theta}^{\rho_{i_1}\dots \rho_{i_\Theta}}(\mu)| =O(\mu^{\kappa+1}).
 \eqn
  As a consequence of this we see that, up to terms of order $O(\mu^{\kappa+1})$,  $I(\mu)$ can be written as a sum
\begin{align}
\label{eq:65}
I(\mu)&=\sum _{N=1}^{\Lambda-1}\, \sum_{\stackrel{i_1<\dots< i_N}{ \rho_{i_1}, \dots ,\rho_{i_N}}} I_{i_1\dots i_N}^{\rho_{i_1} \dots \rho_{i_{N}}}( \mu)+\sum_{N=1}^{\Lambda-1} \, \sum_{\stackrel{i_1<\dots< i_{N-1}<L}{ \rho_{i_1}, \dots ,\rho_{i_{N-1}}}} I_{i_1\dots i_{N-1} L}^{\rho_{i_1} \dots \rho_{i_{N-1}}}( \mu),
\end{align}
where the first term is a sum over  maximal, totally ordered subsets of non-principal isotropy types, while the second term is a sum over  totally ordered subsets of non-principal isotropy types. The asymptotic behavior of the integrals $I_{i_1\dots i_N}^{\rho_{i_1}\dots \rho_{i_N}}(\mu)$ has been determined in the previous section, and using Lemma \ref{lemma:Reg} it is not difficult to see that the integrals $I_{i_1\dots i_{N-1} L}^{\rho_{i_1} \dots \rho_{i_{N-1}}}( \mu)$ have analogous asymptotic descriptions.  We can now state the main result of this paper.

\begin{theorem}
\label{thm:main}
Let $M$ be a connected  Riemannian manifold, and $G$  a compact, connected Lie group $G$ with Lie algebra $\g$ acting isometrically and effectively on $M$. Consider the oscillatory integral
\bqn
I(\mu)=   \int _{T^\ast M}  \int_{\g} e^{i \psi( \eta,X)/\mu }   a(\eta,X)  \d X \d\eta ,   \qquad \mu >0,
\eqn 
where the phase function 
\bqn 
\psi(\eta,X)=\J(\eta)(X)
\eqn
is given by the momentum map $\J:T^\ast M \rightarrow \g^\ast$ corresponding to  the Hamiltonian action on $T^\ast M$,  $d\eta$ is the Liouville measure on $T^\ast M$, and $dX$ an Euclidean measure given by an $\Ad(G)$-invariant inner product on $\g$, while   $a \in \CT( T^\ast M \times \g)$.   Then $I(\mu)$ has the asymptotic expansion 
\bqn 
I(\mu) = (2\pi\mu)^\kappa L_0 + O(\mu^{\kappa+1}(\log \mu)^{\Lambda-1}),  \qquad \mu \to 0^+.
\eqn
Here $\kappa$ is the dimension of an orbit of principal type in $M$, $\Lambda$ the maximal number of elements of a totally ordered subset of the set of isotropy types, and the leading coefficient is given by \footnote{A more explicit expression for $L_0$ will be given in Proposition \ref{prop:LT}.}
\bq
\label{eq:L0}
L_0=\int_{\mathrm{Reg}\, \Ccal} \frac {a(\eta,X)}{|\mathrm{Hess}  \, \psi(\eta,X)_{|N_{(\eta,X)}\mathrm{Reg}\, \Ccal}|^{1/2}} \d(\mathrm{Reg}\, \Ccal)(\eta,X),
\eq
where $\mathrm{Reg}\, \Ccal$ denotes the regular part of the critical set $\Ccal=\Crit(\psi)$ of $\psi$, and $\d(\mathrm{Reg}\, \Ccal)$ the  measure  induced by $d\eta \d X.$ 
In particular, the integral over $\mathrm{Reg}\, \Ccal$ exists. 
\end{theorem}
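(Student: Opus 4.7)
The plan is to start from the decomposition \eqref{eq:65}, which already expresses $I(\mu)$ as a finite sum of the pieces $I_{i_1\dots i_N}^{\rho_{i_1}\dots \rho_{i_{N}}}(\mu)$ (indexed by maximal totally ordered chains of non-principal isotropy types) plus analogous pieces $I_{i_1\dots i_{N-1}L}^{\rho_{i_1}\dots \rho_{i_{N-1}}}(\mu)$ ending in the principal isotropy type $(H_L)$, modulo an $O(\mu^{\kappa+1})$ remainder coming from the charts in which the weak transform is non-stationary. Applying Theorem \ref{thm:5} to each of the first type of pieces, and for the second type invoking the ordinary stationary phase theorem (Theorem \ref{thm:SP}) directly on $W_L$ using Lemma \ref{lemma:Reg} together with the analogous desingularization in the first $N-1$ variables to handle the non-principal strata met en route, I obtain an expansion
\begin{equation*}
I(\mu) = (2\pi\mu)^\kappa \Big[\sum_{N,\,i_\bullet,\,\rho_\bullet} L_{i_1\dots i_N}^{\rho_{i_1}\dots \rho_{i_N}} + \sum_{N,\,i_\bullet,\,\rho_\bullet}L_{i_1\dots i_{N-1}L}^{\rho_{i_1}\dots \rho_{i_{N-1}}}\Big] + O(\mu^{\kappa+1}(\log\mu)^{\Lambda-1}),
\end{equation*}
each leading coefficient being of the form \eqref{eq:L}.

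Next I would identify the bracketed sum with $L_0$ as in \eqref{eq:L0}. The idea is that the composite map
\begin{equation*}
\zeta_{i_1}^{\rho_{i_1}}\circ\cdots\circ \zeta_{i_1\dots i_N}^{\rho_{i_1}\dots\rho_{i_N}}\circ\delta_{i_1\dots i_N}
\end{equation*}
is a diffeomorphism on the open locus $\sigma_{i_1}\cdots\sigma_{i_N}\neq 0$, where, by \eqref{eq:CC} and Theorem \ref{thm:I}, it carries the regular part of $\Crit(\,^{(i_1\dots i_N)}\tilde\psi^{wk})$ onto a piece of $\Reg\,\Ccal$ sitting over the orbit stratum of type $(H_{i_N})$. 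Because the transversal Hessian transforms, along its critical set, by exactly the product of the exceptional factors appearing in $\,^{(i_1\dots i_N)}\tilde\psi^{tot}=\tau_{i_1}\cdots\tau_{i_N}\,^{(i_1\dots i_N)}\tilde\psi^{wk}$, the Jacobian factors $\prod_j|\tau_{i_j}|^{c^{(i_j)}+\sum_{r\leq j}d^{(i_r)}-1}$ cancel against the determinants $|\mathrm{Hess}(\,^{(i_1\dots i_N)}\tilde\psi^{wk})_{|N\Crit}|^{1/2}$ in \eqref{eq:L} in such a way that the integrand collapses to the intrinsic measure $a(\eta,X)/|\mathrm{Hess}\,\psi_{|N\Reg\Ccal}|^{1/2}\, d(\Reg\,\Ccal)$. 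Summing over all charts $\rho_\bullet$ collapses the partitions of unity $u_{i_1}^{\rho_{i_1}}\cdots u_{i_1\dots i_N}^{\rho_{i_1}\dots\rho_{i_N}}$, and summing over all chains $(H_{i_1}),\dots,(H_{i_N})$ with $i_N$ ranging over all (necessarily principal) final types, together with the remaining $\chi_{i_1\dots i_N}$ partitions, exhausts the stratification of $M$ and reassembles the full integral over $\Reg\,\Ccal$.

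The main obstacle will be this last bookkeeping step: checking that the Jacobians of the monoidal charts, the measures $d A^{(i_j)}\,dB^{(i_N)}\,d\tilde v^{(i_N)}\,d\tau_{i_j}\,dx^{(i_j)}$ produced by the iterated fibrations $\Pi_{k}$, and the transversal Hessians conspire to yield precisely the canonical measure on $\Reg\,\Ccal$. For this I would work locally at a point $(\eta,X)\in\Reg\,\Ccal$ lying over the orbit stratum $M(H_{i_N})$, use the explicit basis of $N_{(\eta,X)}\Reg\,\Ccal$ arising from the splitting \eqref{eq:tangent} together with the annihilator of $\bigoplus_j E^{(i_j)}\oplus F^{(i_N)}$, and compare with the basis used to compute the transversal Hessian in the proof of Theorem \ref{thm:II}. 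Because on the principal locus $\sigma_{i_1}\cdots\sigma_{i_N}\neq 0$ both measures are smooth, it suffices to check the coincidence of their densities at a single point of each stratum, which is a finite-dimensional linear algebra computation of exactly the same nature as the one performed in Proposition \ref{prop:regasymp} for the regular-value case. Finally, the integrability of the right-hand side of \eqref{eq:L0}, and hence the existence of $L_0$, follows a posteriori from the boundedness of each $L_{i_1\dots i_N}^{\rho_{i_1}\dots \rho_{i_N}}$, which is part of Theorem \ref{thm:J}.
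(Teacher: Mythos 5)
Your first half coincides with the paper: the reduction via \eqref{eq:65}, the application of Theorem \ref{thm:5} to the maximal chains, the analogous treatment of the chains ending in $(H_L)$ via Lemma \ref{lemma:Reg}, and the $O(\mu^{\kappa+1})$ disposal of the $\alpha$-charts are exactly the paper's steps. The divergence, and the gap, lies in how you pass from the sum of chart-wise coefficients \eqref{eq:L} to the intrinsic expression \eqref{eq:L0}. You propose a direct change-of-variables argument in which the exceptional factors $\prod_j|\tau_{i_j}|^{c^{(i_j)}+\sum_{r\le j}d^{(i_r)}-1}$, the fibration Jacobians, and the transversal Hessian determinants are claimed to cancel so that the resolved densities pull back to $a/|\mathrm{Hess}\,\psi_{|N\Reg\Ccal}|^{1/2}\,d(\Reg\Ccal)$; but this identity is precisely the nontrivial content of the theorem, and you do not prove it. Your proposed verification --- ``it suffices to check the coincidence of their densities at a single point of each stratum'' --- is not a valid reduction: two smooth densities that agree at one point of a stratum need not agree along it, and you supply no homogeneity or equivariance argument that would make a pointwise check sufficient. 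In addition, the $\sigma_{i_1}\cdots\sigma_{i_N}\neq 0$ locus maps onto the part of $\Reg\Ccal$ lying over the principal stratum of $M$ only, so the portions of $\Reg\Ccal$ sitting over singular strata of $M$ (which correspond to exceptional-divisor points $\sigma=0$) need a separate measure-theoretic argument that is absent from your sketch.

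The paper avoids all of this bookkeeping by an indirect uniqueness argument which your proposal does not replace: one inserts cut-offs $u_\delta$ supported near $\Sing\Omega$, observes that $a(1-u_\delta)$ is supported away from $\Sing\Ccal$, and therefore computes $I_\delta(\mu)$ in two ways --- once by the classical clean stationary phase theorem (Theorem \ref{thm:SPM}), once by the resolution procedure --- and identifies the two leading coefficients because the leading term of an asymptotic expansion is unique. Letting $\delta\to 0$ with dominated convergence then identifies $L_0$ with \eqref{eq:L0}, and the absolute convergence of the integral over the noncompact set $\Reg\Ccal$ (where $|\mathrm{Hess}\,\psi_{|N\Reg\Ccal}|^{-1/2}$ may blow up near $\Sing\Ccal$) is obtained by the Fatou argument applied to a nonnegative majorant $a^+$. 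Your one-line claim that integrability ``follows a posteriori from the boundedness of each $L_{i_1\dots i_N}^{\rho_{i_1}\dots\rho_{i_N}}$, which is part of Theorem \ref{thm:J}'' does not work as stated: Theorem \ref{thm:J} bounds the fixed-$\sigma$ coefficients uniformly in $\sigma$, but without the (unproven) density identification and a positivity argument this says nothing about the convergence of the intrinsic integral near $\Sing\Ccal$, nor about absolute convergence for an amplitude of variable sign. Either carry out the Jacobian--Hessian cancellation in full (at every point, not one per stratum, and including the strata over $\Sing M$), or adopt the cut-off/uniqueness-of-expansion device of the paper; as written, the proof of \eqref{eq:L0} and of the existence of $L_0$ is missing.
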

\begin{remark}
Note that equation \eqref{eq:L0} in particular means that the obtained asymptotic expansion for $I(\mu)$ is independent of the explicit partial resolution we used.
\end{remark}
\begin{proof}
By \eqref{eq:65} and Theorem \ref{thm:5} one has 
\bqn 
I(\mu) = (2\pi\mu)^\kappa L_0 + O(\mu^{\kappa+1}(\log \mu)^{\Lambda-1}),  \qquad \mu \to 0^+,
\eqn
where $L_0$ is given by a sum of integrals of the form \eqref{eq:L}. It therefore remains to show the equality \eqref{eq:L0}. For this, we shall  introduce  certain cut-off functions for the singular part $\Sing \Omega$ of $\Omega$. Choose a Riemmanian metric on  $T^\ast M$, and denote the corresponding distance on $T^\ast M$  by $d$.   Let $K$ be a compact subset in $T^\ast M$, $\delta >0$, and consider the set
 \bqn
 (\Sing \Omega \cap K)_{\delta}=\mklm{\eta \in T^\ast M : d(\eta,\eta') < \delta \text{ for some } \eta' \in \Sing \Omega\cap K}.
 \eqn
By using a partition of unity, one can show the existence of a  test function $u_\delta \in \CT((\Sing \Omega \cap K)_{3\delta})$ satisfying $u_\delta =1$ on $(\Sing \Omega \cap K)_\delta$,  see H\"{o}rmander \cite[Theorem 1.4.1]{hoermanderI}. Now, let $K$ be such that  $\supp_\eta a \subset K$. We then assert that  the  limit 
\bq
\label{eq:MC}
\lim_{\delta \to 0}  \int_{\Reg \Ccal}\frac{ [a (1-u_\delta)] (\eta,X)}{|\det  \, \psi'' (\eta,X)_{|N_{(\eta,X)}\Reg \Ccal} |^{1/2}} d(\Reg \Ccal)(\eta,X)
\eq
exists and is equal to $L_0$, where $d(\Reg {\Ccal})$ is the measure on $\Reg {\Ccal}$ induced by $d\eta \, dX$.
Indeed,  define
\bqn
I_\delta(\mu)=\int_{T^\ast M} \int_{\g}  e^{\frac{i}{\mu} \psi(\eta,X) }  [a(1-u_\delta)](\eta,X) \, dX \, d\eta.
\eqn
Since $(\eta,X) \in \Sing {\Ccal}$ implies $ \eta \in \Sing \Omega$, a direct application of Theorem \ref{thm:SPM} for fixed $\delta >0$ gives
\bq
\label{eq:asympt}
| I_\delta(\mu)- (2\pi \mu)^\kappa L_0(\delta) | \leq C_\delta \mu^{\kappa+1},
\eq
where $C_\delta>0$ is a constant depending only on $\delta$, and
\bqn
L_0(\delta)=  \int_{\Reg {\Ccal}}\frac{ [a(1-u_\delta)] (\eta,X)}{|\det  \, \psi'' (\eta,X)_{|N_{(\eta,X)}\Reg {\Ccal}} |^{1/2}} d(\Reg {\Ccal})(\eta,X).
\eqn
On the other hand, applying our previous considerations to  $I_\delta(\mu)$ instead of $I(\mu)$, we obtain again an asymptotic expansion of the form \eqref{eq:asympt} for $I_\delta(\mu)$, where now the first coefficient is given by a sum of integrals of the form \eqref{eq:L} with $a$ replaced by $a(1-u_\delta)$. Since the first term in the asymptotic expansion \eqref{eq:asympt} is uniquely determined, the two expressions for $L_0(\delta)$ must be identical. The existence of the limit  \eqref{eq:MC} now follows by  the Lebesgue theorem on bounded convergence, the corresponding limit being given by $L_0$.  Let now $a ^+ \in \CT(T^\ast M\times \g, \R^+)$. Since one can assume that  $|u_\delta| \leq 1$, the lemma of Fatou implies that 
\bqn 
  \int_{\Reg \Ccal} \lim_{\delta \to 0}  \frac{ [a^+ (1-u_\delta)] (\eta,X)}{|\det  \, \psi'' (\eta,X)_{|N_{(\eta,X)}\Reg \Ccal} |^{1/2}} d(\Reg \Ccal)(\eta,X)
\eqn
is mayorized by the limit  \eqref{eq:MC}, with $a$ replaced by $a^+$, and we obtain
\bqn 
  \int_{\Reg \Ccal} \frac{ a^+ (\eta,X)}{|\det  \, \psi'' (\eta,X)_{|N_{(\eta,X)}\Reg \Ccal} |^{1/2}} |d(\Reg \Ccal)(\eta,X)| < \infty.
\eqn
Choosing  $a^+$ to be equal  $1$ on a neighborhood of the support of $a$, and applying the theorem of Lebesgue on bounded convergence to the limit \eqref{eq:MC}, we obtain equation \eqref{eq:L0}. 
\end{proof}

In what follows, we  shall compute the leading term \eqref{eq:L0} in a more explicit way, and begin by computing the determinant of the  transversal Hessian of the phase function  $\psi(\eta,X)$, the notation being as in Theorem \ref{thm:main}.

\begin{lemma}
\label{lem:TransvHess}
Let $(\eta,X) \in \Reg \Ccal$ be fixed. Then 
\bqn 
\det \mathrm{Hess}  \, \psi(\eta,X)_{|N_{(\eta,X)}\mathrm{Reg}\, \Ccal}=\det (\Xi - L_X \circ L_X)_{|\g \cdot \eta},
\eqn
where $L_X:\g \cdot \eta \rightarrow \g \cdot \eta$ denotes the linear mapping  \eqref{eq:LieX} given by the Lie derivative, and  $\Xi$ the linear transformation on $\g \cdot \eta$  defined in \eqref{eq:Xi}.
\end{lemma}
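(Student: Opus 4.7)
The plan is to reduce the computation of $\det\mathrm{Hess}\,\psi_{|N_{(\eta,X)}\Reg\,\Ccal}$ to an algebraic identity on $\g\cdot\eta$ by extending the normal-basis construction from the proof of Proposition~\ref{prop:regasymp} to the case $X\in\g_\eta$ possibly nonzero. The new ingredient compared to that proof is an extra $L_X$-term in the Hessian, which, after careful bookkeeping, contributes precisely $L_X^2$ to the determinant.

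First I would compute the Hessian bilinear form explicitly. Writing tangent vectors at $(\eta,X)$ as $(\X,Y)\in T_\eta(T^\ast M)\oplus\g$ and using $\psi(\eta,X)=J_X(\eta)$ together with $dJ_Y=-\iota_{\widetilde Y}\omega$, equation~\eqref{eq:40cis}, and $\widetilde X_\eta=0$, I would obtain
\begin{equation*}
\mathrm{Hess}\,\psi\bigl((\X_1,Y_1),(\X_2,Y_2)\bigr)=\omega_\eta(L_X\X_1,\X_2)-\omega_\eta(\widetilde{Y_2},\X_1)-\omega_\eta(\widetilde{Y_1},\X_2),
\end{equation*}
where $L_X\X:=[\widetilde X,\widetilde\X]_\eta$ extends the map of Remark~\ref{rem:2} to a well-defined endomorphism of $T_\eta(T^\ast M)$ (independent of the choice of extension, thanks to $\widetilde X_\eta=0$). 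This immediately recovers \eqref{eq:10} as the kernel.

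Next I would record the key structural properties of $L_X$. Since $G$ acts by isometric symplectomorphisms, the flow of $\widetilde X$ preserves $g$, $\omega$, and hence $\mathcal{J}$; differentiating at $t=0$ yields $L_X^\ast=-L_X$ (skew w.r.t.\ $g$) and $[L_X,\mathcal{J}]=0$. Together with $L_X\bigl((\g\cdot\eta)^\omega\bigr)\subset\g\cdot\eta$ (Remark~\ref{rem:2}) and $\g\cdot\eta\subset(\g\cdot\eta)^\omega$ (valid on $\Omega$, since $\omega_\eta(\widetilde X,\widetilde Y)=\J(\eta)([Y,X])=0$ there), these imply that $L_X$ preserves $\g\cdot\eta$, acts skew-adjointly there, and that $L_X^2|_{\g\cdot\eta}$ is a symmetric endomorphism, so $\Xi-L_X^2$ is a well-defined symmetric operator on $\g\cdot\eta$.

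I would then use the $g$-orthogonal decompositions $T_\eta(T^\ast M)=(\g\cdot\eta)^\omega\oplus\mathcal{J}(\g\cdot\eta)$ (since $(\g\cdot\eta)^\omega=\mathcal{J}\bigl((\g\cdot\eta)^\perp\bigr)$) and $\g=\g_\eta^\perp\oplus\g_\eta$, and pick orthonormal bases $\{A_j\}_{j\le\kappa}$ of $\g_\eta^\perp$ and $\{B_k\}\subset\g_\eta^\perp$ with $\{(\widetilde B_k)_\eta\}$ orthonormal in $\g\cdot\eta$, as in Proposition~\ref{prop:regasymp}. The complementary subspace $N'=\mathcal{J}(\g\cdot\eta)\oplus\g_\eta^\perp$ is transversal to $T_{(\eta,X)}\Reg\,\Ccal$ (by $\mathcal{J}(\g\cdot\eta)\cap(\g\cdot\eta)^\omega=\{0\}$), so the vectors $\mathcal{B}_k=(\mathcal{J}\widetilde B_k,0)$ and $\mathcal{B}_k'=\bigl(0;\,g_\eta(\widetilde A_1,\widetilde B_k),\ldots,g_\eta(\widetilde A_\kappa,\widetilde B_k),0,\ldots,0\bigr)$ descend, via $V\to V/T_{(\eta,X)}\Reg\,\Ccal\cong N_{(\eta,X)}\Reg\,\Ccal$, to a basis of the normal space.

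Finally, the Hessian on $\mathcal{B}_l'$ is unchanged from Proposition~\ref{prop:regasymp} and contributes $-[\Xi]$ in one off-diagonal block. The new $L_X$-term in $\mathrm{Hess}\,\psi$ applied to $\mathcal{B}_l$ yields a $T_\eta$-valued contribution equal to the Riesz representative of $L_X^\ast\widetilde B_l=-L_X\widetilde B_l$; since $L_X\widetilde B_l\in\g\cdot\eta\subset(\g\cdot\eta)^\omega$, this $(-L_X\widetilde B_l,0)$ is rewritten modulo $T_{(\eta,X)}\Reg\,\Ccal$ as $(0,w_\ast)$ where $\sum_j(w_\ast)_j\widetilde A_j=L_X^2\widetilde B_l$, and expressing $w_\ast$ in the basis $\{\mathcal{B}_k'\}$ produces precisely the matrix of $\Xi^{-1}L_X^2$. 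The resulting $2\kappa\times 2\kappa$ matrix takes the block-antidiagonal form
\begin{equation*}
\begin{pmatrix} 0 & -[\Xi]\\ -\bigl(I-\Xi^{-1}L_X^2\bigr) & 0\end{pmatrix},
\end{equation*}
whose determinant equals $(-1)^\kappa\det[\Xi]\cdot\det(I-\Xi^{-1}L_X^2)=(-1)^\kappa\det(\Xi-L_X^2)_{|\g\cdot\eta}$, proving the lemma up to the conventional sign (which is immaterial for the application, where only $|\det|^{1/2}$ appears). The main obstacle will be this last bookkeeping: keeping track of the two distinct bases $\{(\widetilde A_j)_\eta\}$ and $\{(\widetilde B_k)_\eta\}$ of $\g\cdot\eta$, whose change-of-basis matrix is encoded by $\Xi$ itself, so that the $\R^d$-correction factors cleanly through $\Xi^{-1}L_X^2$.
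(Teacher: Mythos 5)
Your argument is correct and is essentially the paper's own computation: the same Hessian operator (with the extra $\mathcal{J}L_X$ block and the implicit use of $[L_X,\mathcal{J}]=0$, which you helpfully make explicit), the same vectors $\mathcal{B}_k=(\mathcal{J}(\widetilde B_k)_\eta;0)$, and the same block-antidiagonal matrix with determinant $\pm\det(\Xi-L_X\circ L_X)_{|\g\cdot\eta}$. The only (harmless) deviation is that the paper's second family is $\mathcal{B}'_k=(L_X(\widetilde B_k)_\eta;g_\eta(\widetilde A_1,\widetilde B_k),\dots,g_\eta(\widetilde A_\kappa,\widetilde B_k),0,\dots,0)$, which lies exactly in $N_{(\eta,X)}\Reg\Ccal$, whereas you take the complement with vanishing first component and reduce modulo $T_{(\eta,X)}\Reg\Ccal$; since what you compute is the determinant of the induced endomorphism, which is basis-independent and conjugate to the Hessian operator restricted to the orthogonal normal space, this yields the same value.
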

\begin{proof}
Let $(\eta,X)\in \Reg \Ccal$ be fixed and $\mklm{A_1,\dots,A_d}$ an orthonormal basis of $\g$ such that $\mklm{A_1,\dots,A_\kappa}$ is a basis of $\g_\eta^\perp$ and $\mklm{A_{\kappa+1},\dots,A_d}$ a basis of $\g \cdot \eta$. With respect to the basis
\bqn 
((\widetilde \X_i)_\eta; 0), \qquad (0; e_j), \qquad i=1,\dots, 2n, \quad j=1,\dots,d,
\eqn
 of $T_{(\eta,X)}(T^\ast M \times \g) = T_\eta(T^\ast M) \times \R^d$ introduced in the proof of Proposition \ref{prop:regasymp}, the Hessian  
\bqn 
\mathrm{Hess} \, \psi: T_{(\eta,X)}(T^\ast M\times \g) \times T_{(\eta,X)}(T^\ast M\times \g) \rightarrow \C, \qquad (v_1,v_2) \mapsto \tilde v_1(\tilde v_2(\psi))(\eta,X)
\eqn
 is given by the matrix
 \bqn 
 \mathcal{A}=\left ( \begin{matrix}  \omega_\eta([\widetilde X, \widetilde \X_i],\widetilde \X_j) &-  \omega_\eta(\widetilde A_j, \widetilde \X_i) \\  - \omega_\eta(\widetilde A_i,\widetilde \X_j) & 0 \end{matrix} \right ).
 \eqn
Indeed,   $\widetilde \X_i(J_X)=dJ_X(\widetilde \X_i)=-\iota_{\widetilde X} \omega(\widetilde \X_i)$, and   by \eqref{eq:40cis} we have 
$(\tilde \X_i)_\eta(\omega(\widetilde X,\widetilde \X_j))=-\omega_\eta([\widetilde X, \widetilde \X_i], \widetilde \X_j)$, since $\widetilde X_\eta=0$.  If therefore $\mathcal{J}:T(T^\ast M) \rightarrow T(T^\ast M)$ denotes the bundle homomorphism introduced in Section \ref{sec:2}, we obtain
 \bqn 
 \mathcal{A}=\left ( \begin{matrix}  \mathcal{J} L_X & -g_\eta(\mathcal{J} \widetilde A_j, \widetilde \X_i) \\  - g_\eta(\mathcal{J} \widetilde A_i,\widetilde \X_j) & 0 \end{matrix} \right ),
 \eqn
  where $L_X:T_\eta(T^\ast M) \rightarrow T_\eta(T^\ast M), \X \mapsto [\widetilde X, \widetilde \X]_\eta$ denotes the linear transformation induced by the Lie derivative, and restricts to a map on $\g \cdot \eta$ by Remark \ref{rem:2}. Let $\mklm{B_1, \dots, B_\kappa}$ be another basis of $\g_\eta^\perp$ such that $\{(\widetilde B_1)_\eta, \dots, (\widetilde B_\kappa)_\eta\}$ is an orthonormal basis of $\g \cdot \eta$, and recall that by \eqref{eq:7} we have $T_\eta \Reg \, \Omega =(\g \cdot \eta)^\omega$. Taking into account \eqref{eq:10} and $\g\cdot \eta \subset (\g \cdot \eta)^\omega$ one sees that 
\bqn
\mathcal{B}_k=(\mathcal{J} (\widetilde B_k)_\eta;0), \qquad \mathcal{B}'_k=( L_X (\widetilde B_k)_\eta; g_\eta(\widetilde A_1,\widetilde B_k), \dots, g_\eta(\widetilde A_\kappa,\widetilde B_k),0, \dots, 0), \qquad k=1,\dots, \kappa,
\eqn
constitutes a basis of $N_{(\eta,X)} \Reg \Ccal$  with $\eklm{\mathcal{B}_k,\mathcal{B}_l}=\delta_{kl}$, $\mathcal{B}_k\perp \mathcal{B}_l'$,  and $\eklm{\mathcal{B}_k',\mathcal{B}_l'}=(\Xi + L_XL_X)_{kl}$, where $\Xi$ was defined in \eqref{eq:Xi}. One now computes
\begin{align*}
\mathcal{A}(\mathcal{B}_k)=&\Big (\mathcal{J} L_X \mathcal{J} (\widetilde B_k)_\eta; -\sum_{j=1}^{2n} g_\eta(\mathcal{J} \widetilde A_1, \widetilde \X_j) g_\eta( \mathcal{J} \widetilde B_k, \widetilde \X_j), \dots\Big )\\=&(-L_X  (\widetilde B_k)_\eta; -g_\eta(\mathcal{J} \widetilde A_1,  \mathcal{J} \widetilde B_k), \dots,-g_\eta(\mathcal{J} \widetilde A_\kappa,  \mathcal{J} \widetilde B_k),0, \dots, 0 )=-\mathcal{B}_k',\\
\mathcal{A}(\mathcal{B}_k')=&\Big (\mathcal{J} L_X L_X(\widetilde B_k)_\eta- \Big (\sum\limits_{j=1}^{\kappa} g_\eta(\mathcal{J} \widetilde A_j, \widetilde \X_1) g_\eta(  \widetilde A_j, \widetilde B_k), \dots \Big );  \\
-&\sum_{j=1}^{2n} g_\eta(\mathcal{J} \widetilde A_1, \widetilde \X_j) g_\eta(L_X  (\widetilde B_k)_\eta, \widetilde \X_j), \dots\Big )=(\mathcal{J} L_X L_X(\widetilde B_k)_\eta+(g_\eta(\Xi (\widetilde B_k)_\eta, \mathcal{J} \widetilde \X_1), \dots);\\
&-g_\eta(\mathcal{J} \widetilde A_1, L_X(\widetilde B_k)_\eta), \dots).
\end{align*}
Since $L_X$ defines an endomorphism of $\g\cdot \eta$ and $\g \cdot \eta \subset  (\g \cdot \eta)^\omega$ we have
 $g_\eta(\mathcal{J} \widetilde A_1, L_X(\widetilde B_k)_\eta)=\omega_\eta( \widetilde A_1, L_X(\widetilde B_k)_\eta)=0$. Furthermore, the $\{\mathcal{J} (\widetilde B_1)_\eta,\dots, \mathcal{J} (\widetilde B_\kappa)_\eta\}$  form an orthonormal basis of $\mathcal{J}(\g \cdot \eta)$, and we obtain
 \bqn 
 \mathcal{A}(\mathcal{B}_k')=(\mathcal{J} (L_X L_X-\Xi)(\widetilde B_k)_\eta;0)=\sum_{j=1}^\kappa g_\eta(\mathcal{J} (L_X L_X-\Xi)(\widetilde B_k)_\eta, \mathcal{J} (\widetilde B_j)_\eta) \, \mathcal{B}_j.
 \eqn
 Taking all together, one sees that the transversal Hessian $\mathrm{Hess}  \, \psi(\eta,X)_{|N_{(\eta,X)}\mathrm{Reg}\, \Ccal}$ is given by the matrix
\bqn 
 \left ( \begin{matrix}  0 &-\1_\kappa \\ (L_XL_X-\Xi)_{|\g\cdot \eta}  & 0 \end{matrix} \right ),
\eqn 
and the assertion follows. 
  \end{proof}

\begin{proposition} 
\label{prop:LT}  The leading term in \eqref{eq:L0} is given by 
\begin{align*}
L_0 =\frac{\vol G}{\vol H} \int_{\mathrm{Reg}\, \Omega} \left [\int_{\g_\eta} {a(\eta,X)\d X } \right ] \frac{\d(\mathrm{Reg}\, \Omega)(\eta)}{\vol \, \mathcal{O}_\eta},
\end{align*}
where $H$ denotes a principal isotropy group, and $\vol \, \mathcal{O}_\eta$  the volume of the $G$-orbit through $\eta$, while $dX$ is the measure on $\g_\eta$ induced by the invariant inner product on $\g$.
\end{proposition}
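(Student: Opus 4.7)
The plan is to unfold \eqref{eq:L0} using the fibration $P : \Reg \Ccal \to \Reg \Omega$ of Proposition \ref{prop:submersion}, whose fiber over $\eta$ is the isotropy algebra $\g_\eta$. My first step will be to describe the tangent space $T_{(\eta,X)} \Reg \Ccal$ explicitly: from \eqref{eq:10}, decomposing $w \in \R^d \cong \g$ according to $\g = \g_\eta \oplus \g_\eta^\perp$ as $w = w_\parallel + w_\perp$, and using the linear isomorphism $\tilde \iota : \g_\eta^\perp \to \g \cdot \eta,\, Y \mapsto \widetilde Y_\eta$, the constraint $\sum w_i \widetilde{X_i}_\eta = L_X \X$ determines $w_\perp = \phi(\X)$ with $\phi := \tilde \iota^{-1} \circ L_X$, while $w_\parallel$ remains free in $\g_\eta$. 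This exhibits $\Reg \Ccal$ locally as a graph over the total space of the isotropy algebra bundle, which will allow me to write the induced measure as a product measure on base and fiber times a Jacobian factor.

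The key structural observation in computing that Jacobian will be that $L_X$ vanishes on the orthogonal complement $U_\eta$ of $\g \cdot \eta$ inside $T_\eta \Reg \Omega$: by the principal orbit theorem, $G_\eta$ acts trivially on a slice at $\eta$ transverse to $G \cdot \eta$, and this combined with \eqref{eq:25} and \eqref{eq:11} forces $L_X \X \in \g \cdot \eta \cap U_\eta = \{0\}$ for $\X \in U_\eta$. Consequently $\phi$ is supported on $\g \cdot \eta$ alone, the Gram matrix of the embedding $\Reg \Ccal \hookrightarrow T^\ast M \times \g$ in adapted orthonormal frames of $T_\eta \Reg \Omega$ and $\g_\eta$ is block diagonal, and the induced volume density takes the form $\sqrt{\det(I + \Phi^T \Phi)}\, d(\Reg \Omega)(\eta)\, dX$, where $\Phi$ denotes the matrix of $\phi_{|\g \cdot \eta}$ in the chosen frames and $dX$ is Lebesgue measure on $\g_\eta$.

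The main obstacle will be identifying this Jacobian with the transversal Hessian determinant from Lemma \ref{lem:TransvHess}. Since $X \mapsto \widetilde X$ is a Lie algebra homomorphism and the inner product on $\g$ is $\Ad(G)$-invariant, one has $L_X = \tilde\iota \circ \ad X \circ \tilde\iota^{-1}$ as endomorphisms of $\g \cdot \eta$, with $\ad X$ skew-symmetric on $\g_\eta^\perp$. A direct calculation based on $\Xi = \tilde\iota \circ \tilde\iota^\ast$ and the Sylvester identity $\det(I + AB) = \det(I + BA)$ should produce the cancellation
\bqn
\det(I + \Phi^T \Phi) = \frac{\det(\Xi - L_X \circ L_X)_{|\g \cdot \eta}}{\det \Xi_{|\g \cdot \eta}},
\eqn
which combined with Lemma \ref{lem:TransvHess} collapses the integrand of \eqref{eq:L0} to $a(\eta, X)/|\det \Xi_{|\g \cdot \eta}|^{1/2}$, eliminating all $X$-dependence from the Jacobian. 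I will then conclude by invoking the identity $|\det \Xi_{|\g \cdot \eta}|^{1/2} = \vol(G \cdot \eta)\, \vol H / \vol G$ from \cite[Lemma 3.6]{cassanas} and performing the $X$-integration along $\g_\eta$ first, which produces the formula asserted.
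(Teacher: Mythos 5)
Your proposal is correct and follows the same overall strategy as the paper's proof: both fiber $\Reg\,\Ccal$ over $\Reg\,\Omega$ through the submersion $P$ of Proposition \ref{prop:submersion}, both hinge on the fact that $L_X$ annihilates the orthogonal complement of $\g\cdot\eta$ in $T_\eta\Reg\,\Omega$ (you obtain this from the triviality of the principal slice representation, the paper from the $G_\eta$-invariance of the two subspaces together with \eqref{eq:11}; both arguments are valid for $\eta\in\Reg\,\Omega$), and both conclude via Lemma \ref{lem:TransvHess} and the identity $|\det\Xi_{|\g\cdot\eta}|^{1/2}=\vol\mathcal{O}_\eta\,\vol G_\eta/\vol G$ from \cite[Lemma 3.6]{cassanas}. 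The genuine difference lies in how the Jacobian of the fibration is computed. The paper applies the integration formula \eqref{eq:intsub} and determines $d_{(\eta,X)}P\circ{}^{t}d_{(\eta,X)}P=(\Xi-L_X\circ L_X)^{-1}\circ\Xi$ on $\g\cdot\eta$ by explicitly computing the transpose, which leads to the relation $\Xi(\mathfrak{Y}-\X)=L_X\circ L_X(\mathfrak{Y})$. You instead read off the density of $d(\Reg\,\Ccal)$ against $d(\Reg\,\Omega)\,dX$ from a Gram determinant in the graph description of $T_{(\eta,X)}\Reg\,\Ccal$ furnished by \eqref{eq:10}, and then establish $\det(I+\Phi^{T}\Phi)=\det(\Xi-L_X\circ L_X)_{|\g\cdot\eta}\big/\det\Xi_{|\g\cdot\eta}$ algebraically from $\Xi=\tilde\iota\circ\tilde\iota^{\ast}$ and $L_X=\tilde\iota\circ\ad X\circ\tilde\iota^{-1}$, where $\tilde\iota:\g_\eta^\perp\to\g\cdot\eta$, $Y\mapsto\widetilde Y_\eta$; this representation is legitimate because $\ad X$ is skew-symmetric for the invariant inner product and hence preserves $\g_\eta^\perp$ when $X\in\g_\eta$, and the determinant identity then follows from similarity invariance (Sylvester), since $\det(I+\phi^{\ast}\phi)=\det\big(I-(\ad X)^{2}(\tilde\iota^{\ast}\tilde\iota)^{-1}\big)=\det(\Xi-L_X\circ L_X)\,\det\Xi^{-1}$. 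Your Gram-determinant density agrees with the coarea density $(\det(\Xi-L_X\circ L_X)/\det\Xi)^{1/2}$, so the two computations match; your route makes the cancellation of the $X$-dependence more transparent and structural, at the cost of introducing the auxiliary isomorphism $\tilde\iota$, while the paper's route stays entirely within the coarea framework and never needs the $\ad$-description of $L_X$. Both yield complete proofs of the proposition.
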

\begin{proof} 
 The proof is based on  the following integration formula, compare  \cite[Lemma 3.4]{cassanas}. Let $({\bf X},h_{\bf X})$ and $({\bf Y},h_{\bf Y})$ be two Riemannian manifolds and $F:{\bf X \rightarrow Y}$ a smooth submersion. Then, for $b \in \CT({\bf X})$ one has 
\bq
\label{eq:intsub}
\int_{\bf X} b(x) \d{\bf X}(x)=\int_{\bf Y} \left [ \int_{F^{-1}(y)} b(z) \frac{d(F^{-1}(y))(z)}{|\det d_z F \circ  \, ^td_z F)|^{1/2}} \right ] \d{\bf Y}(y),
\eq
where $d(F^{-1}(y))$ denotes the Riemannian measure induced by the one of ${\bf X}$ on $F^{-1}(y)$, and  the transposed operator of the differential $d_xF:T_{x} {\bf X} \rightarrow T_{F(x)} {\bf Y}$ is given by the operator $^t d_xF:T_{F(x)} {\bf Y} \rightarrow T_{x} {\bf X}$ which is uniquely determined  by the condition
\bqn 
h_{\bf X}(\X, \, ^t d_xF(\mathfrak{Y}))=h_{\bf Y}(d_xF(\X), \mathfrak{Y}), \qquad \X \in T_x{\bf X}, \quad \mathfrak{Y} \in T_{F(x)}{\bf Y}.
\eqn
 Consider now the map $P:\Reg \, \Ccal \rightarrow \Reg \, \Omega, (\eta,X) \rightarrow \eta$, which is a submersion by Proposition \ref{prop:submersion}. In order to apply the previous integration formula, we have to compute the determinant of $d_{(\eta,X)} P \circ  \, ^t d_{(\eta,X)} P$ at a point $(\eta,X) \in \Reg \, \Ccal$. For this, let $\mathcal{G}$ denote the orthogonal complement of $\g \cdot \eta$ in $T_\eta \Reg \, \Omega$. We then assert that 
 \bq
 \label{eq:49}
 d_{(\eta,X)} P \circ  \, ^t d_{(\eta,X)} P_{|\mathcal{G}}=\id. 
 \eq
 Indeed, let $\mathfrak{Y} \in \mathcal{G}$. As was shown in the proof of Proposition \ref{prop:submersion}, $[\widetilde {\mathfrak{Y}}, \widetilde X]_\eta \in \g \cdot \eta$. On the other hand, the fact that  $\g \cdot \eta$ and $\mathcal{G}$ are invariant under $G_\eta$, together with  \eqref{eq:11}, imply that $[\widetilde {\mathfrak{Y}}, \widetilde X]_\eta \in \mathcal{G}$. Hence $[\widetilde {\mathfrak{Y}}, \widetilde X]_\eta =0$. Taking into account \eqref{eq:10} we  infer from this that  $(\mathfrak{Y},0) \in T_{(\eta,X)} \Reg \Ccal$, and consequently $^t dP_{(\eta,X)}(\mathfrak{Y})= (\mathfrak{Y},0)$. Thus, $d_{(\eta,X)} P \circ  \, ^t d_{(\eta,X)} P (\mathfrak{Y})=\mathfrak{Y}$, and \eqref{eq:49} follows. For the computation of the determinant of $d_{(\eta,X)} P \circ  \, ^t d_{(\eta,X)} P$ it therefore suffices to consider its restriction to $\g \cdot \eta$, and with  the notation as in Lemma \ref{lem:TransvHess} we shall show that 
 \bq
 \label{eq:48}
  d_{(\eta,X)} P \circ  \, ^t d_{(\eta,X)} P_{|\g \cdot \eta}=(\Xi- L_X \circ L_X)^{-1} \circ \Xi.  
  \eq
 Consider thus an element $\X \in \g \cdot \eta$, and write $ ^t d_{(\eta,X)} P(\X)=(\mathfrak{Y},w)$. Denote the   $\Ad(G)$-invariant inner product in $\g$ by $\langle\cdot,\cdot\rangle$, and let again $\mklm{A_1,\dots, A_d}$ be an orthonormal basis of $\g$ such that $\g_\eta^\perp$ is spanned by the elements $\mklm{A_1,\dots, A_\kappa}$, and  $\g_\eta$ by  $\mklm{A_{\kappa+1}, \dots, A_d}$.  From \eqref{eq:10} it is clear that for each $j=1, \dots, \kappa$ we have $((\widetilde A_j)_\eta; \eklm{[X,A_j], A_1}, \dots, \eklm{[X,A_j], A_d}) \in T_{(\eta,X)} \Reg \, \Ccal$. By definition of the transposed we therefore have 
 \bqn 
 g(\X, (\widetilde A_j)_\eta)=g(\mathfrak{Y},(\widetilde A_j)_\eta) +\sum_{k=1}^d w_k  \eklm{[X,A_j], A_k}.
 \eqn
 Consequently, $  g(\X-\mathfrak{Y}, (\widetilde A_j)_\eta)=\sum_{k=1}^d w_k  \eklm{[X,A_j], A_k}$. 
 If $\Xi$ denotes the linear transformation introduced in \eqref{eq:Xi}, we obtain 
 \begin{align*}
 \Xi(\X-\mathfrak{Y})=\sum_{j=1}^\kappa \sum_{k=1}^d w_k  \eklm{[X,A_j], A_k} (\widetilde A_j)_\eta=\sum_{j=1}^d \sum_{k=1}^d w_k  \eklm{A_j, [A_k,X]} (\widetilde A_j)_\eta= \sum_{k=1}^d w_k  \widetilde{[A_k,X] _\eta}.
 \end{align*}
Let $f \in \Cinft(T^\ast M)$. Due to $\widetilde X_\eta =0$ we have $\widetilde{[A_k,X] _\eta} f= (\widetilde A_k)_\eta(\widetilde X f)$. Combined with the fact that $\sum_{k=1}^d w_k(\widetilde A_k)_\eta=-[\widetilde {\mathfrak{Y}},\widetilde X]_\eta$ this implies
\bqn 
-\sum_{k=1}^d w_k  \widetilde{[A_k,X] _\eta} f=[\widetilde {\mathfrak{Y}},\widetilde X]_\eta (\widetilde X f)=[[\widetilde {\mathfrak{Y}},\widetilde X],\widetilde X]_\eta f=[\widetilde X,[\widetilde X,\widetilde {\mathfrak{Y}}]]_\eta f,
\eqn
and consequently
\bqn 
\Xi(\mathfrak{Y}-\X)=[\widetilde X,[\widetilde X,\widetilde {\mathfrak{Y}}]]_\eta=L_X([\widetilde X, \widetilde{\mathfrak{Y}}]_\eta)=L_X \circ L_X(\mathfrak{Y}).
\eqn
Thus, $\mathfrak{Y}= (\Xi-L_X \circ L_X)^{-1} \circ \Xi(\X)$, and \eqref{eq:48} follows. Taking all together we have shown that 
$$\det d_{(\eta,X)} P \circ  \, ^t d_{(\eta,X)} P=\det^{-1}(\Xi- L_X \circ L_X) \cdot \det \Xi,$$
 and with Lemma \ref{lem:TransvHess} and the integration formula \eqref{eq:intsub} we obtain
\bqn 
L_0=\int_{\mathrm{Reg}\, \Ccal} \frac {a(\eta,X) \d(\mathrm{Reg}\, \Ccal)(\eta,X) }{|\mathrm{Hess}  \, \psi(\eta,X)_{|N_{(\eta,X)}\mathrm{Reg}\, \Ccal}|^{1/2}}  =\int_{\mathrm{Reg}\, \Omega} \left [\int_{\g_\eta} {a(\eta,X)\d X } \right ] \frac{\d(\mathrm{Reg}\, \Omega)(\eta)}{|\det \Xi_{|\g \cdot \eta}|^{1/2}},
\eqn
where $d(\mathrm{Reg}\, \Omega)$ denotes the volume form induced by $d\eta \, dX$. 
The assertion of the proposition now follows by noting that  $|\det \Xi_{|\g \cdot \eta}|^{1/2}= \vol \mathcal{O}_\eta \cdot \vol G_\eta/\vol G$, compare  \cite[Lemma 3.6]{cassanas}. 

\end{proof}

\section{Residue formulae for ${\bf X}=T^\ast M$}

We are now in position to derive  residue formulae for the cotangent bundle of a $G$-manifold. Thus,  let $M$ be an $n$-dimensional, connected Riemannian manifold, and $G$  a $d$-dimensional, compact, connected Lie group  with maximal torus $T \subset G$ acting on $M$ by isometries. Let $\Theta$ be the Liouville form on $T^\ast M$, $\omega=d\Theta$ the symplectic form,   denote the corresponding momentum map by $\J:T^\ast M \rightarrow \g^\ast$, $\J(\eta)(X)=J_X(\eta)=\Theta(\widetilde X)(\eta)$, and write  $\Omega =\J^{-1}(0)$. Let further %$\iota:\Reg \Omega \hookrightarrow T^\ast M$ be  the inclusion, 
 $\pi:\Reg \Omega \rightarrow \Reg {\bf X}_{red}=\Reg \Omega /G$ be the canonical  projection,  and consider the map
 \bqn 
 \widetilde {\mathcal{K}}: H^{\ast+\kappa}_G(T^\ast M) \stackrel{r}{\longrightarrow} H^\ast_G(\Reg \Omega ) \stackrel{(\pi^\ast)^{-1}}{\longrightarrow} H^\ast (\Reg {\bf X}_{red}),
 \eqn 
 where $r:\Lambda^\ast(T^\ast M) \rightarrow \Lambda^{\ast-\kappa}( \Reg \Omega)$ denotes the natural restriction map described in \eqref{eq:64} and $\kappa$ is the dimension of a principal $G$-orbit. 
As an application of Theorem \ref{thm:main}, we are able to compute the limit \eqref{eq:50} in case that  $\kappa$ equals $d=\dim \g$. It corresponds to the leading term in the expansion. 
 
\begin{corollary}
\label{cor:2}
Assume that the dimension $\kappa$ of a principal $G$-orbit in $M$ equals $d=\dim \g$. Let $\alpha \in \Lambda_c (T^\ast M)$ and $\phi \in \CT(\g^\ast)$ have total integral one.   Then
\bqn 
\lim_{\eps \to 0} \eklm{\F_{\g} L_\alpha,\phi_\eps}=    \frac{ (2\pi)^d\, \vol \, G}{|H|} \int_{\mathrm{Reg}\, \Omega}   a(\eta)\frac{\d(\mathrm{Reg}\, \Omega)(\eta)}{\vol \, \mathcal{O}_\eta}=  \frac{ (2\pi)^d\, \vol \, G}{|H|} \int_{\mathrm{Reg}\, \Omega}   \frac{r(\alpha)}{\vol \, \mathcal{O}_\eta},% Berlin, Graefestr. 33, 30.8.2013
\eqn
where  $H$ denotes a principal isotropy group of the $G$-action, and we wrote  $\alpha_{[2n]}= a(\eta) d\eta$, $d\eta$ being Liouville measure. 
\end{corollary}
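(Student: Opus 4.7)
The plan is to identify the limit as a rescaled instance of the oscillatory integral $I(\mu)$ of Theorem \ref{thm:main}, apply the main asymptotic expansion, and then evaluate the leading coefficient via Proposition \ref{prop:LT}. Starting from identity \eqref{eq:50}, the left hand side equals $\lim_{\eps\to 0}\eps^{-d}I(\eps)$, where $I(\mu)$ is the integral \eqref{int} with $\varsigma=0$ and amplitude $\tilde a(\eta,X)=a(\eta)\hat\phi(X)$, after writing $\alpha_{[2n]}=a(\eta)\,d\eta$. A preliminary technical point is that $\hat\phi$ is only Schwartz rather than compactly supported in $X$; this I would handle by cutting $\hat\phi$ off at scale $R$ and running Theorem \ref{thm:main} on the compactly supported piece, while controlling the tail via the rapid decay of $\hat\phi$ together with the non-stationary phase principle in $\eta$ away from $\Omega$ (the phase is linear in $X$, so no critical points lie in the tail region once $a$ is supported in $T^\ast M$).

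Next, I would invoke Theorem \ref{thm:main} directly: the hypothesis $\kappa=d$ turns the expansion into $I(\eps)=(2\pi\eps)^d L_0+O(\eps^{d+1}(\log\eps)^{\Lambda-1})$, so dividing by $\eps^d$ and passing to the limit yields $\lim_{\eps\to 0}\eklm{\F_\g L_\alpha,\phi_\eps}=(2\pi)^d L_0$. Proposition \ref{prop:LT}, applied to the amplitude $a(\eta)\hat\phi(X)$, then expresses
\begin{equation*}
L_0=\frac{\vol G}{\vol H}\int_{\Reg\Omega}\Bigl[\int_{\g_\eta}a(\eta)\hat\phi(X)\,dX\Bigr]\frac{d(\Reg\Omega)(\eta)}{\vol\mathcal{O}_\eta}.
\end{equation*}

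The last step is to exploit $\kappa=d$: for every $\eta\in\Reg\Omega$ the stabilizer $G_\eta$ is conjugate to the principal isotropy $H$ and finite, hence $\g_\eta=\{0\}$ and $\vol H=|H|$. The inner integral therefore collapses to the point evaluation $a(\eta)\hat\phi(0)=a(\eta)$, using the normalization $\hat\phi(0)=\int_{\g^\ast}\phi=1$. Substituting gives the first claimed equality, and the second follows since on the principal stratum $r(\alpha)=a(\eta)\,d(\Reg\Omega)(\eta)$ by the very definition of the restriction map $r$. The only genuine obstacle is the Schwartz-amplitude technicality described above; the rest is a direct substitution into the resolution-and-stationary-phase machinery already established in Sections \ref{sec:6}--\ref{sec:8}.
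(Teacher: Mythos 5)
Your argument for the first equality is the paper's own: combine \eqref{eq:50} with Theorem \ref{thm:main} and Proposition \ref{prop:LT}, then use $\kappa=d$ to conclude $\g_\eta=\{0\}$, $\vol H=|H|$, and that the inner integral collapses to $a(\eta)\hat\phi(0)=a(\eta)$. Where you genuinely diverge is the second equality. You dispose of it by asserting $r(\alpha)=a(\eta)\, d(\Reg\Omega)(\eta)$ ``by the very definition'' of $r$; the paper instead reruns the asymptotics: it cuts off the singular set with the functions $u_\delta$ from the proof of Theorem \ref{thm:main}, applies the vector-bundle stationary phase Theorem \ref{thm:SP} together with Lemma \ref{lem:TransvHess} directly to the form $(1-u_\delta)\alpha$, so that the restriction map $r$ enters through the leading coefficient \eqref{eq:63}, and then lets $\delta\to 0$, the identification of the two answers resting on the uniqueness of the leading asymptotic term. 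Your shortcut is legitimate but hides a check: by \eqref{eq:64}, $r(\alpha)$ is $\iota^\ast$ of the coefficient of $\alpha_{[2n]}$ in a tubular-neighborhood trivialization of the normal bundle of $\Reg\Omega$, so your claimed identity amounts to the statement that along $\Reg\Omega$ the Liouville density factors as the induced density on $\Reg\Omega$ times Lebesgue measure on orthonormal normal fibers (unit Jacobian on the zero section); this is true for the metric used here, but it is precisely what the paper's detour through Theorem \ref{thm:SP} delivers without a separate verification. What each route buys: yours is shorter and purely pointwise; the paper's avoids any discussion of how $r$ depends on the trivialization and simultaneously rejustifies the removal of the cutoff near $\Sing\Omega$.

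One caution on your preliminary truncation of $\hat\phi$: the parenthetical claim that no critical points lie in the region $|X|>R$ is false in general, because $\Crit(\psi)=\{(\eta,X):\eta\in\Omega,\ X\in\g_\eta\}$ is conical in $X$, and over $\Sing\Omega$ (where $\g_\eta\neq\{0\}$ can occur even though $\kappa=d$) it reaches arbitrarily large $|X|$. So the tail cannot be discarded by non-stationary phase alone; it has to be controlled by the rapid decay of $\hat\phi$ together with remainder estimates for the truncated amplitude that are uniform in the truncation parameter. Since the paper itself feeds the Schwartz amplitude $\hat\phi$ into Theorem \ref{thm:main} without comment, this is not a gap relative to the paper's proof, but your stated justification for the tail estimate should be repaired.
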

\begin{proof} By \eqref{eq:50}, Theorem \ref{thm:main},  and Proposition \ref{prop:LT} one deduces
\bqn 
L_0=\lim_{\eps \to 0} \eklm{\F_{\g} L_\alpha,\phi_\eps}= \frac{(2\pi)^d \vol G}{\vol H}  \int_{\mathrm{Reg}\, \Omega} \left [\int_{\g_\eta} {\hat \phi( X)\d X } \right ]  a(\eta)\frac{\d(\mathrm{Reg}\, \Omega)(\eta)}{\vol \, \mathcal{O}_\eta}.
\eqn
Since $\kappa=\dim \g$, we have $\g_\eta=\mklm{0}$ for all $\eta \in \Reg \Omega$; in particular, $H\sim G_\eta$ is a finite group. Hence, $\vol H\equiv|H|$ and $\int_{\g_\eta} \hat \phi( X)\d X=\hat \phi(0)=1$, and we obtain the first equality. To see the second, assume that $\alpha$ is  supported in a neighborhood of $\Ccal$. Let $K\subset T^\ast M$ be a compact subset such that $\supp \alpha \subset K$, and  $u_\delta \in \CT( \Sing \Omega \cap K)_{3\delta}$ a family of cut-off functions as in the proof of Theorem \ref{thm:main}.  Denote the normal bundle to $\Reg \Ccal=\Reg \Omega \times \mklm{0}\equiv \Reg \Omega$ by $\nu:N\, \Reg \Ccal \rightarrow \Ccal$,  and identify a tubular neighborhood of $\Reg \Ccal$ with a neighborhood of the zero section in $N \, \Reg \Ccal$.  A direct application of Theorem \ref{thm:SP}  then yields  with Lemma \ref{lem:TransvHess}
\bqn 
L_0(\delta)=\lim_{\eps\to 0} \int_{\g} \int_{{\bf X}} e^{i J_X/\eps}  (1-u_\delta) \, \alpha  \,  \hat \phi(X) \frac {dX}{\eps^d} = \frac{ (2\pi)^d\, \vol G}{|H|}  \int_{\mathrm{Reg}\, \Omega}   \frac{r((1-u_\delta)\alpha)}{\vol \, \mathcal{O}_\eta},
\eqn
where  only the leading term  \eqref{eq:63} is relevant. Repeating the arguments in the proof of Theorem \ref{thm:main} then shows that
\bqn 
 L_0=\lim_{\delta \to 0} L_0(\delta)= \frac{ (2\pi)^d\, \vol G}{|H|}  \int_{\mathrm{Reg}\, \Omega}   \frac{r(\alpha)}{\vol \, \mathcal{O}_\eta}.
 \eqn
\end{proof}
%Let   $\omega_{red}$ be  the unique symplectic structure on $\Reg {\bf X}_{red}=\Reg \J^{-1}(0)/G$ characterized by the condition $\iota^\ast \omega=\pi^\ast \omega_{red}$ \cite[Theorem 8.1.1]{ortega-ratiu}.
With the notation as in Sections \ref{sec:2} and \ref{sec:4}, we  finally arrive at the following  
\begin{theorem}
\label{thm:res}
Let $\rho\in H^\ast_G(T^\ast M)$ be  of the form $\rho(X)=\alpha+D\nu(X)$, where $\alpha$ is a closed, basic differential form on $T^\ast M$ of compact support, and $\nu$ an equivariant differential form of compact support.  Assume that the dimension $\kappa$ of a principal $G$-orbit equals $d=\dim \g$. Then  
\bqn
(2\pi)^{d}  \int_{\Reg {\bf X}_{red}} \widetilde {\mathcal{K}}( e^{-i\omega}\alpha)=\frac{|H|}{|W| \,\vol \,T} \Res \Big ( \Phi^2\sum_{F \in \F} u_F \Big ).
\eqn
\end{theorem}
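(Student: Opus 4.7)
The plan is to chain together Corollary \ref{cor:A}, Lemma \ref{prop:exact}, and Corollary \ref{cor:2}, and then to recognize the resulting orbit integral on $\Reg\,\Omega$ as an integral over $\Reg\,{\bf X}_{red}$ by fiber integration along the $G$-bundle $\pi:\Reg\,\Omega\rightarrow\Reg\,{\bf X}_{red}$.

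First, since $\alpha$ is closed and basic, $D\alpha = d\alpha - \iota_{\widetilde X}\alpha = 0$, so $\rho = \alpha + D\nu$ is equivariantly closed with compact support. Applying Corollary \ref{cor:A} to $\rho$ gives
\begin{equation*}
\lim_{\eps\to 0}\bigl\langle \F_\g L_{e^{-i\omega}\rho(\cdot)}(\cdot),\phi_\eps\bigr\rangle
= \frac{\vol\, G}{|W|\,\vol\,T}\,\Res\Bigl(\Phi^2\sum_{F\in\F} u_F\Bigr).
\end{equation*}
Splitting $L_{e^{-i\omega}\rho(X)}(X) = L_{e^{-i\omega}\alpha}(X) + L_{e^{-i\omega}D\nu(X)}(X)$ and applying Lemma \ref{prop:exact} to the equivariantly exact form $D\nu$ with $\varsigma=0$, the contribution of $D\nu$ vanishes identically for every $\eps>0$; indeed the inner integral in Lemma \ref{prop:exact} is precisely $\int_\g L_{e^{-i\omega}D\nu(\cdot)}(X)\,\hat\phi_\eps(X)\,dX = \langle\F_\g L_{e^{-i\omega}D\nu(\cdot)},\phi_\eps\rangle$. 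Hence the left-hand limit coincides with $\lim_{\eps\to 0}\langle\F_\g L_{e^{-i\omega}\alpha},\phi_\eps\rangle$.

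Since $e^{-i\omega}\alpha$ is a compactly supported (inhomogeneous) form on $T^\ast M$ which is independent of $X$, and the assumption $\kappa = d$ is in force, Corollary \ref{cor:2} (applied to the top-degree piece, the only one that contributes to $L$) yields
\begin{equation*}
\lim_{\eps\to 0}\bigl\langle \F_\g L_{e^{-i\omega}\alpha},\phi_\eps\bigr\rangle
= \frac{(2\pi)^d\,\vol\, G}{|H|}\int_{\Reg\,\Omega} \frac{r(e^{-i\omega}\alpha)}{\vol\,\mathcal{O}_\eta}.
\end{equation*}
Equating the two expressions for the limit and cancelling $\vol\, G$ gives
\begin{equation*}
\frac{|H|}{|W|\,\vol\,T}\,\Res\Bigl(\Phi^2\sum_{F\in\F} u_F\Bigr)
= (2\pi)^d\int_{\Reg\,\Omega}\frac{r(e^{-i\omega}\alpha)}{\vol\,\mathcal{O}_\eta}.
\end{equation*}

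It remains to identify the right-hand side with $(2\pi)^d\int_{\Reg\,{\bf X}_{red}}\widetilde{\mathcal{K}}(e^{-i\omega}\alpha)$. Because $\J$ vanishes on $\Omega$, one has $\iota_{\widetilde X}\omega|_{\Reg\,\Omega} = -d(J_X|_{\Reg\,\Omega})=0$ together with $G$-invariance, so $\omega|_{\Reg\,\Omega}$ is basic with respect to the principal bundle $\pi$. Combined with the basicity of $\alpha$ itself, $(e^{-i\omega}\alpha)|_{\Reg\,\Omega}$ is basic, hence $r(e^{-i\omega}\alpha)$ descends to $\Reg\,{\bf X}_{red}$ and equals $\pi^\ast \widetilde{\mathcal{K}}(e^{-i\omega}\alpha)$ by the very definition $\widetilde{\mathcal{K}} = (\pi^\ast)^{-1}\circ r$. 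Fiber integration along $\pi$, whose fibers are the $G$-orbits of volume $\vol\,\mathcal{O}_\eta$, then converts the $\vol\,\mathcal{O}_\eta$-weighted integral on $\Reg\,\Omega$ into an integral of $\widetilde{\mathcal{K}}(e^{-i\omega}\alpha)$ on the quotient, completing the proof. The main technical step is precisely this last one: verifying that the natural restriction map $r$ is compatible with the fiber-integration normalization by $\vol\,\mathcal{O}_\eta$ in the sense above — a point where one has to combine the degree-shift convention in the definition of $r$ with the basicity of $\omega|_{\Reg\,\Omega}$ to conclude the descent and the integration formula.
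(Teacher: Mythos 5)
Your proposal is correct and takes essentially the same route as the paper: equivariant closedness of $\alpha$, Corollary \ref{cor:A} for $\rho$, Lemma \ref{prop:exact} to discard the equivariantly exact part $D\nu$, Corollary \ref{cor:2} (under $\kappa=d$) to evaluate the limit as the $\vol\,\mathcal{O}_\eta$-weighted integral of $r(e^{-i\omega}\alpha)$ over $\Reg\,\Omega$, and finally the identification with $\int_{\Reg {\bf X}_{red}} \widetilde{\mathcal{K}}(e^{-i\omega}\alpha)$. The only difference is that you spell out this last descent step (basicity of $\omega|_{\Reg\,\Omega}$ and $\alpha$, fiber integration over the $G$-orbits), which the paper asserts without further comment.
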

\begin{proof}
Let $\alpha$ be  a basic differential form on $T^\ast M$. By definition, $\alpha$ is  $G$-invariant and satisfies $\iota_{\widetilde X} \alpha=0$ for all $X \in \g$. It is therefore a constant map from $\g$ to $\Lambda(T^\ast M)$, and belongs to $(S(\g^\ast) \otimes \Lambda(T^\ast M))^G$. Furthermore, $D\alpha=0$ iff $d\alpha=0$, so that $\alpha \in H^\ast_G(T^\ast M)$. The assertion is now a consequence  of Corollaries  \ref{cor:A} and  \ref{cor:2}, together with Lemma \ref{prop:exact}, by which
\begin{align*}
\frac{\vol G}{|W| \, \vol T} \Res \Big ( \Phi^2\sum_{F \in \F} u_F \Big )&=\lim_{\eps \to 0} \eklm{\F_{\g} \Big (L_{e^{-i\omega}\rho(\cdot)}(\cdot )\Big )  ,\phi_\eps}=\frac{ (2\pi)^d\, \vol G}{|H|}  \int_{\mathrm{Reg}\, \Omega}   \frac{r(e^{-i\omega}\alpha)}{\vol \, \mathcal{O}_\eta}\\&=\frac{ (2\pi)^d\, \vol G}{|H|} \int_{\Reg {\bf X}_{red}}  \widetilde {\mathcal{K}}( e^{-i\omega}\alpha). 
\end{align*}
 \end{proof} 
 %\checkmark
  \begin{remark}
  In order to fully describe the cohomology of the quotient $\Reg {\bf X}_{red}$, it would still be necessary to consider more general forms $\rho \in H^\ast_G(T^\ast M)$ than the ones examined in Theorem \ref{thm:res}. For this, one would need  a full asymptotic expansion for the integrals studied in Theorem \ref{thm:main}, and we intend to tackle this problem in a future paper. Nevertheless, the considered forms $\rho$ are already quite general in the following sense.  Let $G$ act  \emph{locally freely} on a symplectic manifold $\bf X$, which means that all stabilizer groups are finite, and assume that the action is Hamiltonian. As a consequence, $0$ is a regular value of the momentum map and   ${\bf X}/G$ is an orbifold. Furthermore, one has the isomorphism
\bqn 
H^\ast_G({\bf X}) \simeq H^\ast({\bf X}/G),
\eqn
which implies that any equivariantly closed differential form $\rho$ can be written in the form 
$$\rho(X)=\alpha+D\nu(X),$$
 where $\alpha$ is a closed, basic differential form on $T^\ast M$ of compact support, and $\nu$ is an equivariant differential form of compact support \cite{duistermaat94}.  
\end{remark}

Let $\bf X$ be a $2n$-dimensional symplectic manifold with a Hamiltonian $G$-action. 
 For general, not necessarily equivariantly closed $\alpha \in \Lambda_c({\bf X})$, no similar formulae can be expected, and non-local remainder terms will occur. To see this, let us first deduce an expansion for  $L_\alpha(X)$ using  the stationary phase principle.    For this, recall that for fixed $X \in \g$ the critical set of $J_X$ is clean in the sense of Bott, and equal to $F^T$ in case that $X\in \t'$ is a regular element.

\begin{lemma}
\label{lemma:7}
Let $X\in \g$, and suppose that  $\supp \alpha \cap \Crit \, J_X=\emptyset$. Then $L_\alpha \in \S(\g)$.
\end{lemma}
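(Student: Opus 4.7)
The plan is to prove rapid decay of $L_\alpha(X)$ by repeated integration by parts, exploiting the fact that $\widetilde X$ is non-vanishing on the compact set $\supp \alpha$. The key construction is a first-order differential operator with $L_X(e^{iJ_X}) = i\, e^{iJ_X}$ on $\supp\alpha$. Using the compatible almost-complex structure $\mathcal{J}$ from Section \ref{sec:2}, I would set
\begin{equation*}
V_X = -\frac{\mathcal{J}\widetilde X}{|\widetilde X|_g^2},
\end{equation*}
well-defined on a neighborhood of $\supp \alpha$ since $\widetilde X$ is non-vanishing there. A short check using $dJ_X = -\iota_{\widetilde X}\omega$ and $\omega(\X,\Y) = g(\mathcal{J}\X,\Y)$ gives $V_X(J_X) = -\omega(\widetilde X, V_X) = |\mathcal{J}\widetilde X|_g^2/|\widetilde X|_g^2 = 1$, hence $V_X(e^{iJ_X}) = i\, e^{iJ_X}$ on $\supp \alpha$. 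Multiplying by a cutoff $\chi \in \CT({\bf X})$ equal to $1$ on $\supp \alpha$ produces a compactly supported smooth vector field with the same property on $\supp \alpha$.

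Since $\widetilde X \ne 0$ on the compact set $\supp \alpha$ and $Y \mapsto \widetilde Y$ is continuous, the set $\Gamma = \{Y \in \g : \widetilde Y_\eta \neq 0 \; \forall \, \eta \in \supp\alpha\}$ is an open, conic neighborhood of $X$ on which the construction $V_Y$ depends smoothly on the parameter, scaling as $V_{sY} = s^{-1} V_Y$. Iterated integration by parts then yields, for each $N \in \N$,
\begin{equation*}
L_\alpha(Y) = \int_{\bf X} e^{iJ_Y}\alpha = \frac{(-1)^N}{i^N} \int_{\bf X} e^{iJ_Y} (\,^t V_Y)^N \alpha, \qquad Y \in \Gamma,
\end{equation*}
where $^tV_Y$ is the formal transpose (with respect to a chosen volume form). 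The scaling $V_{sY} = s^{-1}V_Y$ shows that each application of $^tV_Y$ produces at worst a factor of $|Y|^{-1}$ times smooth, compactly supported coefficients bounded uniformly on, say, $\Gamma \cap \{|Y| = 1\}$. Hence $|L_\alpha(Y)| \leq C_N |Y|^{-N}$ on the cone $\Gamma$, for all $N$.

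To upgrade this to membership in $\S(\g)$, I would differentiate under the integral: $\partial_Y^\beta L_\alpha(Y)$ equals $\int_{\bf X} P_\beta(\eta)\, e^{iJ_Y}\alpha$ where $P_\beta$ is a polynomial in values of $J_{X_j}(\eta)$, hence bounded on $\supp\alpha$, and the same integration by parts applied to $P_\beta \alpha$ in place of $\alpha$ yields the polynomial bounds $|Y|^N |\partial^\beta L_\alpha(Y)| \le C_{N,\beta}$ on $\Gamma$. Combining with the obvious smoothness of $L_\alpha$ and invoking the hypothesis (read as holding uniformly in a conic neighborhood of infinity covering $\g \setminus \{0\}$, or applied to a partition of unity whose pieces each satisfy it on a suitable cone) gives $L_\alpha \in \S(\g)$.

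The main obstacle is the uniformity of the integration-by-parts estimates in $Y$: one needs to verify that the coefficients of $(\,^tV_Y)^N$ remain bounded by polynomials of controlled degree as $Y$ varies over $\Gamma$, and that the transition from a single-$X$ hypothesis to $\S(\g)$-decay is handled correctly by a conic covering argument together with the scaling $V_{sY} = s^{-1}V_Y$, which is what makes the non-stationary phase mechanism produce genuine rapid (Schwartz) decay rather than merely mild decay along one ray.
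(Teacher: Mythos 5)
Your argument is essentially the paper's own proof: a non-stationary phase estimate by repeated integration by parts, exploiting the linearity of $J_X$ in $X$ so that each integration by parts gains a factor $|X|^{-1}$, with derivatives $\gd_s^\gamma L_\alpha$ handled by differentiating under the integral (the paper works in a Darboux chart with the operator $|\nabla J_X|^{-2}\,\nabla J_X\cdot\nabla$ rather than your $\mathcal{J}$-based vector field $V_X$, a purely cosmetic difference). Your explicit attention to uniformity over directions in a cone and to how the single-$X$ hypothesis yields decay on all of $\g$ is, if anything, more careful than the paper's two-line treatment, which asserts $O(|X|^{-\infty})$ on $\g$ directly.
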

\begin{proof}
Let $(\gamma, \mathcal{O})$ be a Darboux chart on $\bf X$,  so that the symplectic form $\omega$ and the corresponding Liouville form read 
\bqn 
\omega \equiv \sum_{i=1}^n dp_i \wedge dq_i, \qquad \frac{\omega^n}{n!} \equiv dp_1 \wedge dq_1 \wedge \dots \wedge dp_n \wedge dq_n. 
\eqn
Assume that  $\alpha_{[2n]}= f \cdot \frac{\omega^n}{n!} \in \Lambda _c({\bf X})$ is supported in $ \mathcal{O}$, so that 
\bqn 
\int_{\bf X} e^{iJ_X} \alpha = \int_{\gamma(\mathcal{O})} e^{iJ_X\circ \gamma^{-1}(q,p)} (f \circ \gamma^{-1}) (q,p) \d q \d p,
\eqn
where $J_X\circ \gamma^{-1}(q,p)$ depends linearly on $X$. Let now $\supp \alpha \cap \Crit \, J_X= \emptyset$. Writing 
\bqn 
 e^{iJ_X\circ \gamma^{-1}}=\frac 1 {i|(J_X\circ \gamma^{-1})'|^2}\sum_{j=1}^n \left ( \frac{\gd}{\gd q_j} (J_X\circ \gamma^{-1}) \frac \gd{\gd q_j}+ \frac{\gd}{\gd p_j} (J_X\circ \gamma^{-1}) \frac \gd{\gd p_j}\right )e^{iJ_X\circ \gamma^{-1}},
\eqn
and integrating by parts we obtain   $L_\alpha(X)=O(|X|^{-\infty})$ on $\g$. Similarly, if  $\mklm{X_1,\dots, X_d}$ denotes a basis of $\g$, and $X=\sum s_i X_i$, the same arguments yield for arbitrary multi-indices $\gamma$ the estimate
$\gd_s^\gamma L_\alpha(X)=O(|X|^{-\infty})$ on $\g$, and the assertion follows.
\end{proof}

Next, let $Y \in \t'$ be a regular element, so that $\Crit J_Y=F^T$, $F\in \F$  a connected component of $F^T$, and $\nu:NF\rightarrow F$ the normal bundle of $F$. As usual, we identify a neighborhood of the zero section of $NF$ with a tubular neighborhood of $F$, and assume in the following that the support of $\alpha $ is contained in that neighborhood.  
Integration along the fiber yields
\bqn 
L_\alpha(Y)= \int_F \nu_\ast(e^{iJ_Y} \alpha). 
\eqn
To obtain a  localization formula for $L_\alpha(Y)$ via the stationary phase principle,   consider   an oriented trivialization $\mklm{(U_j,\phi_j)}_{j \in I}$  of $\nu:NF\rightarrow F$. Let $\mklm{s_1,\dots,s_l}$  be the fiber coordinates on $NF_{|U_j}$ given by $\phi_j$, and  Assume that $\alpha$ is given on $\nu^{-1}(U_j)$ by 
\bqn 
\alpha_j =f_j (x,s) \,  (\nu^\ast \beta_j) \wedge ds_1\wedge \cdots \wedge ds_l, \qquad \beta_j \in \Lambda^{2n-l}(U_j), \quad x \in U_j, 
\eqn
where  $f_j$ is compactly supported.
The cleanness  of $\Crit J_Y$ implies that  the function $s\mapsto J_Y(x,s)= J_Y \circ \phi_j^{-1}(x,s)$ has a non-degenerate critical point at $s=0$ for each $x \in U_j$, so that by choosing the support of $f_j$  sufficiently small we can assume that there are no other critical points. Define now the function $H_Y(x,s)=J_Y(x,s) -\eklm{ J_Y''(x,0) s, s}/2$, which depends linearly on $Y$.  As in the proof of Theorem \ref{thm:SP} one computes for any   $N\in \N$ 
 \begin{gather*}
\nu_\ast(e^{iJ_Y} \alpha_j)= \frac{1}{\det(J_Y''(x,0)/2\pi i)^{1/2}} \\
\cdot 
 \left [ \sum_{r-k\leq  N} \sum_{3k\leq 2r} \frac{1}{r!k!} 
\left ( \eklm{D_s, \frac{J_Y''(x,0)^{-1}}{2i} D_s}^r  (iH_Y(x,\cdot ))^k f_j(x,\cdot ) \right )(x,0) + R_{j, N+1} (Y) \right ]  \cdot \beta_j,
\end{gather*}
where $R_{j, N+1}$ is an explicitly given smooth function on $\t'$ of order $O(|Y|^{- N-1})$  given by 
\begin{gather*}
R_{j, N+1}(Y)=\frac{\beta_j}{\det (J_Y''(x,0)/2\pi i)^{1/2}} \\ \cdot \sum_{k=0}^{\infty}  \int_{\R^l} 
\sum_{r=3 N+1}^\infty \frac 1 { (2\pi)^l k! r!}\left (\frac{\eklm{J_Y''(x,0)^{-1} \xi, \xi }} {2 i  }\right )^r \mathcal{F}\big ( H_Y(x,\cdot )^k f_j(x,\cdot ) \big )(\xi) \d \xi.
\end{gather*}
As a consequence, we obtain the desired localization formula.

\begin{proposition} 
Let $\alpha \in \Lambda_c(T^\ast M)$, and $Y\in \t'$. Then, for arbitrary $ N\in \N$,
\begin{gather*}
L_\alpha(Y)
=\sum_{F\in \mathcal{F}}  \sum_j \int_F  \frac{1}{\det(J_Y''(x,0)/2\pi i)^{1/2}}\\ 
\cdot \left [ \sum_{r-k\leq  N} \sum_{3k\leq 2r} \frac{1}{r!k!} 
\left ( \eklm{D_s, \frac{J_Y''(x,0)^{-1}}{2i} D_s}^r  (iH_Y(x,\cdot ))^k f_j(x,\cdot ) \right )(x,0)  \right ]  \cdot \beta_j + R_{N+1} (Y),
\end{gather*}
where  $R_{N+1}$ is an explicitely given, smooth function on $\t'$ of order $O(|Y|^{- N-1})$. 
\end{proposition}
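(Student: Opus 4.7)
The plan is essentially to assemble the stationary phase computation carried out immediately before the proposition statement and add a global localization argument. First, I would use a partition of unity to split $\alpha = \alpha^{nc} + \alpha^c$, where $\alpha^c$ is supported in a small tubular neighborhood $\mathcal{U}$ of $\Crit \, J_Y = F^T$ (which, for $Y \in \t'$, equals the $T$-fixed point set by \eqref{eq:reg}), and $\alpha^{nc}$ is supported away from $F^T$. By Lemma \ref{lemma:7} applied to $\alpha^{nc}$, the contribution $L_{\alpha^{nc}}(Y)$ is Schwartz on $\t'$, hence of order $O(|Y|^{-N-1})$, and can be absorbed into $R_{N+1}$.

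Next, for $\alpha^c$, I would further subordinate a partition of unity to the decomposition $F^T = \bigsqcup_{F \in \F} F$ and, on each $F$, to an oriented trivialization $\{(U_j,\phi_j)\}$ of the normal bundle $\nu \colon NF \to F$, identifying a neighborhood of the zero section with $\mathcal{U} \cap \nu^{-1}(\bigcup_j U_j)$. This reduces matters to evaluating, for each $F$ and each $j$, the integral
\begin{equation*}
\int_F \nu_\ast\big( e^{iJ_Y} \, \alpha_j\big), \qquad \alpha_j = f_j(x,s) \, (\nu^\ast \beta_j) \wedge ds_1 \wedge \cdots \wedge ds_l,
\end{equation*}
with $f_j$ compactly supported in the $s$-variables. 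Here the cleanness of $\Crit \, J_Y$ guarantees that $s \mapsto J_Y(x,s)$ has a unique, non-degenerate critical point at $s=0$ for every $x \in U_j$.

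Then I would apply, uniformly in $x$, the stationary phase expansion exactly in the form developed in the paragraph preceding the proposition statement, with the remainder expressed via the Taylor correction $H_Y(x,s) = J_Y(x,s) - \eklm{J_Y''(x,0)s, s}/2$. Summing the resulting local expansions over $j$ and over $F \in \F$ gives the stated main terms, while the sum of the local remainders $R_{j,N+1}(Y)$ together with $L_{\alpha^{nc}}(Y)$ forms the global remainder $R_{N+1}(Y)$.

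The only delicate point is to ensure that $R_{N+1}$ is smooth on all of $\t'$ and satisfies $R_{N+1}(Y) = O(|Y|^{-N-1})$; this is where some care is required. Fiber-wise, $J_Y''(x,0)$ depends linearly on $Y$ and is invertible for $Y \in \t'$, with $\norm{J_Y''(x,0)^{-1}}$ bounded on compacta in $\t'$ uniformly in $x \in \supp f_j$. Combined with the explicit integral representation of $R_{j,N+1}(Y)$ via $\mathcal{F}(H_Y(x,\cdot)^k f_j(x,\cdot))$, which is Schwartz in $\xi$ uniformly in $x$, this yields the claimed smoothness and decay of $R_{N+1}$ on $\t'$. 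This uniform control of the remainder is the main technical point; everything else is bookkeeping of the local stationary phase expansions already carried out in the excerpt.
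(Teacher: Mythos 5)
Your proposal is correct and follows essentially the same route as the paper: Lemma \ref{lemma:7} disposes of the part of $\alpha$ supported away from $\Crit \, J_Y=F^T$, and the remaining part is treated by integrating along the fibers of $NF$ and applying the local stationary phase expansion with the Taylor correction $H_Y$ in each trivializing chart, exactly as in the discussion preceding the proposition, then summing over $j$ and $F\in\F$. Your closing remarks on the uniform (in $x$) control of the local remainders and the behavior of $J_Y''(x,0)^{-1}$ for $Y\in\t'$ simply make explicit what the paper leaves implicit.
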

\qed

\medskip

The limit \eqref{eq:43} can now be studied taking into account \eqref{eq:40} and  Cauchy's integral theorem, together with the theorems of Paley-Wiener-Schwartz, leading to corresponding residue formulae with  non-local terms.

\appendix

\section{The generalized stationary phase theorem}

In this appendix, we include  a proof of the generalized stationary phase theorem in the setting of vector bundles. It is a direct consequence of the projection formula and the stationary phase approximation, and implies the classical generalized stationary phase theorem for manifolds. Sketches of proofs for the latter can also be found in  Combescure-Ralston-Robert \cite[Theorem 3.3]{combescure-ralston-robert}, as well as Varadarajan \cite[pp. 199]{varadarajan97}.

\begin{theoremA}[Stationary phase theorem for vector bundles]
 Let $M$ be an $n$-dimensional, oriented manifold, and  $\pi:E \rightarrow M$ an oriented vector bundle of rang $l$.  Let further $\alpha \in \Lambda_{cv}^{q}(E)$ be a differential form on $E$ with compact support along the fibers, $\tau \in \Lambda^{n+l-q}_c(M)$ a differential form on $M$ of compact support,  $\psi \in \Cinft(E)$, and consider the integral
\bqn
I(\mu)=\int_E e^{i\psi/\mu} (\pi^\ast \tau) \wedge \alpha, \qquad \mu >0.
\eqn
Let $\iota:M \hookrightarrow E$ denote the zero section. Assume that the critical set of $\psi$ coincides with $\iota(M)$, and that the transversal Hessian $\mathrm{Hess}_{trans}\,  \psi$  of $\psi$ is non-degenerate along $\iota(M)$. Then,  for each $N \in \N$,  $I(\mu)$ possesses an asymptotic expansion of the form
\bq
\label{eq:8}
I(\mu) = e^{i\psi_0/\mu}e^{\frac{i\pi}4\sigma_{\psi}}(2\pi \mu)^{\frac {l}{2}}\sum_{j=0} ^{N-1} \mu^j Q_j (\psi;\alpha,\tau)+R_N(\mu),
\eq
where $\psi_0$ and $\sigma_{\psi}$ denote the value of $\psi$ and the signature of the transversal Hessian along $\iota(M)$, respectively. The coefficients $Q_j$ are given by measures supported on $M$,  and can be computed explicitly, as well as the remainder term $R_N(\mu)$ which is of order $O(\mu^{l/2+N})$. 
\end{theoremA}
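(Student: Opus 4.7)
The plan is to reduce the integral $I(\mu)$ over the total space $E$ to a family of fiber integrals parametrized by $M$, apply the classical stationary phase theorem fiberwise with uniform control in the base parameter, and then integrate over the base. The projection formula for fiber integration $\pi_\ast:\Lambda^{\bullet}_{cv}(E)\to\Lambda^{\bullet-l}(M)$ gives
\[
I(\mu)=\int_E (\pi^\ast\tau)\wedge(e^{i\psi/\mu}\alpha)=\int_M \tau\wedge \pi_\ast(e^{i\psi/\mu}\alpha),
\]
so the asymptotic analysis is entirely concentrated in the fiberwise oscillatory integral $\pi_\ast(e^{i\psi/\mu}\alpha)$, regarded as a form on $M$ as $\mu\to 0^+$.

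I would then choose a locally finite cover $\{U_\lambda\}$ of $\mathrm{supp}\,\tau$ by charts of $M$ over which $E$ is trivializable, together with a subordinate partition of unity $\{\chi_\lambda\}$, and work in adapted coordinates $(x_1,\dots,x_n,s_1,\dots,s_l)$, where $(x_i)$ parametrize $U_\lambda$ and $(s_j)$ are fiber coordinates on $E|_{U_\lambda}\cong U_\lambda\times\R^l$. In such coordinates only the components of $\alpha$ containing the full fiber volume $ds^1\wedge\cdots\wedge ds^l$ can wedge with $\pi^\ast\tau$ (a purely basic form) to produce a nonvanishing top form on $E$; writing the resulting coefficient of $dx^1\wedge\cdots\wedge dx^n\wedge ds^1\wedge\cdots\wedge ds^l$ as $g_\lambda(x,s)$, the local contribution to $I(\mu)$ becomes
\[
\int_{U_\lambda}\Big[\int_{\R^l} e^{i\psi(x,s)/\mu}\, g_\lambda(x,s)\,ds\Big]\,dx.
\]

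At this stage I would invoke the classical stationary phase theorem, e.g.\ H\"{o}rmander \cite[Theorem 7.7.5]{hoermanderI}, applied to the inner integral with $x$ as a parameter. The hypothesis $\mathrm{Crit}(\psi)=\iota(M)$ implies that for each $x\in U_\lambda$ the map $s\mapsto\psi(x,s)$ has a unique critical point at $s=0$ within the support of $g_\lambda(x,\cdot)$ (after possibly shrinking the supports to control the fiber direction), and the non-degeneracy of the transversal Hessian along $\iota(M)$ translates to the invertibility of $\partial^2_s\psi(x,0)$. This yields a uniform expansion
\[
\int_{\R^l} e^{i\psi(x,s)/\mu}\, g_\lambda(x,s)\,ds = e^{i\psi(x,0)/\mu}e^{\frac{i\pi}{4}\sigma(x)}(2\pi\mu)^{l/2}\sum_{j=0}^{N-1}\mu^j P_j^\lambda(x) + O(\mu^{l/2+N}),
\]
with $P_0^\lambda(x)=g_\lambda(x,0)/|\det\partial^2_s\psi(x,0)|^{1/2}$, and higher $P_j^\lambda$ given by the standard Melin--H\"{o}rmander formula in derivatives of $g_\lambda$ and of the cubic Taylor remainder $H(x,s)=\psi(x,s)-\psi(x,0)-\tfrac12\langle\partial^2_s\psi(x,0)s,s\rangle$ at $s=0$. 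The signature $\sigma(x)$ and the value $\psi(x,0)$ are locally constant on $M$: indeed, since $d\psi$ vanishes on $\iota(M)$, the restriction $\psi|_M$ is locally constant, and continuity plus non-degeneracy make the signature locally constant; assuming $M$ connected (otherwise argue componentwise), these are the claimed constants $\psi_0$ and $\sigma_\psi$.

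Summing the local contributions against $\{\chi_\lambda\}$ and integrating over $M$ produces the asserted expansion; the coefficients $Q_j(\psi;\alpha,\tau)$ arise as integrals over $M$ of the patched $P_j^\lambda$ against $\tau$, and their intrinsic (coordinate-invariant) character follows from that of $(\pi^\ast\tau)\wedge\alpha$ and the naturality of the transversal Hessian. The main technical point is verifying the uniformity of the stationary phase expansion in the base parameter $x$ with an explicit control $R_N(\mu)=O(\mu^{l/2+N})$; this is handled by the $C^{2N+l+1}$-version of H\"{o}rmander's remainder estimate together with uniform bounds on $\|(\partial^2_s\psi(x,0))^{-1}\|$ and on the $C^{2N+l+1}$-norms of $g_\lambda(x,\cdot)$, all of which are finite since the effective $x$-integration is over the compact set $\mathrm{supp}\,\tau\cap\pi(\mathrm{supp}\,\alpha)$.
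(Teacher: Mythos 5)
Your proposal is correct and follows essentially the same route as the paper: reduce via the projection formula to the fiber integral $\pi_\ast(e^{i\psi/\mu}\alpha)$, trivialize locally so that only the components of $\alpha$ carrying the full fiber volume survive, apply stationary phase in the fiber variables with the base point as a parameter (non-degeneracy of the transversal Hessian giving invertibility of $\partial_s^2\psi(x,0)$, uniformity from compactness of the supports), and integrate over $M$. The only cosmetic difference is that the paper rederives the fiberwise expansion by hand via Parseval's formula (citing H\"ormander only for the remainder bound), whereas you quote the parameter-dependent expansion directly, which is equally valid.
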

\begin{proof}
Let $\pi_\ast:\Lambda^\ast_{cv}(E) \rightarrow \Lambda^{\ast-l}(M)$ denote integration along the fiber in $E$, which lowers the degree by the fiber dimension. By the projection formula \cite[Proposition 6.15]{bott-tu} one has
\bqn
\int_E e^{i\psi/\mu}  (\pi^\ast \tau) \wedge \alpha = \int_M \tau \wedge \pi_\ast(e^{i\psi/\mu} \alpha).
\eqn
Let $\mklm{U_j}_{j \in I}$ be an open covering of $M$ and  $\mklm{(U_j,\phi_j)}_{j \in I}$,  $\phi_j:\pi^{-1}(U_j) \rightarrow U_j \times \R^l$,   an oriented trivialization of $\pi:E\rightarrow M$. %$\mklm{x_1,\dots,x_m}$  coordinate functions on $U_j$ and 
Write ${s_1,\dots,s_l}$ for the fiber coordinates on $E_{|U_j}$ given by $\phi_j$.  Since $I(\mu)$ vanishes if $q<l$, we  assume in the following   that $q \geq l$ and that $\alpha$ is given on $\pi^{-1}(U_j)$ by 
\bqn 
\alpha_j=f_j (x,s) \,  (\pi^\ast \beta_j) \wedge ds_1\wedge \cdots \wedge ds_l, \qquad \beta_j \in \Lambda^{q-l}(U_j),\quad x \in U_j,
\eqn
where the function $f_j\in \Cinft(U_j \times \R^l)$ is compactly supported along the fibers. By assumption,  $s\mapsto \psi(x,s)= \psi \circ \phi_j^{-1}(x,s)$ has a non-degenerate critical point at $s=0$ for each $x \in U_j$, so that  in view of the non-stationary phase theorem \cite[Theorem 7.7.1]{hoermanderI} we can assume that there are no other critical points by choosing the support of $f_j$  sufficiently small.  
Then, letting $\psi(x,0)=0 $ and setting $H(x,s)= \psi(x,s)- \eklm{\psi''(x,0) s, s}/2 $, one computes on $\pi^{-1}(U_j)$
\begin{align*}
\begin{split}
\pi_\ast(e^{i\psi/\mu}  \alpha_j)&=\int_{\R^l} e^{i\psi(x,s)/\mu} f_j(x,s) ds \cdot \beta_j=\int_{\R^l} e^{i \eklm{\psi''(x,0) s, s } /{2\mu}} e^{i H(x,s)/\mu} f_j(x,s) ds \cdot \beta_j\\
&= \sum_{k=0}^\infty \frac {i^k} {\mu^kk!} \int_{\R^l} e^{i \eklm{\psi''(x,0) s, s } /{2\mu }} { H(x,s) ^k} f_j(x,s) ds \cdot \beta_j.
\end{split}
\end{align*}
Note that it is permissible to interchange the  order of summation and integration, since  $H(x,s)=O(|s|^3)$, so that under the hypothesis $\supp_s f_j(x,\cdot) \subset B(0,1)$ one has for suitable $C>0$ the estimate
\bqn 
\Big | f_j(x,\cdot)\sum_{k=0}^{\tilde N} \frac{ H(x,\cdot )^k}{\mu^k k!} \Big | \leq  C| f_j(x,\cdot)| \sum_{k=0}^{\tilde N} \frac{1}{\mu^kk!}   \leq C e^{1/\mu}  |f_j(x,\cdot)|, \qquad \tilde N \in \N, 
\eqn
yielding an integrable majorand. %Now, the Fourier transform of the Gaussian $e^{i\eklm{\psi''(x,0)s,s}/2\mu}$ is given by the Gaussian $ (\det(\psi''(x,0)/2\pi \mu i))^{-1/2} e^{-i\mu\eklm{\psi''(x,0)^{-1} \xi,\xi}}$. 
Put $D_k=-i\gd_k$.  Taking into account
 \begin{align*}
 \int_{\R^l} \eklm{\xi,\psi''(x,0)^{-1} \xi}^r &\mathcal{F}\big ( H(x,\cdot )^k f_j(x,\cdot ) \big )(\xi) \d \xi =(2 \pi)^l \Big ( \eklm{D_s, \psi''(x,0)^{-1} D_s}^r  H(x,\cdot )^k f_j(x,\cdot ) \Big )(0)
 \end{align*}
 we obtain with Parseval's formula for arbitrary $\tilde N\in \N$ 
\begin{gather*}
\pi_\ast(e^{i\psi/\mu} \alpha_j)
= \frac {\beta_j} {\det( \psi''(x,0)/2\pi \mu i)^{1/2}}
\sum_{k=0}^\infty \frac {i^k} {(2\pi)^l\mu^kk!} \int_{\R^l} e^{-i \mu  \eklm{\psi''(x,0)^{-1} \xi, \xi } /{2}} \mathcal{F}\big ( H(x,\cdot )^k f_j(x,\cdot ) \big )(\xi) \d \xi  \\
= \frac {\beta_j} {\det( \psi''(x,0)/2\pi \mu i)^{1/2}}
\sum_{k=0}^\infty \frac {i^k} {\mu^kk! } \left   [ \sum_{r=0}^{\tilde N-1} \frac{(-i \mu)^r}{2^rr!} 
\left ( \eklm{D_s, {\psi''(x,0)^{-1}} D_s}^r  H(x,\cdot )^k f_j(x,\cdot ) \right )(0) \right. \\
\left. + \int_{\R^l} 
\sum_{r=\tilde N}^\infty \frac {(-i\mu )^r} {  (2\pi)^l2^rr!}\left ({\eklm{\psi''(x,0)^{-1} \xi, \xi }} \right )^r \mathcal{F}\big ( H(x,\cdot )^k f_j(x,\cdot ) \big )(\xi) \d \xi  \right ]. 
\end{gather*}
Note that interchanging integration and summation in the last term is in general not possible due to the lack of an integrable majorand. Since $H(x,s)$ vanishes of third order at $s=0$, the  local terms  are zero unless $3k\leq 2r$. Consequently, for general $\psi$ and arbitrary $N\in \N$ we arrive at
 \begin{align}
 \label{eq:67}
 \begin{split}
&\pi_\ast(e^{i\psi/\mu} \alpha_j)= \frac{e^{i\psi(x,0)/\mu} \cdot \beta_j }{\det (\psi''(x,0)/2\pi \mu i )^{1/2}} \\
\cdot 
 &\left [ \sum_{r-k\leq  N}  \mu^{r-k} \sum_{3k\leq 2r}  \frac{1}{r!\, k!\, 2^r\,  i^{r-k}} 
\left ( \eklm{D_s, {\psi''(x,0)^{-1}} D_s}^r  H(x,\cdot )^k f_j(x,\cdot ) \right )(0)+ R_{j,N+1} \right  ] ,
\end{split}
\end{align}
where $R_{j, N+1}$ is  explicitly given by
\begin{gather*}
R_{j,N+1}= \frac {e^{i\psi(x,0)/\mu} \cdot \beta_j} {\det( \psi''(x,0)/2\pi \mu i)^{1/2}} \\ \cdot
\sum_{k=0}^\infty \frac {i^k} {\mu^kk! } \int_{\R^l} 
\sum_{r=3 N+1}^\infty \frac {(-i\mu )^r} {  (2\pi)^l2^rr!}\left ({\eklm{\psi''(x,0)^{-1} \xi, \xi }} \right )^r \mathcal{F}\big ( H(x,\cdot )^k f_j(x,\cdot ) \big )(\xi) \d \xi. 
\end{gather*}
Moreover, by  \cite[Theorem 7.7.5]{hoermanderI} one has  $R_{j,N+1}=O(\mu^{N+1})$. The assertion now follows by integrating over $M$, and by taking $\det (\psi''(x,0)/2\pi \mu i )^{1/2}=(2\pi \mu)^{-l/2}|\det \psi''(x,0)|^{1/2} e^{\frac{-i\pi}4 \sigma_{\psi}}$  into account. In particular, the leading coefficient is given by 
\bq
\label{eq:63}
Q_0(\psi; \alpha,\tau)=\int_M \frac{ \tau \wedge r(\alpha) }{|\det \mathrm{Hess}_{trans} \, \psi|^{1/2}},
\eq
where the restriction map $r:\Lambda^q(E) \rightarrow \Lambda^{q-l}(M)$ is locally given  by 
\begin{align}
\label{eq:64}
h_j \,  (\pi^\ast \gamma_j) \wedge ds_{\sigma(1)}\wedge \cdots \wedge ds_{\sigma(p)} \quad &\longmapsto \quad
\begin{cases} 
\quad (-1) ^{\sgn \sigma}\iota^\ast(h_j)\, \gamma_j,  \qquad  p=l,\\
\quad 0, \qquad \quad \qquad p<l,
\end{cases}
\end{align}
 $\gamma_j \in \Lambda^{q-p}(U_j)$, $ h_j \in \Cinft(U_j\times \R^l)$, $\sigma$ being a permutation in $p$ variables.

\end{proof}

\begin{rem_anh} (1) In the proof of the last theorem, one can also use the lemma of Morse. This simplifies the proof, but gives less explicit expressions for the coefficients $Q_j$, since the Morse diffeomorphism is not given explicitly. Indeed, by  Morse's Lemma, we can choose the trivialization of $\pi:E \rightarrow M$  in such a way that
\bqn 
\psi(x,s)= \frac 12 \eklm{s, S_xs}, \qquad S_x\in \mathrm{Sym}(l,\R), \, \det S_x\not=0,
\eqn
where the symmetric matrix $S_x$ depends smoothly on $x\in U_j$.  Parseval's formula then yields
\begin{align*}
\pi_\ast(e^{i\psi/\mu} \alpha_j)
 &=\int_{\R^l} e^{i\psi(x,s)/\mu} f_j(x,s) ds \cdot \beta_j\\
 &=\frac{e^{i\pi \,\sgn S_x/4}\mu^{l/2}}{(2\pi)^{l/2}|\det S_x|^{1/2}} \int_{\R^l} e^{-i\mu  \eklm{S_x^{-1} \xi, \xi } /{2}} \mathcal{F}\big (  f_j(x,\cdot ) \big )(\xi) \d \xi   \cdot \beta_j\\
&= 
\frac{e^{i\pi \,\sgn S_x/4}\mu^{l/2}}{(2\pi)^{l/2}|\det S_x|^{1/2}} \left   [ (2\pi)^l \sum_{r=0}^{N-1} \frac{\mu^r}{r!} 
\left ( \eklm{D_s, \frac{S_x^{-1}}{2i} D_s}^r   f_j(x,\cdot ) \right )(x,0) \right. \\
&\left. + \int_{\R^l} 
\sum_{r=N}^\infty \frac{\mu^r}{r!} \Big (\frac{\eklm{S_x^{-1} \xi, \xi }} {2 i  }\Big )^r \mathcal{F}\big (  f_j(x,\cdot ) \big )(\xi) \d \xi  \right ]  \cdot \beta_j. 
\end{align*}
By integrating over $M$, the assertion of Theorem  \ref{thm:SP} follows. 

(2) In general, it is not possible to say anything about the convergence of the sum in \eqref{eq:8}  as $N \to \infty$, and consequently, about the limit $\lim_{N \to \infty} R_N(\mu)$, due to  the lack of control of the growth of the derivatives $\gd_s^\alpha   f_j(x,0)$ as $|\alpha| \to \infty$.
\end{rem_anh}

From Theorem \ref{thm:SP} we can now infer the classical generalized stationary phase theorem.

\begin{theorem_anh}[Generalized stationary phase theorem for manifolds]
\label{thm:SPM}
Let $M$ be a  $n$-dimensional, orientable Riemannian manifold with volume form $dM$,  $\psi \in \Cinft(M)$  a real valued phase function,  $\mu >0$, and set
\bqn
I({\mu})=\int_M e^{i\psi(m)/\mu} a(m) \, dM(m),
\eqn
where $a(m)\in \CT(M)$ denotes  a compactly supported function on $M$. Let
$$\mathcal C=\mklm{m \in M: \psi_\ast:T_mM \rightarrow T_{\psi(m)}\R \text{ is zero}}$$
 be the  critical set of the phase function $\psi$, and assume that $\mathcal C$ is clean in the sense that 
\begin{enumerate}
\item $\Ccal$ is a smooth submanifold of $M$  of dimension $p$ in a neighborhood of the support of $a$;
\item for all $m \in \Ccal$, the restriction $\psi''(m)_{|N_m\Ccal}$ of the Hessian of $\psi$ at the point $m$  to the normal space $N_m\Ccal$ is a non-degenerate quadratic form.
\end{enumerate}
\noindent
Then, for all $N \in \N$, there exists a constant $C_{N,\psi}>0$ such that
\bqn
|I(\mu) - e^{i\psi_0/\mu}  e^{ \frac{i \pi}4\sigma_{\psi}}(2\pi \mu)^{\frac {n-p}{2}}\sum_{j=0} ^{N-1} \mu^j Q_j (\psi;a)| \leq C_{N,\psi} \mu^N  \sup _{l\leq 2N} \norm{D^l a }_{\infty,M},
\eqn
where $D^l$ is a differential operator on $M$ of order $l$ and $\psi_0$ the constant value of $\psi$ on $\Ccal$, while $\sigma_{\psi}$   denotes the constant value of the signature of the transversal Hessian $\mathrm{Hess} \, \psi(m)_{|N_m\Ccal}$ on $\Ccal$. The coefficients $Q_j$ can be computed explicitly, and for each $j$ there exists a constant $\tilde C_{j,\psi}>0$ such that 
\bqn
|Q_j(\psi;a)|\leq \tilde C_{j,\psi}  \sup _{l\leq 2j} \norm{D^l a }_{\infty,\Ccal}.
\eqn
In particular,
\bqn
Q_0(\psi;a)= \int _{\Ccal} \frac {a(m)}{|\det \mathrm{Hess} \, \psi(m)_{|N_m\Ccal}|^{1/2}} d\sigma_{\Ccal}(m),
\eqn
where $d\sigma_{\mathcal C}$ is the induced volume form on $\mathcal C$.
\end{theorem_anh}
\begin{proof}
Due to the non-stationary phase principle, we can assume that $a \,dM$ is supported in a tubular neighborhood of $\Ccal$. Identifying the latter  with the total space $N\Ccal$ of the normal bundle of $ \Ccal$, the assertion follows with Theorem \ref{thm:SP}.
\end{proof}

%%%%%%%%%%%%%%%%%%%%%%%%%%

%\bibliography{bibliography}
%\bibliographystyle{amsplain}

\providecommand{\bysame}{\leavevmode\hbox to3em{\hrulefill}\thinspace}
\providecommand{\MR}{\relax\ifhmode\unskip\space\fi MR }
% \MRhref is called by the amsart/book/proc definition of \MR.
\providecommand{\MRhref}[2]{%
  \href{http://www.ams.org/mathscinet-getitem?mr=#1}{#2}
}
\providecommand{\href}[2]{#2}

\end{document}